\documentclass[a4paper, 12pt, tikz]{emilsart}
\title{Inversion monotonicity in subclasses\\ of the 1324-avoiders}
\author{Anders Claesson, Svante Linusson, Henning Ulfarsson, and Emil Verkama}

\affiliationblock{Anders Claesson}{Department of Mathematics, University of Iceland, Reykjavik, Iceland}{akc@hi.is}
\affiliationblock{Svante Linusson, Emil Verkama}{Department of Mathematics, KTH Royal Institute of Technology, Stockholm, Sweden}{linusson@kth.se, verkama@kth.se}
\affiliationblock{Henning Ulfarsson}{Department of Computer Science, Reykjavik University, Reykjavik, Iceland}{henningu@ru.is}

\usepackage[backend=biber, doi=false, isbn=false, giveninits=true, style=alphabetic, maxbibnames=10]{biblatex}
\DeclareFieldFormat*{title}{``#1''}
\DeclareMathOperator{\SPM}{SPM}
\newcommand{\delete}{\smallsetminus}

\begin{document}
\maketitle

\begin{abstract}
  A collection \(B\) of patterns is called inversion monotone if \(\av_n^k(B)\), the number of \(B\)-avoiding permutations of length \(n\) with \(k\) inversions, is weakly increasing in \(n\) for any fixed \(k\). In 2012, Claesson, Jelínek and Steingrímsson posed the \emph{inversion monotonicity conjecture}, which states that the pattern \(1324\) is inversion monotone and implies a new upper bound for its Stanley--Wilf limit. 
  
  We prove that the collections \(\{1324, 231\}\) and \(\{1324, 2314, 3214, 4213\}\) are inversion monotone via explicit injections. The latter follows from a general procedure for constructing inversion-monotone sets. Our results constitute the first known nontrivial examples of inversion-monotone sets.

  A key feature of the inversion monotonicity conjecture is that \(1324\) has a limit sequence: \(\av_n^k(1324)\) is constant in \(n\) when \(n\) is large. We characterize the sets of patterns that have limit sequences, and determine the limit sequences of all pairs \(\{1324, p\}\), where \(p\) is a pattern of length four. Connections to various families of integer partitions arise.

  Finally, we expand on work by Linusson and Verkama (2025) on almost decomposable permutations to determine a broad family of sets containing \(1324\) that are inversion monotone under the assumption \(n \geq \frac{k+7}{2}\). The method yields an enumeration of \(\av_n^k(1324, 1342)\) when \(n \geq \frac{k+7}{2}\).
\end{abstract}

% \begin{abstract}
%   A collection \(B\) of patterns is called inversion monotone if the number of \(B\)-avoiding permutations of length \(n\) with \(k\) inversions is weakly increasing in \(n\) for any fixed \(k\). Claesson, Jelínek and Steingrímsson (2012) conjectured that the pattern \(1324\) is inversion monotone, implying a new upper bound for its Stanley--Wilf limit. We make progress on the conjecture by proving that \(\{1324, 231\}\) and \(\{1324, 2314, 3214, 4213\}\) are inversion monotone via explicit injections. The latter follows from a general procedure for constructing inversion-monotone sets. These are the first proofs of inversion monotonicity in nontrivial cases.

%   A key feature of the \(\inv\)-distribution on the \(1324\)-avoiders is the limit sequence: the number of \(1324\)-avoiding permutations of length \(n\) with \(k\) inversions is known to be constant in \(n\) when \(n\) is large. We characterize the sets of patterns for which limit sequences exist, and determine the limit sequences for all pairs \(\{1324, p\}\), where \(p\) is a pattern of length four. Connections to various families of integer partitions arise.
  
%   Finally, we expand on work by Linusson and Verkama (2025) on almost decomposable permutations to determine a broad family of sets containing \(1324\) that are inversion monotone under the assumption \(n \geq \frac{k+7}{2}\). The method yields an enumeration of the \(\{1324, 1342\}\)-avoiding permutations of length \(n\) with \(k\) inversions when \(n \geq \frac{k+7}{2}\).
% \end{abstract}

\begingroup
\hypersetup{linkcolor=black}
\tableofcontents
\endgroup

\section{Introduction}

The distribution of combinatorial statistics over restricted sets of permutations is a popular topic in enumerative combinatorics. One such statistic is the number of inversions, and a common restriction is pattern avoidance. Let \(\Av_n^k(p)\) denote the set of length-\(n\) permutations with exactly \(k\) inversions that avoid a pattern \(p\), and let \(\av_n^k(p) = |{\Av_n^k(p)}|\). This distribution is poorly understood. For example, if \(p = 132\), then \(\av_n^k(p)\) is the coefficient of \(x^k\) in Carlitz's \(q\)-Catalan numbers
\begin{equation*}
  C_n(x) = \sum_{k=0}^{n-1} C_k(x) C_{n-1-k}(x) x^{k(n-k)}, \quad C_0(x) = 1.
\end{equation*}
These polynomials were conjectured to be unimodal by Stanton in 1990 \cite{stanton_unimodality_1990}, and the conjecture remains open. In this paper, we study a certain feature of the distribution of inversions over the notorious class \(\Av(1324)\).

\paragraph{Inversion-monotone patterns}

In 2012, Claesson, Jelínek and Steingrímsson \cite{claesson_upper_2012} introduced \emph{inversion monotonicity} -- a pattern \(p\) is called inversion monotone if \(\av_n^k(p) \leq \av_{n+1}^k(p)\) for all \(n\) and \(k\). Inversion monotonicity has special implications for \(\Av(1324)\), whose exact enumeration and asymptotic growth rate (\emph{Stanley--Wilf limit}, see \cite{marcus_excluded_2004}) are long-standing open problems: if \(1324\) is inversion monotone, then its Stanley--Wilf limit is less than \(13.002\). Currently, the best known upper and lower bounds for the Stanley--Wilf limit of \(1324\) are \(13.5\) and \(10.27\), respectively \cite{bevan_structural_2020}, whereas the actual value is estimated to be \(11.600 \pm 0.003\) \cite{conway_1324-avoiding_2018}.

\begin{conjecture}[Conjecture 20 in \cite{claesson_upper_2012}] \label{conj:invomono}
  All patterns, except for the identity patterns, are inversion monotone. 
\end{conjecture}

The identity pattern \(\id_m \in S_m\) is not inversion monotone, since \(\av_n^k(\id_m) = 0\) for all \(n \geq k+m\) by \eqref{eq:comp inv length} (Section \ref{sec:preliminaries}). Furthermore, if \(p\) is a pattern that does not start with \(1\) or does not end with its largest entry, then \(p\) is trivially inversion monotone; in the latter case, setting \(n+1\) as the last entry defines an injection from \(\Av_n^k(p)\) to \(\Av_{n+1}^k(p)\), and the former case is symmetric. However, Conjecture~\ref{conj:invomono} is open for \emph{all} nontrivial patterns. Linusson and Verkama recently made progress on the important pattern \(1324\) by proving combinatorially that \(\av_n^k(1324) \leq \av_{n+1}^k(1324)\) for all \(k\) and \(n \geq \frac{k+7}{2}\) \cite{linusson_enumerating_2025}.

In this work, we examine the inversion monotonicity of \(1324\) by imposing additional pattern-avoidance conditions. We are guided by the following idea: if \(B\) is an inversion-monotone collection of patterns such that \(1324 \in B\), \(|B|\) is small and all patterns in \(B \setminus \{1324\}\) are long, then \(1324\) is close to being inversion monotone. To this end, our results are as follows (Theorem \ref{thm:1324 231 inv mono} and Propositions \ref{prop:building B}, \ref{prop:bb inv mono 1324}).

\begin{theorem*}
  The collections \(\{1324, 231\}\) and \(\{1324, 2314, 3214, 4213\}\) are inversion monotone.
\end{theorem*}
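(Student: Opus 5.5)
We first reduce \(\{1324,231\}\) to a sum-decomposition argument, and then treat \(\{1324,2314,3214,4213\}\) as an instance of a general ``insert a fixed point'' construction. The guiding observation is this: if \(\pi = \sigma \oplus \tau\), then inserting a new entry between \(\sigma\) and \(\tau\), as a point larger than all of \(\sigma\) and smaller than all of \(\tau\), creates no new inversions. Such an insertion creates a new occurrence of \(1324\) only if the new point plays the role of the \(1\) or the \(4\). It cannot play the \(2\) or the \(3\), because everything to its left is smaller and everything to its right is larger. Hence the insertion is safe for \(1324\) exactly when \(\sigma\) avoids \(132\) and \(\tau\) avoids \(213\).

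For \(\{1324,231\}\), write \(\pi = \pi_1 \oplus \dots \oplus \pi_r\) with each \(\pi_i\) sum-indecomposable. First, if some \(\pi_i\) with \(i<r\) contains \(132\), then any entry of \(\pi_r\) completes a \(1324\). So \(132\) can only occur inside \(\pi_r\), and symmetrically \(213\) can only occur inside \(\pi_1\). Thus when \(r\ge 2\), inserting the new point at the junction of \(\pi_1\) and \(\pi_2\oplus\dots\oplus\pi_r\) is \(1324\)-safe. It is also \(231\)-safe, since a sum of \(231\)-avoiders with a singleton component stays \(231\)-avoiding. The sum-indecomposable case \(r=1\) needs separate treatment. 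Writing a \(231\)-avoider as \(\alpha\, n\, \beta\) with \(\alpha<\beta\) entrywise, indecomposability forces \(\alpha=\emptyset\), so \(\pi = n\beta\). In this case, I would instead insert a new entry adjacent to the leading \(n\), for example a new maximum right after a suitable descent, or a fixed point at the end when \(\beta\) avoids \(132\). The aim is to keep \(k\) fixed, and I would recurse into \(\beta\) if necessary.

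The main obstacle is making the whole map injective. The images from the decomposable case (which gain a singleton sum component next to \(\pi_1\)) must be distinguishable from the images coming from the indecomposable case. I would handle this by arranging that every image has a recognizable marked entry, for instance the leftmost singleton sum component in the decomposable case and a specified position after the initial maximum in the other, from which the preimage is recovered by deletion.

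For \(\{1324,2314,3214,4213\}\), I would first formulate a general procedure, as in Proposition~\ref{prop:building B}. Fix an inversion-preserving insertion rule, such as the junction insertion above with a canonical choice of junction, or the insertion of \(n+1\) at a canonical position. Then add to \(\{1324\}\) exactly the patterns whose occurrences can be created by this insertion, or which would destroy injectivity or the canonical choice. After that, one checks that the resulting set \(B\) is closed under the rule, in the sense that the insertion never produces an occurrence of any pattern of \(B\); this gives the injection \(\Av_n^k(B)\to\Av_{n+1}^k(B)\). Applied to \(1324\) and run to stabilization, I expect the generated patterns to be exactly \(2314\), \(3214\) and \(4213\). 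These are the length-four patterns in which a new point that is larger than everything to its left, such as the \(4\) in \(2314\), \(3214\) and \(4213\), can complete an occurrence; this would yield Proposition~\ref{prop:bb inv mono 1324}. The delicate step here is verifying that no further patterns are forced, so that the procedure terminates with this four-element set.
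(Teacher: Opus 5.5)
Your decomposable case for $\{1324,231\}$ is correct. It is the paper's branch (2), $\sigma\oplus\id_m\oplus\tau\mapsto\sigma\oplus\id_{m+1}\oplus\tau$, whose image is exactly the permutations with at least three components.

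\textbf{The indecomposable case.} For $\pi=n\beta$, which is the whole difficulty, the proposal gives no map, and the moves you mention fail. Inserting $n+1$ anywhere strictly between the leading $n$ and the end creates a $231$, formed by $n$, $n+1$ and any later entry. Appending $n+1$ is only allowed when $\beta$ avoids $132$. Recursing via $\pi\mapsto(n+1)\,g(\beta)$, with $g(\beta)$ of length $n$, gives $n+\inv(\beta)=k+1$ inversions, one too many. What is missing is a way to insert a point, then remove exactly the surplus inversions, stay in the class, and remain decodable.

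The paper does this as follows. It writes $\pi\neq\id_n^{\rev}$ as a decreasing run $n,n-1,\dots,n-\ell+1$ followed by a decomposable $\{1324,231\}$-avoider whose last component has size $m$ and avoids $\{213,231\}$. By Lemma~\ref{lem:213 231}, a point can be inserted into that component creating exactly $r$ inversions there, for any $r\in\{0,\dots,m\}$. Writing $\ell=q(m+1)-r$, it inserts so that $\ell+r$ inversions are created in total. It then shifts the last $q$ entries of the run down by $m+1$ each, removing $q(m+1)=\ell+r$ inversions.

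Injectivity then requires two further steps. First, one must recover $\ell-q$, $m$, $q$ and $r$ from the image. Second, one must check that every image has at most two components and differs from $\id_n^{\rev}\oplus 1$, so it cannot collide with the decomposable branch or the image of $\id_n^{\rev}$. Your sketch contains none of this, so the first half of the statement is not proved.

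\textbf{The collection $\{1324,2314,3214,4213\}$.} The closure procedure is not an argument: no insertion rule is fixed, and termination is only expected. The description of the patterns is also off. Being larger than everything to its left holds for the $4$ of every length-four pattern, and no entry of $4213$ can be played by a junction point. Your candidate rules also do not do the job. Junction insertion applies only to decomposable permutations. Appending $n+1$ creates $1324$, $2314$, $3214$ from occurrences of $132$, $231$, $321$, so it forces shorter patterns, not longer ones.

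The missing observation is that these four are exactly the patterns $p$ with $p\delete p_1=213$. Hence $\pi$ avoids all four if and only if $\pi\delete\pi_1$ avoids $213$. So keep the value $\pi_1$ as the first entry, and replace $\pi\delete\pi_1$ by its image under the trivial injection for $213$: prepend a new minimum. This creates no inversion, and no $213$, since $213$ does not start with $1$. The inversion count $\pi_1-1+\inv(\pi\delete\pi_1)$ is unchanged and the map is clearly injective. This is Proposition~\ref{prop:building B} applied to $B=\{213\}$, and it needs no search for further forced patterns.
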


The inversion monotonicity of the latter collection follows as an immediate consequence of a procedure that, given an inversion-monotone collection \(B\), constructs a nontrivially inversion-monotone collection \(B'\) such that 
\begin{equation*}
  \min \{|p| : p \in B'\} = \min \{|p| : p \in B\} + 1.
\end{equation*}
Section \ref{sec:building bases} describes this construction. On the other hand, our proof of the inversion monotonicity of \(\{1324, 231\}\) relies on an intricate injection. These are the first proofs of inversion monotonicity for any nontrivial collection of patterns.

\paragraph{Limit sequences}

We mentioned above that if \(1324\) is inversion monotone, then its Stanley--Wilf limit is less than \(13.002\). The estimate is based on the fact that if \(n \geq k+2\), then
\begin{equation*}
  \av_n^k(1324) = \sum_{i = 0}^k p(i) p(k-i) \eqqcolon a(k),
\end{equation*}
where \(p(k)\) is the number of integer partitions of \(k\) (see Section \ref{sec:preliminaries}). In particular, \(a(k)\) is independent of \(n\). If \(1324\) is inversion monotone, the number of \(1324\)-avoiders of length \(n\) is
\begin{equation*}
  \sum_{k=0}^{\binom n2} \av_n^k(1324) \leq \sum_{k=0}^{\binom n2} a(k) \leq \left(\tbinom n2 + 1\right) a \left(\tbinom n2\right),
\end{equation*}
on which classical estimates may be applied. It is clear that the \emph{limit sequence} \(a(0), a(1), a(2), \ldots\) of \(1324\) is important.

After proving that \(\{1324, 231\}\) is inversion monotone, it is natural to examine the pairs \(\{1324, p\}\), where \(p\) is a pattern of length four. We could not prove that any of these pairs are inversion monotone, but we are able to determine all of their limit sequences (Section \ref{sec:1324 pairs}, in particular Table \ref{tab:lim seqs}). In most cases, the problem reduces to counting \emph{indecomposable} permutations avoiding \(132\) and a pattern of length four by the number of inversions, expanding on work by Franklín \cite{franklin_pattern_2025} (Section \ref{sec:132 indecomposable}). The limit sequences are combinatorially rich, with connections to several interesting classes of integer partitions, such as the sand pile model and penny arrangements. Furthermore, we obtain the following characterization of sets that have limit sequences (Proposition \ref{prop:lim seq existence}).

\begin{proposition*}
  A collection \(B\) has a limit sequence if and only if \(B\) contains a pattern \(p\) such that \(\inv(p) \leq 1\).
\end{proposition*}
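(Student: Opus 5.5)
Here "B has a limit sequence" means that for every fixed k, \(\av_n^k(B)\) is eventually constant in \(n\). The plan is to prove the two directions separately, using the inversion-count fact \eqref{eq:comp inv length}: a permutation of length \(n\) with \(k\) inversions has at least \(n-k\) components, so for \(n\) large it has many components of size one.

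\emph{Necessity.} Suppose every \(p\in B\) has \(\inv(p)\ge 2\). Fix \(k=1\). The permutations with exactly one inversion are the \(n-1\) adjacent transpositions \(s_i\). Every pattern contained in \(s_i\) has at most one inversion, so \(s_i\) avoids all of \(B\). Hence \(\av_n^1(B)=n-1\), which is unbounded, and \(B\) has no limit sequence. (If \(B\) is infinite this still works, since the argument is patternwise.)

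\emph{Sufficiency.} Suppose \(p\in B\) with \(\inv(p)\le1\), say \(p=\id_m\) or \(p\) is \(\id_m\) with one adjacent transposition. Fix \(k\). Write \(\sigma\in\Av_n^k(B)\) as a direct sum \(\sigma=\alpha_1\oplus\cdots\oplus\alpha_r\) of indecomposable blocks. The nontrivial blocks (size \(\ge2\)) have total size at most \(2k\) and at most \(k\) of them occur. So \(\sigma\) is determined by the sequence of nontrivial blocks together with how many \(1\)'s sit in each gap between them.

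Avoiding \(p\) bounds the number of \(1\)'s in each gap. If \(p=\id_m\), the total number of fixed singleton blocks is \(<m\), so \(n<m+2k\) and \(\av_n^k\) is eventually \(0\). If \(p=\id_a\oplus 21\oplus\id_b\), then any nontrivial block contains a \(21\). So:
\begin{itemize}
\item before the last nontrivial block there are fewer than \(a\) singletons;
\item after the first nontrivial block there are fewer than \(b\) singletons.
\end{itemize}
For \(k\ge1\) there is at least one nontrivial block, so the number of singletons is bounded unless the structure lets singletons pile up freely.

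The main obstacle is that this counting alone is not enough, because \(B\) contains other patterns. I would set up an explicit map. For \(n\ge N(k,B)\), with \(N\) large compared with \(2k\) and the lengths of the patterns, I claim that inserting a singleton block is a bijection \(\Av_n^k(B)\to\Av_{n+1}^k(B)\). Insertion is not always possible, so I would argue via the bounded gaps as follows.

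By the bounds above, for large \(n\) the singletons cannot sit both before the last and after the first nontrivial block in large numbers. In fact the total number of singletons is at most \(a+b-2\) plus the number of singletons lying strictly between nontrivial blocks. That last number is itself bounded by \(a-1\) (everything before the last block), so \(n\le 2k+a+b\). Hence \(\av_n^k(B)=0\) for large \(n\), and the limit sequence exists; for \(k=0\) it is \(0\) or \(1\) constantly.

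Thus the crux is just the block decomposition plus the pigeonhole bound on singletons. The step I expect to need care is checking that every inversion-\(\le1\) pattern really has the form \(\id_a\oplus21\oplus\id_b\) or \(\id_m\), and that a \(21\) plus the prefix and suffix singletons forms an occurrence of \(p\).
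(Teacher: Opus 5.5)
Your necessity direction and the case \(p=\id_m\) are fine. The sufficiency argument for \(p=\id_a\oplus 21\oplus\id_b\), however, reaches a false conclusion. You deduce that \(\av_n^k(B)=0\) for all large \(n\) whenever \(k\ge 1\). Already for \(B=\{1324\}\) (so \(a=b=1\)) this fails: the permutations \(21\oplus\id_{n-2}\) and \(\id_{n-2}\oplus 21\) avoid \(1324\) for every \(n\). Indeed, for \(n\ge k+2\), \(\av_n^k(1324)\) equals the number of pairs of partitions of total size \(k\), which is positive.

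The error is in your two bullets. Avoiding \(p\) only forbids a nontrivial block that has at least \(a\) components before it \emph{and} at least \(b\) components after it. One entry from each of those components, together with an inversion inside the block, would form \(p\). Hence every nontrivial block lies among the first \(a\) or the last \(b\) components. This does not bound the singletons before the last nontrivial block when that block has fewer than \(b\) components after it. Nor does it bound those after the first nontrivial block when that block has fewer than \(a\) before it. For example, \(21\oplus\id_M\oplus 21\) avoids \(1324\) for every \(M\). For large \(n\) the correct picture is \(\sigma=\sigma^{(1)}\oplus\cdots\oplus\sigma^{(a)}\oplus\id_m\oplus\sigma^{(r-b+1)}\oplus\cdots\oplus\sigma^{(r)}\), with a middle identity block of unbounded length. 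The counts then stabilize at a value that is in general nonzero.

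What is missing is exactly the step you announced and then abandoned: that replacing \(\id_m\) by \(\id_{m+1}\) is a bijection \(\Av_n^k(B)\to\Av_{n+1}^k(B)\) for large \(n\). The paper takes \(n\ge k+a+b+\max_{q\in B}|q|\). Then \(\inv(\sigma)+\comp(\sigma)\ge n\) gives at least \(a+b+\max_{q\in B}|q|\) components, so \(m\ge\max_{q\in B}|q|\). Any occurrence of some \(q\in B\) in the lengthened permutation therefore misses an entry of \(\id_{m+1}\). Deleting that entry returns \(\sigma\), which would then contain \(q\), a contradiction.

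Surjectivity comes from deleting an entry of the middle block of an element of \(\Av_{n+1}^k(B)\); this block is nonempty by the same inequality. Such an entry lies in no inversion, and \(\Av(B)\) is closed under taking patterns. So ``insertion is not always possible'' is not an obstacle in this range: it is always possible, and proving this is the real content of the sufficiency direction.
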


\paragraph{Half-monotone collections}

In \cite{linusson_enumerating_2025}, Linusson and Verkama defined a certain injective mapping on the \emph{almost decomposable} \(1324\)-avoiding permutations and proved that if \(k \leq 2n-7\), every permutation in \(\Av_n^k(1324)\) is decomposable or almost decomposable. As a consequence, \(\av_n^k(1324) \leq \av_{n+1}^k(1324)\) for every \(n \geq \frac{k+7}{2}\); we say that \(1324\) is \emph{half-monotone}. In this work, we expand on the result by determining necessary (Theorem \ref{thm:incomp sufficient}) and sufficient (Theorem \ref{thm:incomp necessary}) conditions for a pattern \(p\) to have the following property: if \(\pi \in \Av(1324, p)\) is almost decomposable, then the image of \(\pi\) under the Linusson--Verkama injection avoids \(\{1324, p\}\). If \(p\) is such a pattern -- we say that \(p\) is \emph{compatible} -- then \(\{1324, p\}\) is half-monotone. Some special cases of the results in Section \ref{sec:almost decomp} are summarized in the following theorem.

\begin{theorem*}
  The compatible patterns of length four are
  \begin{equation*}
    1432,\ 4231,\ 4321,
  \end{equation*}
  and the compatible patterns of length five are
  \begin{align*}
    & 14523,\ 14532,\ 15342,\ 15423,\ 15432,\ 34125, \\
    & 52341,\ 52431,\ 53241,\ 53421,\ 54231,\ 54321.
  \end{align*}
  Furthermore, if \(p \in S_m\) is any pattern such that \(p_1 = m\) and \(p_m = 1\), then \(p\) is compatible.
\end{theorem*}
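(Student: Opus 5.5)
The statement is a summary of the special cases of the two general results of Section~\ref{sec:almost decomp}. I would derive it from them rather than argue directly about the Linusson--Verkama injection. The plan has three parts: a symmetry reduction, a finite case check for lengths four and five, and a direct argument for the infinite family.

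\emph{Symmetry reduction.} Reverse-complement and inverse both fix \(1324\), preserve the number of inversions, and map almost decomposable permutations to almost decomposable permutations. I would first check, from the definition of the Linusson--Verkama map, that it is equivariant under these symmetries, at least under reverse-complement. If so, compatibility is constant on symmetry classes. This is consistent with the lists: for instance \(1432\) and \(4321\) are reverse-complement fixed, and \(14523\) and \(34125\) are related by inverse and reverse-complement. The reduction cuts the work to a few classes of length four and a few dozen of length five.

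\emph{Finite check.} For each class representative \(p\) of length four or five, I would apply the two criteria.
\begin{itemize}
\item If \(p\) satisfies the hypothesis of the sufficient condition, Theorem~\ref{thm:incomp necessary}, it is compatible. This should cover the three length-four patterns and the twelve length-five patterns listed.
\item Otherwise I would invoke the necessary condition, Theorem~\ref{thm:incomp sufficient}. Its failure shows that \(p\) is not compatible, typically by exhibiting a small almost decomposable \(\pi\in\Av(1324,p)\) whose image contains \(p\).
\item Patterns containing \(1324\) are excluded from the start, since then \(\{1324,p\}\) is just \(\{1324\}\) and there is nothing to check.
\end{itemize}
I expect this to be a mechanical case check, ideally confirmed by computer enumeration of small almost decomposable \(1324\)-avoiders and their images.

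\emph{The family with \(p_1=m\) and \(p_m=1\).} I would show that such \(p\) meets the hypothesis of Theorem~\ref{thm:incomp necessary}. If a direct argument is needed, I would argue as follows.
\begin{itemize}
\item An occurrence of \(p\) in the image \(\sigma\) of \(\pi\) begins with its largest entry and ends with its smallest. It therefore spans a large inversion that crosses the block structure of \(\sigma\).
\item The Linusson--Verkama map only locally rearranges the entries near the ``almost'' part of the decomposition and inserts one new element. So every inversion pair of \(\sigma\) straddling that region should correspond to an inversion pair of \(\pi\) with the same relative order to all other entries.
\item Pulling the occurrence back entry by entry, with the new element replaced by the element it displaced, then gives an occurrence of \(p\) in \(\pi\), a contradiction.
\end{itemize}

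The main obstacle is this last step. One must control occurrences of \(p\) that use the newly inserted element. I would treat that case separately, using \(p_1=m\) and \(p_m=1\) to show that the new element can be exchanged with an adjacent entry of \(\pi\) playing the same role in the pattern.
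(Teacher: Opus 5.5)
You have the paper's architecture right: certify compatibility via the contrapositive of Theorem~\ref{thm:incomp necessary}, certify incompatibility via Theorem~\ref{thm:incomp sufficient} plus explicit witnesses, and handle the family as in Corollary~\ref{cor:f compatible patterns}. But the symmetry reduction you start from is false, and applying it would produce the wrong lists.

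\(f\) commutes with inversion (Lemma~\ref{lem:f compatible properties}\ref{lem:f compatible properties:inv}). It does not commute with reverse-complement, because the definition gives cases F1 and F2 priority over F3 (Remark~\ref{rmk:f priority}). Concretely, \(\pi = 4213\) falls under F1, with \(f(\pi) = 42135\). Its reverse-complement \(2431\) has \(2431 \delete 1\) decomposable, so it falls under F2, giving \(f(2431) = 23514\) rather than \(42135^{\mathrm{rc}} = 13542\).

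Consequently compatibility is not closed under reverse-complement. The pattern \(3214\) is incompatible: \(4213\) avoids \(\{1324, 3214\}\), while \(42135\) contains \(3214\) as \(4215\). Yet \(3214^{\mathrm{rc}} = 1432\) is compatible. Likewise \(14532\) is on the length-five list but \(14532^{\mathrm{rc}} = 43125\) is not. Your consistency check fails on exactly this point, since \(1432\) is not fixed by reverse-complement. You may only reduce modulo inverses. The reverse-complement images must be treated with the asymmetric side conditions both theorems attach to \(q \in \{p^{\mathrm{rc}}, (p^{\mathrm{rc}})^{-1}\}\): \(q_n < n\) in condition (c) of Theorem~\ref{thm:incomp necessary}, and \(q_1 < n-1\) in Theorem~\ref{thm:incomp sufficient}.

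Your finite check also assumes the two theorems are complementary, and they are not. By Table~\ref{tab:incomp counts} they leave \(2\) patterns of length four and \(4\) of length five undecided. Take \(p = 1342\) (and its inverse \(1423\)). Here \(q = p^{\mathrm{rc}} = 3124\) satisfies conditions (b) and (d) of Theorem~\ref{thm:incomp necessary}, so compatibility cannot be certified. But \(q_1 = 3 = n-1\), so Theorem~\ref{thm:incomp sufficient} does not apply to \(q\), and none of the other three images satisfies its conditions. Such patterns need an explicit witness, e.g.\ \(\pi = 34152\), whose image \(f(\pi) = 241563\) contains \(1342\) as \(2563\).

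The paper finds these witnesses by exhaustive computation over \(\pi\) of lengths \(n-1\) to \(n+2\) (the CLB column). Its remark that the bounds are tight for \(n \le 5\) means this computational bound meets Theorem~\ref{thm:incomp necessary}, not that the two theorems meet. So the computer search is an essential step, not a confirmation.

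For the family with \(p_1 = m\) and \(p_m = 1\), your primary route is exactly the paper's, and it is immediate:
\begin{itemize}
  \item All four of \(p, p^{-1}, p^{\mathrm{rc}}, (p^{\mathrm{rc}})^{-1}\) start with \(m\) and end with \(1\).
  \item So \(\operatorname{comp}(q) = 1\), which rules out (a) and (c).
  \item \(q \delete q_1\) ends with its minimum, so it is indecomposable, which rules out (b).
  \item \(q_1 = m\) rules out (d).
\end{itemize}
The ``direct'' fallback is unnecessary. As written it also misdescribes \(f\): the map removes a boundary point, lengthens the identity block of the resulting decomposition, and reinserts the point, rather than locally rearranging entries.
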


Lastly, we study the pair \(\{1324, 1342\}\). The pattern \(1342\) is not fully compatible, but any almost decomposable permutation \(\pi \in \Av_n(1324, 1342)\) such that its image under the Linusson--Verkama injection contains \(1342\) must satisfy \(\inv(\pi) \geq 2n - 4\) (Theorem \ref{thm:1342 almost compatible}). Half-monotonicity of \(\{1324, 1342\}\) follows. Furthermore, we determine the differences \(\av_{n+1}^k(1324, 1342) - \av_n^k(1324, 1342)\) when \(k \leq 2n-7\) (similar to the case \(1324\) in \cite{linusson_enumerating_2025}), leading to the following enumeration result (Theorem~\ref{thm:1342 enum}).

\begin{theorem*} 
  For every \(n \geq \frac{k+7}{2}\), the difference \(\av_{n+1}^k(1324, 1342) - \av_n^k(1324, 1342)\) is nonnegative, and equals the coefficient of \(x^k\) in the generating function
  \begin{equation*}
    x^{n-1} (2+2x) \cdot \prod_{i \geq 1} \frac{1 + x^i}{1 - x^i}.
  \end{equation*}
  In particular, when \(n \geq \frac{k+7}{2}\), 
  \begin{equation*}
    \av_n^k(1324, 1342) = [x^k] \left( \frac{1 - x - x^{n-1}(2 + 2x)}{1 - x} \cdot \prod_{i \geq 1} \frac{1 + x^i}{1 - x^i} \right).
  \end{equation*}
\end{theorem*}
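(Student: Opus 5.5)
The plan is to use the Linusson--Verkama injection, written \(\Phi\) here, as the main tool, and to count the permutations of length \(n+1\) that lie outside its image.

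\textbf{Setting up the injection.} Fix \(k\) and \(n\) with \(k \le 2n-7\). By the result of \cite{linusson_enumerating_2025} quoted above, every \(\pi \in \Av_n^k(1324)\) is decomposable or almost decomposable. The injection \(\Phi\) maps \(\Av_n^k(1324)\) into \(\Av_{n+1}^k(1324)\), and I will use it on both kinds of permutations.
\begin{itemize}
\item For decomposable \(\pi\), \(\Phi\) essentially inserts a new fixed point into the identity-like middle segment. This clearly preserves avoidance of \(1342\), since no new occurrence of \(1342\) can use an inserted point that lies in the direct sum between the two blocks.
\item For almost decomposable \(\pi\), Theorem~\ref{thm:1342 almost compatible} says that \(\Phi(\pi)\) can contain \(1342\) only if \(\inv(\pi) \ge 2n-4\). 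Our assumption \(k \le 2n-7\) excludes this.
\end{itemize}
So \(\Phi\) restricts to an injection \(\Av_n^k(1324,1342) \to \Av_{n+1}^k(1324,1342)\). This gives nonnegativity, and it shows that the difference \(\av_{n+1}^k - \av_n^k\) equals the number of permutations in \(\Av_{n+1}^k(1324,1342)\) that are not in the image of \(\Phi\).

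\textbf{Counting the complement.} The second step is to characterize this complement explicitly. Following the analogous computation for \(1324\) in \cite{linusson_enumerating_2025}, I expect the non-image permutations to be those whose decomposition has a prescribed "bridge" structure that \(\Phi\) never produces. Roughly, this is a rigid element spanning the whole length, contributing about \(n-1\) inversions, together with two flanking blocks. The blocks are a \(132\)-avoiding prefix and a \(213\)-avoiding suffix, further restricted by \(1342\). I would then show four things.
\begin{itemize}
\item The bridge comes in exactly four shapes: two choices, each with or without one extra inversion. This accounts for the factor \(x^{n-1}(2+2x)\).
\item Because \(k \le 2n-7\), the remaining inversions are too few to interact with the bridge.
\item After removing the bridge, the left and right blocks are independent.
\item Under \(1342\)-avoidance, one block is encoded by an ordinary partition and the other by a partition into distinct parts, counted by inversions. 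This gives \(\prod_{i\ge1}(1+x^i)/(1-x^i)\). This factor should also be the limit sequence of \(\{1324,1342\}\) from Table~\ref{tab:lim seqs}.
\end{itemize}
I expect this characterization to be the main obstacle. It requires redoing the case analysis of the Linusson--Verkama image description with the extra pattern \(1342\), and verifying that \(1342\)-avoidance cuts the blocks down to exactly the distinct-part/ordinary-partition pair. My first attempt would be to encode each block by its inversion table, read as a Ferrers diagram, and check which diagrams create a \(1342\) occurrence together with the bridge.

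\textbf{The closed formula.} This follows by telescoping. For \(n \ge k+2\) we have \(\av_n^k(1324,1342) = [x^k]\prod_{i\ge1}(1+x^i)/(1-x^i)\), which is the limit value. The condition \(m \ge \frac{k+7}{2}\) holds for every \(m \ge n\) once it holds for \(n\). Therefore
\begin{equation*}
\av_n^k = [x^k]\Bigl(\prod_{i\ge1}\tfrac{1+x^i}{1-x^i}\Bigr) - \sum_{m \ge n}[x^k]\Bigl(x^{m-1}(2+2x)\prod_{i\ge1}\tfrac{1+x^i}{1-x^i}\Bigr).
\end{equation*}
Summing the geometric series \(\sum_{m\ge n}x^{m-1} = x^{n-1}/(1-x)\) gives exactly \(\frac{1-x-x^{n-1}(2+2x)}{1-x}\prod_{i\ge1}\frac{1+x^i}{1-x^i}\).
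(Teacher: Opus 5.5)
Your outer steps are sound and match the paper's. For $k\le 2n-7$, Theorem~\ref{thm:1342 almost compatible} makes the map $f$ (your $\Phi$) restrict to an injection $\Av_n^k(1324,1342)\to\Av_{n+1}^k(1324,1342)$, which gives nonnegativity. The closed formula then follows from the difference formula by exactly your telescoping argument. (Strictly, the threshold you quote should read $\inv(\pi)\ge 2n-5$: the example $\pi=34152$ just before that theorem has $n=5$ and five inversions, and $f(\pi)$ contains $1342$. This is harmless, since $k\le 2n-7$.)

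The gap is the middle step, which is the whole content of the difference formula. You never determine $\Av_{n+1}^k(1324,1342)\setminus f(\Av_n^k(1324,1342))$. You only predict a ``bridge'' structure with four shapes, and nothing in the proposal shows these are exactly the missed permutations.

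Two ingredients are needed.

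\textbf{The right complement.} You need not redo the Linusson--Verkama case analysis, since \cite[Section~4]{linusson_enumerating_2025} already gives $\Av_{n+1}^k(1324)\setminus f(\Av_n^k(1324))=R_1\sqcup R_2\sqcup R_3$. But that describes what $f$ misses on \emph{all} of $\Av_n^k(1324)$. Your phrase ``not in the image of $\Phi$'' slides between that image and the restricted one. To pass between them you need Lemma~\ref{lem:1342 preserved}: $f$ sends $1342$-containing permutations to $1342$-containing ones. Without it, some $\sigma\in\Av_{n+1}^k(1324,1342)$ could equal $f(\pi)$ with $\pi$ containing $1342$. Such a $\sigma$ lies outside $f(\Av_n^k(1324,1342))$ and also outside $R_1\sqcup R_2\sqcup R_3$, so counting ``shapes that $\Phi$ never produces'' would miss it. With the lemma, the complement is exactly $(R_1\sqcup R_2\sqcup R_3)\cap\Av(1342)$.

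\textbf{Which parts survive $1342$-avoidance.}
\begin{itemize}
  \item $R_1$ gives $1\ominus\pi$ and $\pi\ominus 1$ with $\pi\in\Av_n^{k-n}(1324,1342)$ arbitrary, contributing $2x^nC_{1324,1342}(x)$.
  \item In $R_2$, the case $\sigma_{n+1}=2$ is impossible, because the entry $1$ is followed by an increasing pair lying above the last entry. This leaves $\sigma_2=n+1$, contributing $x^{n-1}C_{1324,1342}(x)$.
  \item In $R_3$, the case $\tau=\sigma$ dies for the same reason, and $\tau=\sigma^{-1}$ contributes $x^{n-1}C_{1324,1342}(x)$.
\end{itemize}
The limit generating function enters simply because $k\le 2n-7$ puts each residual permutation in the stable range of Proposition~\ref{prop:1324 1342}. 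Your picture of one block encoded by an ordinary partition and the other by a distinct-part partition is then automatic, and needs no separate inversion-table analysis.
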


\paragraph{Outline}

Section \ref{sec:preliminaries} recalls the necessary preliminaries. In Section \ref{sec:1324 231} we prove that \(\{1324, 231\}\) is inversion monotone, whereas Section \ref{sec:building bases} describes the procedure for building new, larger inversion monotone collections. Our analysis of the limit sequences of pairs \(\{1324, p\}\) with \(p \in S_4\), together with some other observations on pairs of length-four patterns, are in Section \ref{sec:1324 pairs}. Section \ref{sec:almost decomp} discusses almost decomposable permutations and compatible patterns. The enumeration results for indecomposable permutations avoiding \(\{132, p\}\), needed for the limit sequences, are in Section \ref{sec:132 indecomposable}. Finally, Section \ref{sec:conclusions} contains some concluding remarks and open problems, and Appendix \ref{appendix:data} shows data for the values of \(\av_n^k(1324, p)\) and \(\av_{n+1}^k(1324, p) - \av_n^k(1324, p)\) for \(p \in S_4\).

\section{Preliminaries} \label{sec:preliminaries}

A permutation \(\pi \in S_n\) \emph{contains} a pattern \(p \in S_m\) if \(\pi\) has a subsequence that is order-isomorphic to \(p\). If \(\pi\) does not contain \(p\), then \(\pi\) \emph{avoids} \(p\). If \(B\) is a set of patterns, we say that \(\pi\) avoids \(B\) if \(\pi\) avoids every pattern contained in \(B\). We write \(\Av(B)\) for the set of all permutations that avoid \(B\), \(\Av_n(B) = \Av(B) \cap S_n\) and \(\av_n(B) = |{\Av_n(B)}|\). Furthermore, if \(\pi \in S_n\), we write \(|\pi| = n\).

It is convenient for us to have notation for the pattern obtained by deleting certain entries. If \(\pi \in S_n\) and \(A \subseteq [n]\), let \(\pi \delete A\) denote the permutation that is order-isomorphic to the subsequence obtained by removing all entries with values in \(A\) from the one-line notation of \(\pi\). For singletons, we write \(\pi \delete e = \pi \delete \{e\}\).

\paragraph{Inversions and decomposability}

An \emph{inversion} in a permutation \(\pi\) is a pair \((i,j)\) of indices such that \(i < j\) and \(\pi_i > \pi_j\). Let \(\inv(\pi)\) denote the number of inversions in \(\pi\). We write \(\Av_n^k(B)\) for the set of \(B\)-avoiding permutations of length \(n\) with exactly \(k\) inversions, and \(\av_n^k(B) = |{\Av_n^k(B)}|\). There are three (nontrivial) symmetries preserving the number of inversions of a permutation \(\pi \in S_n\): the inverse \(\pi^{-1}\), the reverse-complement \(\pi^{\rc}\), which is given by \(\pi^{\rc}_i = n + 1 - \pi_{n+1-i}\), as well as their composition \((\pi^{-1})^{\rc}\). The reverse-complement is the composition of two symmetries that sends inversions to noninversions, namely the reverse \(\pi^{\rev}_i = \pi_{n+1 - i}\) and the complement \(\pi^{\compl}_i = n + 1 - \pi_i\).

We define the \emph{direct sum} of \(\sigma \in S_n\) and \(\tau \in S_m\) as the permutation \(\sigma \oplus \tau \in S_{n+m}\) given by
\begin{equation*}
  (\sigma \oplus \tau)(i) = \begin{cases}
    \sigma(i) & \text{if } i \leq n, \\
    n + \tau(i-n) & \text{if } i > n.
  \end{cases}
\end{equation*}
Similarly, the \emph{skew sum} \(\sigma \ominus \tau\) is defined by
\begin{equation*}
  (\sigma \ominus \tau)(i) = \begin{cases}
    m + \sigma(i) & \text{if } i \leq n, \\
    \tau(i-n) & \text{if } i > n.
  \end{cases}
\end{equation*}
There is a nice visual interpretation. We will illustrate permutations using their plots in cartesian coordinates: \(\pi \in S_n\) becomes \(\{i,\pi_i : i \in [n]\}\). See Figure \ref{fig:perm sums} for an example: if \(\sigma = 21\) and \(\tau = 231\), then \(\sigma \oplus \tau = 21453\) and \(\sigma \ominus \tau = 54231\).

A permutation is called \emph{indecomposable} if it cannot be written as the direct sum of two nonempty permutations. Every permutation \(\pi\) can be written uniquely as a direct sum
\begin{equation*}
  \pi = \pi^{(1)} \oplus \pi^{(2)} \oplus \ldots \oplus \pi^{(r)}
\end{equation*}
of indecomposable permutations \(\pi^{(i)}\), called the \emph{components} of \(\pi\). Let \(\comp(\pi) = r\) denote the number of components of \(\pi\). The identity permutation of length \(n\) is denoted by \(\id_n\). If the length is irrelevant and can be left unspecified, we simply write \(\id\). Note that
\begin{equation*}
  \id_n = {\underbrace{1 \oplus 1 \oplus \ldots \oplus 1}_{n \text{ times}}}.
\end{equation*}
The number of inversions and components of a permutation \(\pi \in S_n\) are related by the Erd\H{o}s--Szekeres-type inequality
\begin{equation} \label{eq:comp inv length}
  \inv(\pi) + \comp(\pi) \geq n,
\end{equation}
see \cite[Lemma~8]{claesson_upper_2012}. In particular, if \(n \geq \inv(\pi) + 2\), then \(\pi\) must be decomposable.

\begin{figure}[t]
  \centering
  \begin{tikzpicture}[scale=0.5]
    \draw (0,0) rectangle (2,2);
    \foreach \x\y in {1/2,2/1}
      \node[dot] at (\x-0.5,\y-0.5) {};
    \node at (1,-0.8) {\(\sigma\)};

    \begin{scope}[shift={(3.5,0)}]
      \draw (0,0) rectangle (3,3);
      \foreach \x\y in {1/2,2/3,3/1}
        \node[dot] at (\x-0.5,\y-0.5) {};
      \node at (1.5,-0.8) {\(\tau\)};
    \end{scope}
    
    \begin{scope}[shift={(8,0)}]
      \draw (0,0) rectangle (5,5);
      \draw 
        (0,2) -- (5,2)
        (2,0) -- (2,5);
      \foreach \x\y in {1/2,2/1,3/4,4/5,5/3}
        \node[dot] at (\x-0.5,\y-0.5) {};
      \node at (2.5,-0.8) {\(\sigma \oplus \tau\)};
    \end{scope}
    
    \begin{scope}[shift={(14.5,0)}]
      \draw (0,0) rectangle (5,5);
      \draw
        (0,3) -- (5,3)
        (2,0) -- (2,5);
      \foreach \x\y in {1/5,2/4,3/2,4/3,5/1}
        \node[dot] at (\x-0.5,\y-0.5) {};
      \node at (2.5,-0.8) {\(\sigma \ominus \tau\)};
    \end{scope}
  \end{tikzpicture}
  \caption{The direct sum and skew sum of \(\sigma = 21\) and \(\tau = 231\).}
  \label{fig:perm sums}
\end{figure}
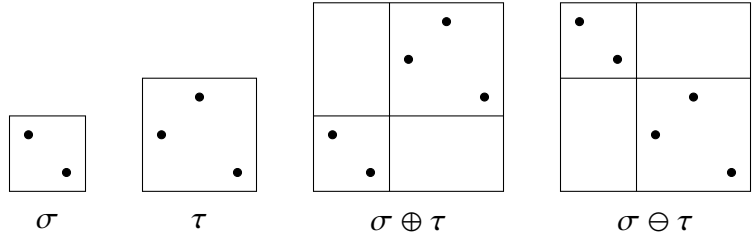

An important observation is that a decomposable permutation \(\pi\) avoids \(1324\) if and only if it is of the form \(\pi = \sigma \oplus {\id} \oplus \tau\), where \(\sigma\) is an indecomposable \(132\)-avoider and \(\tau\) is an indecomposable \(213\)-avoider \cite{claesson_upper_2012}.

\begin{definition}
  A collection \(B\) of patterns is called \emph{inversion monotone} if for all \(k\) and \(n\),
  \begin{equation*}
    \av_n^k(B) \leq \av_{n+1}^k(B).
  \end{equation*}
  If \(B = \{p\}\), we say that \(p\) itself is inversion monotone.
\end{definition}

\paragraph{Limit sequences}

We often find that the sequences \((\av_n^k(B))_n\) converge to constant values \(c_k\). In these cases, analyzing the limiting sequence \((c_k)_k\) can be interesting.

\begin{definition}
  If \(B\) is a set of patterns such that for every \(k\) there exist constants \(m_k\) and \(c_k\) for which
  \begin{equation*}
    \av_n^k(B) = c_k \quad \text{for all } n \geq m_k,
  \end{equation*}  
  we say that \((c_k)_{k \geq 0}\) is the \emph{limit sequence} of \(B\), and denote \(c_k(B) = c_k\). We call
  \begin{equation*}
    C_B(x) = \sum_{k \geq 0} c_k(B) x^k
  \end{equation*}
  the \emph{limit generating function} of \(B\).
\end{definition}

Few limit sequences have been explicitly determined. A simple argument shows that \(C_{132}(x) = P(x)\) (using the inversion sequence; see \eqref{eq:132 partition bijection} below), from which it follows that \(C_{1324}(x) = P(x)^2\), where \(P(x)\) is the generating function for the partition numbers. However, for example \(C_{1243}(x)\) is unknown. Existence is better understood. Claesson, Jelínek and Steingrímsson proved that a pattern \(p\) has a limit sequence if and only if \(\inv(p) \leq 1\) \cite[Proposition~21]{claesson_upper_2012}. We provide a simple generalization in Proposition \ref{prop:lim seq existence}.

\begin{definition}
  Let \(B\) and \(B'\) be collections of patterns.
  \begin{itemize}
    \item \(B\) and \(B'\) are \emph{inv-Wilf-equivalent} if \begin{equation*}
      \av_n^k(B) = \av_n^k(B') \quad \text{for all } n, k.
    \end{equation*}
    \item \(B\) and \(B'\) are \emph{limit equivalent} if their limit sequences exist and are equal.
  \end{itemize}
\end{definition}

\begin{remark}
  A collection \(B\) of patterns is trivially inv-Wilf-equivalent to
  \begin{equation*}
    \{p^{-1} : p \in B\} \quad \text{and} \quad \{p^{\rc} : p \in B\}.
  \end{equation*}
  If \(B\) and \(B'\) are inv-Wilf-equivalent and have a limit sequence, then obviously they are also limit equivalent. Inv-Wilf-equivalence was introduced by Dokos, Dwyer, Johnson, Sagan and Selsor \cite{dokos_permutation_2012}. Chan proved that nontrivial inv-Wilf-equivalences exist, even for principal classes \cite{chan_infinite_2015}.
\end{remark}

\paragraph{132-avoiders and partitions}

 An essential tool in our analysis of limit sequences is the bijection between indecomposable \(132\)-avoiders and integer partitions, encoding the inversions of the permutation. The \emph{inversion table} or \emph{Lehmer code} \(L(\pi) = (b_1, \ldots, b_n)\) of a permutation \(\pi \in S_n\) is defined by
\begin{equation*}
  b_i = |\{j \in \{i+1, \ldots, n\} : \pi_i > \pi_j\}|
\end{equation*}
for all \(i \in [n]\). This mapping gives a bijection
\begin{equation*}
  L : S_n \longrightarrow \{0, \ldots, n-1\} \times \{0, \ldots, n-2\} \times \ldots \times \{0\}.
\end{equation*}

It is easy to show that the inversion table of a permutation \(\pi\) is weakly decreasing if and only if \(\pi\) avoids \(132\) \cite[Section~1.5]{stanley_enumerative_2012}. Therefore \(L\) induces a mapping \(\Lambda\) from the \(132\)-avoiders to the integer partitions, where \(\Lambda(\pi)\) is the partition obtained from \(L(\pi)\) by deleting the trailing zeroes. The restriction
\begin{equation*}
  \Lambda : \{\pi \in \Av(132) : \comp(\pi) = 1\} \longrightarrow \{\text{integer partitions}\}
\end{equation*}
is a bijection. If we fix \(k\) and \(n \geq k+1\), we therefore also have that
\begin{equation} \label{eq:132 partition bijection}
  \Lambda : \Av_n^k(132) \longrightarrow \{\text{integer partitions of } k\}
\end{equation}
is a bijection. If \(\lambda\) is a partition, we will write \(\Lambda^{-1}(\lambda)\) for the unique indecomposable \(132\)-avoider \(\pi\) such that \(\Lambda(\pi) = \lambda\). The following elementary result collects some facts about the connection between \(\pi\) and \(\lambda = \Lambda(\pi)\). 

\begin{lemma}\label{lem:132 partition properties}
  Let \(\pi\) be an indecomposable \(132\)-avoiding permutation and \(\lambda = \Lambda(\pi)\). 
  \begin{enumerate}[(a)]
    \item\label{lem:132 partition property def} For every \(i\), \(\pi_i\) is the smallest positive integer \(\ell\) such that \(\ell > \lambda_i\) and \(\ell \neq \pi_j\) for all \(j < i\).
    \item\label{lem:132 partition property equal} \(\lambda_i = \lambda_{i+1}\) if and only if \(\pi_i < \pi_{i+1}\).
    \item\label{lem:132 partition property drop} \(\lambda_i > \lambda_{i+1}\) if and only if \(\pi_{i+1} = \lambda_{i+1} + 1\).
  \end{enumerate}
\end{lemma}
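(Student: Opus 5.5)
The plan is to work directly from the definition of the Lehmer code together with the fact that \(L(\pi) = (b_1, \ldots, b_n)\) is weakly decreasing when \(\pi\) avoids \(132\). Since \(\lambda\) is \(L(\pi)\) with its trailing zeroes removed, we set \(\lambda_i = b_i\) for all \(i\), where \(\lambda_i = 0\) once \(i\) exceeds the number of parts. Indecomposability will only be needed to ensure that \(\Lambda\) is well defined as a bijection. The three statements are then local facts about the code.

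For (a), fix \(i\). The entry \(\pi_i\) lies in the set \(U_i = [n] \setminus \{\pi_1, \ldots, \pi_{i-1}\}\) of values not yet used, and by definition \(b_i\) is the number of elements of \(U_i\) smaller than \(\pi_i\). So \(\pi_i\) is the \((b_i+1)\)-st smallest element of \(U_i\). To turn this into the stated characterization, we show that every positive integer \(\ell \le b_i\) is already among \(\pi_1, \ldots, \pi_{i-1}\), i.e.\ \(\{1, \ldots, b_i\} \cap U_i = \emptyset\).

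Suppose instead that some \(\ell \le b_i\) lies in \(U_i\). Then \(\ell\) appears at a position \(j > i\). Among the values \(1, \ldots, b_i\), at most \(b_i - 1\) are used before position \(i\)... this counting is the one delicate point. The cleaner route is through \(132\)-avoidance: since the code is weakly decreasing, we have \(b_{i-1} \ge b_i\). Then induct on \(i\), showing that the used prefix \(\{\pi_1, \ldots, \pi_{i-1}\}\) contains \(\{1, \ldots, b_{i-1}\}\).

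Concretely, assume \(\{1,\ldots,b_{i-1}\}\) is covered by \(\pi_1,\ldots,\pi_{i-2}\) together with \(\pi_{i-1}\); this is the inductive hypothesis, in the form that the unused values below \(\pi_{i-1}\) number exactly \(b_{i-1}\) and are the smallest elements of \(U_{i-1}\). Alternatively, argue by contradiction with a \(132\) occurrence. If a value \(\ell < \pi_i\) with \(\ell \in U_i\) and \(\ell \le b_i\) existed while some smaller unused value was skipped, then \(\pi_i\) would be preceded by a smaller unused entry \(m\) sitting later. Choosing an entry \(a < m\) to the left, if one exists, gives \(a\,\pi_i\,m\), a \(132\) pattern. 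If no such \(a\) exists, we obtain a contradiction with indecomposability, which forbids a prefix \(\pi_1\ldots\pi_{i-1}\) that is exactly \(\{1,\ldots,i-1\}\).

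I expect this step to be the main obstacle, namely pinning down exactly which values are missing from the prefix. I would first try to prove the invariant ``\(U_i \cap [1, b_i] = \emptyset\)'' by induction using the decreasing code. Once that holds, the \((b_i+1)\)-st smallest element of \(U_i\) is exactly the smallest element of \(U_i\) exceeding \(b_i\), which is (a).

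For (b) and (c), we use (a). If \(\lambda_{i+1} = \lambda_i\), then \(\pi_{i+1}\) is the smallest unused value above \(\lambda_i\) after removing \(\pi_i\), which was itself the smallest such value; hence \(\pi_{i+1} > \pi_i\). Conversely, if \(\lambda_{i+1} < \lambda_i\), then by the invariant \(\lambda_{i+1}+1 \le \lambda_i\) is already used or not. The invariant gives \(U_{i+1} \supseteq\) no values \(\le \lambda_{i+1}\), and \(\lambda_{i+1}+1\) is unused: otherwise the prefix through \(i\) would contain \([1,\lambda_{i+1}+1]\), and the counts \(b_{i+1}\) would force a different value. So \(\pi_{i+1} = \lambda_{i+1}+1 \le \lambda_i < \pi_i\).

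The two cases \(\lambda_i = \lambda_{i+1}\) and \(\lambda_i > \lambda_{i+1}\) are exhaustive by monotonicity of the code. Each case yields a distinct conclusion, namely an ascent or a descent with \(\pi_{i+1} = \lambda_{i+1}+1\). For the descent case, \(\pi_{i+1} = \lambda_{i+1}+1 \le \lambda_i < \pi_i\), so \(\pi_{i+1} \neq \pi_i\) and indeed \(\pi_{i+1} < \pi_i\). For the ascent case, \(\pi_{i+1} > \pi_i > \lambda_i = \lambda_{i+1}\), so \(\pi_{i+1} > \lambda_{i+1}+1\). The two conclusions are therefore mutually exclusive, which gives both biconditionals in (b) and (c).
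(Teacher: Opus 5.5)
\textbf{There is a genuine gap in part (a), and it carries over to part (c).}

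Your invariant for (a) is false; the containment points the wrong way. You claim \(\{1,\ldots,b_i\}\cap U_i=\emptyset\), i.e.\ that every value \(\le b_i\) is used before position \(i\). Already \(\pi=321\) refutes this at \(i=1\): it is indecomposable and \(132\)-avoiding with \(L(\pi)=(2,1,0)\), nothing is used yet, but \(b_1=2\).

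The invariant would not give (a) even if it held. If no element of \(U_i\) were \(\le b_i\), the smallest element of \(U_i\) exceeding \(b_i\) would be \(\min U_i\), not the \((b_i+1)\)-st smallest, unless \(b_i=0\). The \((b_i+1)\)-st smallest element of \(U_i\) equals the smallest element of \(U_i\) exceeding \(b_i\) exactly when \(\{1,\ldots,b_i\}\subseteq U_i\), the opposite containment.

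That containment is what you need, and it follows in one line from \(132\)-avoidance. If \(v\) precedes \(\pi_i\), \(w\) follows it, and \(v<w<\pi_i\), then \(v\,\pi_i\,w\) is a \(132\). So every value below \(\pi_i\) occurring after position \(i\) is smaller than every value below \(\pi_i\) occurring before it. Hence the \(b_i\) values below \(\pi_i\) to its right are precisely \(1,\ldots,b_i\), which gives (a) directly. This needs no induction and no indecomposability. Your ``alternatively'' paragraph appealing to indecomposability is garbled and unnecessary, since the lemma holds for every \(132\)-avoider.

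The same error breaks (c). To get \(\pi_{i+1}=\lambda_{i+1}+1\) you need \(\lambda_{i+1}+1\in U_{i+1}\). But your invariant at index \(i\) would put all of \([1,b_i]\), which contains \(b_{i+1}+1\), into the prefix, so the argument contradicts itself. With the correct fact the step is immediate:
\begin{itemize}
  \item \(\{1,\ldots,b_i\}\subseteq U_i\) and \(\pi_i>b_i\), so \(\{1,\ldots,b_i\}\subseteq U_{i+1}\).
  \item Since \(b_{i+1}+1\le b_i\), the value \(b_{i+1}+1\) is unused.
  \item Then (a) gives \(\pi_{i+1}=b_{i+1}+1\).
\end{itemize}

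Once (a) is properly established, the rest of your proposal is correct. This includes the ascent argument in (b) and the closing argument that the two cases are exhaustive and mutually exclusive, which yields both biconditionals.
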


Lastly, we show how to determine the limit sequence of \(1324\). Let \(n \geq k+2\). By \eqref{eq:comp inv length}, every permutation \(\pi \in \Av_n^k(1324)\) is decomposable, and may be written as \(\pi = \sigma \oplus {\id} \oplus \tau\), where \(\sigma\) is an indecomposable \(132\)-avoider and \(\tau\) is an indecomposable \(213\)-avoider. Note that \(\inv(\pi) = \inv(\sigma) + \inv(\tau)\). The mapping \(\pi \mapsto (\Lambda(\sigma), \Lambda(\tau^{\rc}))\) is a bijection between \(\Av_n^k(1324)\) and pairs \((\lambda, \mu)\) of integer partitions such that \(|\lambda| + |\mu| = k\). Therefore,
\begin{equation*}
  \av_n^k(1324) = \sum_{i=0}^k p(i) p(k-i)
\end{equation*}
for all \(n \geq k+2\), and hence \(C_{1324}(x) = P(x)^2\).

\section[\texorpdfstring{\(\{1324, 231\}\)}{\{1324, 231\}} is inversion monotone]{\texorpdfstring{\(\bm{\{1324, 231\}}\)}{\{1324, 231\}} is inversion monotone} \label{sec:1324 231}

The pattern \(1324\) contains three distinct patterns of length three: \(123\), \(132\) and \(213\). Furthermore, \(\{1324, 321\}\) is \emph{not} inversion monotone, since
\begin{equation*}
  \av_{10}^{15}(1324, 321) = 60 > 52 = \av_{11}^{15}(1324, 321).
\end{equation*} 
This is interesting in itself, since the decreasing patterns intuitively seem highly inversion monotone. Nevertheless, the only remaining pattern is \(231\) (and the symmetric \(312\)). We shall prove that the collection \(\{1324, 231\}\) is inversion
monotone, but first we establish a lemma.

\begin{lemma} \label{lem:213 231}
  An indecomposable permutation \(\pi \in \Av_n^{k}(213, 231)\) starts
  with \(n\), and:
  \begin{enumerate}[(a)]
  \item \label{lem:213 231 insert} For each \(r \in \{0, \ldots, n\}\), there exists a unique
    \(\sigma \in \Av_{n+1}^{k+r}(213, 231)\) such that
    \(\sigma \delete i = \pi\) for some \(i\).
  \item \label{lem:213 231 delete} For each \(r \in \{1, \ldots, n-1\}\), there exists a unique
    \(\tau \in \Av_{n-1}^{k-r}(213, 231)\) such that
    \(\pi \delete i = \tau\) for some \(i\).
  \end{enumerate}
\end{lemma}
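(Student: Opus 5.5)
The plan is to replace \(\Av(213,231)\) by a word model in which both insertion and deletion become explicit, and then to read off the number of inversions gained or lost by each operation.

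\textbf{Word model and the first claim.} First I will record that \(\pi \in \Av_n(213,231)\) if and only if every entry \(\pi_i\) is either the minimum or the maximum of \(\{\pi_i, \pi_{i+1}, \ldots, \pi_n\}\). Indeed, if \(\pi_i\) is neither, then some later entry is larger and some later entry is smaller, and these two together with \(\pi_i\) form a \(213\) or a \(231\). So \(\pi\) is encoded by a word \(w = w_1 \cdots w_{n-1} \in \{\mathsf m, \mathsf M\}^{n-1}\), where \(w_i\) says whether \(\pi_i\) is the min or the max of the remaining entries; the last entry is forced. An entry of type \(\mathsf M\) at position \(i\) is inverted with every later entry, so
\begin{equation*}
  \inv(\pi) = \sum_{i\,:\,w_i = \mathsf M} (n-i).
\end{equation*}
If \(w_1 = \mathsf m\), then \(\pi_1 = 1\) and \(\pi = 1 \oplus \pi'\) is decomposable. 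If \(w_1 = \mathsf M\), then \(\pi_1 = n\) and every proper prefix contains \(n\), so \(\pi\) is indecomposable. This proves that an indecomposable \(\pi\) starts with \(n\), apart from the trivial case \(n=1\), where the claim is immediate.

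\textbf{Inversion change under insertion and deletion.} Deleting an entry corresponds to deleting one letter of \(w\), and inserting an entry so that the result stays in \(\Av(213,231)\) corresponds to inserting a letter \(\mathsf m\) or \(\mathsf M\) at some position of the word. The last two entries of the permutation play symmetric roles, so deleting either of them amounts to deleting the last letter; this is the bookkeeping I will have to set up carefully. Deleting the entry at position \(j\) removes exactly the inversions that involve it. These are one for each earlier letter \(\mathsf M\), together with all \(n-j\) later entries if \(w_j = \mathsf M\). Writing \(c_j\) for the number of \(\mathsf M\)'s among \(w_1, \ldots, w_{j-1}\), the loss is therefore
\begin{equation*}
  r = c_j \quad \text{if } w_j = \mathsf m, \qquad r = c_j + (n-j) \quad \text{if } w_j = \mathsf M.
\end{equation*}
Insertion is the same computation in reverse, with \(n+1\) in place of \(n\).

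\textbf{Counting distinct results.} For part (a), distinct permutations \(\sigma\) correspond to distinct words, and inserting a letter anywhere inside a maximal run of equal letters produces the same word. I will check that \(r\) is constant along such a run:
\begin{itemize}
  \item inserting \(\mathsf m\) inside a run of \(\mathsf m\)'s leaves \(c_j\) unchanged;
  \item inserting \(\mathsf M\) inside a run of \(\mathsf M\)'s increases \(c_j\) by one and decreases \(n+1-j\) by one.
\end{itemize}
So \(r\) is a function of the resulting word. I will then show it is injective on words with range exactly \(\{0, \ldots, n\}\):
\begin{itemize}
  \item inserting \(\mathsf m\) gives \(r = c_j\), which runs through \(0, \ldots, p\) once per distinct word, where \(p\) is the number of \(\mathsf M\)'s in \(w\);
  \item inserting \(\mathsf M\) gives \(r = n+1-(\text{number of } \mathsf m\text{'s before } j)\), which runs through a top interval of length \(q+1\), where \(q\) is the number of \(\mathsf m\)'s in \(w\).
\end{itemize}
Since \(p+q\) equals the word length, these two intervals should be disjoint and abut, giving each value of \(r\) exactly once.

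The main obstacle is the off-by-one bookkeeping. This involves the forced last entry, the possibility of inserting a new last entry, and the fact that \(w_1 = \mathsf M\). Together these must shift the two intervals so that they tile \(\{0, \ldots, n\}\) exactly; my naive count above gives \(\{0, \ldots, p\} \cup \{p+2, \ldots, n+1\}\), so the convention for the terminal entries has to be fixed and then rechecked against small cases such as \(\pi = 1\) and \(\pi = 21\). Part (b) follows the same template for deletion. Because \(w_1 = \mathsf M\), deleting the first letter gives the extreme value \(r = n-1\), and the remaining deletions of \(\mathsf m\)'s and \(\mathsf M\)'s again give two monotone, run-constant, abutting ranges, which together cover \(\{1, \ldots, n-1\}\) exactly once.
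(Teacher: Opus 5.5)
Your word model is the paper's argument in different notation. The letters \(\mathsf M\) are exactly the entries above \(\pi_n\), which form the paper's upper arm, so \(a = p\). The letters \(\mathsf m\) together with \(\pi_n\) form the lower arm, so \(b = q+1\). Reading off \(\pi_1 = n\) from \(w_1 = \mathsf M\) is a more direct substitute for the paper's unimodality argument. The mismatch you found in (a), however, is an arithmetic slip and not a convention issue, so leave the treatment of the terminal entries alone. If the inserted \(\mathsf M\) sits at position \(j\) of the new word with \(d_j\) letters \(\mathsf m\) before it, then \(c_j = (j-1) - d_j\), hence \(r = c_j + (n+1-j) = n - d_j\), not \(n+1-d_j\). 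As \(d_j\) ranges over \(\{0,\ldots,q\}\) (insert just before the \((d_j+1)\)-th \(\mathsf m\), or at the end), \(r\) ranges over \(\{n-q,\ldots,n\} = \{p+1,\ldots,n\}\). This abuts \(\{0,\ldots,p\}\) exactly and matches the paper's \(\{0,\ldots,a\}\) and \(\{a+1,\ldots,a+b\}\). (Part (a) never uses \(w_1 = \mathsf M\).)

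Part (b) is where the plan genuinely fails: the coverage you assert is false, and so is the existence half of (b) as stated. Deletion can only remove letters already present, so the distinct permutations \(\pi \delete i\) correspond to the maximal runs of \(w\) (deleting \(\pi_n\) equals deleting the last letter). There are usually fewer than \(n-1\) runs. For \(\pi = 321\) we have \(w = \mathsf{MM}\) and every deletion gives \(21\) with \(r = 2\). No \(\tau\) exists for \(r = 1\), since \(\Av_2^2(213,231) = \emptyset\); similarly \(4321\) only realises \(r = 3\). The paper's ``Part (b) is similar'' glosses over exactly this. What your model does prove is uniqueness. Deleting an \(\mathsf m\) from a run preceded by \(c\) letters \(\mathsf M\) loses \(r = c \in \{1,\ldots,p\}\); this strictly increases from run to run and equals \(p\) only if \(w\) ends in \(\mathsf m\). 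Deleting an \(\mathsf M\) from a run preceded by \(d\) letters \(\mathsf m\) loses \(r = n-1-d \in \{p,\ldots,n-1\}\); this strictly decreases and equals \(p\) only if \(w\) ends in \(\mathsf M\). So each \(r\) is realised by at most one \(\tau\). That is all the injectivity argument for \(\{1324,231\}\) needs, since there the existence of a preimage comes from \(\sigma\) lying in the image of \(f\). State and prove (b) in this at-most-one form instead of trying to tile \(\{1,\ldots,n-1\}\).
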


\begin{figure}[!ht]
  \centering
  \begin{tikzpicture}[scale=0.95]
    \draw (0,0) rectangle (4,4);
    \begin{scope}
      \clip (0,0) rectangle (4,4);
      \draw[ultra thick] (0,0) -- (4,2) -- (0,4);
    \end{scope}

    \begin{scope}[shift={(4.75,-0.25)}, scale=0.5]
      \draw (0.5,0.5) rectangle (8.5,8.5);
      \foreach \x\y in {1/8,2/7,3/1,4/6,5/2,6/3,7/5,8/4}
        \node[dot] at (\x,\y) {};
      \foreach \x in {1,2,4,7} {
        \draw[dashed] (\x,8.5) -- (\x,0.5);
        \draw (\x,0.5) -- (\x,0.3);
      }
      \foreach \x\i in {0.5/0,1.5/1,3/2,5.5/3,8/4}
        \node at (\x,0) {\tiny\(+\i\)};
    \end{scope}
    
    \begin{scope}[shift={(9.75,-0.25)}, scale=0.5]
      \draw (0.5,0.5) rectangle (8.5,8.5);
      \foreach \x\y in {1/8,2/7,3/1,4/6,5/2,6/3,7/5,8/4}
        \node[dot] at (\x,\y) {};
      \foreach \x in {3,5,6,8} {
        \draw[dashed] (\x,8.5) -- (\x,0.5);
        \draw (\x,8.5) -- (\x,8.7);
      }
      \foreach \x\i in {7/5,5.5/6,4/7,1.75/8}
        \node at (\x,9) {\tiny\(+\i\)};
    \end{scope}
  \end{tikzpicture}
  \caption{The structure of a \(\{213, 231\}\)-avoider (left). The different ways to insert a point in the lower arm (middle) and in the upper arm (right), with the increase in inversions indicated.}
  \label{fig:213 231 structure}
\end{figure}
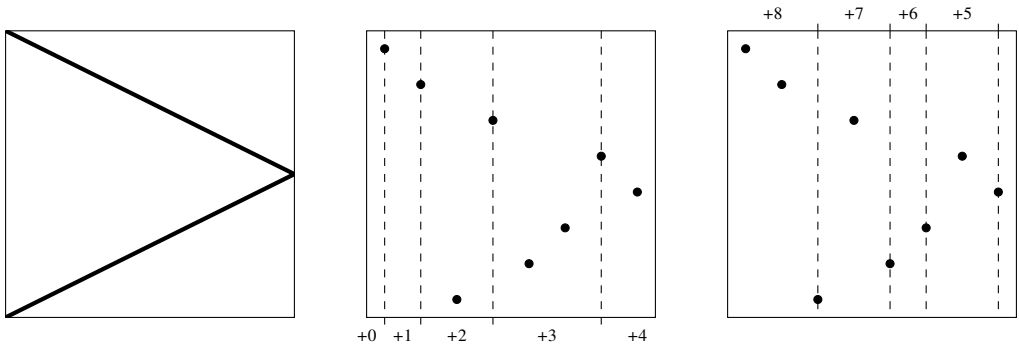

\begin{proof}
  The inverse permutation \(\pi^{-1}\) is \(\{213, 312\}\)-avoiding,
  thus unimodal.  Therefore \(\pi^{-1}\) (and \(\pi\)) is
  indecomposable if and only if \(\pi_n^{-1} = 1\), that is,
  \(\pi_1 = n\). Hence \(\pi\) is of the form indicated in Figure \ref{fig:213 231 structure} (left): the subsequence of entries \(\pi_i\) with \(\pi_i \leq \pi_n\) is increasing (the `lower arm'), and the subsequence of entries \(\pi_i\) with \(\pi_i > \pi_n\) is decreasing (the `upper arm').
  
  Denote the number of points in the upper (resp.\ lower) arm by
  \(a\) (resp.\ \(b\)), so that \(a + b = n\). The rightmost point is
  considered to be part of the lower arm.  There can be several ways to
  add a point to the lower arm in a given interval between two
  points in the upper arm, but each insertion yields the same permutation. A point added to the \(r\)-th such
  interval adds exactly \(r\) inversions, for
  \(r \in \{0, \ldots, a\}\); see Figure \ref{fig:213 231 structure} (middle).

  Similarly, adding a point to the upper arm in the \(r\)-th
  interval (from the right) between two points in the lower arm
  adds exactly \(a + r\) inversions, for \(r \in \{1, \ldots, b\}\); see Figure \ref{fig:213 231 structure} (right). This proves part \ref{lem:213 231 insert}. Part \ref{lem:213 231 delete} is similar: we delete a point instead of adding a point.
\end{proof}

\begin{figure}[p]
  \centering
  \begin{tikzpicture}[scale=0.8]
    \draw (-1.5,0) rectangle (1.5,3);
    \node at (0,1.5) {\(\Av \begin{pmatrix}
      1324, \\ 231
    \end{pmatrix}\)};
    \draw[decorate, very thick, decoration={calligraphic brace,amplitude=6pt}, yshift=-1mm] (-2,-0.5) -- (2,-0.5);
    \node at (-2,-1.2) {\(\epsilon\)};
    \node[rotate=-105] at (-2.25,-2) {\(\longmapsto\)};
    \node[dot] at (-2.42, -2.7) {};

    \begin{scope}[shift={(1,-3)}]
      \draw (0,0) rectangle (2,2);
      \node at (1,1) {\(n \geq 1\)};
      \draw[decorate, very thick, decoration={calligraphic brace,amplitude=6pt}, yshift=-1mm] (-1.5,-0.5) -- (3.5,-0.5);  
    \end{scope}

    \begin{scope}[shift={(-1.5,-6)}]
      \begin{scope}
        \clip[draw] (0,0) rectangle (2,2);
        \draw[ultra thick] (0,2) -- (2,0);
      \end{scope}
      \node[rotate=-100] at (0.2, -0.77) {\(\longmapsto\)};
    \end{scope}

    \begin{scope}[shift={(-3,-9.7)}]
      \begin{scope}
        \clip[draw] (0,0) rectangle (2,2);
        \draw[ultra thick] (0,2) -- (2,0);
      \end{scope}
      \node[dot] at (2.15,2.15) {};
      \node at (-1,1) {(1)};
    \end{scope}

    \begin{scope}[shift={(2.5,-8)}]
      \draw (0,0) rectangle (4,4);
      \draw
        (4/3,0) -- (4/3,4)
        (8/3,0) -- (8/3,4)
        (0,4/3) -- (4,4/3)
        (0,8/3) -- (4,8/3);

      \begin{scope}
        \clip (0,4) rectangle (4/3,8/3);
        \draw[ultra thick] (0,4) -- (4/3,8/3);
      \end{scope}

      \begin{scope}
        \clip (4/3,0) rectangle (8/3,4/3);
        \draw[ultra thick] (4/3,4/3) -- (2,0) -- (8/3,4/3);
      \end{scope}

      \begin{scope}
        \clip (8/3,4/3) rectangle (4,8/3);
        \draw[ultra thick] (8/3,4/3) -- (4,2) -- (8/3,8/3);
      \end{scope}

      \draw (4.1,4/3) -- (4.2,4/3) -- (4.2,8/3) -- (4.1,8/3);
      \node[anchor=west] at (4.15,2) {\scriptsize\(m > 0\)};

      \draw[decorate, very thick, decoration={calligraphic brace,amplitude=6pt}, yshift=-1mm] (-0.5,-0.5) -- (4.5,-0.5);      
    \end{scope}
    
    \begin{scope}[shift={(0,-12)}]
      \begin{scope}
        \clip[draw] (0,0) rectangle (1.5,1.5);
        \draw[ultra thick] (0,1.5) -- (0.75,0) -- (1.5,1.5);
      \end{scope}

      \begin{scope}
        \clip[draw] (1.5,1.5) rectangle (3,3);
        \draw[ultra thick] (1.5,1.5) -- (3,2.25) -- (1.5,3);
      \end{scope}

      \node[rotate=-95] at (1,-0.77) {\(\longmapsto\)};
    \end{scope}

    \begin{scope}[shift={(-1,-16.8)}]
      \draw (1.8,1.8) rectangle (3.3,3.3);
      \node[dot] at (1.65,1.65) {};

      \begin{scope}
        \clip[draw] (0,0) rectangle (1.5,1.5);
        \draw[ultra thick] (0,1.5) -- (0.75,0) -- (1.5,1.5);
      \end{scope}

      \begin{scope}
        \clip[draw] (1.8,1.8) rectangle (3.3,3.3);
        \draw[ultra thick] (1.8,1.8) -- (3.3,2.55) -- (1.8,3.3);
      \end{scope}
      
      \node at (-1,1.65) {(2)};
    \end{scope}
    
    \begin{scope}[shift={(5,-13)}]
      \draw (0,0) rectangle (4,4);
      \draw
        (4/3,0) -- (4/3,4)
        (8/3,0) -- (8/3,4)
        (0,4/3) -- (4,4/3)
        (0,8/3) -- (4,8/3);

      \begin{scope}
        \clip (0,4) rectangle (4/3,8/3);
        \draw[ultra thick] (0,4) -- (4/3,8/3);
      \end{scope}

      \begin{scope}
        \clip (4/3,0) rectangle (8/3,4/3);
        \draw[ultra thick] (4/3,4/3) -- (2,0) -- (8/3,4/3);
      \end{scope}

      \begin{scope}
        \clip (8/3,4/3) rectangle (4,8/3);
        \draw[ultra thick] (8/3,4/3) -- (4,2) -- (8/3,8/3);
      \end{scope}

      \draw 
        (-0.1,8/3) -- (-0.2,8/3) -- (-0.2,4) -- (-0.1,4)
        (4.1,4/3) -- (4.2,4/3) -- (4.2,8/3) -- (4.1,8/3);
      \node[anchor=east] at (-0.15,10/3) {\scriptsize\(0 < \ell\)};
      \node[anchor=west] at (4.15,2) {\scriptsize\(m > 0\)};

      \node[rotate=-90] at (2,-0.77) {\(\longmapsto\)};
    \end{scope}

    \begin{scope}[shift={(5,-18.5)}]
      \draw (0,0) rectangle (4,4);
      \draw (0,0) rectangle (4,4);
      \draw
        (4/3,0) -- (4/3,4)
        (8/3,0) -- (8/3,4)
        (0,4/3) -- (4,4/3)
        (4/3-0.5,0) -- (4/3-0.5,4)
        (0,4/3+0.5) -- (4,4/3+0.5)
        (0,8/3+0.5) -- (4,8/3+0.5);

      \begin{scope}
        \clip (0,4) rectangle (4/3-0.5,8/3+0.5);
        \draw[ultra thick] (0,4) -- (4/3-0.5,8/3+0.5);
      \end{scope}

      \begin{scope}
        \clip (4/3-0.5,4/3) rectangle (4/3,4/3+0.5);
        \draw[ultra thick] (4/3-0.5,4/3+0.5) -- (4/3,4/3);
      \end{scope}

      \begin{scope}
        \clip (4/3,0) rectangle (8/3,4/3);
        \draw[ultra thick] (4/3,4/3) -- (2,0) -- (8/3,4/3);
      \end{scope}

      \begin{scope}
        \clip (8/3,4/3+0.5) rectangle (4,8/3+0.5);
        \draw[ultra thick] (8/3,4/3+0.5) -- (4,2.5) -- (8/3,8/3+0.5);
        \node[dot] at (10/3,2.5) {};
       \end{scope}

      \draw
        (4.1,4/3+0.5) -- (4.2,4/3+0.5) -- (4.2,8/3+0.5) -- (4.1,8/3+0.5)
        (-0.1,4/3) -- (-0.2,4/3) -- (-0.2,4/3+0.5) -- (-0.1,4/3+0.5)
        (-0.1,8/3+0.5) -- (-0.2,8/3+0.5) -- (-0.2,4) -- (-0.1,4);
      \node[anchor=east] at (-0.15,4/3+0.25) {\scriptsize\(q\)};
      \node[anchor=east] at (-0.15,10/3+0.25) {\scriptsize\(\ell - q\)};
      \node[anchor=west] at (4.15,2.5) {\scriptsize\(m+1\)};
      \node[anchor=north] at (2,-0.5) {\parbox{6.7cm}{Compute \(\ell = q(m+1) - r\), where \(r \leq m\). Insert a point in the rightmost component so that \(\ell + r\) inversions are created, then shift down \(q\) points from the top left.}};
      \node at (-1.3,2) {(3)};
    \end{scope}
  \end{tikzpicture}
  \vspace{5mm}
  \caption{A schematic of the injection \(\Av_n^k(1324, 231) \to \Av_{n+1}^k(1324, 231)\).}
  \label{fig:1324 231 inj}
\end{figure}

\begin{theorem} \label{thm:1324 231 inv mono}
  The collection \(\{1324, 231\}\) is inversion monotone.
\end{theorem}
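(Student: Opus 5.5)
We construct an explicit injection \(\Phi:\Av_n^k(1324,231)\to\Av_{n+1}^k(1324,231)\) following the case split of Figure~\ref{fig:1324 231 inj}, and then check that the images of the cases are pairwise disjoint.

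\textbf{Structure.} We first pin down the shape of a \(\{1324,231\}\)-avoider \(\pi\). Write \(\pi=\pi^{(1)}\oplus\cdots\oplus\pi^{(r)}\). Since \(231\) is indecomposable, avoiding \(231\) is a condition on each component separately. Avoiding \(1324\) forces every component other than the first and last to have size one. So \(\pi=\sigma\oplus\id\oplus\tau\), where \(\sigma\) is an indecomposable \(\{132,231\}\)-avoider and \(\tau\) is an indecomposable \(\{213,231\}\)-avoider (either may be empty or trivial).

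An indecomposable \(\{132,231\}\)-avoider must end in \(1\). Avoiding both patterns means that no entry is preceded by entries both smaller and larger than a later, smaller one. Hence each entry is either the current maximum or the current minimum of the remaining values, read right to left. So \(\sigma\) is a decreasing sequence when it is a single decreasing run, and otherwise it is a "V" shape with its minimum at the end. This is the picture in the figure: a decreasing block in the top left, a V-shaped block, and a \(\{213,231\}\)-avoider of the form in Lemma~\ref{lem:213 231}.

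\textbf{The cases.}
\begin{enumerate}
\item[(0)] The empty permutation maps to \(1\).
\item[(1)] If \(\pi\) is a decreasing permutation \(\delta_n\), map it to \(\delta_n\oplus 1\), which adds no inversions.
\item[(2)] If \(\pi\) has a nontrivial first component and the final \(\{213,231\}\)-component is trivial, i.e.\ the parameter \(m=0\), prepend a fixed point by taking \(1\oplus\pi\). Adding an initial \(1\) creates neither a \(1324\) (we must check that \(\pi\) already has the right form) nor a \(231\).
\item[(3)] The generic case has \(\ell>0\) points in the top-left decreasing run and a last component of size \(m+1>1\).
\begin{itemize}
\item Write \(\ell=q(m+1)-r\) with \(0\le r\le m\).
\item Using Lemma~\ref{lem:213 231}\ref{lem:213 231 insert}, insert a point into the last component so that it gains exactly \(\ell+r=q(m+1)\) inversions. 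This needs \(\ell+r\le\) the allowed range \(\{0,\dots,m+1\}\) shifted, and we need to verify that the range of inversion increments reachable covers \(q(m+1)\).
\item Compensate by moving \(q\) points from the top of the decreasing run down into a new singleton-type position just above the V-part. Each moved point loses exactly \(m+1\) inversions, namely those with the enlarged last component, so the total count of inversions is preserved.
\end{itemize}
\end{enumerate}
Case (3) as drawn may also need to absorb the situation where the first component is the V-shape with \(\ell\) counted as its descending arm. We would refine the case boundaries until all of \(\Av_n^k\) is covered.

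\textbf{Injectivity.} This is the main obstacle. The images must be distinguishable, and within case (3) the map must be invertible. For inversion, \(m+1\) is read off from the size of the last component of the image. The shifted block of \(q\) points is recognizable as a decreasing run of the form \(\ell-q\), followed by \(q\) points lying below the V's top, which are separated from the others. Lemma~\ref{lem:213 231}\ref{lem:213 231 delete} then says that deleting the appropriate point of the last component to remove \(\ell+r\) inversions is unique. Uniqueness of \(r\le m\) via division with remainder recovers \(\ell\) and \(q\).

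Disjointness between cases is checked by features of the image:
\begin{itemize}
\item case (2) images start with a fixed point \(1\);
\item case (1) images are \(\delta_n\oplus1\);
\item case (3) images have a first component of the characteristic shifted shape, never starting with \(1\) when \(\ell>0\).
\end{itemize}

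I expect the delicate points to be two. First, showing that the target inversion count \(\ell+r\) always lies in the range where Lemma~\ref{lem:213 231}\ref{lem:213 231 insert} applies; if it does not, we may need to let the shift act over several periods. Second, showing that the shifted configuration cannot coincide with an unshifted permutation's image. If that fails, I would instead encode \(q\) by where the new point lands rather than by moving points.
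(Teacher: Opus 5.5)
Your outline follows the three branches of Figure~\ref{fig:1324 231 inj}, but several of the steps that carry the proof are wrong or left open.

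\textbf{Structure and branch (2).} The decomposition \(\pi=\sigma\oplus{\id}\oplus\tau\) with \(\sigma\in\Av(132,231)\) and \(\tau\in\Av(213,231)\) holds only when \(\comp(\pi)\ge 2\). An indecomposable avoider need not avoid \(132\) or \(213\); for example, \(52143\) contains both. Also, indecomposable \(\{132,231\}\)-avoiders begin with their maximum (e.g.\ \(312\)); they need not end in \(1\). The structural fact you need comes from writing \(\pi=LnR\). Avoiding \(231\) puts every entry of \(L\) below every entry of \(R\), so an indecomposable avoider starts with \(n\), and removing \(n\) leaves an arbitrary avoider. Iterating, every indecomposable avoider other than \(\id_n^{\rev}\) begins with \(n,n-1,\dots,n-\ell+1\) for some \(1\le\ell<n\). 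It then continues with a \emph{decomposable} avoider, whose last component is an indecomposable \(\{213,231\}\)-avoider of size \(m\ge 1\). Your branch (2) fails in two ways. First, \(1\oplus\pi\) can contain \(1324\): take \(\pi=213\). Second, decomposable permutations with a nontrivial last component, such as \(132\), lie in none of your cases, since they have \(\ell=0\). The paper's branch (2) instead sends every decomposable \(\sigma\oplus{\id_j}\oplus\tau\) to \(\sigma\oplus{\id_{j+1}}\oplus\tau\). Its image is exactly the avoiders with at least three components. Together with \(\id_n^{\rev}\mapsto{\id_n^{\rev}}\oplus 1\), this leaves only indecomposable \(\pi\neq\id_n^{\rev}\) for branch (3).

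\textbf{Branch (3).} You have shifted the parameter: \(m\) should be the size of the last component, not \(m+1\). With that convention, the point you flag as delicate is automatic. By Lemma~\ref{lem:213 231}\ref{lem:213 231 insert}, the new point can create any \(r\in\{0,\dots,m\}\) inversions inside the component. It also creates exactly \(\ell\) more with the top block, so the increase is \(\ell+r=q(m+1)\). Then the \emph{lowest} \(q\) points of the top block (note \(\ell\ge q\)) are shifted down past the enlarged \((m+1)\)-point component, each losing exactly \(m+1\) inversions. Under your convention the enlarged component has \(m+2\) points, and the counts do not match.

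\textbf{Injectivity.} The inverse does not work as you describe. The shifted points are not separated in the image: their values lie directly above the V-part, so if the V-part begins with its maximum they merge with it into one run of consecutive decreasing values. Nor is \(\ell\) itself visible. What can be read off is \(\ell'=\ell-q\), the least \(j\ge 0\) for which \(\sigma_{j+1}\cdots\sigma_{n+1}\) is decomposable, and \(m\), from the value \(\sigma_{\ell'+1}\). The paper then finds \(q\) by undoing shifts and comparing inversion counts, obtains \(r\), and undoes the insertion with Lemma~\ref{lem:213 231}\ref{lem:213 231 delete}. If you prefer division with remainder, apply it to \(\ell'=qm-r\). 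This is ambiguous only when \(m\mid\ell'\). In that case the candidates \(r=0\) and \(r=m\) are distinguished by whether the last \(m+1\) entries start with their minimum or their maximum.

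\textbf{Disjointness.} This rests on every branch-(3) image having at most two components and never equalling \({\id_n^{\rev}}\oplus 1\). For \(r=0\) this holds because \(\ell'=qm\ge 1\), so the image still starts with \(n+1\). Your leading-fixed-point criterion depends on the broken branch (2), so it cannot replace this argument.
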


\begin{proof}
  We start by analyzing the structure of an arbitrary permutation \(\pi \in \Av_n(1324, 231)\). Write \(\pi = L n R\), where \(L\) (resp.\ \(R\)) is the subsequence of entries to the left (resp.\ right) of \(n\). Every entry of \(L\) must be smaller than every entry of \(R\), since \(\pi\) avoids \(231\). Furthermore, \(L \in \Av(132, 231)\), and if \(L\) is nonempty then \(R \in \Av(213, 231)\). If \(L\) is empty (i.e.\ \(n\) is the first entry of \(\pi\)), then \(R\) is an arbitrary \(\{1324, 231\}\)-avoider. We get the following recursive characterization: \(\pi \in S_n\) avoids \(\{1324, 231\}\) if and only if \(n=0\), or \(\pi_1 = n\) and \(\pi \delete n \in \Av(1324, 231)\), or \(\pi = \pi^{(1)} \oplus \pi^{(2)}\) where \(\pi^{(1)} \in \Av(132, 231)\) and \(\pi^{(2)} \in \Av(213, 231)\). If \(\pi^{(2)}\) is indecomposable then it begins with its largest entry, which is the entry \(n\) of \(\pi\).

  Hence, if \(\pi \in \Av_n(1324, 231)\) is indecomposable and not equal to \(\id_n^{\rev}\), then there is some positive integer \(\ell < n\) such that \(n, n-1, \ldots, n-\ell+1\) appear consecutively in decreasing order as the first \(\ell\) elements of \(\pi\), and the remaining \(n - \ell\) elements form an arbitrary decomposable \(\{1324, 231\}\)-avoider. We will refer to the last component of this decomposable permutation as the last component of \(\pi\). Observe that the component avoids \(\{213, 231\}\).

  With these preliminaries in mind, we can define a mapping from \(\Av_n^{k}(1324, 231)\) to \(\Av_{n+1}^{k}(1324, 231)\) according to Figure \ref{fig:1324 231 inj}, which we shall prove is injective. For branches (1) and (2), the injection is straightforward. In branch (3), the mapping is defined as follows for a permutation \(\pi \in \Av(1324, 231)\).
  \begin{itemize}
    \item There are unique integers \(q > 0\) and \(0 \leq r \leq m\) such that \(\ell = q(m+1) - r\), where \(\ell > 0\) is as before and \(m\) is the number of points in the last component of \(\pi\).
    \item Insert a point into the last component of \(\pi\) so that its \(\{213, 231\}\)-avoidance is maintained and \(r\) inversions are created within the component, as in Lemma \ref{lem:213 231} \ref{lem:213 231 insert}. The new point also creates inversions with the \(\ell\) first points of the permutation, so in total, \(\ell + r\) new inversions are created. 
    \item Shift the last \(q\) of the \(\ell\) first points down by \(m+1\) steps each. Each such shift removes \(m+1\) inversions, so a total of \(q(m+1)\) inversions are removed. Since \(\ell + r = q(m+1)\), the resulting permutation has equally many inversions as \(\pi\).
  \end{itemize}
  Here, \emph{shifting} an entry \(e\) of a permutation \(\pi\) down (resp.\ up) by one step means mapping \(\pi \mapsto (e\ e-1) \pi\) (resp.\ \(\pi \mapsto (e\ e+1) \pi\)), where \((i\ j)\) is the transposition swapping \(i\) and \(j\). Shifting \(e\) down by \(m\) steps is the composition
  \begin{equation*}
    \pi \longmapsto (e-m+1\ e-m) \cdots (e-1\ e-2)(e\ e-1) \pi.
  \end{equation*}
  When shifting down several entries, start from the smallest and proceed in increasing order. When shifting up, start from the largest.

  Let \(f\) denote our mapping \(\Av_n^k(1324, 231) \to \Av_{n+1}^k(1324, 231)\). The restriction of \(f\) to branches (1) and (2), i.e.\ to the set
  \begin{equation*}
    \left\{\pi \in \Av_n^{k}(1324, 231) : \comp(\pi) \geq 2\right\} \cup \left\{\id_n^{\rev}\right\}
  \end{equation*}
  is clearly injective, and its image is
  \begin{equation*}
    \left\{\pi \in \Av_{n+1}^{k}(1324, 231) : \comp(\pi) \geq 3\right\} \cup \left\{{\id_n^{\rev}} \oplus 1\right\}.
  \end{equation*}
  It suffices to show that \(f\) restricted to branch (3) is
  injective, and that its image is disjoint from the above. Denote
  \begin{equation*}
    \mathcal{I}_n^{k} = \left\{\pi \in \Av_n^{k}(1324, 231) : \comp(\pi) = 1\right\} \setminus \left\{\id_n^{\rev}\right\},
  \end{equation*}
  and let \(\sigma \in f(\mathcal{I}_n^{k})\). We want to construct
  the inverse of \(f|_{\mathcal{I}_n^{k}}\). To do this, we need to
  recover the numbers \(m\), \(q\), and \(r\) used to construct
  \(\sigma\). Firstly:
  \begin{itemize}
  \item \(\ell - q\) is the smallest nonnegative integer such that
    \(\sigma_{\ell-q+1} \sigma_{\ell-q+2} \ldots \sigma_{n+1}\) is
    decomposable. For now, denote \(\ell' = \ell - q\).
  \item \(m + 1 = n + 1 - \sigma_{\ell'+1} - \ell'\). This is
    the number of entries larger than \(\sigma_{\ell'+1}\), excluding
    \(\sigma_1, \ldots, \sigma_{\ell'}\).
  \end{itemize}
  It remains to determine the value of \(q\), the number of points that were shifted down as the last step in the construction of \(\sigma\). Observe that there exists some largest positive integer \(q'\) such that the entries \(\sigma_{\ell' + 1}, \sigma_{\ell' + 2}, \ldots, \sigma_{\ell' + q'}\) are decreasing and have consecutive values, since we know that \(q > 0\). The first of these entries, \(\sigma_{\ell' + 1}\), is equal to \(\sigma_{\ell'} - m - 1\) (if \(\ell' = 0\), set \(\sigma_{\ell'} = n+2\)). Call these \(q'\) entries \emph{eligible}; we must have \(q \leq q'\).

  Since \(\sigma\) was constructed using \(f\), there must exist a positive integer \(q\) such that after shifting the first \(q\) eligible points up by \(m+1\) steps, the resulting permutation \(\sigma'\) satisfies
  \begin{equation*}
    k + \ell' + q \leq \inv(\sigma') \leq k + \ell' + q + m.
  \end{equation*}
  On the other hand, since shifting the first eligible point up by \(m+1\) steps creates \(m+1\) new inversions each time, \(q\) is unique. This is what we needed to show. Finally, \(r\) is given by
  \begin{equation*}
    \inv(\sigma') = k + \ell' + q + r.
  \end{equation*}
  By Lemma \ref{lem:213 231} \ref{lem:213 231 delete} there exists a unique permutation \(\pi \in \mathcal{I}_n^{k}\) such that \(\sigma' \delete e = \pi\) for some \(e\) among the last \(m+1\) entries of \(\sigma'\). Clearly \(\pi\) is the unique preimage of \(\sigma\) under \(f|_{\mathcal{I}_n^{k}}\).

  It remains to show that \(f\) as a whole is injective.
  \begin{itemize}
  \item Clearly \({\id_n^{\rev}} \oplus 1 \notin f(\mathcal{I}_n^{k})\).
  \item Each permutation \(\sigma \in f(\mathcal{I}_n^{k})\) has at
    most two components. This is clear whenever the inserted point in
    branch (3) adds at least one inversion to the last component. If
    it adds zero inversions to the last component, that is, \(r = 0\),
    then \(\ell = q(m+1) \geq 2q\), so \(\sigma_1 = n+1\) and
    \(\comp(\sigma) = 1\). \qedhere
  \end{itemize}
\end{proof}

\begin{example}
  Let \(\pi\) be as in Figure \ref{fig:1324 231 inj example} (left). We have \(n = 12\) and \(k = 52\). The last component of \(\pi\) is \(21\), so \(m = 2\). The number of consecutive decreasing entries at the start of \(\pi\) is \(\ell = 5\). Hence, we get \(\ell = q(m+1) - r\) with \(q = 2\) and \(r = 1\). 
  
  As per Figure \ref{fig:1324 231 inj}, we insert a point into the last component of \(\pi\) so that it creates \(r = 1\) inversion; the only way to do this results in the permutation \(312\). This new point also creates \(\ell = 5\) inversions with the first five entries of \(\pi\), so in total, \(6\) new inversions are created. Finally, we shift down the last \(q = 2\) of the first \(\ell = 5\) entries of \(\pi\) by \(m+1 = 3\) steps each. The resulting permutation is \(f(\pi)\), shown in Figure \ref{fig:1324 231 inj example} (right). It has exactly \(k = 52\) inversions. The three circled points in the figure are the two points that were shifted down, and the new point that was inserted.
\end{example}

\begin{figure}[tbp]
  \centering
  \begin{tikzpicture}[scale=0.5]
    \draw (0,0) rectangle (12,12);
    \draw
      (5,0) -- (5,12)
      (10,0) -- (10,12)
      (0,5) -- (12,5)
      (0,7) -- (12,7);

    \foreach \x\y in {1/12,2/11,3/10,4/9,5/8,6/5,7/3,8/1,9/2,10/4,11/7,12/6} 
      \node[dot] at (\x-0.5,\y-0.5) {}; 
    
    \begin{scope}[shift={(14,0)}, scale=12/13]
      \draw (0,0) rectangle (13,13);
      \draw
        (3,0) -- (3,13)
        (5,0) -- (5,13)
        (10,0) -- (10,13)
        (0,5) -- (13,5)
        (0,7) -- (13,7)
        (0,10) -- (13,10);

      \foreach \x\y in {1/13,2/12,3/11,4/7,5/6,6/5,7/3,8/1,9/2,10/4,11/10,12/8,13/9}
        \node[dot] at (\x-0.5,\y-0.5) {};
      \foreach \x\y in {4/7,5/6,12/8}
        \node[circ] at (\x-0.5,\y-0.5) {};
    \end{scope}
  \end{tikzpicture}
  \vspace{2mm}
  \caption{A \(\{1324, 231\}\)-avoiding permutation \(\pi\) (left) and its image \(f(\pi)\) (right).}
  \label{fig:1324 231 inj example}
\end{figure}
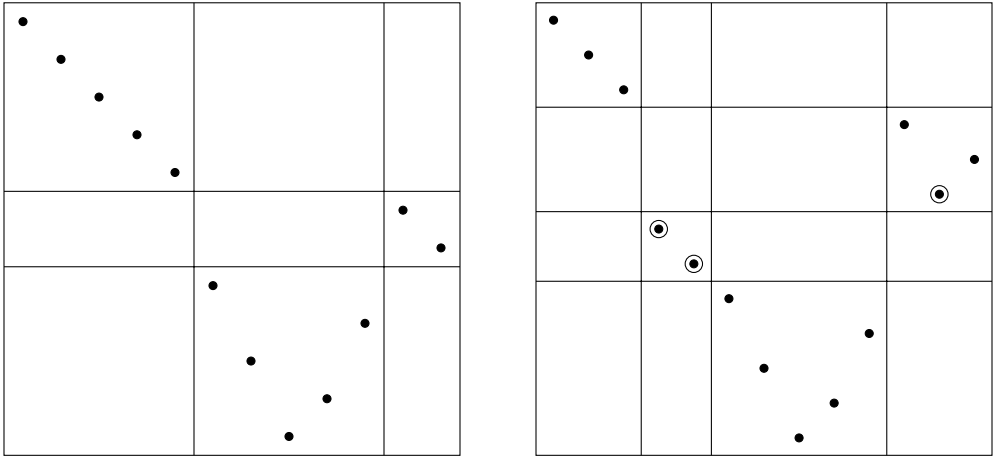

\section{Building inversion-monotone sets} \label{sec:building bases}

We say that a collection \(B\) is \emph{trivially} inversion monotone if
\begin{enumerate}[(a)]
  \item \(p_1 \neq 1\) for all \(p \in B\), or
  \item \(p_m \neq m\) for all \(p \in B\), where \(m = |p|\).
\end{enumerate}
In this section, we will demonstrate how to build a nontrivially inversion monotone set \(B'\) from an inversion monotone set \(B\), such that
\begin{equation*}
  \min \{|p| : p \in B'\} = \min \{|p| : p \in B\} + 1. 
\end{equation*} 
For a collection \(B\) of patterns, we define \(B^{\ell}\), \(B^r\), \(B^u\) and \(B^d\) as follows:
\begin{align*}
  B^{\ell} &= \left\{p : p \delete p_1 \in B\right\}, \\
  B^r &= \left\{p : p \delete p_{|p|} \in B\right\}, \\
  B^u &= \left\{p : p \delete |p| \in B\right\}, \\
  B^d &= \left\{p : p \delete 1 \in B\right\}.
\end{align*}
In other words, \(B^{\ell}\) consists of the patterns obtained by inserting a new first entry to all patterns in \(B\) in all possible ways. The other three collections are similar.

\begin{proposition} \label{prop:building B}
  If \(B\) is inversion monotone, then so are \(B^{\ell}\), \(B^r\), \(B^u\) and \(B^d\).
\end{proposition}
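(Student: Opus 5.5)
The idea is to prove the statement for \(B^{\ell}\) directly, using a simple description of \(\Av(B^{\ell})\), and then obtain the other three cases from the inversion-preserving symmetries \(\pi \mapsto \pi^{-1}\) and \(\pi \mapsto \pi^{\rc}\).

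\emph{Structural claim.} For \(\pi \in S_n\) with \(n \geq 1\), \(\pi\) avoids \(B^{\ell}\) if and only if \(\pi \delete \pi_1\) avoids \(B\).
\begin{itemize}
  \item Suppose \(\pi\) contains some \(p \in B^{\ell}\). Removing from the occurrence the entry that plays the role of \(p_1\) leaves an occurrence of \(p \delete p_1 \in B\). This occurrence lies strictly to the right of some position, so it avoids position \(1\) and sits inside \(\pi \delete \pi_1\).
  \item Conversely, suppose \(\pi \delete \pi_1\) contains an occurrence of some \(q \in B\), using positions \(\geq 2\). Prepending \(\pi_1\) gives an occurrence of a pattern \(p\) with \(p \delete p_1 = q\). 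Such a \(p\) lies in \(B^{\ell}\), because \(B^{\ell}\) contains every pattern obtained by inserting a new first entry into \(q\), whatever its value.
\end{itemize}

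\emph{Counting.} Write \(\pi_1 = v\). Then \(\inv(\pi) = (v-1) + \inv(\pi \delete \pi_1)\), and \(\pi \mapsto (\pi_1, \pi \delete \pi_1)\) is a bijection from \(S_n\) to \([n] \times S_{n-1}\). Hence, for \(n \geq 1\),
\begin{equation*}
  \av_n^k(B^{\ell}) = \sum_{j=0}^{n-1} \av_{n-1}^{k-j}(B).
\end{equation*}
Take \(\av\) to be zero at negative \(k\). Then, using the hypothesis \(\av_{n-1}^{k-j}(B) \leq \av_n^{k-j}(B)\) term by term,
\begin{equation*}
  \av_{n+1}^k(B^{\ell}) = \sum_{j=0}^{n} \av_n^{k-j}(B) \geq \sum_{j=0}^{n-1} \av_n^{k-j}(B) \geq \sum_{j=0}^{n-1} \av_{n-1}^{k-j}(B) = \av_n^k(B^{\ell}).
\end{equation*}
The case \(n = 0\) needs a separate check. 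The empty permutation avoids \(B^{\ell}\) whenever all patterns in \(B^{\ell}\) are nonempty, and length-one permutations avoid \(B^{\ell}\) unless \(B\) contains the empty pattern; that degenerate case is trivial.

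\emph{Symmetries.} Both \(\pi \mapsto \pi^{\rc}\) and \(\pi \mapsto \pi^{-1}\) preserve length, inversions and containment (\(\pi\) contains \(p\) iff \(\pi^{s}\) contains \(p^{s}\)). So if \(B\) is inversion monotone, so are \(B^{\rc}\) and \(B^{-1}\). Next I would check how these symmetries interact with the four operations:
\begin{itemize}
  \item Reverse-complement turns deleting the last entry into deleting the first entry, and deleting the largest value into deleting the value \(1\). Hence \((B^{r})^{\rc} = (B^{\rc})^{\ell}\) and \((B^{u})^{\rc} = (B^{\rc})^{d}\).
  \item Inversion swaps positions and values, so deleting the first entry corresponds to deleting the value \(1\). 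Hence \((B^{d})^{-1} = (B^{-1})^{\ell}\).
\end{itemize}
From these identities:
\begin{itemize}
  \item \(B^{r}\) is inv-Wilf-equivalent to \((B^{\rc})^{\ell}\), which is inversion monotone by the first part applied to \(B^{\rc}\).
  \item \(B^{d}\) is handled the same way via \(B^{-1}\).
  \item \(B^{u}\) follows from the \(B^{d}\) case applied to \(B^{\rc}\).
\end{itemize}

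I expect the only step needing real care to be the structural claim, together with the bookkeeping in these symmetry identities. The identities amount to checking that deleting a position or value commutes appropriately with \(\rc\) and inverse.
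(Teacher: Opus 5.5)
Your proof is correct and is essentially the paper's argument. The identity \(\av_n^k(B^{\ell})=\sum_{j=0}^{n-1}\av_{n-1}^{k-j}(B)\) with termwise comparison is the counting form of the paper's injection \(g\), which keeps \(\pi_1\) and applies an inversion-preserving injection for \(B\) to \(\pi \delete \pi_1\). Your two-directional structural claim and explicit symmetry identities \((B^r)^{\rc}=(B^{\rc})^{\ell}\), \((B^u)^{\rc}=(B^{\rc})^{d}\), \((B^d)^{-1}=(B^{-1})^{\ell}\) spell out what the paper leaves as ``the same argument'' and ``symmetrical.'' One small quibble: if \(B\) contains the empty pattern, the case is not ``trivial'' but a counterexample at \(n=0\), since then \(\av_0^0(B^{\ell})=1>0=\av_1^0(B^{\ell})\); this is harmless only because patterns are tacitly nonempty.
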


\begin{proof}
  It suffices to prove that \(B^{\ell}\) is inversion monotone, the other cases are symmetrical. Since \(B\) is inversion monotone, there is an injection \(f : \Av(B) \to \Av(B)\) such that \(|f(\pi)| = |\pi| + 1\) and \(\inv(f(\pi)) = \inv(\pi)\) for all \(\pi \in \Av(B)\). We will first show that if \(\pi \in \Av(B^{\ell})\), then \(\pi \delete \pi_1 \in \Av(B)\). Indeed, if \(\pi \delete \pi_1\) contains a pattern \(p \in B\), then obviously this occurrence of \(p\) in \(\pi\) together with \(\pi_1\) forms a pattern \(q\) such that \(q \delete q_1 = p\). So \(q \in B^{\ell}\) by construction, meaning that \(\pi\) does not avoid \(B^{\ell}\).

  Hence, we can define a map \(g : \Av(B^{\ell}) \to \Av(B^{\ell})\) by setting \(g(\pi)_1 = \pi_1\) and \(g(\pi) \delete \pi_1 = f(\pi \delete \pi_1)\). Observe that since \(f(\pi \delete \pi_1) \in \Av(B)\), we must have \(g(\pi) \in \Av(B^{\ell})\) by the same argument as above. Clearly \(g\) is injective, \(|g(\pi)| = |\pi| + 1\) and \(\inv(g(\pi)) = \inv(\pi)\), so \(B^{\ell}\) is inversion monotone.
\end{proof}

The result is somewhat surprising, since \(B^{\ell}\) is much larger than \(B\). If we let \(B_0 = B \subseteq S_m\) (for example with \(B = \{213\} \subseteq S_3\)) and iteratively \(B_{i+1} = B_i^{\ell}\), then
\begin{equation*}
  |B_i| = (m+i) \cdot |B_{i-1}| = \frac{(m+i)!}{m!} |B|. 
\end{equation*}

\begin{example}
  Since \(213\) is inversion monotone, so are
  \begin{align*}
    \{213\}^{\ell} &= \{1324, 2314, 3214, 4213\}, \\
    \{213\}^r &= \{3241, 3142, 2143, 2134\}, \\
    \{213\}^u &= \{4213, 2413, 2143, 2134\}, \\
    \{213\}^d &= \{1324, 3124, 3214, 3241\}.
  \end{align*}
  Note in particular the collections \(B^{\ell}\) and \(B^d\), which contain \(1324\). We record this as a proposition.
\end{example}

\begin{proposition} \label{prop:bb inv mono 1324}
  The collection \(\{1324, 2314, 3214, 4213\}\) is inversion monotone.
\end{proposition}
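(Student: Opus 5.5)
The plan is to obtain the statement as a direct instance of Proposition~\ref{prop:building B} applied to the single pattern \(213\).

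First, I will check that \(\{213\}\) is inversion monotone. By the discussion following Conjecture~\ref{conj:invomono}, any pattern that does not start with \(1\) is trivially inversion monotone. Concretely, consider the map \(\pi \mapsto 1 \oplus \pi\), or more precisely the symmetric version of appending \(n+1\) at the end, obtained via reverse-complement. This map inserts a new smallest entry at the front. It creates no inversions. It also cannot create an occurrence of \(213\), because the new entry \(1\) in first position could only play the role of the pattern's first entry, and \(213\) does not start with its smallest element. The map is clearly injective, so \(\av_n^k(213) \le \av_{n+1}^k(213)\).

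Next, I will apply Proposition~\ref{prop:building B} with \(B = \{213\}\) to conclude that \(\{213\}^{\ell}\) is inversion monotone.

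The remaining step is to compute \(\{213\}^{\ell}\) explicitly. This is the set of \(p \in S_4\) with \(p \delete p_1 = 213\). For each choice \(p_1 = j \in \{1,2,3,4\}\), the remaining three entries must be the unique arrangement of \([4]\setminus\{j\}\) that is order-isomorphic to \(213\). This gives four patterns:
\begin{itemize}
  \item \(j=1\): \(1324\);
  \item \(j=2\): \(2314\);
  \item \(j=3\): \(3214\);
  \item \(j=4\): \(4213\).
\end{itemize}
These agree with the example preceding the statement, so \(\{213\}^{\ell} = \{1324, 2314, 3214, 4213\}\), and the statement follows.

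There is no real obstacle in this argument. The only points requiring care are confirming that \(213\) is inversion monotone, which is the trivial case, and verifying the listing of \(\{213\}^{\ell}\).
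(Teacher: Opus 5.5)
Your proposal is correct and matches the paper's argument: the paper obtains this statement as the instance \(B = \{213\}\) of Proposition~\ref{prop:building B}, using that \(213\) is trivially inversion monotone. Your injection \(\pi \mapsto 1 \oplus \pi\) is the right one, since \(213\) does not start with \(1\), and your computation \(\{213\}^{\ell} = \{1324, 2314, 3214, 4213\}\) agrees with the example preceding the statement.
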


Inspired by the approach in Proposition \ref{prop:building B} and the discussion right after, one is tempted to try to fix a pattern in the basis and increase the length of the rest. In other words, we would fix some pattern \(p\) and collection \(B_0\), and iteratively set \(B_{i+1}\) to be either \(B_i^{\ell}\), \(B_i^r\), \(B_i^u\) or \(B_i^d\). Clearly, if \(\{p\} \cup B_i\) is inversion monotone for all \(i\), then \(p\) is inversion monotone: we have
\begin{equation*}
  \av_n^k(p) = \av_n^k(\{p\} \cup B_{n+2}) \leq \av_{n+1}^k(\{p\} \cup B_{n+2}) = \av_{n+1}^k(p)
\end{equation*}
for all \(n\) and \(k\). This is a potential avenue towards the inversion monotonicity conjecture for \(1324\). We have computational support for the following claim.

\begin{conjecture}
  Let \(B_0 = \{213\}\) and \(B_{i+1} = B_i^{\ell}\) for all \(i \geq 0\). The collection \(\{1324\} \cup B_i\) is inversion monotone for all \(i\). In particular, \(1324\) is inversion monotone.
\end{conjecture}

\section{Limit sequences of sets containing 1324} \label{sec:1324 pairs}

In this section we turn our attention to the pairs \(\{1324, p\}\), where \(p\) is a pattern of length four. We are not able to show that any such pair is inversion monotone, but we determine all of their limit sequences, and in some cases prove that inversion monotonicity holds under the additional assumption \(n \geq \frac{k+7}{2}\) (as in \cite{linusson_enumerating_2025}).

Table \ref{tab:invmono pairs} shows all pairs of length-four patterns and indicates which of them are not inversion monotone (F), which are trivially inversion monotone (T), which are inversion monotone when \(n \geq \frac{k+7}{2}\) (T\(\frac{1}{2}\)), and which are unknown ({T?}). All cases are checked up to \(n = 15\). A pair \(\{p, q\}\) is trivially inversion monotone if \(p_1 \neq 1\) and \(q_1 \neq 1\), or neither \(p\) nor \(q\) ends with their largest entry.

This section is organized as follows. In Section \ref{subsec:first obs} we make some preliminary observations regarding Table \ref{tab:invmono pairs}, characterize the collections that have limit sequences, and identify a class of pattern pairs whose limit sequences are \(1,0,0, \ldots.\) In Section \ref{subsec:limit seqs} we embark on our quest to determine the limit sequences of all pairs \(\{1324, p\}\) with \(p \in S_4\). Pairs are categorized into three subsections: non-inversion-monotone pairs (Section \ref{subsec:non inv mono pairs}), pairs that are inversion monotone for \(n \geq \frac{k+7}{2}\) (Section \ref{subsec:half mono pairs}), and the rest (Section \ref{subsec:rest pairs}). Proofs of inversion monotonicity holding when \(n \geq \frac{k+7}{2}\) are in Section \ref{sec:almost decomp}. By the reasoning in Section \ref{sec:preliminaries}, determining the limit sequences usually reduces to enumerating indecomposable permutations avoiding \(\{132, p\}\) for some pattern \(p\). These results are found in Section \ref{sec:132 indecomposable}. The relevant data is in Appendix \ref{appendix:data}.

\begin{table}[t]
  \centering
  \tiny
  \caption{Which pairs of length-four patterns are inversion monotone?}
  \begin{tblr}{ 
    colsep=1.8pt,
    rowsep=2pt,
    cell{-}{-} = {c},
    hline{2-25}={2-25}{},
    vline{2-26}={2-24}{},
    hline{2}={2-25}{1pt},
    vline{2}={2-24}{1pt}
    }
      ~ & 1243 & 1324 & 1342 & 1423 & 1432 & 2134 & 2143 & 2314 & 2341 & 2413 & 2431 & 3124 & 3142 & 3214 & 3241 & 3412 & 3421 & 4123 & 4132 & 4213 & 4231 & 4312 & 4321 \\ 
      1234 & \textbf F & \textbf F & \textbf F & \textbf F & \textbf F & \textbf F & \textbf F & \textbf F & \textbf F & \textbf F & \textbf F & \textbf F & \textbf F & \textbf F & \textbf F & \textbf F & \textbf F & \textbf F & \textbf F & \textbf F & \textbf F & \textbf F & \textbf F \\ 
      1243 & ~ & \textbf F & T & T & T & \textbf F & T & \textbf F & T & T & T & \textbf F & T & \textbf F & T & T & T & T & T & T & T & T & T \\ 
      1324 & ~ & ~ & T\(\frac12\) & T\(\frac12\) & T\(\frac12\) & \textbf F & \textbf F & T\(\frac12\) & T? & T? & T? & T\(\frac12\) & T? & T\(\frac12\) & T? & T? & T? & T? & T? & T? & T\(\frac12\) & T? & T\(\frac12\) \\ 
      1342 & ~ & ~ & ~ & T & T & \textbf F & T & T? & T & T & T & T? & T & T? & T & T & T & T & T & T & T & T & T \\ 
      1423 & ~ & ~ & ~ & ~ & T & \textbf F & T & T? & T & T & T & T? & T & T? & T & T & T & T & T & T & T & T & T \\ 
      1432 & ~ & ~ & ~ & ~ & ~ & \textbf F & T & T? & T & T & T & T? & T & T? & T & T & T & T & T & T & T & T & T \\ 
      2134 & ~ & ~ & ~ & ~ & ~ & ~ & T & T & T & T & T & T & T & T & T & T & T & T & T & T & T & T & T \\ 
      2143 & ~ & ~ & ~ & ~ & ~ & ~ & ~ & T & T & T & T & T & T & T & T & T & T & T & T & T & T & T & T \\ 
      2314 & ~ & ~ & ~ & ~ & ~ & ~ & ~ & ~ & T & T & T & T & T & T & T & T & T & T & T & T & T & T & T \\ 
      2341 & ~ & ~ & ~ & ~ & ~ & ~ & ~ & ~ & ~ & T & T & T & T & T & T & T & T & T & T & T & T & T & T \\ 
      2413 & ~ & ~ & ~ & ~ & ~ & ~ & ~ & ~ & ~ & ~ & T & T & T & T & T & T & T & T & T & T & T & T & T \\ 
      2431 & ~ & ~ & ~ & ~ & ~ & ~ & ~ & ~ & ~ & ~ & ~ & T & T & T & T & T & T & T & T & T & T & T & T \\ 
      3124 & ~ & ~ & ~ & ~ & ~ & ~ & ~ & ~ & ~ & ~ & ~ & ~ & T & T & T & T & T & T & T & T & T & T & T \\ 
      3142 & ~ & ~ & ~ & ~ & ~ & ~ & ~ & ~ & ~ & ~ & ~ & ~ & ~ & T & T & T & T & T & T & T & T & T & T \\ 
      3214 & ~ & ~ & ~ & ~ & ~ & ~ & ~ & ~ & ~ & ~ & ~ & ~ & ~ & ~ & T & T & T & T & T & T & T & T & T \\ 
      3241 & ~ & ~ & ~ & ~ & ~ & ~ & ~ & ~ & ~ & ~ & ~ & ~ & ~ & ~ & ~ & T & T & T & T & T & T & T & T \\ 
      3412 & ~ & ~ & ~ & ~ & ~ & ~ & ~ & ~ & ~ & ~ & ~ & ~ & ~ & ~ & ~ & ~ & T & T & T & T & T & T & T \\ 
      3421 & ~ & ~ & ~ & ~ & ~ & ~ & ~ & ~ & ~ & ~ & ~ & ~ & ~ & ~ & ~ & ~ & ~ & T & T & T & T & T & T \\ 
      4123 & ~ & ~ & ~ & ~ & ~ & ~ & ~ & ~ & ~ & ~ & ~ & ~ & ~ & ~ & ~ & ~ & ~ & ~ & T & T & T & T & T \\ 
      4132 & ~ & ~ & ~ & ~ & ~ & ~ & ~ & ~ & ~ & ~ & ~ & ~ & ~ & ~ & ~ & ~ & ~ & ~ & ~ & T & T & T & T \\ 
      4213 & ~ & ~ & ~ & ~ & ~ & ~ & ~ & ~ & ~ & ~ & ~ & ~ & ~ & ~ & ~ & ~ & ~ & ~ & ~ & ~ & T & T & T \\ 
      4231 & ~ & ~ & ~ & ~ & ~ & ~ & ~ & ~ & ~ & ~ & ~ & ~ & ~ & ~ & ~ & ~ & ~ & ~ & ~ & ~ & ~ & T & T \\ 
      4312 & ~ & ~ & ~ & ~ & ~ & ~ & ~ & ~ & ~ & ~ & ~ & ~ & ~ & ~ & ~ & ~ & ~ & ~ & ~ & ~ & ~ & ~ & T
  \end{tblr}
  \label{tab:invmono pairs}
\end{table}

\subsection{On limit sequences} \label{subsec:first obs}

This section contains a few observations about limit sequences. First, note that the first row of Table \ref{tab:invmono pairs} is all `F's due to the inclusion of the identity pattern \(1234\): since the limit sequence of \(1234\) is \(0\), all pairs \(\{1234, p\}\) also have limit sequence \(0\), so they cannot be inversion monotone. It is curious that all other known `F's (except one) come from pairs containing \(1243\) or the symmetric \(2134\). Proposition~\ref{prop:21 id} below hints at a reason for this.
 
Another interesting feature of the table is that apart from the pairs with \(1324\), the `{T?}' pairs are exactly
\begin{equation} \label{eq:other pairs}
  \{1342, 1423, 1432\} \times \{2314, 3124, 3214\}.
\end{equation}
None of these pairs have limit sequences, so we do not study them in this paper. The following result shows why these pairs fail to have limit sequences, and explains why all the pairs \(\{1324, p\}\) do. This is related to \cite[Proposition~21]{claesson_upper_2012}.

\begin{proposition} \label{prop:lim seq existence}
  A collection \(B\) of patterns has a limit sequence if and only if \(B\) contains a pattern \(p\) such that \(\inv(p) \leq 1\).
\end{proposition}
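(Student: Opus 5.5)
We prove both directions separately. The "if" direction is a stabilization argument: avoiding a pattern with at most one inversion forces bounded complexity for fixed \(k\). The "only if" direction constructs, for fixed \(k\), a family of permutations whose count grows without bound in \(n\).

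\emph{Sufficiency.} Suppose \(p \in B\) has \(\inv(p) \le 1\), say \(p \in S_m\). Then \(p = \id_m\) or \(p = \id_a \oplus 21 \oplus \id_b\) with \(a+b = m-2\). Fix \(k\) and take \(n\) large, say \(n \geq 2k + m + 2\).

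By \eqref{eq:comp inv length}, every \(\pi \in S_n\) with \(k\) inversions has at least \(n-k\) components. At most \(k\) of these are nontrivial, since each nontrivial component carries at least one inversion. So all but at most \(2k\) entries are fixed points lying in trivial components \(1\). Write \(\pi\) as a direct sum of at most \(k\) nontrivial components, each of size at most \(k+1\), interleaved with runs of trivial components; there are at most \(k+1\) runs. When \(n\) is large, some run of trivial components has length at least \((n-2k)/(k+1)\), which is at least \(m\) once \(n\) is large enough.

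The key step is the following claim. Once some run has length \(\geq m\), inserting or deleting a trivial component in a run of length \(\geq m\) does not change membership in \(\Av(B)\), for every pattern in \(B\) of length at most some \(M\). Because \(B\) may be infinite, I would phrase the claim through a bijection instead. The structure of \(\pi\) is determined by the ordered tuple of nontrivial components together with the run lengths \(r_0, \ldots, r_t\). I will argue that \(\Av_n^k(B)\) is in bijection with \(\Av_{n+1}^k(B)\) via increasing a suitable run.

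That bijection needs care. A cleaner route: since \(p \in B\) and \(p\) is \(\id_m\) or \(\id_a \oplus 21 \oplus \id_b\), every \(\pi \in \Av(B)\) has all runs of bounded size. If \(p = \id_m\), then \(\pi\) contains \(\id_{n-k}\) from one entry per component, so \(\av_n^k(B) = 0\) for large \(n\). If \(p = \id_a \oplus 21 \oplus \id_b\), then any nontrivial component with \(a\) components before it and \(b\) after contains \(p\). So the nontrivial components must lie within the first \(a\) or the last \(b\) components. Hence \(\pi = \alpha \oplus \id \oplus \beta\), where \(\alpha\) and \(\beta\) have at most \(a\) and \(b\) components respectively, each of bounded size. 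For large \(n\), the middle identity block is long, of length at least \(\max\) of all relevant pattern lengths. The remaining task is to show that membership in \(\Av(B)\) depends only on \((\alpha,\beta)\) once the middle block is long. If \(q \in B\) occurs in \(\alpha \oplus \id_N \oplus \beta\), the occurrence uses at most \(|q|\) middle entries. Conversely, lengthening the middle never destroys an occurrence. So the set \(A_N = \{(\alpha,\beta) : \alpha \oplus \id_N \oplus \beta \in \Av(B)\}\) is weakly decreasing in \(N\). Within the finite set of pairs with \(\inv = k\), the sets \(A_N\) eventually stabilize. This gives the limit sequence, and this monotonicity-in-\(N\) observation is the key step.

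\emph{Necessity.} Suppose every \(p \in B\) has \(\inv(p) \ge 2\). Consider \(\pi = \id_i \oplus 21 \oplus \id_{n-2-i}\). Any pattern contained in \(\pi\) has at most one inversion, so \(\pi \in \Av_n^1(B)\) for all \(i\), giving \(\av_n^1(B) \ge n-1 \to \infty\). Hence no limit constant \(c_1\) exists. The main obstacle is the sufficiency bookkeeping, namely the structural decomposition \(\alpha \oplus \id \oplus \beta\) with bounded \(\alpha, \beta\), combined with the monotone-stabilization argument to handle possibly infinite \(B\).
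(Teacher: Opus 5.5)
Your argument is correct and has the same skeleton as the paper's. Necessity uses the \(n-1\) permutations \(\id_i \oplus 21 \oplus \id_{n-2-i}\). Sufficiency uses the fact that avoiding \(p = \id_a \oplus 21 \oplus \id_b\) forces every component except the first \(a\) and last \(b\) to be trivial, so \(\pi = \alpha \oplus \id_N \oplus \beta\) with \(N\) large.

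The routes differ in the stabilization step. The paper takes \(n \geq k + a + b + \max\{|q| : q \in B\}\), so the middle block is longer than every pattern of \(B\). An occurrence of \(q \in B\) in \(\alpha \oplus \id_{N+1} \oplus \beta\) must then miss a middle entry, so it already lies in \(\alpha \oplus \id_N \oplus \beta\). This makes ``lengthen the middle block'' an explicit bijection \(\Av_n^k(B) \to \Av_{n+1}^k(B)\) with an explicit threshold; your remark that an occurrence uses at most \(|q|\) middle entries is exactly this ingredient. You instead use only the monotonicity \(A_{N+1} \subseteq A_N\) and the finiteness of the set of admissible pairs \((\alpha,\beta)\), since components have length at most \(k+1\). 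This gives no explicit \(m_k\). In exchange, it works even for infinite \(B\) with unbounded pattern lengths, where \(\max\{|q|\}\) does not exist and the paper's argument as written does not apply. You also treat \(p = \id_m\) explicitly, which the paper's ``write \(p = \id_a \oplus 21 \oplus \id_b\)'' skips.

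There are three bookkeeping points to tighten.
\begin{enumerate}
  \item Fix a canonical split, e.g.\ \(\alpha\) is exactly the first \(a\) components and \(\beta\) exactly the last \(b\). With ``at most \(a\) components'' the decomposition is not unique, so pairs do not correspond one-to-one with permutations.
  \item The middle length \(N = n - |\alpha| - |\beta|\) depends on the pair. So \(\av_n^k(B)\) is constant only once \(n \geq N_0 + \max(|\alpha| + |\beta|)\), where \(N_0\) is the point at which the sets \(A_N\) stabilize.
  \item Drop the sentence claiming that every \(\pi \in \Av(B)\) has all runs of bounded size. It is false as written, because the middle run is unbounded.
\end{enumerate}
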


\begin{proof}
  Suppose first that all patterns in \(B\) have at least two inversions. Then clearly
  \begin{equation*}
    \Av_n^1(B) = \{\pi \in S_n : \inv(\pi) = 1\},
  \end{equation*}
  so \(\av_n^1(B) = n-1\) and \(B\) does not have a limit sequence. 
  
  Conversely, suppose that \(B\) contains a pattern \(p\) with at most one inversion, and write \(p = {\id_a} \oplus 21 \oplus {\id_b}\), where \(a, b \geq 0\). We will show that \(\av_n^k(B) = \av_{n+1}^k(B)\) for all
  \begin{equation} \label{eq:inv bound}
    n \geq k + a + b + \max \{|q| : q \in B\}.
  \end{equation}
  Indeed, any permutation \(\pi \in S_n\) with \(\inv(\pi) = k\) satisfying \eqref{eq:inv bound} must have 
  \begin{equation*}
    \comp(\pi) \geq a + b + \max \{|q| : q \in B\}.
  \end{equation*}
  by \eqref{eq:comp inv length}. Write \(\pi = \pi^{(1)} \oplus \pi^{(2)} \oplus \ldots \oplus \pi^{(r)}\), where each \(\pi^{(i)}\) is indecomposable. If \(\pi\) avoids \(B\) (in particular, \(\pi\) avoids \(p\)), then we must have \(\pi^{(i)} = 1\) for all \(i\) such that \(a+1 \leq i \leq r-b\). In other words,
  \begin{equation*}
    \pi = \pi^{(1)} \oplus \ldots \oplus \pi^{(a)} \oplus {\id_m} \oplus \pi^{(r-b+1)} \oplus \ldots \oplus \pi^{(r)},
  \end{equation*}
  where 
  \begin{equation*}
    m \geq r - a - b \geq \max \{|q| : q \in B\}.
  \end{equation*}
  Hence, we can define a bijection \(\Av_n^k(B) \to \Av_{n+1}^k(B)\) by
  \begin{equation*}
    \pi \longmapsto \pi^{(1)} \oplus \ldots \oplus \pi^{(a)} \oplus {\id_{m+1}} \oplus \pi^{(r-b+1)} \oplus \ldots \oplus \pi^{(r)} \eqqcolon \sigma.
  \end{equation*}
  Observe that \(\sigma\) avoids \(B\), since an occurrence of a pattern \(q \in B\) in \(\sigma\) would have to use all of the entries of \(\id_{m+1}\), contradicting the fact that \(m+1 > |q|\). This concludes the proof.
\end{proof}

\begin{proposition}\label{prop:21 id}
  We have that
  \begin{equation*}
    \av_n^k \left({\id_a} \oplus 21, 21 \oplus {\id_b}\right) = 0
  \end{equation*}
  for all \(k \geq 1\) and \(n \geq k + a + b\).
\end{proposition}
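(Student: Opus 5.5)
The plan is to use the component decomposition together with inequality \eqref{eq:comp inv length}, in the same way as the proof of Proposition~\ref{prop:lim seq existence}. Let \(\pi \in S_n\) have \(k \geq 1\) inversions, with \(n \geq k + a + b\). By \eqref{eq:comp inv length}, \(\comp(\pi) \geq n - k \geq a + b\). Write
\begin{equation*}
  \pi = \pi^{(1)} \oplus \cdots \oplus \pi^{(r)}
\end{equation*}
with each \(\pi^{(i)}\) indecomposable and \(r \geq a + b\). Since \(k \geq 1\), some component \(\pi^{(j)}\) has length at least \(2\). An indecomposable permutation of length at least \(2\) is not the identity, so \(\pi^{(j)}\) contains an inversion, that is, an occurrence of \(21\).

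Next I split on the position \(j\). If \(j \geq a+1\), choose one entry from each of \(\pi^{(1)}, \ldots, \pi^{(a)}\) and follow them by the \(21\) inside \(\pi^{(j)}\). Entries in earlier components lie below and to the left of all entries in later components, so this gives an occurrence of \(\id_a \oplus 21\). If instead \(j \leq a\), then \(j \leq r - b\) because \(r \geq a + b\). Take the \(21\) inside \(\pi^{(j)}\) and follow it by one entry from each of \(\pi^{(r-b+1)}, \ldots, \pi^{(r)}\). This gives an occurrence of \(21 \oplus \id_b\).

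In either case \(\pi\) contains one of the two patterns, so \(\Av_n^k(\id_a \oplus 21, 21 \oplus \id_b) = \emptyset\).

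The only point needing care is the counting in the second case, namely checking that the \(b\) components to the right of \(\pi^{(j)}\) really exist, which is exactly where \(r \geq a+b\) is used. Boundary cases such as \(a = 0\) or \(b = 0\) are handled by the same argument, since the first case then applies trivially. No real obstacle is expected.
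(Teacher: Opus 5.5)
Your proof is correct and is essentially the paper's argument. Both use \(\comp(\pi)\geq n-k\geq a+b\) from \eqref{eq:comp inv length}, then show that a component carrying an inversion would have to lie both among the first \(a\) components and among the last \(b\). The paper phrases this as the contradiction \(c\leq a+b-1\); you phrase it as a direct case split. One small slip: when \(b=0\) it is your second case, not the first, that becomes trivial, but your argument covers that case anyway.
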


\begin{remark}
  An example of this is the failure of inversion monotonicity for the pair \(\{1243, 2134\}\): we have \(\av_n^k(1243, 2134) = 0\) for all \(k \geq 1\) and \(n \geq k + 4\).
\end{remark}

\begin{proof}[Proof of Proposition \ref{prop:21 id}]
  Suppose that \(k \geq 1\) and \(n \geq k + a + b\). Every permutation \(\pi \in \Av_n^k\left({\id_a} \oplus 21, 21 \oplus {\id_b}\right)\) satisfies \(\comp(\pi) \geq n - k \geq a + b\) by \eqref{eq:comp inv length}. Decompose
  \begin{equation*}
    \pi = \pi^{(1)} \oplus \pi^{(2)} \oplus \ldots \oplus \pi^{(c)},
  \end{equation*}
  and suppose that a component \(\pi^{(i)}\) contains an inversion. We must have \(i \leq a\), or
  \begin{equation*}
    \pi^{(1)} \oplus \pi^{(2)} \oplus \ldots \oplus \pi^{(a)} \oplus \pi^{(i)}  
  \end{equation*}
  contains \({\id_a} \oplus 21\). Similarly, we must have \(i \geq c - b + 1\), or 
  \begin{equation*}
    \pi^{(i)} \oplus \pi^{(c-b+1)} \oplus \pi^{(c-b+2)} \oplus \ldots \oplus \pi^{(c)}  
  \end{equation*}
  contains \(21 \oplus {\id_b}\). But then \(c \leq i + b - 1 \leq a + b - 1\), a contradiction.
\end{proof}

\subsection{Representatives and overview} \label{subsec:limit seqs}

Many entries in Table~\ref{tab:invmono pairs} are redundant due to inversion-preserving symmetries. We will begin by choosing a set of representative pairs \(\{1324, p\}\) with \(p \in S_4\).

\begin{proposition} \label{prop:1324 pairs}
  The pairs of length-four patterns containing \(1324\) are represented up to inv-Wilf-equivalence by the following twelve patterns:
  \begin{equation} \label{eq:1324 pairs}
    1234, 1243, 1342, 1432, 2143, 2341, 2413, 2431, 3412, 3421, 4231, 4321.
  \end{equation}
  All of the pairs have distinct limit sequences.
\end{proposition}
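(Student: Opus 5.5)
Since \(1324\) is fixed by both inversion-preserving symmetries (\(1324^{-1} = 1324\) and \(1324^{\rc} = 1324\)), the Klein four-group \(G = \{\mathrm{id}, {}^{-1}, {}^{\rc}, ({}^{-1})^{\rc}\}\) acts on the pairs \(\{1324, p\}\) through its action on \(p \in S_4 \setminus \{1324\}\). By the Remark on trivial inv-Wilf-equivalences, patterns in the same \(G\)-orbit give inv-Wilf-equivalent pairs. So the first part reduces to listing the \(G\)-orbits on these \(23\) patterns and checking that the twelve patterns in \eqref{eq:1324 pairs} form a system of orbit representatives.

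I will compute each orbit directly, using \(\pi^{\rc}\) = reverse then complement. The orbits are:
\begin{itemize}
  \item \(\{1234\}\), \(\{2143\}\), \(\{3412\}\), \(\{4231\}\) and \(\{4321\}\), which are fixed points;
  \item \(\{1243, 2134\}\), \(\{1432, 3214\}\), \(\{2341, 4123\}\), \(\{2413, 3142\}\) and \(\{3421, 4312\}\);
  \item \(\{1342, 1423, 3124, 2314\}\) and \(\{2431, 4132, 4213, 3241\}\).
\end{itemize}
The sizes add to \(5 + 10 + 8 = 23\), so every pattern is covered exactly once. Each orbit contains exactly one pattern of \eqref{eq:1324 pairs}, which proves the representation claim.

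For distinctness, note first that \(\inv(1324) = 1\), so by Proposition~\ref{prop:lim seq existence} every pair \(\{1324, p\}\) has a limit sequence. Moreover, the proof of that proposition gives an explicit stabilisation point. Writing \(1324 = 1 \oplus 21 \oplus 1\), we have \(a = b = 1\), so \(c_k(\{1324, p\}) = \av_n^k(1324, p)\) for every \(n \geq k + 6\). Hence each \(c_k\) is a finite computation, for instance \(\av_{k+6}^k(1324, p)\). Alternatively, I would use the decomposition \(\pi = \sigma \oplus {\id} \oplus \tau\) from Section~\ref{sec:preliminaries}. With it, \(c_k\) can be computed by pairs of partitions (via \(\Lambda\)) whose associated indecomposable \(132\)- and \(213\)-avoiders also avoid the relevant patterns.

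I then plan to tabulate \(c_0, \dots, c_K\) for the twelve representatives, for a modest \(K\) (around \(8\)–\(10\); the appendix data should suffice), and exhibit, for each pair of representatives, an index \(k\) where the values differ. Several cases separate immediately:
\begin{itemize}
  \item \(1234\) has limit sequence \(1, 0, 0, \ldots\), since every long permutation with few inversions contains a long increasing run.
  \item For \(1243\) the values are also tiny; only permutations with all inversions in the first component survive.
  \item Patterns with many inversions, such as \(4321\) and \(4231\), agree with \(C_{1324} = P(x)^2\) for small \(k\) and first deviate at a larger \(k\).
\end{itemize}

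The main obstacle is this last group: patterns like \(3421\), \(4231\) and \(4321\) have the same limit sequence as \(1324\) alone for all small \(k\). Their sequences are likely to first differ only at \(k \approx 5\)–\(6\), so the comparison must go far enough. I will handle them by identifying the smallest indecomposable \(132\)-avoider (or \(213\)-avoider) containing each \(p\). The first \(k\) at which \(c_k\) drops below the \(k\)-th coefficient of \(P(x)^2\), together with the size of that drop, should then distinguish these patterns. This is checked against the computed table.
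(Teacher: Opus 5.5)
Your orbit computation is correct and is the paper's argument: its table of symmetries records exactly these orbits. The distinctness claim is, as in the paper, a finite comparison against the appendix data. That comparison succeeds, since every two of the twelve limit sequences already differ at some \(k \leq 6\).

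Two slips should be corrected.
\begin{enumerate}
  \item The limit sequence of \(\{1324, 1234\}\) is \(0, 0, 0, \ldots\), not \(1, 0, 0, \ldots\), because the identity permutation of length \(n \geq 4\) contains \(1234\).
  \item Your ``first drop below \(P(x)^2\) and its size'' heuristic cannot separate \(3421\) from \(4231\). Each pattern removes exactly one partition of \(5\), namely \((2,2,1)\) and \((3,1,1)\) respectively. Hence both sequences begin \(1, 2, 5, 10, 20, 34\), with the same first drop at \(k = 5\). They first differ at \(k = 6\), with values \(61\) and \(59\), so for this pair you must rely on the tabulated values rather than the heuristic.
\end{enumerate}
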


\begin{proof}
  The permutations not listed in \eqref{eq:1324 pairs} are
  \begin{equation*}
    1423, 2134, 2314, 3124, 3142, 3214, 3241, 4123, 4132, 4213, 4312.
  \end{equation*}
  Table \ref{tab:1324 pair symmetries} shows the inversion-preserving symmetries mapping these patterns to some listed in \eqref
  {eq:1324 pairs}. That the pairs listed in the claim have distinct limit sequences can be checked computationally, see Tables \ref{tab:1324 1243}--\ref{tab:1324 3421} in Appendix \ref{appendix:data}.
\end{proof}

\begin{table}[t]
  \centering
  \caption{Inversion-preserving symmetries for pairs containing \(1324\).}
  \begin{booktabs}{
    rows = {mode=math},
    column{2,3} = {leftsep=14pt},
    rowsep = 2pt
  } \toprule
    \text{Pair} & \text{Symmetry} & \text{Image} \\ \midrule
    1324, 1423 & \pi \mapsto \pi^{-1} & 1324, 1342 \\
    1324, 2134 & \pi \mapsto \pi^{\rc} & 1324, 1243 \\
    1324, 2314 & \pi \mapsto \pi^{\rc} & 1324, 1423 \\
    1324, 3124 & \pi \mapsto \pi^{\rc} & 1324, 1342 \\
    1324, 3142 & \pi \mapsto \pi^{-1} & 1324, 2413 \\
    1324, 3214 & \pi \mapsto \pi^{\rc} & 1324, 1432 \\
    1324, 3241 & \pi \mapsto (\pi^{-1})^{\rc} & 1324, 2431 \\
    1324, 4123 & \pi \mapsto \pi^{-1} & 1324, 2341 \\
    1324, 4132 & \pi \mapsto \pi^{-1} & 1324, 2431 \\
    1324, 4213 & \pi \mapsto \pi^{\rc} & 1324, 2431 \\
    1324, 4312 & \pi \mapsto \pi^{-1} & 1324, 3421 \\ \bottomrule
  \end{booktabs}
  \label{tab:1324 pair symmetries}
\end{table}

We omit the pair \(\{1324, 1234\}\) from our analysis, since its limit sequence is \(0\). Our findings for the remaining eleven pairs are summarized in Table \ref{tab:lim seqs}. We give combinatorial interpretations for all of their limit sequences, in most cases obtain limit generating functions, and in many cases prove that they are inversion monotone when \(n \geq \frac{k+7}{2}\) (marked `half' in the table). The limit sequences exhibit a wide variety of combinatorial structures, including restricted partitions, the sand pile model, and penny arrangements (Section~\ref{sec:132 indecomposable}). For more information on these permutation classes, \href{https://permpal.com}{PermPAL} is a useful resource \cite{combinatorial-exploration}.

A common theme is the appearance of a \emph{secondary} limit sequence in the row differences \(\av_{n+1}^k(1324, p) - \av_n^k(1324, p)\). Specifically, when \(n\) is large enough and \(k\) is fixed, the values 
\begin{equation*}
  \av_{n+1+i}^{k+i}(1324, p) - \av_{n+i}^{k+i}(1324, p)
\end{equation*}
are constant in \(i\). Linusson and Verkama proved that \(\Av(1324)\) has a secondary limit sequence, and used this fact to enumerate \(\Av_n^k(1324)\) when \(n \geq \frac{k+7}{2}\) \cite{linusson_enumerating_2025}. We observe the same phenomenon for all pairs \(\{1324, p\}\) with \(p\) in
\begin{equation*}
  1243,\ 1342,\ 1432,\ 2341,\ 3421,\ 4321,
\end{equation*}
but prove it only for \(p = 1342\) (Section \ref{sec:almost decomp}). The nonzero terms of the secondary limit sequence are sometimes given by the limit generating function multiplied by a low-degree polynomial; for \(1324\) and \(\{1324, 1342\}\) the polynomials are \(2 + 4x\) and \(2 + 2x\), respectively. 

If \(\{1324, p\}\) does not have a secondary limit sequence, we can, for some reason, find a \emph{tertiary} limit sequence in the \emph{second} differences. Specifically, let
\begin{equation*}
  b(n,k) = (a(n+2,k+1) - a(n+1,k+1)) + (a(n+1,k) - a(n,k)),
\end{equation*}
where \(a(n,k) = \av_n^k(1324, p)\). When \(n\) is large enough and \(p\) is any of the patterns with no secondary limit sequence, i.e.\
\begin{equation*}
  2143,\ 2413,\ 2431,\ 3412,\ 4231,
\end{equation*}
the sequence \(b(n+i,k+i)\) is constant in \(i\). This sequence is often related to the limit sequence, and we have no explanation for why it appears.

\begin{table}[ht]
  \centering
  \caption{Limit sequences of pairs containing \(1324\).}
  \begin{booktabs}{rowsep=2pt}
    \toprule
    Pair & Inv-monotone & Limit sequence & Reference \\ \midrule
    \(1243\) & No & Partition numbers & Prop.\ \ref{prop:1324 1243} \\
    \(2143\) & No & \(2 \cdot \text{partition numbers}\) & Prop.\ \ref{prop:1324 2143} \\ \midrule
    \(1342\) & Half & Overpartitions & Prop.\ \ref{prop:1324 1342} \\
    \(1432\) & Half & Partitions with a divider & Prop.\ \ref{prop:1324 1432} \\
    \(4231\) & Half & \((\text{g.f.\ of convex partitions})^2\) & Prop.\ \ref{prop:1324 4231} \\
    \(4321\) & Half & \((\text{g.f.\ of partitions with 2 distinct parts})^2\) & Prop.\ \ref{prop:1324 4321} \\ \midrule
    \(2341\) & Conjectured & \((\text{g.f.\ of sand pile model})^2\) & Prop.\ \ref{prop:1324 2341} \\
    \(2413\) & Conjectured & \((\text{g.f.\ of partitions})^2\) & Prop.\ \ref{prop:1324 2413} \\
    \(2431\) & Conjectured & \(P(x) \cdot (\text{g.f.\ of steep partitions})\) & Prop.\ \ref{prop:1324 2431} \\
    \(3412\) & Conjectured & \((\text{g.f.\ of convex penny arrangements}^{\compl})^2\) & Prop.\ \ref{prop:1324 3412} \\
    \(3421\) & Conjectured & \((\text{g.f.\ of \href{https://oeis.org/A115029}{A115029}})^2\) & Prop.\ \ref{prop:1324 3421} \\ \bottomrule
  \end{booktabs}
  \label{tab:lim seqs}
\end{table}

\subsection{Non-inversion-monotone pairs} \label{subsec:non inv mono pairs}

\subsection*{\(\bm{1324, 1243}\)}

The class \(\Av(1324, 1243)\) is enumerated by the large Schröder numbers, entry \href{https://oeis.org/A006318}{A006318} in the OEIS \cite{oeis}. Table \ref{tab:1324 1243} shows the values of \(\av_n^k(1324, 1243)\). The limit sequence \(\av_n^k(1324, 1243)\), \(n \geq k+3\), is
\begin{equation*}
  1, 1, 2, 3, 5, 7, 11, 15, 22, 30, \ldots,
\end{equation*}
the partition numbers.

\begin{proposition} \label{prop:1324 1243}
  We have \(\av_n^k(1324, 1243) = p(k)\) for all \(n \geq k + 3\).
\end{proposition}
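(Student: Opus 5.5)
The plan is to use the structure of decomposable $1324$-avoiders from Section~\ref{sec:preliminaries}. Then I would show that the extra condition of avoiding $1243$ kills the right-hand block, so that what remains is counted by the partition bijection $\Lambda$.

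First, let $n \geq k+3$ and $\pi \in \Av_n^k(1324, 1243)$. By \eqref{eq:comp inv length}, $\comp(\pi) \geq n-k \geq 3$. In particular $\pi$ is decomposable, so $\pi = \sigma \oplus \id \oplus \tau$, where $\sigma$ is an indecomposable $132$-avoider and $\tau$ is an indecomposable $213$-avoider. I allow $\sigma$ or $\tau$ to be the single-point permutation $1$, in which case it is absorbed into the identity part.

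The key step is to show that the last component $\pi^{(r)}$ of $\pi$ has no inversion, so that $\pi = \sigma \oplus \id_m$. Suppose $\pi^{(r)}$ contains an inversion, i.e.\ an occurrence of $21$. Since $r = \comp(\pi) \geq 3$, the components $\pi^{(1)}$ and $\pi^{(2)}$ lie before it. Picking one entry from each of them, together with the $21$ in $\pi^{(r)}$, gives an occurrence of $12 \oplus 21 = 1243$, a contradiction. This is exactly where the hypothesis $n \geq k+3$ (rather than $k+2$) is used: we need two components in front of the last one. Since $\tau$ is the last component (or is trivial), $\tau$ must be trivial, and hence $\pi = \sigma \oplus \id_m$ with $m = n - |\sigma|$.

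Conversely, I would check that every $\pi = \sigma \oplus \id_m$ with $\sigma$ an indecomposable $132$-avoider avoids both $1324$ and $1243$. All inversions of $\pi$ lie inside $\sigma$. An occurrence of $1243$ or $1324$ needs its inversion ($43$, resp.\ $32$) inside $\sigma$, and needs an entry smaller than and to the left of both entries of that inversion. That entry must then also lie in $\sigma$, since anything to its left in the direct sum is in $\sigma$ anyway. So the inversion together with this smaller entry to its left forms a $132$ inside $\sigma$, which is impossible.

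Finally, since $\sigma$ is indecomposable with $k$ inversions, \eqref{eq:comp inv length} gives $|\sigma| \leq k+1 \leq n$. Therefore $\pi \mapsto \Lambda(\sigma)$ is a bijection from $\Av_n^k(1324,1243)$ to the partitions of $k$: the inverse sends $\lambda$ to $\Lambda^{-1}(\lambda) \oplus \id_{n-|\Lambda^{-1}(\lambda)|}$. This gives $\av_n^k(1324,1243) = p(k)$. The case $k=0$ (where $\pi = \id_n$ and $\sigma = 1$) is consistent with this, since $p(0)=1$. The only delicate point in the whole argument is the component-counting step above, together with the convention that a trivial $\sigma$ or $\tau$ is absorbed into the identity part.
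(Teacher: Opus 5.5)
Your proof is correct and follows essentially the same route as the paper. The paper observes that once $\comp(\pi)\geq 3$, avoiding $\{1324,1243\}$ is equivalent to avoiding $132$, for exactly the component reasons you give. It concludes that $\Av_n^k(1324,1243)=\Av_n^k(132)$ and applies the bijection $\Lambda$; your explicit form $\pi=\sigma\oplus\id_m$ is just the decomposable $132$-avoiders written out.
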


\begin{proof}
  A permutation \(\pi\) with \(\comp(\pi) \geq 3\) avoids \(\{1324, 1243\}\) if and only if it avoids \(132\). Therefore 
  \begin{equation*}
    \av_n^k(1324, 1243) = \av_n^k(132) = p(k)
  \end{equation*}
  for all \(n \geq k + 3\).
\end{proof}

Table \ref{tab:1324 1243 diffs} shows the row differences \(\av_{n+1}^k(1324, 1243) - \av_n^k(1324, 1243)\). We have no explanation for the (negative) secondary limit sequence
\begin{equation*}
  -4, -2, -2, -6, -6, -8, \ldots .
\end{equation*}

\subsection*{\(\bm{1324, 2143}\)}

\(\Av(1324, 2143)\) is the class of \emph{smooth} permutations -- a permutation is smooth if the Schubert cell indexed by it is smooth \cite{lakshmibai_criterion_1990}. See the OEIS entry \href{https://oeis.org/A032351}{A032351} for more information. Table \ref{tab:1324 2143} shows that the limit sequence is
\begin{equation*}
  1, 2, 4, 6, 10, 14, 22, 30, 44, 60, \ldots,
\end{equation*}
which, for \(k \geq 1\), is given by \(2p(k)\).

\begin{proposition} \label{prop:1324 2143}
  For every \(k \geq 1\) and \(n \geq k + 2\) we have \(\av_n^k(1324, 2143) = 2 p(k)\).
\end{proposition}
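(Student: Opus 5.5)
The plan follows the structural description of decomposable $1324$-avoiders from Section~\ref{sec:preliminaries}. Let $k \geq 1$ and $n \geq k+2$. By \eqref{eq:comp inv length}, every $\pi \in \Av_n^k(1324, 2143)$ is decomposable, so $\pi = \sigma \oplus {\id} \oplus \tau$. Here $\sigma$ is an indecomposable $132$-avoider and $\tau$ is an indecomposable $213$-avoider; the identity part may be empty, and $\sigma$ or $\tau$ may be the trivial permutation $1$.

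The key observation concerns $2143$. Suppose both $\sigma$ and $\tau$ contain an inversion. Then an inversion in $\sigma$ followed by an inversion in $\tau$ forms an occurrence of $2143$. Hence avoidance of $2143$ forces either $\inv(\sigma) = 0$ or $\inv(\tau) = 0$. Since both are indecomposable, this means $\sigma = 1$ or $\tau = 1$. So $\pi$ is either ${\id} \oplus \tau$ or $\sigma \oplus {\id}$, and exactly one of the two "arms" carries all $k \geq 1$ inversions. These two cases are disjoint, since $k \geq 1$.

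Conversely, we must check that every permutation of the form $\sigma \oplus \id_j$, with $\sigma$ an indecomposable $132$-avoider, avoids both patterns; the case ${\id} \oplus \tau$ is symmetric under $\rc$. For $1324$ this follows from the structural statement quoted earlier. For $2143$, any occurrence would need two disjoint inversions lying in a $\oplus$-arrangement. All inversions of $\sigma \oplus \id$ lie inside $\sigma$, so such an occurrence would have to lie entirely within $\sigma$. The step I expect to need most care is ruling this out: an occurrence $2143$ inside $\sigma$ would contain $132$ (take $1,4,3$). So $\sigma$ avoids $2143$ automatically.

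The counting then goes as follows. With $n \geq k+2$, the indecomposable $132$-avoiders $\sigma$ with $k$ inversions and $|\sigma| \leq n-1$ correspond bijectively to partitions of $k$ via $\Lambda$, since $|\sigma| \leq k+1 \leq n-1$. The identity part then pads the length to $n$, and it is nonempty, so the decomposition is valid. Likewise, the $\tau$ correspond via $\Lambda(\tau^{\rc})$. This gives $p(k) + p(k) = 2p(k)$.
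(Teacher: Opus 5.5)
Your proof is correct and takes the same approach as the paper. The paper's proof is the single observation that a decomposable \(\{1324, 2143\}\)-avoider has the form \(\sigma \oplus \id \oplus \tau\) with either \(\tau = 1\) or \(\sigma = 1\). You supply the details it leaves implicit: inversions in both \(\sigma\) and \(\tau\) would form a \(2143\), any \(2143\) inside \(\sigma\) contains a \(132\), disjointness of the two cases uses \(k \geq 1\), and the length bound \(|\sigma| \leq k+1 \leq n-1\) makes the count via \(\Lambda\) work.
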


\begin{proof}
  A decomposable permutation \(\pi\) avoids \(\{1324, 2143\}\) if and only if it is of the form \(\pi = \sigma \oplus {\id} \oplus \tau\), where either \(\sigma \in \Av(132)\) and \(\tau = 1\), or \(\sigma = 1\) and \(\tau \in \Av(213)\).
\end{proof}

The row differences in Table \ref{tab:1324 2143 diffs} are fascinating. This is the only case we know of with nontrivial zeros. We have no explanation for the tertiary limit sequence
\begin{equation*}
  -2, 0, 0, 0, 0, \ldots.
\end{equation*}

\subsection{Half-monotone pairs} \label{subsec:half mono pairs}

\subsection*{\(\bm{1324, 1342}\)}

\(\Av(1324, 1342)\) is enumerated by the large Schröder numbers. From Table \ref{tab:1324 1342}, the limit sequence is
\begin{equation*}
  1, 2, 4, 8, 14, 24, 40, 64, 100, 154, \ldots,
\end{equation*}
the number of \emph{overpartitions} of \(k\); \href{https://oeis.org/A015128}{A015128} in the OEIS. An overpartition of \(k\) is a partition of \(k\) in which the first occurrence of each part may either be overlined or not.

\begin{proposition} \label{prop:1324 1342}
  For each \(n \geq k+2\), \(\av_n^k(1324, 1342)\) equals the number of overpartitions of \(k\). In particular,
  \begin{equation*}
    C_{1324, 1342}(x) = \prod_{k \geq 1} \frac{1 + x^k}{1 - x^k}.
  \end{equation*}
\end{proposition}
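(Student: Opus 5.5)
The plan is to use the decomposition from Section~\ref{sec:preliminaries} and reduce the count to pairs of partitions. Fix \(k\) and \(n \geq k+2\). By \eqref{eq:comp inv length}, every \(\pi \in \Av_n^k(1324)\) is decomposable, so \(\pi = \sigma \oplus \id \oplus \tau\). Here \(\sigma\) is the first component, an indecomposable \(132\)-avoider, and \(\tau\) is the last component, an indecomposable \(213\)-avoider, distinct from \(\sigma\).

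\textbf{Step 1: where \(1342\) can occur.} Since \(1342 = 1 \oplus 231\) and \(231\) is indecomposable, any occurrence of \(231\) inside \(\pi\) lies within a single component. I will use this to show that \(\pi\) avoids \(1342\) if and only if \(\tau\) avoids \(231\).
\begin{itemize}
\item If \(\tau\) contains \(231\), then any entry of \(\sigma\) serves as the \(1\), so \(\pi\) contains \(1342\).
\item An occurrence of \(231\) in \(\sigma\) or in the identity part cannot exist. In \(\sigma\) it would be preceded by a smaller entry (giving \(1342\)) or be the start of \(\sigma\); but \(\sigma\) avoids \(132\) and \(1342\) contains \(132\), and more directly \(\sigma\) already cannot host \(1342\). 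So the only remaining source is \(\tau\) together with an earlier entry. It suffices to check that \(\sigma\) itself cannot contain \(1342\), which holds because \(132 \leq 1342\).
\end{itemize}
Hence \(\Av_n^k(1324,1342)\) is in bijection with pairs \((\sigma,\tau)\), where \(\sigma\) is an indecomposable \(132\)-avoider, \(\tau\) is an indecomposable \(\{213,231\}\)-avoider, and \(\inv(\sigma)+\inv(\tau)=k\). The middle identity has length \(n-|\sigma|-|\tau|\). This is nonnegative because an indecomposable permutation satisfies \(|\sigma| \leq \inv(\sigma)+1\) by \eqref{eq:comp inv length}, so \(|\sigma|+|\tau| \leq k+2 \leq n\). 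The case where \(\sigma\) and \(\tau\) are adjacent, with an empty middle, is allowed.

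\textbf{Step 2: counting the two factors.} Via \(\Lambda\), the \(\sigma\)'s with \(j\) inversions correspond to partitions of \(j\), which contributes the factor \(P(x)\). For \(\tau\), I use the structure from Lemma~\ref{lem:213 231}: \(\tau\) starts with its maximum and consists of a decreasing upper arm and an increasing lower arm, with the last entry in the lower arm. I then read off the inversion table.
\begin{itemize}
\item A lower-arm entry has no smaller entries to its right.
\item An upper-arm entry at position \(i\) has \(b_i\) equal to the number of later upper-arm entries plus the number of later lower-arm entries.
\end{itemize}
Consequently the nonzero entries of \(L(\tau)\) are exactly the upper-arm values, and they are strictly decreasing. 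Each is at least \(1\), because the final entry lies in the lower arm. So \(\tau \mapsto\) (these values) sends \(\tau\) to a partition into distinct parts of \(\inv(\tau)\).

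Conversely, a partition \(\lambda_1 > \dots > \lambda_a \geq 1\) determines \(\tau\) uniquely. One places \(\lambda_i - \lambda_{i+1} - 1\) lower-arm points between the \(i\)-th and \((i+1)\)-st upper points, and \(\lambda_a\) lower points after the last one. The empty partition corresponds to \(\tau = 1\). The generating function of the \(\tau\)'s is therefore \(\prod_{i\geq1}(1+x^i)\).

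\textbf{Conclusion and main obstacle.} Multiplying the two factors gives \(C_{1324,1342}(x) = \prod_{i\ge1}\frac{1+x^i}{1-x^i}\), which is the generating function for overpartitions. Equivalently, the pair (partition, distinct partition) gives an overpartition by overlining the parts coming from the distinct partition. The main point requiring care is the bijection in Step 2. One must verify that every distinct partition is realized by an indecomposable \(\{213,231\}\)-avoider, in particular that the first entry is the maximum so that \(\tau\) is indecomposable. One must also verify that the degenerate cases \(|\sigma|=1\) and \(|\tau|=1\) match the empty partitions without double counting.
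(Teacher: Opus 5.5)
Your proof is correct and follows the paper's argument. Both use the decomposition \(\pi = \sigma \oplus {\id} \oplus \tau\) with \(\tau\) an indecomposable \(\{213,231\}\)-avoider, and both then merge a partition with a distinct-part partition into an overpartition; the only difference is that you derive the distinct-parts correspondence for \(\tau\) yourself from the arm structure of Lemma~\ref{lem:213 231} (an upper-arm entry at position \(i\) has inversion-table entry \(|\tau| - i\)), whereas the paper cites \cite{franklin_pattern_2025} for it. One wording fix in Step~1: occurrences of \(231\) in \(\sigma\) can certainly exist (e.g.\ \(\sigma = 231\)); what you need, and what your final sentence there actually proves, is that the \(342\) of a \(1342\)-occurrence cannot lie in \(\sigma\), since \(\sigma\) is the first component, so the \(1\) would also lie in \(\sigma\) and force a \(132\).
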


\begin{proof}
  A decomposable permutation \(\pi\) avoids \(\{1324, 1342\}\) if and only if it is of the form \(\sigma \oplus {\id} \oplus \tau\), where \(\sigma \in \Av(132)\) and \(\tau \in \Av(213, 231)\). The indecomposable \(\{213, 231\}\) avoiders with \(\ell\) inversions are in bijection with partitions of \(\ell\) into distinct parts \cite{franklin_pattern_2025}, so \(\Av_n^k(1324, 1342)\) is in bijection with pairs \((\lambda, \mu)\) of partitions such that \(|\lambda| + |\mu| = k\) and all parts of \(\mu\) are distinct (when \(n \geq k+2\)). This collection is, in turn, in bijection with the overpartitions of \(k\) as follows: overline all parts of \(\mu\), then merge \(\lambda\) and \(\mu\) so that the parts are weakly decreasing, and every overlined part comes before the equal non-overlined parts. The generating function is easy to see.
\end{proof}

Table \ref{tab:1324 1342 diffs} shows the row differences \(\av_{n+1}^k(1324, 1342) - \av_n^k(1324, 1342)\). There is a secondary limit sequence
\begin{equation*}
  2, 6, 12, 24, 44, 76, 128, \ldots,
\end{equation*}
whose generating function is \((2 + 2x) C_{1324, 1342}(x)\). We prove this, as well as inversion monotonicity for \(n \geq \frac{k+7}{2}\), in Section \ref{sec:almost decomp}.

\subsection*{\(\bm{1324, 1432}\)}

The class \(\Av(1324, 1432)\) has been enumerated by \href{https://permpal.com/perms/basis/0213_0321/}{PermPAL}, but there is no known closed form expression for its generating function;  see also the OEIS entry \href{https://oeis.org/A165542}{A165542}. Table \ref{tab:1324 1432} shows that the limit sequence is
\begin{equation*}
  1, 2, 5, 9, 17, 27, 46, 69, 108, 158, \ldots,
\end{equation*}
which is the number of partitions of \(k\) into blue and red parts, such that all blue parts are greater than or equal to all red parts; this is sequence \href{https://oeis.org/A093694}{A093694} in the OEIS.

\begin{proposition} \label{prop:1324 1432}
  For every \(n \geq k+2\), \(\av_n^k(1324, 1432)\) equals the number of partitions of \(k\) into blue and red parts, such that all blue parts are greater than or equal to all red parts. In particular,
  \begin{equation*}
    C_{1324, 1432}(x) = \sum_{k \geq 0} (k+1) x^k \cdot \prod_{i=1}^k \frac{1}{1 - x^i}.
  \end{equation*}
\end{proposition}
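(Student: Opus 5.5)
Following the pattern of Propositions \ref{prop:1324 1342} and \ref{prop:1324 2143}, we first describe the decomposable \(\{1324,1432\}\)-avoiders. For \(n \geq k+2\), inequality \eqref{eq:comp inv length} makes every \(\pi \in \Av_n^k(1324,1432)\) decomposable. Every decomposable \(1324\)-avoider is \(\sigma \oplus \id \oplus \tau\), with \(\sigma\) an indecomposable \(132\)-avoider and \(\tau\) an indecomposable \(213\)-avoider. We determine when this also avoids \(1432\).

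\emph{Where an occurrence of \(1432\) can live.} Since \(\tau\) avoids \(213\), any \(1432\) using entries from two blocks has its ``\(1\)'' in an earlier block and its ``\(432\)'' inside \(\tau\); the pattern \(432\) cannot be split across blocks of a direct sum. The ``\(432\)'' also cannot lie in \(\sigma\): the ``\(1\)'' would then be in \(\sigma\) too, and \(\sigma\) avoids \(132\), hence \(1432\). A \(1432\) entirely inside \(\tau\) is impossible, since \(\tau\) avoids \(213\) and \(1432\) contains \(132\) but not \(213\); to be careful, \(1432\) contains \(21\) patterns but the three-letter subpatterns of \(1432\) are \(132\) and \(321\), so \(\tau\) may well contain a \(1432\) and this case must be checked. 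Because \(\sigma\) is nonempty, a prefix element always exists. The condition is therefore that \(\tau\) avoids \(321\) and also \(1432\), which reduces to \(\tau \in \Av(213,321)\) with \(\tau\) indecomposable. The first block \(\sigma\) may be any indecomposable \(132\)-avoider; we should also double-check whether \(\sigma\) could be empty (the leading \(\id\) part), which again forces \(\tau\) to avoid \(321\).

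\emph{Counting.} By \eqref{eq:132 partition bijection}, the \(\sigma\)'s with \(i\) inversions correspond to partitions of \(i\). We then need the generating function of indecomposable \(\{213,321\}\)-avoiders by inversions. Applying \(\rc\), these become indecomposable \(\{132,321\}\)-avoiders, which under \(\Lambda\) correspond to partitions whose associated permutation has no \(321\). Using Lemma \ref{lem:132 partition properties}, a \(321\) in \(\Lambda^{-1}(\lambda)\) corresponds to two consecutive strict descents \(\lambda_i>\lambda_{i+1}>\lambda_{i+2}\) arising appropriately. The expected outcome is that \(\lambda\) takes the form \(a^{j}\) followed by \(b^{i}\), a partition with at most two distinct part sizes and a specific structure, with generating function \(\sum_{k}x^k\) times a simple factor. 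Combining with the \(P(x)\) from \(\sigma\) should yield \(\sum_{k\ge0}(k+1)x^k\prod_{i=1}^k(1-x^i)^{-1}\).

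\emph{Bijection.} We convert the pair (partition \(\lambda\) from \(\sigma\), structured object from \(\tau\)) into a blue–red partition in which all blue parts are at least all red parts. The natural choice is: red parts come from \(\lambda\) truncated below a threshold, and the \(\tau\) data records the threshold and the blue parts. The coefficient extraction \(x^k\prod_{i\le k}(1-x^i)^{-1}\) suggests splitting on the smallest blue part \(k\): the blue parts form a partition with least part \(k\), and the red parts are at most \(k\), giving \((k+1)\) choices related to how many copies of \(k\) are blue. The main obstacle is pinning down exactly which \(\tau\) are allowed, i.e. the precise inversion enumeration of indecomposable \(\{213,321\}\)-avoiders, and matching it cleanly with this generating function. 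I would first compute small cases by hand to fix the shape, and then prove the generating-function identity directly.
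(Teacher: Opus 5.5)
Your structural reduction matches the paper's: for \(n\ge k+2\) the permutations are \(\sigma\oplus{\id}\oplus\tau\), with \(\sigma\) an indecomposable \(132\)-avoider and \(\tau\) an indecomposable \(\{213,321\}\)-avoider. Delete the sentence claiming a \(1432\) inside \(\tau\) is impossible because \(\tau\) avoids \(213\). The clean argument is this: the decreasing \(432\) lies in one block. It is not in \(\sigma\), since \(\sigma\) avoids \(132\), so it is in \(\tau\). Since \(\sigma\) is nonempty, the condition is exactly that \(\tau\) avoids \(321\).

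The first genuine gap is the enumeration of the \(\tau\)'s, where your predicted shape is wrong. In an indecomposable \(132\)-avoider with \(\lambda=\Lambda(\pi)\), the left-to-right minima are \(\pi_1\) together with \(\pi_{i+1}=\lambda_{i+1}+1\) at every strict drop \(\lambda_i>\lambda_{i+1}\), including the final drop to \(0\) (Lemma~\ref{lem:132 partition properties}\ref{lem:132 partition property drop}). So as soon as \(\lambda\) has two distinct part sizes, there are three decreasing left-to-right minima, i.e.\ a \(321\). For example, \((2,1)\) gives \(321\). Likewise \((3,3,1,1)\) gives \(45231\), which contains \(421\) although \((3,3,1,1)\) has no two consecutive strict descents.

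Hence \(\Lambda(I_\ell(132,321))\) consists exactly of the rectangles \((a^b)\) with \(ab=\ell\). This is the case \(m=3\) of the proposition in Section~\ref{subsec:132 4321}; the paper cites \cite{franklin_pattern_2025} for it. Concretely, the indecomposable \(\{213,321\}\)-avoiders are \(1\) and \(\id_a\ominus\id_b\), which has \(ab\) inversions, so their generating function is \(1+\sum_{a\ge1}x^a/(1-x^a)\). ``At most two part sizes'' is the answer for \(4321\), not \(321\). With it you would get \(c_3=3+2+2+3=10\) instead of the correct \(9\), so the target formula cannot come out of your shape.

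The second gap is that neither the bijection nor the series identity is carried out, and your reading of the series is off. Splitting on a smallest blue part \(k\) with red parts at most \(k\) gives \(x^k\prod_{i\ge k}(1-x^i)^{-1}\prod_{i\le k}(1-x^i)^{-1}\), not \((k+1)x^k\prod_{i\le k}(1-x^i)^{-1}\). Also, \(\tau\) cannot ``record the blue parts'': it supplies only a rectangle \(\mu=(i^j)\), i.e.\ a threshold and some red parts equal to it.

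The missing step, as in the paper, is as follows. Colour the parts of \(\lambda\) that are \(\ge i\) blue, colour the remaining parts of \(\lambda\) and all parts of \(\mu\) red, and merge. The inverse recovers \(\mu\) as the red parts equal to the largest red part, with \(\mu=\emptyset\) exactly when there are no red parts.

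For the formula, which the paper takes from the OEIS, note that a blue--red partition with all blue parts \(\ge\) all red parts is the same as a partition \(\nu\vdash k\) together with a number \(b\in\{0,\ldots,\ell(\nu)\}\) of largest parts coloured blue. So the count is \(\sum_{\nu\vdash k}(\ell(\nu)+1)=\sum_{\nu\vdash k}(\nu_1+1)\) by conjugation, and \(x^m\prod_{i\le m}(1-x^i)^{-1}\) counts partitions with largest part \(m\). Alternatively, \(P(x)\sum_{a\ge1}x^a/(1-x^a)\) generates the total number of parts \(\sum_{\nu\vdash k}\ell(\nu)\). That identifies \(P(x)\bigl(1+\sum_{a\ge1}x^a/(1-x^a)\bigr)\) with the stated series directly.
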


\begin{proof}
  A decomposable permutation \(\pi\) avoids \(\{1324, 1432\}\) if and only if it is of the form \(\pi = \sigma \oplus {\id} \oplus \tau\), where \(\sigma \in \Av(132)\) and \(\tau \in \Av(213, 321)\). The indecomposable \(\{213, 321\}\)-avoiders with \(\ell\) inversions are in bijection with partitions of \(\ell\) into equal parts \cite{franklin_pattern_2025}, so \(\Av_n^k(1324, 1432)\) is in bijection with pairs \((\lambda, \mu)\) of partitions such that \(|\lambda| + |\mu| = k\) and all parts of \(\mu\) are equal (when \(n \geq k+2\)). These pairs are, in turn, in bijection with the desired partitions of \(k\) as follows: let \(i\) denote the size of the parts of \(\mu\). Color all parts of \(\lambda\) greater than or equal to \(i\) blue, and color the rest of the parts of \(\lambda\) along with all parts of \(\mu\) red. Then merge \(\lambda\) and \(\mu\) into one partition, so that all blue parts precede all red parts. See the OEIS entry for the generating function.
\end{proof}

Table \ref{tab:1324 1432 diffs} shows the row differences \(\av_{n+1}^k(1324, 1432) - \av_n^k(1324, 1432)\). We have no explanation for the secondary limit sequence
\begin{equation*}
  4, 6, 12, 22, 38, 62, \ldots .
\end{equation*}
In Section \ref{sec:almost decomp}, we prove that \(\{1324, 1432\}\) is inversion monotone when \(n \geq \frac{k+7}{2}\).

\subsection*{\(\bm{1324, 4231}\)}

The class \(\Av(1324, 4231)\) was first enumerated by Albert, Atkinson and Vatter \cite{albert_counting_2009}. Table \ref{tab:1324 4231} shows the values \(\av_n^k(1324, 4231)\), and the limit sequence
\begin{equation*}
  1, 2, 5, 10, 20, 34, 59, 96, 151, 230, \ldots
\end{equation*}
does not appear in the OEIS. Since a decomposable permutation \(\pi\) avoids \(\{1324, 4231\}\) if and only if it is of the form \(\pi = \sigma \oplus {\id} \oplus \tau\), where \(\sigma \in \Av(132, 4231)\) and \(\tau \in \Av(213, 4231)\), we have that
\begin{equation*}
  C_{1324, 4231}(x) = C_{132, 4231}(x) \cdot C_{213, 4231}(x) = C_{132, 4231}(x)^2,
\end{equation*}
and the limit sequence of \(\{132, 4231\}\) is
\begin{equation*}
  1, 1, 2, 3, 5, 6, 9, 12, 15, 19, 25, \ldots.
\end{equation*}
This sequence is not in the OEIS either, but we enumerate it in Section \ref{subsec:132 4231}.

\begin{proposition} \label{prop:1324 4231}
  We have
  \begin{equation*}
    C_{1324, 4231}(x) = \left(\,\prod_{i\geq 1} (1+x^i)\ + \sum_{a, b \geq 0} x^{(a+2)(b + 1)} \cdot \prod_{i=1}^a (1+x^i) \cdot \prod_{i=1}^b (1+x^i)\right)^2.
  \end{equation*}
\end{proposition}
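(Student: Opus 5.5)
The plan is to reduce to counting indecomposable \(132\)-avoiders through the bijection \(\Lambda\), translate \(4231\)-avoidance into a condition on partitions, and then obtain the generating function from a decomposition of those partitions. The first two reductions are already in hand. From the discussion before the statement, \(C_{1324,4231}(x) = C_{132,4231}(x)^2\). So it suffices to show that \(C_{132,4231}(x)\) equals the expression inside the square.

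\textbf{Step 1: reduce to partitions.} For \(n \geq k+2\), every \(\pi \in \Av_n^k(132, 4231)\) has the form \(\sigma \oplus \id\) with \(\sigma\) an indecomposable \(132\)-avoider; this follows from \eqref{eq:comp inv length} and the fact that a \(132\)-avoider has only its first component nontrivial. Since \(4231\) is indecomposable, \(\pi\) avoids \(4231\) exactly when \(\sigma\) does. By \eqref{eq:132 partition bijection}, \(c_k(132,4231)\) is therefore the number of partitions \(\lambda \vdash k\) such that \(\Lambda^{-1}(\lambda)\) avoids \(4231\).

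\textbf{Step 2: characterize the admissible \(\lambda\).} Using Lemma~\ref{lem:132 partition properties}, I will read the plot of \(\Lambda^{-1}(\lambda)\) from \(\lambda\).
\begin{itemize}
  \item Strict drops \(\lambda_i > \lambda_{i+1}\) place \(\pi_{i+1} = \lambda_{i+1}+1\).
  \item Runs of equal parts give ascending runs of \(\pi\).
\end{itemize}
If \(\lambda\) has distinct parts, then \(\pi\) is essentially a decreasing sequence interleaved with the forced values, and I expect it to avoid \(4231\) automatically. This accounts for the term \(\prod_{i \geq 1}(1+x^i)\).

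If \(\lambda\) has a repeated part, the ascending run created by the repetition supplies the ``\(23\)'' of a \(4231\). The ``\(4\)'' and ``\(1\)'' then come from a larger earlier part and a later small entry. I will determine exactly when such an occurrence exists. My guess at the outcome is the following. Admissible non-distinct \(\lambda\) have a single repeated value, with some limited structure above and below it. After removing a canonical rectangle of size \((a+2)(b+1)\), determined by the repeated block and the position of the first forced entry below it, what remains is two partitions into distinct parts, one with parts \(\le a\) and one with parts \(\le b\).

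\textbf{Step 3: the bijection.} I will construct the bijection from admissible non-distinct \(\lambda\) to triples \((a,b,\text{two distinct-part partitions with parts} \le a \text{ and} \le b)\) with weight \((a+2)(b+1)\) plus the sizes of the two partitions. The construction should be Durfee-rectangle-like: strip the repeated block together with the columns it spans. The parts above contribute distinct parts bounded by one side length, and the parts below contribute distinct parts bounded by the other. I will then check invertibility and that \(4231\)-avoidance is preserved in both directions. Summing over \(a,b \ge 0\) gives the second term.

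\textbf{Main obstacle.} The main obstacle is Step~2: getting the exact characterization, in particular pinning down the off-by-one shifts \(a+2\) and \(b+1\). I would first compute \(\Lambda^{-1}(\lambda)\) for all \(\lambda\) with \(|\lambda| \le 10\) by hand or by machine and compare against the sequence \(1,1,2,3,5,6,9,12,15,19,25\). I would then guess the decomposition from the smallest non-distinct admissible examples: weight \(2\) is \(\lambda = (1,1)\), which gives \(a=0, b=0\). Weight \(4\) should give \((2,2)\) and/or \((1,1,1,1)\). I would calibrate the rectangle from these before proving the general case.
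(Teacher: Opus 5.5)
Step~1 is fine, but the proposal has a genuine gap in Step~2, which is the actual content of the proposition. You never establish which \(\lambda\) give \(4231\)-avoiders, and the structure you guess is false. Two examples show this.
\begin{enumerate}[(i)]
  \item The ``\(4\)'' cannot come from just any larger earlier part: \(\lambda=(2,1,1)\) gives \(\Lambda^{-1}(\lambda)=3241\), which avoids \(4231\).
  \item Admissible non-distinct \(\lambda\) need not have a single repeated value: \(\lambda=(2,2,1,1)\) gives \(34251\), which avoids \(\{132,4231\}\). For \(k=6\) the admissible non-distinct partitions are \((3,3),(2,2,2),(2,2,1,1),(2,1,1,1,1),(1^6)\), matching the coefficient \(5\) of \(x^6\) in the double sum. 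A bijection built on one repeated block therefore misses \((2,2,1,1)\).
\end{enumerate}
The missing idea is that the ``\(4\)'' needs a drop of size at least two. Precisely, \(\Lambda^{-1}(\lambda)\) avoids \(4231\) if and only if no repeated part occurs after an index \(i\) with \(\lambda_i\ge\lambda_{i+1}+2\).

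For necessity, take such an \(i\) and a repeat \(\lambda_j=\lambda_{j+1}>0\) with \(j>i\), chosen so that neither property holds strictly between them. The value \(\lambda_{i+1}+2\) lies strictly between \(\pi_{i+1}\) and \(\pi_i\), and Lemma~\ref{lem:132 partition properties} places it after position \(j\). Avoiding \(132\) then forces \(\pi_j<\pi_{j+1}<\pi_i\), so \(\pi_i\pi_j\pi_{j+1}1\) is a \(4231\). For sufficiency you must normalize an arbitrary occurrence:
\begin{itemize}
  \item the entry \(1\) can play the ``\(1\)'', since otherwise there is a \(132\);
  \item the ``\(2\)'' and ``\(3\)'' can be taken adjacent, since an entry between them exceeding the ``\(3\)'' would give a \(132\);
  \item pushing the ``\(4\)'' as far right as possible yields \(\pi_{i_1+1}<\pi_{i_2+1}<\pi_{i_1}\), i.e.\ a drop of size at least two at \(i_1\) before the repeat at \(i_2\).
\end{itemize}
This is exactly the argument your plan defers to experiments.

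Once the condition is known, your split into distinct and non-distinct \(\lambda\) does work. It is not the paper's split but its conjugate, and Step~3 must be redone accordingly. The condition is invariant under conjugation, because conjugation swaps the drop sizes \(\lambda_j-\lambda_{j+1}\) with the multiplicities. Hence ``distinct parts'' corresponds to ``no drop \(\ge 2\)''.

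The paper splits by whether a drop \(\ge 2\) exists. If not, the conjugate has distinct parts, giving \(\prod_i(1+x^i)\). If so, it cuts at the first such drop: the first \(b+1\) parts have differences at most \(1\) and end in the value \(a+2\), and the parts after the drop are distinct and \(\le a\).

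In your version, anchor the rectangle at the smallest repeated part \(v=b+1\). The parts below \(v\) are distinct and \(\le b\). The \(a+2\) parts \(\ge v\) have consecutive differences at most \(1\) and may repeat freely; this is where \((2,2,1,1)\) fits. Removing the \((a+2)\times(b+1)\) rectangle leaves a gap-free partition with at most \(a\) parts. Its conjugate, not the piece itself, is the partition into distinct parts \(\le a\).
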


Table \ref{tab:1324 4231 diffs} shows the row differences. There is no secondary limit sequence, but there appears to be a tertiary limit sequence starting with
\begin{equation*}
  0, 0, 2, 4, 6, 12, 18, 28, \ldots.
\end{equation*}
We have no explanation for this sequence. In Section \ref{sec:almost decomp}, we prove that \(\{1324, 4231\}\) is inversion monotone when \(n \geq \frac{k+7}{2}\).

\subsection*{\(\bm{1324, 4321}\)}

The class \(\Av(1324, 4321)\) was enumerated by Vatter \cite{vatter_finding_2012}. Table \ref{tab:1324 4321} shows the values \(\av_n^k(1324, 4321)\), and the limit sequence is
\begin{equation*}
  1, 2, 5, 10, 20, 36, 63, 104, 167, 256, \ldots,
\end{equation*}
which does not appear in the OEIS. Again, we have that
\begin{equation*}
  C_{1324, 4321}(x) = C_{132, 4321}(x) \cdot C_{213, 4321}(x) = C_{132, 4321}(x)^2,
\end{equation*}
and the limit sequence of \(\{132, 4321\}\) is
\begin{equation*}
  1, 1, 2, 3, 5, 7, 10, 13, 17, 20, \ldots.
\end{equation*}
This is entry \href{https://oeis.org/A265250}{A265250} in the OEIS: the number of partitions of \(k\) having parts of at most two sizes. We prove that this is the case in Section \ref{subsec:132 4321}.

\begin{proposition} \label{prop:1324 4321}
  We have
  \begin{equation*}
    C_{1324, 4321}(x) = \left(1 + \sum_{k \geq 1} \frac{x^k}{1 - x^k} + \sum_{k \geq 1} \sum_{i \geq k+1} \frac{x^{k + i}}{(1 - x^k)(1 - x^i)}\right)^2.
  \end{equation*}
\end{proposition}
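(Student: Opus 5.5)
Since \(C_{1324,4321}(x) = C_{132,4321}(x)^2\) has already been observed, the plan is to show that
\[
C_{132,4321}(x) = 1 + \sum_{k\ge1}\frac{x^k}{1-x^k} + \sum_{k\ge1}\sum_{i\ge k+1}\frac{x^{k+i}}{(1-x^k)(1-x^i)}.
\]
The right-hand side is the generating function for partitions having parts of at most two distinct sizes. The three terms count, respectively, the empty partition, partitions with a single part size \(k\), and partitions whose two sizes \(k<i\) each occur at least once.

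First I would justify the factorization. For \(n \ge k+2\), every \(\pi\in\Av_n^k(1324,4321)\) is decomposable by \eqref{eq:comp inv length}, so \(\pi=\sigma\oplus\id\oplus\tau\) with \(\sigma\) an indecomposable \(132\)-avoider and \(\tau\) an indecomposable \(213\)-avoider. Such a \(\pi\) avoids \(4321\) iff both \(\sigma\) and \(\tau\) do, because \(4321\) is skew-indecomposable and so any occurrence lies in a single component. The map \(\tau\mapsto\tau^{\rc}\) sends \(\Av(213,4321)\) to \(\Av(132,4321)\) and preserves inversions. Hence it suffices to show that \(\Lambda\) restricts to a bijection from indecomposable \(\{132,4321\}\)-avoiders onto partitions with at most two distinct part sizes.

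The key step is the following claim: an indecomposable \(132\)-avoider \(\pi\) contains \(321\) \emph{at positions where the partition changes} only in a controlled way, namely \(\pi\) contains \(4321\) iff \(\lambda=\Lambda(\pi)\) has at least three distinct part sizes. I would prove this with Lemma~\ref{lem:132 partition properties}.

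1. By parts (b) and (c), \(\pi\) consists of maximal increasing runs, one run for each block of equal parts of \(\lambda\), followed by a final run corresponding to the trailing zeros.
2. Each run after a strict drop \(\lambda_i>\lambda_{i+1}\) begins at value \(\lambda_{i+1}+1\). So the run starts are strictly decreasing: they are \(\lambda_1+1,\ \lambda_{j_2}+1,\ \dots\), ending with \(1\) for the zero block.
3. If \(\lambda\) has \(d\) distinct sizes, there are \(d+1\) runs, and their first entries form a decreasing subsequence of length \(d+1\). Thus \(d\ge3\) forces \(4321\).
4. Conversely, a \(132\)-avoider is a union of increasing runs, and any decreasing subsequence uses at most one entry per run, since entries within a run increase. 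Hence with \(d\le2\) there are at most \(3\) runs and no \(4321\).

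Indecomposability means \(\lambda\) can be arbitrary, with \(\pi=\Lambda^{-1}(\lambda)\). So the restricted \(\Lambda\) is a bijection onto partitions with at most two part sizes, and \(|\lambda|=\inv(\pi)\).

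The main obstacle is making step 4 airtight: the maximal increasing runs of \(\pi\) are exactly the blocks indicated by Lemma~\ref{lem:132 partition properties}(b), so the number of runs, i.e.\ descents plus one, equals \(d+1\). This uses the fact that every descent of \(\pi\) corresponds to a strict drop in \(\lambda\), and that the trailing zero block adds exactly one run. Counting the generating function is then routine: choose one size \(k\) with multiplicity at least \(1\), or two sizes \(k<i\) each with multiplicity at least \(1\). Squaring gives the formula in the statement.
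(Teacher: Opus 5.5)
Your proof is correct and is essentially the paper's argument. The paper likewise uses that the entries between consecutive left-to-right minima of a \(132\)-avoider increase (these segments are exactly your runs), and applies Lemma~\ref{lem:132 partition properties} to match the left-to-right minima with the distinct parts of \(\Lambda(\pi)\) plus the zero block. In this way it proves, more generally, that \(\pi\) avoids the decreasing pattern of length \(m\) if and only if \(\Lambda(\pi)\) has at most \(m-2\) distinct parts.

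One slip in your factorization step: \(4321\) is skew-\emph{decomposable}, not skew-indecomposable. The property you actually need, and which does hold, is that \(4321\) is sum-indecomposable, so any occurrence in \(\sigma \oplus \mathrm{id} \oplus \tau\) lies within a single summand.
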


Table \ref{tab:1324 4321 diffs} shows that there is a secondary limit sequence
\begin{equation*}
  4, 10, 24, 46, 88, 144, \ldots,
\end{equation*}
which we do not understand. In Section \ref{sec:almost decomp}, we prove that \(\{1324, 4321\}\) is inversion monotone when \(n \geq \frac{k+7}{2}\).

\subsection{The remaining pairs} \label{subsec:rest pairs}

\subsection*{\(\bm{1324, 2341}\)}

The class \(\Av(1324, 2341)\) was first enumerated by Miner \cite{miner_enumeration_2016}. The limit sequence in Table \ref{tab:1324 2341} starts with
\begin{equation*}
  1, 2, 5, 8, 16, 26, 42, 66, 104, 156, \ldots,
\end{equation*}
which is not in the OEIS. Again, we have that
\begin{equation*}
  C_{1324, 2341}(x) = C_{132, 2341}(x) \cdot C_{213, 2341}(x) = C_{132, 2341}(x)^2,
\end{equation*}
and the limit sequence \(c_k(132, 2341)\) starts with
\begin{equation*}
  1, 1, 2, 2, 4, 5, 6, 9, 13, 15, \ldots.
\end{equation*}
Excluding the first term, this shows up as sequence \href{https://oeis.org/A056219}{A056219} in the OEIS: the number of partitions in the \emph{sand pile model} \(\SPM(k)\) -- a discrete dynamical system originating in physics. See Section \ref{subsec:132 2341 spm} for more details. The set \(\SPM(k)\) is defined recursively as follows: \((k) \in \SPM(k)\), and if \(\lambda \in \SPM(k)\) then every partition that can be obtained from \(\lambda\) by subtracting one from a part and adding one to the next part is also in \(\SPM(k)\). For example,
\begin{equation*}
  \SPM(5) = \left\{(5), (4, 1), (3, 2), (3, 1, 1), (2, 2, 1)\right\}.
\end{equation*}
The next result follows from Proposition \ref{prop:132 2341 spm}, and \cite{corteel_enumeration_2002} for the generating function.

\begin{proposition} \label{prop:1324 2341}
  Indecomposable \(\{132,2341\}\)-avoiders with \(k\) inversions are in bijection with \(\SPM(k)\), and thus
  \begin{equation*}
    C_{1324, 2341}(x) = \left(1 + \sum_{k \geq 1} x^{\frac{k(k+1)}{2}} \cdot \prod_{i=1}^k \left(x + \frac{1}{1 - x^i}\right)\right)^2.
  \end{equation*}
\end{proposition}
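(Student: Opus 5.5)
The plan has three steps: reduce the limit sequence to indecomposable \(\{132,2341\}\)-avoiders, build a bijection from these to sand pile partitions, and then square the known generating function.

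\emph{Reduction.} Let \(n \geq k+2\). By \eqref{eq:comp inv length}, every \(\pi \in \Av_n^k(1324,2341)\) is decomposable, so \(\pi = \sigma \oplus \id \oplus \tau\) with \(\sigma\) an indecomposable \(132\)-avoider and \(\tau\) an indecomposable \(213\)-avoider. Since \(\pi\) avoids \(2341\), a direct-sum argument shows the condition is simply that \(\sigma\) and \(\tau\) avoid \(2341\). Indeed, \(2341\) is indecomposable, so any occurrence must lie inside a single component. The map \(\tau \mapsto \tau^{\rc}\) sends \(\Av(213,2341)\) to \(\Av(132,(2341)^{\rc}) = \Av(132,4123)\). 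Taking inverses sends this to \(\Av(132^{-1}, 2341) = \Av(132,2341)\), using \((4123)^{-1} = 2341\). This justifies \(C_{1324,2341} = C_{132,2341}^2\), where \(c_k(132,2341)\) counts indecomposable \(\{132,2341\}\)-avoiders with \(k\) inversions (padding with the identity does not affect avoidance).

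\emph{Bijection.} Use \(\Lambda\). An indecomposable \(132\)-avoider \(\pi\) corresponds to a partition \(\lambda = \Lambda(\pi)\), and the task is to characterize \(2341\)-avoidance in terms of \(\lambda\). I would use Lemma~\ref{lem:132 partition properties}: ascents of \(\pi\) occur exactly at runs of equal parts, and descents occur at drops, where \(\pi_{i+1} = \lambda_{i+1}+1\). An occurrence of \(234\) followed by a smaller \(1\) translates into a condition on the run structure. The target characterization is that \(\pi\) avoids \(2341\) if and only if consecutive distinct part values of \(\lambda\) differ by at most one, and part sizes appear with suitable multiplicity constraints. Concretely, \(\lambda\) should be the conjugate of a partition in \(\SPM(k)\). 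Sand pile partitions are characterized (Goles--Kiwi) as partitions with \(\lambda_i - \lambda_{i+1} \in \{0,1\}\), with at most one plateau of length two, and ending in a \(1\), up to conjugation. Accordingly, I would translate ``no \(2341\)'' into ``the Lehmer code of \(\pi\) has no two equal adjacent parts after the first repeat'' (or its conjugate form) and match it to this characterization.

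\emph{Generating function.} With the bijection in hand, quote \cite{corteel_enumeration_2002} for \(\sum_k |\SPM(k)| x^k\) in the displayed form. Adjoin the constant term \(1\) for \(k=0\) and square to obtain the result.

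The main obstacle is the middle step: pinning down exactly which partitions correspond to \(2341\)-avoiders. I would first compute small cases to guess the right description, either directly on \(\lambda\) or on its conjugate, and only then prove both directions via Lemma~\ref{lem:132 partition properties}.
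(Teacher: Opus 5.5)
Your reduction is correct and matches the paper. The map \(\tau \mapsto (\tau^{\rc})^{-1}\) sends \(\{213,2341\}\) to \(\{132,2341\}\), which gives \(C_{1324,2341} = C_{132,2341}^2\). Your final step of quoting Corteel--Gouyou-Beauchamps for \(\sum_k |\SPM(k)|x^k\) and squaring is also what the paper does.

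The gap is the middle step, which is the whole content of the proposition. You do not establish the bijection, and each concrete target you name is wrong, so the plan cannot be completed as written.
\begin{itemize}
\item The description you attribute to Goles--Kiwi (differences in \(\{0,1\}\), at most one plateau of length two, last part \(1\)) characterizes only the fixed point of the sand pile. For \(k=5\) only \((2,2,1)\) qualifies, whereas \(|\SPM(5)|=5\).
\item The conjugation guess is false. We have \((5)\in\SPM(5)\), and \(\Lambda^{-1}((5)) = 612345\) avoids \(2341\). Its conjugate \((1,1,1,1,1)\) gives \(234561\), which contains \(2341\). In fact \(\Lambda\) maps \(I_k(132,2341)\) onto \(\SPM(k)\) itself, with no conjugation.
\item The rule ``no two equal adjacent parts after the first repeat'' rejects \((3,3,1,1)\in\SPM(8)\). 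Its preimage \(45231\) avoids \(2341\).
\end{itemize}

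The missing ingredient is Phan's characterization (Proposition~\ref{prop:spm}). A partition \(\lambda\) lies in \(\SPM(k)\) if and only if it has no three equal parts, and any two plateaus \(\lambda_{i_1}=\lambda_{i_1+1}\) and \(\lambda_{i_3}=\lambda_{i_3+1}\) with \(i_1<i_3\) are separated by some drop \(\lambda_{i_2}-\lambda_{i_2+1}\ge 2\). Both conditions are visible through Lemma~\ref{lem:132 partition properties}:
\begin{itemize}
\item Three equal parts give \(\pi_i<\pi_{i+1}<\pi_{i+2}\), followed later by the entry \(1\), which is a \(2341\).
\item Two plateaus joined only by unit drops force \(\pi_{i_1+2},\ldots,\pi_{i_3}\) to be left-to-right minima with consecutive values \(\lambda_j+1\). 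Hence \(\pi_{i_3+1}>\pi_{i_1+1}\), and \(\pi_{i_1}\pi_{i_1+1}\pi_{i_3+1}1\) is a \(2341\).
\item Conversely, an occurrence of \(2341\) in a \(132\)-avoider can be normalized: its \(1\) is the entry \(1\), its first two entries are adjacent, and the entries strictly between its second and third entries descend by consecutive values. Reading off \(\lambda\) with the lemma then gives a violation of one of Phan's two conditions.
\end{itemize}
Without this characterization and both directions of the translation, you have no bijection to \(\SPM(k)\) to which the Corteel--Gouyou-Beauchamps formula can be applied.
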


The differences \(\av_{n+1}^k(1324, 2341) - \av_n^k(1324, 2341)\) in Table \ref{tab:1324 2341 diffs} exhibit a secondary limit sequence
\begin{equation*}
  3, 7, 17, 31, 60, 104, 170, \ldots,
\end{equation*}
which we have no explanation for.

\subsection*{\(\bm{1324, 2413}\)}

The class \(\Av(1324, 2413)\) is equinumerous to the smooth permutations (OEIS \href{https://oeis.org/A032351}{A032351}) as shown by Bóna \cite{bona_smooth_1998}. According to Table \ref{tab:1324 2413} the limit sequence is
\begin{equation*}
  1, 2, 5, 10, 20, 36, 65, 110, 185, 300, \ldots,
\end{equation*}
which is given by \(P(x)^2\), where \(P(x)\) is the generating function for the partition numbers. This is the same as the limit sequence of the \(1324\)-avoiders.

\begin{proposition} \label{prop:1324 2413}
  For all \(k\) and \(n \geq k + 2\),
  \begin{equation*}
    \av_n^k(1324, 2413) = \av_n^k(1324) = \sum_{i = 0}^k p(i) p(k-i).
  \end{equation*}
  In particular, \(C_{1324, 2413}(x) = P(x)^2\).
\end{proposition}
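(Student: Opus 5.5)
The plan is to reduce to the decomposable structure, exactly as for \(1324\) in Section \ref{sec:preliminaries}. Let \(n \geq k+2\). By \eqref{eq:comp inv length}, every \(\pi \in \Av_n^k(1324)\) is decomposable, so by the observation of \cite{claesson_upper_2012} we can write \(\pi = \sigma \oplus {\id} \oplus \tau\). Here \(\sigma\) is an indecomposable \(132\)-avoider and \(\tau\) is an indecomposable \(213\)-avoider, where \(\sigma\) or \(\tau\) may be trivial, i.e.\ a single point absorbed into the identity block. Since \(\Av_n^k(1324, 2413) \subseteq \Av_n^k(1324)\), it suffices to show that every such \(\pi\) automatically avoids \(2413\). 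The stated formula and \(C_{1324,2413}(x) = P(x)^2\) then follow immediately from the computation of \(\av_n^k(1324)\) in Section \ref{sec:preliminaries}.

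To see that \(\pi\) avoids \(2413\), first note that \(2413\) is indecomposable. Hence any occurrence of \(2413\) in a direct sum must lie entirely within a single component. The components of \(\pi\) are \(\sigma\), the singletons of \({\id}\), and \(\tau\), so it is enough to check that \(\sigma\) and \(\tau\) avoid \(2413\).

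This follows from containment of smaller patterns. The pattern \(2413\) contains \(132\), namely the subsequence \(243\). So every \(132\)-avoider, in particular \(\sigma\), avoids \(2413\). Likewise \(2413\) contains \(213\), namely the subsequence \(213\) formed by its entries \(2, 1, 3\). So \(\tau\) avoids \(2413\) as well. Therefore \(\Av_n^k(1324, 2413) = \Av_n^k(1324)\) for \(n \geq k+2\), and the result follows.

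There is no real obstacle in this argument. The only point needing care is the justification that occurrences of an indecomposable pattern cannot straddle components of a direct sum. The reason is that any subsequence drawn from two or more summands of \(\pi\) is itself order-isomorphic to a direct sum of nonempty pieces, and so cannot be indecomposable.
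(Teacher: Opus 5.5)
Your proof is correct and follows the paper's argument. Decomposable \(1324\)-avoiders have the form \(\sigma \oplus \mathrm{id} \oplus \tau\), and since \(2413\) contains both \(132\) and \(213\), such permutations automatically avoid \(2413\); you also make explicit the step the paper leaves implicit, namely that \(2413\) is indecomposable, so any occurrence must lie inside a single component.
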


\begin{proof}
  Observe that \(2413\) contains both \(132\) and \(213\) as patterns. Therefore a decomposable permutation avoids \(\{1324, 2413\}\) if and only if it avoids \(1324\).
\end{proof}

Table \ref{tab:1324 2413 diffs} shows that the differences \(\av_{n+1}^k(1324, 2413) - \av_n^k(1324, 2413)\) do not exhibit a secondary limit sequence. However, there is a tertiary limit sequence
\begin{equation*}
  3, 6, 15, 30, 60, 108, 195, 330, 555, \ldots,
\end{equation*}
which agrees with the limit sequence multiplied by three. We have not been able to prove this.

\subsection*{\(\bm{1324, 2431}\)}

The class \(\Av(1324, 2431)\) was enumerated by Albert, Atkinson and Vatter \cite{albert_inflations_2014}. Table \ref{tab:1324 2431} shows that the limit sequence is
\begin{equation*}
  1, 2, 5, 10, 19, 34, 59, 97, 158, 250, \ldots,
\end{equation*}
which does not appear in the OEIS. Since \(2431\) contains \(132\) as a pattern, we have 
\begin{equation*}
  C_{1324, 2431}(x) = C_{132}(x) \cdot C_{213, 2431}(x) = P(x) \cdot C_{213, 2431}(x).
\end{equation*}
The limit sequence of \(\{213, 2431\}\) is
\begin{equation*}
  1, 1, 2, 3, 4, 6, 8, 10, 14, 19, \ldots,
\end{equation*}
which does not appear in the OEIS either. In Section \ref{subsec:132 3241 steep} we are able to interpret this sequence as the number of partitions of \(k\) such that the difference between any two consecutive distinct parts is at least the multiplicity of the smaller part. We call these partitions \emph{steep}. We have not been able to find an expression for the generating function of \(\operatorname{steep}(k)\), the number of steep partitions of \(k\).

\begin{proposition} \label{prop:1324 2431}
  We have
  \begin{equation*}
    C_{1324, 2431}(x) = P(x) \cdot \sum_{k \geq 0} \operatorname{steep}(k) x^k,
  \end{equation*}
  where \(\operatorname{steep}(k)\) is the number of steep partitions of \(k\).
\end{proposition}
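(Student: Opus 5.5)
The reduction runs exactly as in Propositions~\ref{prop:1324 1342} and \ref{prop:1324 1432}; the real content is a bijection for one factor.

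\emph{Reduction.} Fix \(k\) and take \(n \geq k+2\). By \eqref{eq:comp inv length}, every \(\pi \in \Av_n^k(1324, 2431)\) is decomposable. Hence it has the form \(\pi = \sigma \oplus \id \oplus \tau\), with \(\sigma\) an indecomposable \(132\)-avoider and \(\tau\) an indecomposable \(213\)-avoider.

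Next I check when such a \(\pi\) avoids \(2431\). Since \(2431\) is indecomposable, any occurrence of it lies inside a single component. Because \(2431\) contains \(132\), it cannot occur in \(\sigma\). So \(\pi\) avoids \(\{1324, 2431\}\) if and only if \(\tau \in \Av(213, 2431)\).

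The map \(\pi \mapsto (\Lambda(\sigma), \tau)\) therefore shows
\begin{equation*}
  C_{1324, 2431}(x) = P(x) \cdot C,
\end{equation*}
where \(C\) is the generating function, by inversions, of indecomposable \(\{213, 2431\}\)-avoiders. Applying \(\tau \mapsto \tau^{\rc}\), these correspond to indecomposable \(\{132, 3241\}\)-avoiders, with inversions preserved (\(2431^{\rc} = 3241\)). It remains to show that the indecomposable \(\{132, 3241\}\)-avoiders with \(k\) inversions are in bijection with steep partitions of \(k\), via \(\Lambda\).

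\emph{Main step.} Let \(\rho\) be an indecomposable \(132\)-avoider and \(\lambda = \Lambda(\rho)\). Write \(\lambda\) as blocks of equal parts \(v_1 > v_2 > \cdots > v_s\), with multiplicities \(m_1, \ldots, m_s\). By Lemma~\ref{lem:132 partition properties}\ref{lem:132 partition property equal},\ref{lem:132 partition property drop}, each block is an increasing run in \(\rho\), and the run of value \(v_j\) starts at the entry \(v_j + 1\). Part \ref{lem:132 partition property def} then says the run fills the \(m_j\) smallest unused values exceeding \(v_j\).

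The claim to prove is that \(\rho\) contains \(3241\) exactly when some consecutive pair of blocks has \(v_j - v_{j+1} < m_{j+1}\).

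\begin{itemize}
  \item[(\(\Leftarrow\))] Suppose \(v_j - v_{j+1} < m_{j+1}\). Then the increasing run for block \(j+1\) must jump over values already used by earlier runs. That run starts at \(v_{j+1}+1\) and, having \(m_{j+1} > v_j - v_{j+1}\) entries, it must pass above \(v_j + 1\), the first entry of block \(j\). Take a suitable earlier large entry as the ``3'', the entry \(v_j+1\) as the ``2'', a late element of run \(j+1\) exceeding it as the ``4'', and a final small entry (for example the last entry of \(\rho\), or a small value after the runs) as the ``1''. This gives \(3241\).
  \item[(\(\Rightarrow\))] Conversely, take an occurrence of \(3241\) in a \(132\)-avoider. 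The ``2'' and ``4'' must lie in one increasing run, or be linked through runs. Applying Lemma~\ref{lem:132 partition properties}, I would locate adjacent blocks that violate the inequality.
\end{itemize}

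The main obstacle is this characterization. Two points need care. First, checking only consecutive blocks should suffice, which needs a monotonicity argument: if a non-adjacent pair of blocks fails, some adjacent pair already fails. Second, the ``1'' of the pattern must genuinely exist; indecomposability is used here, since the final entry \(\rho_n\) is small enough.

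I would first test the criterion against the data \(1,1,2,3,4,6,8,10,\ldots\) for small \(k\), to fix its exact form: whether the multiplicity compared is that of the smaller part, and whether the inequality is strict. Only then would I write the two directions via the run structure. Once the bijection is established, the displayed formula follows immediately.
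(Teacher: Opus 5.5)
Your reduction matches the paper's, with one slip. \(2431^{\rc} = 4213\), not \(3241\), so \(\tau \mapsto \tau^{\rc}\) lands in \(\{132, 4213\}\). The image of that class under \(\Lambda\) is not the set of steep partitions: \(\Lambda(4213) = (3,1)\) is steep, while \(3241\) avoids \(4213\) and \(\Lambda(3241) = (2,1,1)\) is not steep. Use \(\tau \mapsto (\tau^{-1})^{\rc}\), as the paper does; it preserves inversions and indecomposability and sends \(213 \mapsto 132\) and \(2431 \mapsto 3241\).

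The paper then simply cites Proposition~\ref{prop:132 3241 steep}. Your ``main step'' is exactly that proposition, and your argument for it has a genuine gap. In \((\Leftarrow)\) you take the ``2'' to be \(v_j + 1\), the \emph{first} entry of block \(j\), with an earlier larger entry as the ``3''. This fails already for \(\lambda = (2,1,1)\), \(\rho = 3241\): the violation is at \(j = 1\), and \(v_1 + 1 = 3 = \rho_1\) has nothing before it. It is not only an edge case: for \(\lambda = (5,2,1,1)\), \(\rho = 632415\), the only candidate ``3'' is \(6\), which exceeds every possible ``4''.

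The working choice is as follows. The ``3'' is the \emph{last} entry \(\rho_i\) of block \(j\), and the ``2'' is \(\rho_{i+1} = v_{j+1}+1\). The ``4'' is an entry of run \(j+1\) exceeding \(v_j\); it exceeds \(\rho_i\) because \(\rho_i\) was the smallest value above \(v_j\) still unused at step \(i\). The ``1'' is the entry \(1\). It follows run \(j+1\) because those entries have positive code, and everything after \(1\) in a \(132\)-avoider is increasing.

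The \((\Rightarrow)\) direction is only announced, and the difficulty you flag is not the real one, since steepness compares only consecutive blocks. What is missing is a normalization of the occurrence, which the paper calls routine:
\begin{enumerate}
\item Take the ``1'' to be the entry \(1\). If \(1\) precedes the ``4'', then \(1\), the ``4'' and the ``1'' form a \(132\); otherwise \(1\) can replace the ``1''.
\item Choose the occurrence with the ``3'' and ``2'' as close as possible. An entry strictly between them falls into one of three cases: it lies below the ``3'' and can replace the ``2''; it lies between the ``3'' and ``4'' and can replace the ``3''; or it lies above the ``4'' and forms a \(132\) with the ``3'' and ``4''. Hence they sit at adjacent positions \(i, i+1\).
\item Among such configurations, let \(s\) be the first position after \(i+1\) with \(\rho_s > \rho_i\), and minimize \(s - i\). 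A descent inside \([i+1,s]\) would give a shorter configuration, so \(\rho\) increases on \([i+1,s]\).
\end{enumerate}
Lemma~\ref{lem:132 partition properties} then gives \(\lambda_i > \lambda_{i+1} = \cdots = \lambda_s \geq 1\) and \(\lambda_i - \lambda_{i+1} = s-i-1\). This is less than the multiplicity of \(\lambda_{i+1}\), which is at least \(s-i\), so \(\lambda\) is not steep.
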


\begin{proof}
  In Proposition \ref{prop:132 3241 steep}, we show that the indecomposable \(\{132, 3241\}\)-avoiders with \(k\) inversions are in bijection with the steep partitions of \(k\). The mapping \(\pi \mapsto (\pi^{-1})^{\rc}\) sends \(\{213, 2431\}\) to \(\{132, 3241\}\), so the claim follows.
\end{proof}

Table \ref{tab:1324 2431 diffs} shows that there is no secondary limit sequence, but there is a tertiary limit sequence
\begin{equation*}
  0, 1, 2, 5, 10, \ldots.
\end{equation*}
Excluding the zero, this seems to be equal to the limit sequence itself.

\subsection*{\(\bm{1324, 3412}\)}

The class \(\Av(1324, 3412)\) was enumerated by Albert, Atkinson and Brignall \cite{albert_enumeration_2011}. Table \ref{tab:1324 3412} shows the values \(\av_n^k(1324, 3412)\), and the limit sequence is
\begin{equation*}
  1, 2, 5, 10, 18, 34, 57, 96, 154, 246, \ldots.
\end{equation*}
Since \(3412\) contains neither \(132\) nor \(213\) and \(3412^{\rc} = 3412\), we have
\begin{equation*}
  C_{1324, 3412}(x) = C_{132, 3412}(x) \cdot C_{213, 3412}(x) = C_{132, 3412}(x)^2,
\end{equation*}
and the limit sequence \(c_k(132, 3412)\) is
\begin{equation*}
  1, 1, 2, 3, 4, 7, 9, 13, 17, 25, \ldots.
\end{equation*}
This sequence appears as entry \href{https://oeis.org/A005576}{A005576} in the OEIS, which arises from certain \emph{penny arrangements} -- see Section \ref{subsec:132 3412} for details.

\begin{proposition} \label{prop:1324 3412}
  We have
  \begin{equation*}
    C_{1324, 3412}(x) = \left(\,\sum_{k \geq 0} \href{https://oeis.org/A005576}{\operatorname{A005576}}(k) x^k\right)^2.
  \end{equation*}
\end{proposition}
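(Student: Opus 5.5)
The argument follows the template used for the other pairs: reduce to counting indecomposable pieces, then identify that count with the penny-arrangement sequence. By Proposition~\ref{prop:lim seq existence}, \(\{1324, 3412\}\) has a limit sequence, since \(\inv(1324) = 1\). To compute it, I take \(n \geq k+2\). By \eqref{eq:comp inv length} every \(\pi \in \Av_n^k(1324,3412)\) is then decomposable, and since it avoids \(1324\) it has the form \(\pi = \sigma \oplus \id \oplus \tau\), with \(\sigma\) an indecomposable \(132\)-avoider and \(\tau\) an indecomposable \(213\)-avoider.

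The first step is to check when such a sum avoids \(3412\). The pattern \(3412\) is itself indecomposable: it is a skew sum \(12 \ominus 12\), and any direct-sum split \(\{1,\dots,j\}\oplus\) rest fails because \(3412\) begins with \(3\). An occurrence of an indecomposable pattern in a direct sum must lie entirely inside one summand. So \(\pi\) avoids \(3412\) if and only if both \(\sigma\) and \(\tau\) do.

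Counting inversions gives \(\inv(\pi) = \inv(\sigma)+\inv(\tau)\). Hence \(C_{1324,3412}(x) = C_{132,3412}(x)\cdot C_{213,3412}(x)\), where each factor is the generating function of the corresponding indecomposable avoiders by inversions. Because \(3412^{\rc}=3412\) and \(132^{\rc}=213\), the map \(\tau\mapsto\tau^{\rc}\) is an inversion-preserving bijection between indecomposable \(\{213,3412\}\)-avoiders and indecomposable \(\{132,3412\}\)-avoiders. This yields the square \(C_{132,3412}(x)^2\).

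The remaining, and main, step is to show that the number of indecomposable \(\{132,3412\}\)-avoiders with \(k\) inversions equals \(\operatorname{A005576}(k)\). My plan is to use the bijection \(\Lambda\) of \eqref{eq:132 partition bijection} together with Lemma~\ref{lem:132 partition properties}. This translates \(3412\)-containment in \(\pi=\Lambda^{-1}(\lambda)\) into a condition on the partition \(\lambda\), roughly as follows.
\begin{itemize}
\item By parts (b) and (c) of the lemma, ascents of \(\pi\) correspond to repeated parts of \(\lambda\).
\item A \(3412\) occurrence needs two ascents \(\pi_a<\pi_b\) and \(\pi_c<\pi_d\) with the second pair entirely below the first.
\item I expect the avoidance condition to become a convexity or gap condition on the multiplicities and part sizes, such as ``once a part is repeated, the subsequent distinct parts must drop by at least a prescribed amount''.
\end{itemize}
I would then match this class of partitions with the convex penny arrangements of A005576, read through their complement as Table~\ref{tab:lim seqs} indicates. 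I expect deriving the exact partition condition, and matching it with the OEIS description, to be the obstacle. I would first work out small cases by hand, verify them against the data \(1,1,2,3,4,7,9,13,\dots\), and then construct the bijection row by row. These details are deferred to Section~\ref{subsec:132 3412}, where Proposition~\ref{prop:1324 3412} follows from this enumeration.
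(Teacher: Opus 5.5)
Your reduction is correct and is the same as the paper's. For \(n \geq k+2\) every \(\pi \in \Av_n^k(1324,3412)\) has the form \(\sigma \oplus \id \oplus \tau\), the indecomposability of \(3412\) confines its occurrences to \(\sigma\) or \(\tau\), and reverse-complementation gives \(C_{1324,3412}(x) = C_{132,3412}(x)^2\).

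The gap is that the real content of the proposition, the identity \(i_k(132,3412) = \operatorname{A005576}(k)\), is only announced and deferred. Moreover, the condition you anticipate is inverted:
\begin{itemize}
\item For \(\lambda = (2,2)\) one gets \(\Lambda^{-1}(\lambda) = 3412\). So a repeated part followed by a drop of two (here to the trailing zero) must be \emph{excluded}, not required.
\item For \(\lambda = (2,2,1,1)\) one gets \(34251\), which avoids \(3412\) despite having two repeated parts.
\end{itemize}
In your second bullet, the \(34\) of an occurrence can indeed be taken to be an adjacent ascent, i.e.\ a repeated part. The \(12\) beneath it is witnessed by a later drop of at least two, not by a second ascent.

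The missing step is Proposition~\ref{prop:132 3412 penny}. Let \(\lambda = \Lambda(\pi)\), padded by zeros. An indecomposable \(132\)-avoider \(\pi\) avoids \(3412\) if and only if there are no \(i<j\) with \(\lambda_i = \lambda_{i+1}\) and \(\lambda_j - \lambda_{j+1} \geq 2\). Both directions are short.
\begin{itemize}
\item Suppose \(\lambda_i = \lambda_{i+1}\). Then \(\pi_i < \pi_{i+1}\), and no entry after position \(i+1\) has a value between them. If also \(\lambda_j - \lambda_{j+1} \geq 2\) for some \(j>i\), count the entries after position \(j\) that are smaller than \(\pi_j\). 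This yields \(\ell > j+1\) with \(\pi_{j+1} < \pi_\ell < \pi_j\). By \(132\)-avoidance, \(\pi_\ell < \pi_i\), so \(\pi_i \pi_{i+1} \pi_{j+1} \pi_\ell\) is an occurrence of \(3412\).
\item Conversely, \(132\)-avoidance lets you slide an occurrence of \(3412\) until its \(3\) and \(4\) occupy adjacent positions \(i, i+1\). Let \(q\) be the first position after \(i+1\) whose value lies below the \(2\) of the occurrence. Then \(\pi_{q-1}\) exceeds that \(2\). Now \(\lambda_{q-1}\) counts \(\pi_q\), that \(2\), and everything counted by \(\lambda_q\). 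Hence \(\lambda_{q-1} - \lambda_q \geq 2\) with \(q-1 > i\).
\end{itemize}

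You would then still need the match with A005576. The diagonal lengths of the complement of a penny arrangement in the full triangle form a partition; this is the map \(\Pi\) of Section~\ref{subsec:132 3412}. A row of the arrangement is non-contiguous exactly when a repeated part of that partition precedes a drop of at least two. Without these two steps, your argument proves only \(C_{1324,3412}(x) = C_{132,3412}(x)^2\).
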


By Table \ref{tab:1324 3412} there is no secondary limit sequence, but there is a tertiary limit sequence
\begin{equation*}
  0, 2, 4, 10, 20, 36, 68, 114, \ldots.
\end{equation*}
Excluding the zero, this seems to be exactly the limit sequence multiplied by two.

\subsection*{\(\bm{1324, 3421}\)}

The class \(\Av(1324, 3421)\) was enumerated by Albert, Atkinson and Brignall \cite{albert_enumeration_2012}. According to Table~\ref{tab:1324 3421}, its limit sequence is
\begin{equation*}
  1, 2, 5, 10, 20, 34, 61, 98, 159, 246, \ldots,
\end{equation*}
which does not appear in the OEIS. As in some of the previous cases, we have that
\begin{equation*}
  C_{1324, 3421}(x) = C_{132, 3421}(x) \cdot C_{213, 3421}(x) = C_{132, 3421}(x)^2,
\end{equation*}
and the limit sequence \(c_k(132, 3421)\) is
\begin{equation*}
  1, 1, 2, 3, 5, 6, 10, 12, 17, 22, \ldots.
\end{equation*}
This sequence is entry \href{https://oeis.org/A115029}{A115029} in the OEIS: the number of partitions of \(k\) such that all parts, except possibly the smallest, are distinct. We prove this in Section \ref{subsec:132 3421}.

\begin{proposition} \label{prop:1324 3421}
  We have
  \begin{equation*}
    C_{1324, 3421}(x) = \left(1 + \sum_{k \geq 1} \frac{x^k}{1 - x^k} \cdot \prod_{i \geq k+1} \big(1+ x^i\big)\right)^2.
  \end{equation*}
\end{proposition}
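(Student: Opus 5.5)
The approach follows the pattern of the previous propositions: reduce to indecomposable \(\{132,3421\}\)-avoiders, biject those with partitions via \(\Lambda\), and read off the generating function.

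\emph{Reduction.} For \(n \geq k+2\), every \(\pi \in \Av_n^k(1324,3421)\) is decomposable by \eqref{eq:comp inv length}. Hence \(\pi = \sigma \oplus \id \oplus \tau\) with \(\sigma\) an indecomposable \(132\)-avoider and \(\tau\) an indecomposable \(213\)-avoider. An occurrence of \(3421\) meeting two components would need an entry of a later component below an entry of an earlier one, which is impossible in a direct sum. So an occurrence of \(3421\) lies inside one component, and \(\pi\) avoids \(3421\) if and only if \(\sigma\) and \(\tau\) do. Since \(3421^{\rc}=4312\) and \(4312^{-1}=3421\), the map \(\tau \mapsto (\tau^{-1})^{\rc}\) sends indecomposable \(\{213,3421\}\)-avoiders to indecomposable \(\{132,3421\}\)-avoiders, preserving inversions. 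This gives \(C_{1324,3421}(x)=C_{132,3421}(x)^2\), where \(C_{132,3421}\) is the generating function of indecomposable \(\{132,3421\}\)-avoiders by inversions. It then suffices to show that, under \(\Lambda\), these correspond exactly to partitions in which every part except possibly the smallest is distinct.

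\emph{The bijection.} Let \(\lambda=\Lambda(\pi)\) with \(\pi\) an indecomposable \(132\)-avoider. I would use Lemma~\ref{lem:132 partition properties} to decide when \(\pi\) contains \(3421\). For a \(132\)-avoider, an occurrence of \(3421\) should amount to an ascent \(\pi_i<\pi_{i+1}\) (equivalently \(\lambda_i=\lambda_{i+1}\)) followed by two later entries smaller than \(\pi_i\) that appear in decreasing order.

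Suppose \(\lambda_i=\lambda_{i+1}=c\) with \(c>\lambda_j\) for some later \(j\), so the repeated value is not the smallest part. At position \(i+1\) the inversion count is still \(c\). Position \(i\) also has \(c\) smaller entries to its right, but \(\pi_{i+1}\) is not one of them, so these \(c\) entries lie after \(i+1\). I want to show they contain a descending pair. If they all appeared in increasing order, then by \(132\)-avoidance together with Lemma~\ref{lem:132 partition properties}\ref{lem:132 partition property drop}, each drop in \(\lambda\) after position \(i+1\) places the value \(\lambda_{j}+1\), and this would force the partition to end at value \(c\). That contradicts \(c>\lambda_j\). Conversely, if the repeated parts are equal to the smallest nonzero part \(s\), the entries after the last block of \(s\)'s are exactly the \(s\) values below, placed increasingly (trailing zeros mean an increasing tail). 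Any pair of smaller entries after an ascent is then increasing, so no \(3421\) can occur.

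\emph{Generating function.} Choose the smallest part \(k\) with multiplicity \(\geq 1\), giving \(x^k/(1-x^k)\), and distinct larger parts, giving \(\prod_{i\geq k+1}(1+x^i)\). Together with the empty partition this yields the stated formula, which is then squared.

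The main obstacle is the bijection step. Making precise that "repeated non-minimal part \(\iff\) contains \(3421\)" requires careful tracking of the values via Lemma~\ref{lem:132 partition properties}\ref{lem:132 partition property def}. I would first test the argument on small examples, for instance \(\lambda=(2,2,1)\), for which \(\pi=3421\) itself. That example confirms the direction that a repeated larger part forces the pattern.
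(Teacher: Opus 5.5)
Your proposal is correct and follows essentially the paper's route: the decomposition \(\sigma\oplus\id\oplus\tau\) together with the symmetry \(\tau\mapsto(\tau^{-1})^{\rc}\) gives \(C_{132,3421}(x)^2\), and the characterization via \(\Lambda\) uses Lemma~\ref{lem:132 partition properties} as in Proposition~\ref{prop:132 3421}, where a drop at \(j\) with \(0<\lambda_j<\lambda_i\) gives \(\pi_j=\lambda_j+1<\pi_i\) and hence the occurrence \(\pi_i\pi_{i+1}\pi_j1\). For the converse you still need to justify reducing to an adjacent ascent, as the paper does: entries between the \(3\) and \(4\) of an occurrence cannot lie below the \(2\), since that would create a \(132\). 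You should also note that only the monotonicity of the tail after the last block of \(s\)'s is used; that tail need not consist of exactly \(s\) values, e.g.\ \(\Lambda^{-1}((3,1))=4213\).
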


See Table \ref{tab:1324 3421 diffs} for the differences \(\av_{n+1}^k(1324, 3421) - \av_n^k(1324, 3421)\). There is a secondary limit sequence
\begin{equation*}
  4, 10, 24, 52, 103, 185, \ldots,
\end{equation*}
that we do not understand.

\section{Almost decomposability} \label{sec:almost decomp}

In this section, we provide a wide class of patterns \(p\) such that
\begin{equation*}
  \av_n^k(1324, p) \leq \av_{n+1}^k(1324, p) \quad \text{for all } k \text{ and } n \geq \frac{k+7}{2}.
\end{equation*}
In particular, this holds for \(p \in \{1342, 1432, 4231, 4321\}\), which covers all the half-monotone pairs studied in Section \ref{subsec:half mono pairs}. In the case of \(p = 1342\), we also obtain an enumeration of \(\av_n^k(1324, p)\) for \(n \geq \frac{k+7}{2}\). Our methods are based on the notion of \emph{almost decomposability} introduced in \cite{linusson_enumerating_2025}, as well as the accompanying injective mapping \(f\) defined on almost decomposable \(1324\)-avoiders, with the following properties: \(f(\pi)\) avoids \(1324\), \(\inv(f(\pi)) = \inv(\pi)\) and \(|f(\pi)| = |\pi| + 1\).

Let us recall the preliminaries. We say that \(\pi \in S_n\) is \emph{almost decomposable} if it is indecomposable, but there exists an entry \(e \in \{1, \pi_1, n, \pi_n\}\) such that \(\pi \delete e\) is decomposable.

\begin{theorem}[Theorem 8 in \cite{linusson_enumerating_2025}] \label{thm:almost decomp 1324}
  If \(\pi\) avoids \(1324\) and \(\inv(\pi) \leq 2|\pi| - 7\), then \(\pi\) is decomposable or almost decomposable.
\end{theorem}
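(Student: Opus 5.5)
The plan is to argue by contraposition. Let \(\pi \in \Av_n(1324)\) be indecomposable, and assume \(\pi \delete e\) is indecomposable for every \(e \in \{1, \pi_1, n, \pi_n\}\). The goal is to show \(\inv(\pi) \geq 2n-6\).

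The basic tool is the cut-covering view of indecomposability. A permutation \(\sigma \in S_n\) is indecomposable exactly when each of the \(n-1\) cuts (between positions \(i\) and \(i+1\)) is crossed by some inversion, meaning a pair \(a<b\) with \(a \leq i < b\) and \(\sigma_a > \sigma_b\). This is also the idea behind \eqref{eq:comp inv length}. The hypothesis that \(\pi \delete e\) stays indecomposable says that every cut of \(\pi\) not adjacent to the position of \(e\) is crossed by an inversion not involving \(e\). So I would split the inversions of \(\pi\) into those touching the ``extreme'' entries \(1, \pi_1, n, \pi_n\) and the remaining ``inner'' ones. I would then show that each family must be large:
\begin{itemize}
\item The extreme inversions contribute \((n - \mathrm{pos}(n)) + (\mathrm{pos}(1) - 1) + (\pi_1 - 1) + (n - \pi_n)\), minus at most a constant for double counting. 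The overlaps are pairs among the four entries themselves, so there are at most six of them.
\item The inner inversions must by themselves cover almost every cut, so there are at least about \(n-5\) of them. Deleting any single extreme entry leaves an indecomposable permutation, and a cut crossed only by inversions touching extreme entries would have to be crossed by two different extreme entries.
\end{itemize}

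Next I would split into cases by the relative positions of \(1\) and \(n\). This is where \(1324\)-avoidance enters.
\begin{itemize}
\item Case \(\mathrm{pos}(n) < \mathrm{pos}(1)\). Here \(n\) alone crosses the cuts from \(\mathrm{pos}(n)\) to the end, and \(1\) alone crosses the cuts from the start to \(\mathrm{pos}(1)\). Together they give at least \(n-1\) inversions covering every cut. Removing \(n\) (or \(1\)) keeps \(\pi\) indecomposable, so the inner entries must independently cover all cuts except a bounded number near the ends. That yields at least \(n-5\) further inversions.
\item Case \(\mathrm{pos}(1) < \mathrm{pos}(n)\). Entries left of \(1\) and right of \(n\) interact under \(1324\)-avoidance. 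Every entry between \(1\) and \(n\) in position forms a \(1\,x\,\ldots\,n\) configuration, so the entries strictly between them must avoid \(21\) relative to each other. In other words, the middle segment is increasing. The cuts inside this increasing middle segment must then be covered by inversions from \(\pi_1\) and \(\pi_n\), or from other entries outside the segment. I would use the hypotheses on \(\pi \delete \pi_1\) and \(\pi \delete \pi_n\) to force two independent coverings of the middle cuts. I would use the hypotheses on \(\pi \delete 1\) and \(\pi \delete n\) to do the same for the outer cuts.
\end{itemize}

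The main obstacle will be the bookkeeping of how many cuts near the ends can be covered only once. This slack is what turns the naive bound of about \(2n-2\) into \(2n-6\). I expect the sharp constant to come from a few small configurations, for instance \(\pi_1 = n\) or \(\pi_n = 1\), where two extreme entries coincide. In those configurations the family of deletable entries shrinks, and I would treat them separately. For this step I would first try to build, for each cut, two inversions crossing it with disjoint endpoint sets, valid for all but at most five cuts, using \(1324\)-avoidance to exclude the configurations in which the second witness would be missing. If that proves unwieldy, the fallback is induction on \(n\): delete an inner entry that lies in few inversions, and check that indecomposability and the deletion conditions survive.
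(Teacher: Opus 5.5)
Your proposal has a genuine gap at its quantitative core: the two families of inversions you split off need not both be large. Take first your claim that inversions avoiding the extreme entries ``must by themselves cover almost every cut, so there are at least about \(n-5\) of them.'' This is false. For \(n\ge 6\), \(\pi = 3\,n\,4\,5\cdots(n-1)\,1\,2\) is indecomposable, avoids \(1324\), and \(\pi\delete e\) is indecomposable for each \(e\in\{1,2,3,n\}\), yet every inversion of \(\pi\) involves one of \(1,2,3,n\).

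The weaker version you use in the case \(\mathrm{pos}(n)<\mathrm{pos}(1)\) also fails. For \(n\ge 8\), \(\pi = 3\,n\,2\,4\,5\cdots(n-3)\,(n-1)\,1\,(n-2)\) satisfies all the hypotheses. However, \(\pi\delete\{1,n\}\) has only two inversions and \(n-4\) components, and the cuts it leaves uncovered all lie between \(n\) and \(1\), not near the ends. The observation you give as justification is true: a cut crossed only by extreme inversions must be crossed by two different extreme entries. But it describes exactly how cuts escape inner coverage rather than ruling it out. In these examples the bound holds only because the rows of \(n\) and \(1\) are far longer than \(n-1\). The argument therefore has to be a trade-off between the families, not a sum of two separately large ones.

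The proposal also has no step that converts coverage of cuts into a count of inversions, and that inference is invalid as stated. One inversion can cross many cuts. If \(\pi_1>\pi_n\), the single inversion \((\pi_1,\pi_n)\) crosses every cut; if \(\pi_1=2\), the row of \(\pi_1\) is one inversion covering every cut left of \(1\). So ``two coverings of all but five cuts'' does not give \(2n-6\).

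A mechanism that does work is \eqref{eq:comp inv length}, applied to the permutation \(\sigma\) obtained by deleting extreme entries, with \(\comp(\sigma)\) controlled by the deletion hypotheses. In your first case this is short. Let \(\sigma=\pi\delete\{1,n\}\). A decomposition point of \(\sigma\) without \(n\) strictly to its left makes \(\pi\delete 1\) decomposable. One without \(1\) strictly to its right makes \(\pi\delete n\) decomposable. Hence \(\comp(\sigma)\le\max(1,\mathrm{pos}(1)-\mathrm{pos}(n)-1)\), and adding the exact rows \(n-\mathrm{pos}(n)\) and \(\mathrm{pos}(1)-1\) gives \(\inv(\pi)\ge 2n-4\). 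This case also absorbs the coincidences \(\pi_1=n\) and \(\pi_n=1\) that you planned to treat separately.

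The case \(\mathrm{pos}(1)<\mathrm{pos}(n)\), where all four extremes are distinct, is where the theorem has content. There your plan to get a second covering of the middle cuts from the hypotheses on \(\pi\delete\pi_1\) and \(\pi\delete\pi_n\) does not work. A middle cut crossed by an inversion between two non-extreme entries survives every single deletion, so those hypotheses force nothing. The doubling must come from \(1324\)-avoidance beyond the fact that the middle segment is increasing: the prefix before \(n\) avoids \(132\), and the suffix after \(1\) avoids \(213\). For example, an entry left of \(1\) that exceeds a middle entry \(m\) forces \(\pi_1>m\), and an entry right of \(n\) below \(m\) forces \(\pi_n<m\). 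An entry-by-entry count, in the spirit of the proof of Theorem~\ref{thm:1342 almost compatible}, is a more promising framework for this than cut coverage. Your fallback induction is not developed: deleting an inner entry can destroy the indecomposability of some \(\pi\delete e\), and you give no rule for choosing the entry. Note that the paper itself does not prove this statement; it is quoted from \cite{linusson_enumerating_2025}.
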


A decomposable \(1324\)-avoider \(\pi\) is of the form \(\pi = \sigma \oplus {\id_m} \oplus \tau\), where \(\sigma\) is an indecomposable \(132\)-avoider and \(\tau\) is an indecomposable \(213\)-avoider. This allows us to define
\begin{equation*}
  \widetilde f(\pi) = \sigma \oplus {\id_{m+1}} \oplus \tau,
\end{equation*}
which clearly preserves \(1324\)-avoidance and the number of inversions. If \(\pi \in \Av_n(1324)\) is almost decomposable, we define \(f(\pi)\) as follows.

\begin{enumerate}[F1]
  \item \label{inj:p1} If \(\pi \delete \pi_1\) is decomposable, let \(f(\pi)_1 = \pi_1\) and \(f(\pi) \delete \pi_1 = \widetilde f(\pi \delete \pi_1)\).
  \item \label{inj:inv} If \(\pi \delete 1\) is decomposable, let \(f(\pi) = f(\pi^{-1})^{-1}\).
  \item \label{inj:rc} Otherwise, let \(f(\pi) = f(\pi^{\rc})^{\rc}\).
\end{enumerate}

In other words, we remove a point from the boundary of the plot of \(\pi\) to make it decomposable, use \(\widetilde f\) on the resulting permutation, and then insert the point back in its original position.

The mapping is presented as above in \cite{linusson_enumerating_2025}, but the second and third cases can be written out more explicitly.

\begin{enumerate}[F1\('\), start=2]
  \item If \(\pi \delete 1\) is decomposable, let \(f(\pi)^{-1}_1 = \pi^{-1}_1\) and \(f(\pi) \delete 1 = \widetilde f(\pi \delete 1)\).
  \item \label{inj:pn} If \(\pi \delete \pi_n\) is decomposable, let \(f(\pi)_{n+1} = \pi_n + 1\) and \(f(\pi) \delete f(\pi)_{n+1} = \widetilde f(\pi \delete \pi_n)\).
  \item \label{inj:n} If \(\pi \delete n\) is decomposable, let \(f(\pi)^{-1}_{n+1} = \pi^{-1}_n + 1\) and \(f(\pi) \delete \{n+1\} = \widetilde f(\pi \delete n)\).
\end{enumerate}
For notational convenience, we set \(f(\pi) \coloneqq \widetilde f(\pi
)\) when \(\pi\) is decomposable.

\begin{remark} \label{rmk:f priority}
  It is important to note that a certain priority order is chosen in the definition of \(f\). If \(\pi \delete \pi_1\) and, say, \(\pi \delete \pi_n\) are \emph{both} decomposable, we define \(f(\pi)\) according to case \ref{inj:p1}. We only use case \ref{inj:rc} if \(\pi \delete \pi_1\) and \(\pi \delete 1\) are both indecomposable. It is, however, impossible for both \(\pi \delete \pi_1\) and \(\pi \delete 1\) to be decomposable simultaneously, so there is no priority issue between cases \ref{inj:p1} and \ref{inj:inv}, or between \ref{inj:pn} and \ref{inj:n} \cite[Proposition~7]{linusson_enumerating_2025}. 

  The choice of priority is arbitrary, and the mapping could just as well have been defined by prioritizing \ref{inj:rc} over \ref{inj:p1} and \ref{inj:inv}. Some results regarding \(f\) in this section are, for this reason, not closed under taking reverse complements of the patterns appearing in them, but similar results valid for those patterns can be obtained by changing priority.
\end{remark}
 
\begin{theorem}[Theorem 19 in \cite{linusson_enumerating_2025}] \label{thm:inj 1324}
  The mapping \(f\) is injective, \(\inv(f(\pi)) = \inv(\pi)\), and \(f(\pi) \in \Av(1324)\) for any decomposable or almost decomposable \(1324\)-avoider \(\pi\). In particular, if \(n \geq \frac{k+7}{2}\), then
  \begin{equation*}
    \av_n^k(1324) \leq \av_{n+1}^k(1324).
  \end{equation*}
\end{theorem}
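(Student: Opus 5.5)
The plan has four steps: dispose of the decomposable case, prove each case F1--F3 preserves the properties, show injectivity, and finally combine with Theorem~\ref{thm:almost decomp 1324} for the counting statement.

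\emph{Decomposable case.} For decomposable \(\pi\), write \(\pi = \sigma \oplus {\id_m} \oplus \tau\). Then \(f = \widetilde f\) is visibly injective, since \((\sigma, m, \tau)\) is recovered from the image. It preserves inversions and \(1324\)-avoidance, because the characterization of decomposable \(1324\)-avoiders as \(\sigma \oplus \id \oplus \tau\) applies to the image as well.

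\emph{Case F1.} By the symmetries \(\pi \mapsto \pi^{-1}\) and \(\pi \mapsto \pi^{\rc}\), which commute with the definition of \(f\) and preserve both \(\inv\) and \(\Av(1324)\), it suffices to treat F1. Here \(\pi\delete\pi_1 = \sigma\oplus\id_m\oplus\tau\), and \(f(\pi)\) puts \(\pi_1\) in front of \(\sigma\oplus\id_{m+1}\oplus\tau\). The value \(\pi_1\) is placed at the same relative height. Indecomposability of \(\pi\) forces \(\pi_1\) to exceed every entry of \(\sigma\) and of the identity block. Hence \(\pi_1\) lies either inside \(\tau\)'s value range or at least above the identity block.

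\emph{Inversions in F1.} The new identity entry lies below \(\pi_1\) and creates one extra inversion, unless the value of \(\pi_1\) is chosen consistently. So I will fix the convention as the paper does: the inserted identity point is placed so that \(\pi_1\) keeps the same set of smaller entries among the old points. I then count inversions involving \(\pi_1\) carefully. I expect this convention to require that \(\pi_1\) lies above the identity block, so the new identity point is below \(\pi_1\). In that case \(\inv\) increases by one. I expect instead that the definition shifts the new point into place so that the count is maintained, i.e.\ the identity block is lengthened at a position above \(\pi_1\)'s reach.

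\emph{Avoidance in F1.} A \(1324\) occurrence in \(f(\pi)\) must use \(\pi_1\) as its ``1'' or ``3''. Since \(\pi_1\) is first, it can only play the role of the ``1''. I then show that any such occurrence pulls back to one in \(\pi\): replacing an identity entry by another identity entry in the pattern works because \(m \ge 1\).

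\emph{Injectivity.} This is the main obstacle. Images of different cases could collide: an F1 image, an F2 image, an F3 image, or a decomposable image. Decomposable images are decomposable, whereas almost decomposable images remain indecomposable, so that pair is separated. Within each case, deleting the first entry (respectively \(1\), \(\pi_n\), \(n\)) and inverting \(\widetilde f\) recovers \(\pi\). For cross-case collisions, I will use the priority order and \cite[Proposition~7]{linusson_enumerating_2025}: from \(f(\pi)\) one checks which of \(f(\pi)\delete f(\pi)_1\), \(f(\pi)\delete 1\), and so on is decomposable. I will then argue that \(f\) preserves exactly which of these deletions are decomposable, so the case of \(\pi\) is read off from \(f(\pi)\).

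\emph{Conclusion.} Finally, Theorem~\ref{thm:almost decomp 1324} says that for \(n \ge \frac{k+7}{2}\), i.e.\ \(k \le 2n-7\), every element of \(\Av_n^k(1324)\) is in the domain of \(f\), which gives the inequality \(\av_n^k(1324) \le \av_{n+1}^k(1324)\).
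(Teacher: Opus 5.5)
\emph{Case F1.} The paper does not reprove this theorem; it imports it from Linusson--Verkama. Judged on its own, your decomposable case and final counting step are fine, but both substantive steps fail as written. You have misread the F1 construction, so your inversion count is not an argument. You conclude that $\inv$ increases by one, and the repair you propose (lengthening the identity block above $\pi_1$) is impossible, since $\pi_1$ lies above that block. The definition fixes the \emph{value} $f(\pi)_1=\pi_1$, not its position relative to the old points. Since $\pi_1$ is the first entry, it lies in exactly $\pi_1-1$ inversions in both $\pi$ and $f(\pi)$. All other inversions are those of $\widetilde f(\pi\delete\pi_1)$, which has as many inversions as $\pi\delete\pi_1$; that is the whole proof. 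Geometrically, indecomposability of $\pi$ gives $\pi_1\ge|\sigma|+m+2$. So the new identity point goes below $\pi_1$, while the entry of $\tau$ just below $\pi_1$ moves above it. This shift also breaks your avoidance argument. First, $m=0$ is allowed (e.g.\ $\pi=4132$). Second, identity entries lie below $\pi_1$ and so cannot appear in an occurrence where $\pi_1$ plays the $1$. Third, the entry of $\tau$ now above $\pi_1$ has no counterpart above $\pi_1$ in $\pi$, so there is nothing to pull back. The correct reason is that every entry above $\pi_1$ in $f(\pi)$ lies in $\tau$, which avoids $213$.

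\emph{Injectivity.} Both claims your plan rests on are false. First, images of almost decomposable permutations can be decomposable: $\pi=312$ is in case F1 and $f(\pi)=3124=312\oplus 1$. Second, $f$ does not preserve which deletions are decomposable, and the case cannot be read off by priority. The example $\pi=34152$ from Section~\ref{subsec:1342 almost compatible} is in case F3. Yet $f(\pi)=241563$ has $f(\pi)\delete f(\pi)_1$ indecomposable and $f(\pi)\delete 1=13452$ decomposable, so your rule would call it an F2 image. The invariant that works is the number of components:
\begin{itemize}
  \item Images under $\widetilde f$ have at least three components, while images of almost decomposable permutations have at most two.
  \item An F1 image $\rho$ has $\rho\delete\rho_1$ in the image of $\widetilde f$, hence with at least three components; for F2 and F3 images, $\rho\delete\rho_1$ has at most two.
  \item For F3 with $\pi\delete\pi_n$ decomposable, the last point rises past exactly one entry $x$ of the first component of $\pi\delete\pi_n$. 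An initial-segment prefix of $\rho\delete\rho_1$ avoiding $x$ would already be one of $\pi\delete\pi_1$. So the only possible splitting is $\rho\delete\rho_1=P\oplus R$, where $R$ ends with its minimum and is therefore indecomposable. The case $\pi\delete n$ is analogous.
  \item The same holds symmetrically for $\rho\delete 1$, and for $\rho\delete\rho_{n+1}$ versus $\rho\delete(n+1)$ within F3.
\end{itemize}
Finally, $f$ does not commute with $\pi\mapsto\pi^{\rc}$ (Remark~\ref{rmk:f priority}), so this cross-case disjointness cannot be obtained by symmetry. Your symmetry reduction is valid only for the per-case properties ($\inv$ and avoidance), where in case F3 the permutation $\pi^{\rc}$ falls under F1 or F2.
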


This section is structured as follows. In Section \ref{subsec:compatible patterns}, we study the notion of \emph{\(f\)-compatibility}: a pattern \(p\) is \(f\)-compatible if \(f\) preserves \(p\)-avoidance. If \(p\) is such a pattern, then Theorems \ref{thm:almost decomp 1324} and \ref{thm:inj 1324} immediately imply that 
\begin{equation*}
  \av_n^k(1324, p) \leq \av_{n+1}^k(1324, p)
\end{equation*}
for all \(n \geq \frac{k+7}{2}\). We give necessary and sufficient conditions for \(f\)-compatibility, but fall short of a full characterization. In Section \ref{subsec:1342 almost compatible}, we show that the pattern \(1342\) is not \(f\)-compatible, but that all counterexamples have \(n < \frac{k+7}{2}\). An analysis of the properties of \(f\) restricted to \(\Av(1324, 1342)\) yields an exact enumeration of \(\av_n^k(1324, 1342)\) for \(n \geq \frac{k+7}{2}\).

\subsection{Compatible patterns} \label{subsec:compatible patterns}

This subsection examines patterns \(p\) for which \(f\) always preserves \(p\)-avoidance.

\begin{definition}
  A collection \(B\) of patterns is called \emph{\(f\)-compatible} if \(f(\pi)\) avoids \(B\) for any decomposable or almost decomposable permutation \(\pi \in \Av(1324, B)\). If \(B = \{p\}\) is \(f\)-compatible, we say that \(p\) itself is \(f\)-compatible. If \(B\) is not \(f\)-compatible, we say that it is \emph{\(f\)-incompatible}.
\end{definition}

The following lemma collects some simple properties of \(f\)-compatible sets.

\begin{lemma} \label{lem:f compatible properties} \leavevmode
  \begin{enumerate}[(a)]
    \item \label{lem:f compatible properties:inv} If \(p\) is an \(f\)-compatible pattern, then \(p^{-1}\) is also \(f\)-compatible.
    \item \label{lem:f compatible properties:union} If \(B\) and \(B'\) are \(f\)-compatible, then \(B \cup B'\) is \(f\)-compatible. In particular, a collection of \(f\)-compatible patterns is \(f\)-compatible.
    \item \label{lem:f compatible properties:1324} If \(p\) contains \(1324\), then \(p\) is \(f\)-compatible.
    \item \label{lem:f compatible properties:injection} If \(B\) or \(B^{\rc}\) is \(f\)-compatible, then 
    \begin{equation*}
      \av_n^k(1324, B) \leq \av_{n+1}^k(1324, B)
    \end{equation*}
    for all \(k\) and \(n \geq \frac{k+7}{2}\).
  \end{enumerate}
\end{lemma}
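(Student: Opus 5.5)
I would prove the four parts in order, each from the definition of \(f\) together with the basic symmetry relations behind it.

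For part (a), the key observation is that \(f\) commutes with inversion: \(f(\pi^{-1}) = f(\pi)^{-1}\) for every decomposable or almost decomposable \(1324\)-avoider \(\pi\). I would check this case by case.
\begin{itemize}
  \item For decomposable \(\pi = \sigma \oplus \id_m \oplus \tau\), we have \(\pi^{-1} = \sigma^{-1} \oplus \id_m \oplus \tau^{-1}\). Here \(\sigma^{-1}\) is again an indecomposable \(132\)-avoider and \(\tau^{-1}\) an indecomposable \(213\)-avoider, since \(132^{-1}=132\) and \(213^{-1}=213\). So \(\widetilde f(\pi^{-1}) = \widetilde f(\pi)^{-1}\).
  \item For almost decomposable \(\pi\), the explicit forms F1, F2\('\), F3\('\), F4\('\) pair up under inversion: F1 with F2\('\), and F3\('\) with F4\('\). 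Deleting \(\pi_1\) from \(\pi\) corresponds to deleting the value \(1\) from \(\pi^{-1}\), and deleting \(\pi_n\) corresponds to deleting \(n\).
\end{itemize}
The delicate point is the priority order. I need that \(\pi\) falls in case F1 exactly when \(\pi^{-1}\) falls in case F2. This holds because F1 and F2 are mutually exclusive by \cite[Proposition~7]{linusson_enumerating_2025}, and the same is true for F3\('\) and F4\('\). Given the commutation, suppose \(\pi \in \Av(1324, p^{-1})\). Then \(\pi^{-1} \in \Av(1324, p)\), so by compatibility of \(p\), \(f(\pi^{-1}) = f(\pi)^{-1}\) avoids \(p\). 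Hence \(f(\pi)\) avoids \(p^{-1}\).

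Part (b) is immediate from the definition. If \(\pi\) avoids \(1324\) and \(B \cup B'\), then \(\pi\) avoids \(1324\) and \(B\), so \(f(\pi)\) avoids \(B\) by compatibility of \(B\). Likewise \(f(\pi)\) avoids \(B'\), and therefore avoids \(B \cup B'\).

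Part (c) follows from Theorem \ref{thm:inj 1324}. The image \(f(\pi)\) avoids \(1324\), so it also avoids every pattern that contains \(1324\).

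For part (d), first suppose \(B\) is \(f\)-compatible and \(n \geq \frac{k+7}{2}\), that is, \(k \leq 2n-7\).
\begin{itemize}
  \item By Theorem \ref{thm:almost decomp 1324}, every \(\pi \in \Av_n^k(1324, B)\) is decomposable or almost decomposable, so \(f\) is defined on it.
  \item By Theorem \ref{thm:inj 1324}, \(f\) is injective, preserves \(\inv\), increases the length by one, and lands in \(\Av(1324)\).
  \item By compatibility, \(f(\pi)\) also avoids \(B\).
\end{itemize}
This gives an injection \(\Av_n^k(1324,B) \to \Av_{n+1}^k(1324,B)\).

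If instead \(B^{\rc}\) is compatible, I would apply the same argument to \(B^{\rc}\) and use \(1324^{\rc} = 1324\). Reverse-complementation preserves \(\inv\), so \(\av_n^k(1324,B) = \av_n^k(1324,B^{\rc})\) for all \(n, k\), and the inequality transfers.

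The main obstacle is the priority bookkeeping in (a). I must confirm that the case split for \(\pi^{-1}\) mirrors the one for \(\pi\): F1 versus F2 are both checked first, and F3 is reached only when neither applies, a condition symmetric under inversion. Once that is verified, the explicit rewritten cases F2\('\) and F4\('\) make the commutation transparent.
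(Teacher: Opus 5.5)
Your proof is correct and takes the same approach as the paper. The paper only says that (a) follows from the inversion symmetry in the definition of \(f\) and that (b)--(d) are obvious; you supply the missing details, namely the commutation \(f(\pi^{-1}) = f(\pi)^{-1}\), the priority check via the mutual exclusivity of F1 and F2, and, in the \(B^{\rc}\) case of (d), the inversion-preserving bijection \(\pi \mapsto \pi^{\rc}\), which amounts to the paper's appeal to reversing the priority order in Remark~\ref{rmk:f priority}.
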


\begin{proof}
  Part \ref{lem:f compatible properties:inv} follows from the symmetry in the definition of \(f\). Parts \ref{lem:f compatible properties:union}, \ref{lem:f compatible properties:1324} and \ref{lem:f compatible properties:injection} are obvious from the definition of \(f\)-compatibility and Remark \ref{rmk:f priority}.
\end{proof}

We are not able to give a concise, complete characterization of the \(f\)-compatible patterns, but we get close. Theorems \ref{thm:incomp necessary} and \ref{thm:incomp sufficient} below give necessary and sufficient conditions for \(f\)-incompatibility, respectively, and thus provide upper and lower bounds on the total number of \(f\)-(in)compatible patterns in \(\Av_n(1324)\). In Table \ref{tab:incomp counts}, we compare these bounds with a value obtained computationally as follows: for each \(p \in \Av_n(1324)\), we computed \(f(\pi)\) for all decomposable and almost decomposable permutations \(\pi \in \Av_m(1324, p)\) with \(n-1 \leq m \leq n+2\), and checked if \(f(\pi)\) avoids \(p\). If any \(f(\pi)\) contained \(p\), then \(p\) was declared \(f\)-incompatible. The total number of such patterns is another lower bound on the number of \(f\)-incompatible patterns, likely to be very close to the actual number.

\begin{table}[t]
  \centering
  \caption{Upper and lower bound for the number of \(f\)-(in)compatible patterns provided by Theorems \ref{thm:incomp necessary} and \ref{thm:incomp sufficient}, compared with the also rigorous lower bound (CLB) and upper bound (CUB) obtained computationally.}
  \begin{booktabs}{
    column{2,5} = {leftsep=14pt},
    rowsep = 2pt
  }
    \toprule
    \SetCell[r=2]{c} \(n\) & \SetCell[c=3]{c} \(f\)-incompatible & & & \SetCell[c=3]{c} \(f\)-compatible \\ \cmidrule[lr]{2-4} \cmidrule[lr]{5-7}
    & Thm.\ \ref{thm:incomp sufficient} & CLB & Thm.\ \ref{thm:incomp necessary} & Thm.\ \ref{thm:incomp necessary} & CUB & Thm.\ \ref{thm:incomp sufficient} \\
    \midrule
    3 & 4 & 4 & 4 & 2 & 2 & 2 \\
    4 & 18 & 20 & 20 & 3 & 3 & 5 \\
    5 & 87 & 91 & 91 & 12 & 12 & 16 \\
    6 & 425 & 447 & 451 & 62 & 66 & 88 \\
    7 & 1973 & 2087 & 2122 & 640 & 675 & 789 \\
    \bottomrule
  \end{booktabs}
  \label{tab:incomp counts}
\end{table}

For \(n \leq 5\) our bounds are tight. The \(f\)-compatible patterns of length four are
\begin{equation*}
  1432,\ 4231,\ 4321,
\end{equation*}
and the \(f\)-compatible patterns of length five are
\begin{align*}
  & 14523,\ 14532,\ 15342,\ 15423,\ 15432,\ 34125, \\
  & 52341,\ 52431,\ 53241,\ 53421,\ 54231,\ 54321.
\end{align*}
In Corollary \ref{cor:f compatible patterns} below, we collect two simple families of \(f\)-compatible patterns. For \(n \leq 5\), the only \(f\)-compatible pattern that is not in one of these families is \(34125\).

\begin{theorem} \label{thm:incomp necessary}
  If \(p \in \Av_n(1324)\) is \(f\)-incompatible, then at least one of the patterns \(q \in \left\{p, p^{-1}, p^{\rc}, (p^{\rc})^{-1}\right\}\) satisfies one of the following conditions:
  \begin{enumerate}[(a)]
    \item \label{incomp cond:3 comp}\(\comp(q) \geq 3\).
    \item \label{incomp cond:almost decomp} \(\comp(q \delete q_1) > \comp(q)\).
    \item \(q_1 > 1\), \label{incomp cond:2 comp} \(\comp(q) = 2\), and the first component of \(q\) begins with its largest entry. Furthermore, if \(q \in \left\{p^{\rc}, (p^{\rc})^{-1} \right\}\), then \(q_n < n\).
    \item \label{incomp cond:av 213}\(1 < q_1 < n\) and \(q \delete q_1\) avoids \(213\).
  \end{enumerate}
\end{theorem}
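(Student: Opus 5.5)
We prove the contrapositive. Fix \(p \in \Av_n(1324)\) such that none of \(p, p^{-1}, p^{\rc}, (p^{\rc})^{-1}\) satisfies any of \ref{incomp cond:3 comp}--\ref{incomp cond:av 213}. Let \(\pi \in \Av(1324, p)\) be decomposable or almost decomposable, and suppose for contradiction that \(f(\pi)\) contains \(p\). By the definition of \(f\) through F1, F2\('\), F3\('\), F4\('\), the images in the cases F2\('\)--F4\('\) are obtained from F1 (and from the decomposable case) by applying \(\pi \mapsto \pi^{-1}\) and \(\pi \mapsto \pi^{\rc}\). We therefore treat only the decomposable case and case F1, and pass to \(q \in \{p^{-1}, p^{\rc}, (p^{\rc})^{-1}\}\) for the other cases. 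Remark~\ref{rmk:f priority} explains why the symmetry is imperfect. Case F3\('\) is only used when \(\pi \delete \pi_1\) is indecomposable, and this priority is what produces the extra clause ``\(q_n < n\)'' in \ref{incomp cond:2 comp} for \(q \in \{p^{\rc}, (p^{\rc})^{-1}\}\).

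The basic tool is an exchange observation. In the decomposable case \(f(\pi) = \sigma \oplus \id_{m+1} \oplus \tau\). Deleting any single entry of the block \(\id_{m+1}\) returns a permutation order-isomorphic to \(\pi\). So if an occurrence of \(p\) in \(f(\pi)\) misses some entry of that block, then \(\pi\) contains \(p\). Hence every occurrence uses all \(m+1\) block entries, and \(p = \sigma' \oplus \id_{m+1} \oplus \tau'\) with \(\sigma'\) contained in \(\sigma\) and \(\tau'\) contained in \(\tau\).

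If \(\sigma'\) and \(\tau'\) are both nonempty, or if \(m \geq 1\), then \(\comp(p) \geq 3\), which is \ref{incomp cond:3 comp}. Otherwise, up to symmetry, \(m = 0\) and \(p = \sigma' \oplus 1\). If \(\tau\) is nonempty, we replace the new point by any entry of \(\tau\); this gives an occurrence of \(p\) in \(\pi\), a contradiction. If \(\tau\) is empty, \(\pi\) itself contains \(\sigma' \oplus 1\), again a contradiction. So the decomposable case only ever produces condition \ref{incomp cond:3 comp}.

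Case F1 is the heart of the proof. Here \(f(\pi)\) consists of \(\pi_1\) placed in front of \(\widetilde f(\pi \delete \pi_1) = \sigma \oplus \id_{m+1} \oplus \tau\). By the same exchange argument, an occurrence of \(p\) must contain the whole identity block. If it avoids the entry \(\pi_1\), we are back in the decomposable situation and get \ref{incomp cond:3 comp}. So we may assume it uses \(\pi_1\), which plays the role of \(p_1\), and then \(p \delete p_1\) contains the full block. We split according to where \(p_1\) sits relative to the block.
\begin{itemize}
  \item If \(p_1\) lies below the block, then \(p \delete p_1\) has strictly more components than \(p\). This is condition \ref{incomp cond:almost decomp}.
  \item If \(p_1\) lies above everything, then \(p\) is indecomposable while \(p \delete p_1\) starts with the block structure. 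Either \(\comp(p \delete p_1) > \comp(p)\) again, or \(p\) has the two-component shape described in \ref{incomp cond:2 comp}.
  \item If \(p_1\) lies strictly inside the value range of \(\sigma\), then the part of \(p \delete p_1\) to the right of \(p_1\) must avoid \(213\). The reason is that \(\pi\) avoids \(1324\) and \(\pi_1\) precedes everything, so a \(213\) there would combine with \(\pi_1\) into a forbidden configuration. This yields \ref{incomp cond:av 213}, with \(1 < q_1 < n\).
\end{itemize}
In each subcase in which none of the conditions holds, we exhibit an exchange. Either we substitute an entry of \(\sigma\), an entry of \(\tau\), or \(\pi_1\) itself for the added block point, or we use that \(m+1\) exceeds what the occurrence needs. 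Each such exchange produces an occurrence of \(p\) in \(\pi\), a contradiction.

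The main obstacle is this F1 case analysis. We must verify carefully that the failure of \ref{incomp cond:almost decomp}--\ref{incomp cond:av 213} always leaves room for such an exchange, in particular when \(m = 0\) and when \(\sigma\) or \(\tau\) is tiny. We also have to track how the priority rule restricts which cases can occur for \(p^{\rc}\). We would first try to organize the F1 case entirely by the value \(p_1\) relative to the components of \(p \delete p_1\), and only afterwards transport the results through the symmetries.
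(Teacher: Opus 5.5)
The decomposable case is fine; it is the paper's argument. One harmless slip: for \(m = 1\) with \(\sigma', \tau'\) both empty you get \(p = 12\), which has only two components, but the exchange argument rules it out.

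The genuine gap is in case F1. There you claim that, by the same exchange argument, every occurrence of \(p\) through \(\pi_1\) contains the whole identity block. This is false. In F1 the \emph{value} \(\pi_1\) is kept fixed, which is what preserves the inversion number, while the new block point is inserted below it. So deleting a block point from \(f(\pi)\) does not return \(\pi\); it returns \(\pi\) with its first entry lowered past one entry of \(\tau\). Equivalently, exactly one entry of \(\tau\) lying below \(\pi_1\) in \(\pi\) lies above \(\pi_1\) in \(f(\pi)\), and occurrences of \(p\) can use it while skipping the block altogether. Two examples:
\begin{itemize}
\item For \(\pi = 4132\) (F1 with \(\sigma = 1\), \(m = 0\), \(\tau = 21\)) one gets \(f(\pi) = 41253\). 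It contains \(231\) as \(453\), although \(\pi\) avoids \(231\).
\item For \(\pi = 312\) one gets \(f(\pi) = 3124\). It contains \(213\) as \(314\), although \(\pi\) avoids \(213\).
\end{itemize}
Block-free occurrences like these are exactly where conditions (c) and (d) come from. An analysis that presupposes the block is present therefore cannot derive them.

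Your trichotomy on the position of \(p_1\) also does not match the geometry. Indecomposability of \(\pi\) forces \(\pi_1 \geq |\sigma| + m + 2\); otherwise the first \(|\sigma| + m + 1\) entries of \(\pi\) would form a component. Hence in \(f(\pi)\) the entry \(\pi_1\) lies above all of \(\sigma\) and the block, so your first and third bullets describe impossible configurations. The \(213\)-avoidance in (d) comes from \(\tau\), the last component of the \(1324\)-avoider \(\pi \delete \pi_1\), avoiding \(213\) (so \(\id \oplus \tau\) does too), not from combining with \(\pi_1\). Your second bullet contradicts itself: if \(p_1\) is the maximum of \(p\), then \(p\) is indecomposable and cannot have the two-component shape in (c).

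The workable organisation, used in the paper, classifies an occurrence through \(\pi_1\) by which of \(\sigma\), the block and \(\tau\) it meets:
\begin{itemize}
\item \(\sigma\) alone gives an occurrence in \(\pi\).
\item \(\sigma\) and the block give (b).
\item The block alone reduces to the previous two cases by trading one block point for a point of \(\sigma\).
\item \(\tau\), with or without block points, gives (d) once two cases are excluded. If \(p_1\) is maximal, the used \(\tau\)-points were already below \(\pi_1\) in \(\pi\), and one block point can be traded for a point of \(\sigma\). If \(p_1 = 1\), trade \(\pi_1\) for a point of \(\sigma\).
\item \(\sigma\) and \(\tau\) without the block give (a), (b) or (c).
\end{itemize}

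Finally, you assert the clause \(q_n < n\) but do not prove it. It needs the priority argument: in the reduction of F3\('\)/F4\('\) to F1, the transformed permutation cannot begin with its maximum, since otherwise F1 or F2 would have applied to the original permutation. So some entry of \(\tau\) already lies above \(\pi_1\) in \(\pi\), and it can replace the single final \(\tau\)-point of an occurrence with \(q_n = n\).
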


\begin{proof}
  Since \(p\) is not \(f\)-compatible, there exists a decomposable or almost decomposable permutation \(\pi \in \Av(1324, p)\) such that \(f(\pi)\) contains \(p\). Recall first that if \(\pi\) is decomposable, then \(\pi = \sigma \oplus {\id_m} \oplus \tau\), where \(\sigma\) and \(\tau\) are indecomposable, and \(f(\pi) = \sigma \oplus {\id_{m+1}} \oplus \tau\). Since \(\pi\) avoids \(p\), it must be that the occurrence of \(p\) in \(f(\pi)\) uses every entry of \(\id_{m+1}\). This implies that \(\comp(p) \geq 3\), condition \ref{incomp cond:3 comp}.

  Assume instead that \(\pi\) is almost decomposable, and that \(\pi \delete \pi_1\) is decomposable (so \(f(\pi)\) is defined as in case \ref{inj:p1}). Write \(\pi \delete \pi_1 = \sigma \oplus {\id_m} \oplus \tau\), where \(\sigma\) and \(\tau\) are indecomposable. We analyze how \(p\) can occur in \(f(\pi)\).

  \begin{itemize}
    \item If \(p\) is contained in \(f(\pi \delete \pi_1)\), then \(\comp(p) \geq 3\) as above. So, assume instead that every occurrence of \(p\) in \(f(\pi)\) uses the entry \(f(\pi)_1 = \pi_1\).
    \item If an occurrence of \(p\) uses only \(\pi_1\) and points of \(\sigma\), then \(\pi\) also contains \(p\). So, every occurrence of \(p\) in \(f(\pi)\) must use at least one point of \(\id_{m+1}\) or \(\tau\).
    \item If an occurrence of \(p\) uses \(\pi_1\) as well as points from both \(\sigma\) and \(\id_{m+1}\) (and possibly \(\tau\)), then \(\comp(p \delete p_1) > \comp(p)\) since \(\pi_1 \geq |\sigma| + m+1\). This is condition \ref{incomp cond:almost decomp}.
    \item If an occurrence of \(p\) uses only \(\pi_1\) and points from \(\id_{m+1}\), then there is another occurrence of \(p\) that uses points from \(\sigma\): replace \(p_2\) with any point from \(\sigma\). This case was handled above.
    \item Suppose an occurrence of \(p\) uses only \(\pi_1\) and points from \(\tau\). It is clear that if \(p_1 = n\) then \(\pi\) also contains \(p\). Furthermore, if \(p_1 = 1\) then \(\pi\) contains \(p\) by taking \(p \delete p_1\) from \(\tau\) together with any point from \(\sigma\). Hence, \(1 < p_1 < n\) and condition \ref{incomp cond:av 213} holds.
    \item Suppose an occurrence of \(p\) uses only \(\pi_1\) as well as points from both \(\id_{m+1}\) and \(\tau\). Again, \(p \delete p_1\) avoids \(213\). If \(p_1 = n\), then \(\pi\) also contains \(p\): take \(p_1\) as \(\pi_1\), \(p_2\) arbitrarily from \(\sigma\), and \(p \delete \{p_1, p_2\}\) from \({\id_m} \oplus \tau\). Since we clearly have \(p_1 > 1\), condition \ref{incomp cond:av 213} holds.
    \item Finally, suppose that an occurrence of \(p\) uses \(\pi_1\) as well as points from \(\sigma\) and \(\tau\), but not from \(\id_{m+1}\). We have \(\comp(p \delete p_1) \geq 2\), so if \(\comp(p) = 1\) we have condition \ref{incomp cond:almost decomp}, and if \(\comp(p) \geq 3\) we have condition \ref{incomp cond:3 comp}. Assume that \(\comp(p) = 2\). The first component of \(p\) is contained in \(\pi_1 \sigma\) and uses \(\pi_1\), which is larger than every entry of \(\sigma\). Hence, the first component of \(p\) begins with its largest entry, condition \ref{incomp cond:2 comp}. We know that \(p_1 > 1\), since \(p\) uses entries from \(\sigma\).
  \end{itemize}
  
  If instead \(\pi \delete 1\) is decomposable, the same arguments apply to \(\pi^{-1}\) and \(q = p^{-1}\). If both \(\pi \delete \pi_1\) and \(\pi \delete 1\) are indecomposable, then take \(\pi^{\rc}\) and \(q = p^{\rc}\) instead. We only need to show that the additional property holds in condition \ref{incomp cond:2 comp}. So, suppose \(q = p^{\rc}\) is contained in \(f(\pi)\), where \(\pi \delete \pi_1 = \sigma \oplus {\id_m} \oplus \tau\) is decomposable, and the occurrence of \(q\) in \(f(\pi)\) uses only \(f(\pi)_1 = \pi_1\) and points from both \(\sigma\) and \(\tau\). Again, we may assume that \(q\) has exactly two components. If \(q_n = n\), then the second component is simply \(1\). If our occurrence of \(q\) contains any other entries of \(\tau\), then \(q \delete q_1\) has at least three components, which gives us condition \ref{incomp cond:almost decomp}. 
  
  Hence, we assume that the occurrence of \(q\) uses only \(\pi_1\) and points from \(\sigma\), as well as one point from \(\tau\). Now, observe that \(\pi_1 < n\): otherwise the last entry of \(\pi^{\rc}\) is \(1\), \(\pi^{\rc} \delete 1\) is decomposable, and \(f(\pi^{\rc})\) is defined according to case \ref{inj:inv} instead of \ref{inj:rc}. This means that there exists some entry in \(\tau\) that is larger than \(\pi_1\), in \(\pi\). Taking this entry together with our occurrence of \(q \delete q_n\) in \(f(\pi)\) creates an occurrence of \(q\) in \(\pi\). This is a contradiction, so we must have \(q_n \neq n\), concluding the proof.
\end{proof}

\begin{corollary} \label{cor:f compatible patterns}
  If \(p \in \Av_n(1324)\) satisfies any of the following conditions, then \(p\) is \(f\)-compatible:
  \begin{itemize}
    \item \(p_1 = n\) and \(p_n = 1\).
    \item \(p = 1 \oplus \tau\), where \(\tau\), \(\tau \delete \tau_{n-1}\) and \(\tau \delete \{n-1\}\) are indecomposable (\(n \geq 4\)).
  \end{itemize}
\end{corollary}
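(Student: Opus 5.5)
The plan is to argue by contraposition from Theorem~\ref{thm:incomp necessary}. It suffices to show that, for \(p\) in either family, none of the four symmetric versions \(q \in \{p, p^{-1}, p^{\rc}, (p^{\rc})^{-1}\}\) satisfies any of the conditions (a)--(d). Then \(p\) cannot be \(f\)-incompatible.

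\textbf{First family: \(p_1 = n\) and \(p_n = 1\).} Each of the four symmetries preserves this property. For \(p^{-1}\), the value \(n\) sits in position \(1\) exactly when position \(n\) holds \(1\); the reverse-complement sends a first entry \(n\) to a last entry \(1\) and vice versa. So every \(q\) satisfies \(q_1 = n\) and \(q_n = 1\). The conditions then fail as follows.
\begin{itemize}
  \item Since \(q_1 = n\), \(q\) is indecomposable, so \(\comp(q) = 1\). This rules out (a) and (c).
  \item For (b), \(q \delete q_1\) still ends with its minimum \(1\), so it is indecomposable and has one component. Thus \(\comp(q \delete q_1) = 1 = \comp(q)\).
  \item For (d), we would need \(q_1 < n\), which is false.
\end{itemize}
Hence \(p\) is \(f\)-compatible.

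\textbf{Second family: \(p = 1 \oplus \tau\).} Here the four symmetric versions are
\begin{equation*}
  p^{-1} = 1 \oplus \tau^{-1}, \qquad p^{\rc} = \tau^{\rc} \oplus 1, \qquad (p^{\rc})^{-1} = (\tau^{\rc})^{-1} \oplus 1.
\end{equation*}
Each has exactly two components, because \(\tau\) is indecomposable and indecomposability is preserved by inverse and reverse-complement. So (a) fails for all four. I would then treat the two shapes separately.

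\emph{Shape \(q = 1 \oplus \tau'\)} with \(\tau' \in \{\tau, \tau^{-1}\}\).
\begin{itemize}
  \item Since \(q_1 = 1\), conditions (c) and (d) fail immediately.
  \item For (b), \(q \delete q_1 = \tau'\) is indecomposable, so it has one component, fewer than the two of \(q\).
\end{itemize}

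\emph{Shape \(q = \rho \oplus 1\)} with \(\rho \in \{\tau^{\rc}, (\tau^{\rc})^{-1}\}\). Now \(q_n = n\). These are exactly the two symmetries in \(\{p^{\rc}, (p^{\rc})^{-1}\}\), so the extra clause of (c) requiring \(q_n < n\) fails, and (c) fails.

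The remaining work for this shape, and the main obstacle, is ruling out (b) and (d). This is where the hypotheses on \(\tau \delete \tau_{n-1}\) and \(\tau \delete \{n-1\}\) enter.
\begin{itemize}
  \item Removing \(q_1 = \rho_1\) gives \(q \delete q_1 = (\rho \delete \rho_1) \oplus 1\).
  \item For \(\rho = \tau^{\rc}\), deleting the first entry of \(\tau^{\rc}\) corresponds to deleting the last entry \(\tau_{n-1}\) of \(\tau\). Hence \(\rho \delete \rho_1 = (\tau \delete \tau_{n-1})^{\rc}\), which is indecomposable.
  \item For \(\rho = (\tau^{\rc})^{-1}\), deleting its first entry corresponds to deleting the entry \(1\) of \(\tau^{\rc}\), which in turn corresponds to deleting the entry \(n-1\) of \(\tau\). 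Hence \(\rho \delete \rho_1\) is a symmetric image of \(\tau \delete \{n-1\}\) and is indecomposable.
\end{itemize}
In both cases \(\comp(q \delete q_1) = 2 = \comp(q)\), so (b) fails.

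For (d), we must show that \(q \delete q_1 = (\rho \delete \rho_1) \oplus 1\) contains \(213\) whenever \(1 < q_1 < n\). The block \(\rho \delete \rho_1\) is indecomposable of length \(n-2 \ge 2\), since \(n \ge 4\). An indecomposable permutation of length at least \(2\) has an inversion, so \(\rho \delete \rho_1\) contains \(21\). Appending the final maximal entry gives an occurrence of \(213\). So (d) fails.

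Having excluded every condition for every \(q\), Theorem~\ref{thm:incomp necessary} shows that \(p\) is \(f\)-compatible.
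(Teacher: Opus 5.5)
Your proposal is correct and follows the paper's own route. The paper proves the corollary by observing that none of \(p, p^{-1}, p^{\rc}, (p^{\rc})^{-1}\) satisfies conditions (a)--(d) of Theorem~\ref{thm:incomp necessary}, and calls this check straightforward; your case analysis fills in those details accurately, including the use of the extra clause \(q_n < n\) to rule out (c) and of the hypotheses on \(\tau \delete \tau_{n-1}\) and \(\tau \delete \{n-1\}\) to rule out (b).
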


\begin{proof}
  It suffices to verify that if \(p\) satisfies one of the two conditions, then none of the patterns \(q \in \left\{p, p^{-1}, p^{\rc}, (p^{\rc})^{-1}\right\}\) satisfy any of the conditions in Theorem~\ref{thm:incomp necessary}, which is straightforward.
\end{proof}

\begin{theorem} \label{thm:incomp sufficient}
  Let \(p \in \Av_n(1324)\) be a pattern. If at least one of the patterns \(q \in \left\{p, p^{-1}, p^{\rc}, (p^{\rc})^{-1}\right\}\) satisfies one of the following conditions, then \(p\) is \(f\)-incompatible:
  \begin{enumerate}[(a)]
    \item \label{incomp suff cond:3 comp} \(\comp(q) \geq 3\).
    \item \label{incomp suff cond:almost decomp}\(q_1 < n\) and \(\comp(q \delete q_1) > \comp(q)\).
    \item \label{incomp suff cond:2 comp} \(q_1 > 1\), \(\comp(q) = 2\), and the first component of \(q\) begins with its largest entry.
    \item \label{incomp suff cond:av 213}\(1 < q_1 < n\) and \(q \delete q_1\) avoids \(213\).
  \end{enumerate}
  In cases \ref{incomp suff cond:almost decomp}, \ref{incomp suff cond:2 comp} and \ref{incomp suff cond:av 213}, if \(q \in \left\{p^{\rc}, (p^{\rc})^{-1}\right\}\), then require further that \(q_1 < n-1\).
\end{theorem}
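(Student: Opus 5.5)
We prove the statement by constructing, for each condition, an explicit decomposable or almost decomposable permutation \(\pi \in \Av(1324, p)\) such that \(f(\pi)\) contains \(p\). The constructions essentially run the case analysis in the proof of Theorem \ref{thm:incomp necessary} backwards. First we reduce to \(q = p\), using the symmetries of \(f\). If the witness \(\pi\) for \(q = p^{-1}\) is handled by case F1, then \(\pi^{-1}\) is handled by F2, and \(f(\pi^{-1}) = f(\pi)^{-1}\) contains \(p\). For \(q \in \{p^{\rc}, (p^{\rc})^{-1}\}\) we pass through F3 instead, and there we must make sure that \(\pi^{\rc}\) is not captured by F1 or F2 first. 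This priority issue is exactly why the extra hypothesis \(q_1 < n-1\) appears, so in those cases we choose witnesses \(\pi\) with \(\pi_1 < |\pi|\), and even with room to spare.

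\emph{Case (a).} Write \(p = \alpha \oplus \beta \oplus \gamma\) with \(\beta\) nonempty and \(\alpha, \gamma\) indecomposable. Since \(p\) avoids \(1324\), \(\alpha\) avoids \(132\), \(\gamma\) avoids \(213\), and \(\beta\) is an identity. Take \(\pi = \alpha \oplus \id_{|\beta|-1} \oplus \gamma\). This permutation is decomposable and avoids \(p\) (its components do not allow enough room), and \(\widetilde f(\pi) = \alpha \oplus \id_{|\beta|} \oplus \gamma = p\). No priority issue arises here, since decomposable permutations are treated directly by \(\widetilde f\).

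\emph{Cases (b)--(d).} Take \(\pi\) with \(\pi_1\) playing the role of \(q_1\), and \(\pi \delete \pi_1 = \sigma \oplus \id_m \oplus \tau\), following the corresponding bullet in the proof of Theorem \ref{thm:incomp necessary}.
\begin{itemize}
  \item In case (b), \(q \delete q_1\) splits into at least two components. Place its first components into \(\sigma\) together with \(\id_m\), and put the remaining ones into \(\tau\). Shrink the identity part by one, so that only \(f\) restores the occurrence, and choose the value \(\pi_1\) so that \(q_1 < n\) is respected and \(\pi\) stays indecomposable.
  \item In case (c), put the first component of \(q\), minus its first entry, into \(\sigma\), and the second component into \(\tau\), with \(m = 0\). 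After \(f\) inserts a point, \(\pi_1\) sits at the right height relative to \(\tau\) to complete the occurrence. In \(\pi\) itself, \(\pi_1\) lies above all of \(\tau\) except for too few points, and \(q_1 > 1\) prevents a relocation.
  \item In case (d), put \(q \delete q_1\) into \(\tau\). Since \(q \delete q_1\) avoids \(213\), we can take \(\tau\) to be an indecomposable \(213\)-avoider containing it, for instance by prepending a large point if needed. Let \(\sigma = 1\), and choose \(\pi_1\) just one value too low, so that the inserted identity point lifts it into position.
\end{itemize}

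In each case we must check three things. First, \(\pi\) avoids \(1324\); this follows from \(\sigma \in \Av(132)\), \(\tau \in \Av(213)\), and the placement of \(\pi_1\). Second, \(\pi\) is almost decomposable and handled by F1, which requires \(\pi_1 \neq 1\) so that \(\pi \delete 1\) is not also decomposable, consistent with \cite[Proposition~7]{linusson_enumerating_2025}. Third, \(\pi\) avoids \(p\).

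The main obstacle is this last check. We have to show that no occurrence of \(p\) exists in \(\pi\) other than through the intended structure, which is where the conditions \(1 < q_1 < n\) enter. We would handle it by counting: every occurrence of \(p\) in \(\pi\) must use \(\pi_1\) (otherwise it lies in a decomposable permutation of the form handled in case (a), with too short an identity part), and then the value \(\pi_1\) is off by one from what the occurrence requires. The rc-cases then need the additional verification that \(\pi^{\rc}\) is not decomposable after deleting its first entry or the value \(1\), which the extra condition \(q_1 < n-1\) guarantees.
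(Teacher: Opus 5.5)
Your overall plan (explicit witnesses \(\pi\), transfer to \(p^{-1}\) by inversion, and to \(p^{\rc}\) via case F3) matches the paper's. The constructions in (b)--(d), however, are either missing or wrong at exactly the point that matters: where \(\pi_1\) sits.

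\textbf{The mechanism.} In F1 the \emph{value} of \(\pi_1\) is kept. Meanwhile \(\widetilde f\) inserts a new point directly above \(\sigma \oplus \id_m\). Since \(\pi\) is indecomposable, \(\pi_1\) already lies above \(\sigma \oplus \id_m\), so in \(f(\pi)\) one point of \(\tau\) that was below \(\pi_1\) ends up above it. The insertion therefore \emph{lowers} \(\pi_1\) by one step relative to \(\tau\), and a witness must have \(\pi_1\) one step too \emph{high}.

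\textbf{Case (d).} You prescribe the opposite, and it fails. Take \(p = 2431\), which satisfies (d), with \(\sigma = 1\) and \(\tau = 321\). Placing \(\pi_1\) one step below its target gives \(\pi = 21543\). This is decomposable, and \(f(\pi) = 213654\) avoids \(2431\). If you read ``one value too low'' as an absolute value instead, you get \(\pi = 31542\), which already contains \(2431\) (as \(3542\)). The paper takes \(\pi \delete \pi_1 = 1 \oplus (p \delete p_1)\) with \(\pi_1 = p_1 + 2\); here \(\pi = 41532\) and \(f(\pi) = 412653\). Then \(|\pi| = |p| + 1\), and avoidance is a short count:
\begin{itemize}
  \item an occurrence not using \(\pi_1\) would be \(1 \oplus (p \delete p_1)\), which starts with \(1 \neq p_1\);
  \item an occurrence using \(\pi_1\) omits one other point, leaving \(\pi_1\) above \(p_1\) or \(p_1 + 1\) of the rest instead of \(p_1 - 1\).
\end{itemize}
Your device of prepending a large point to \(q \delete q_1\) makes \(|\pi| = |q| + 2\) and breaks this count. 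For \(q = 213\), even with the direction corrected, you get \(\pi = 41523\), which contains \(213\) as \(415\).

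\textbf{Case (b).} Shrinking the identity block is right when \(\comp(q \delete q_1) \geq 3\); the paper takes \(\pi = f^{-1}(q)\) there. But when \(q\) is indecomposable and \(q \delete q_1\) has exactly two components, there is no identity block to shrink, so your recipe produces nothing. The missing witness, which the paper also reuses in (c), is \(\pi \delete \pi_1 = q \delete q_1\) with \(\pi_1 = q_1 + 1\). This \(\pi\) avoids \(q\) simply because \(|\pi| = |q|\) and \(\pi \neq q\). Choosing \(|\pi| \leq |q|\) in (a)--(c) and \(|\pi| = |q| + 1\) in (d) is what turns your ``main obstacle'' into a triviality.

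\textbf{Case (c).} \(\pi_1\) should lie below all of \(\tau\) except its lowest point, not above it.

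\textbf{The rc cases.} It is false that \(q_1 < n-1\) by itself makes \(\pi \delete n\) and \(\pi \delete \pi_n\) indecomposable. For \(q = 2143 = 2143^{\rc}\), your (c)-witness is \(\pi = 3142\), and \(\pi \delete 2 \cong 213\) is decomposable, so \(\pi^{\rc}\) is handled by F1, not F3. The paper fixes this as follows:
\begin{itemize}
  \item it replaces the last component \(\tau\) by \(\tau' \in \{\tau,\ 1 \ominus \tau,\ \tau \ominus 1,\ 1 \ominus \tau \ominus 1\}\), so that \(\tau'\) begins with its largest entry and ends with \(1\);
  \item it adjusts the value of \(\pi_1\) accordingly;
  \item it uses \(q_1 < n-1\) only to ensure that \(\pi_1\) is not the maximum of \(\pi\).
\end{itemize}
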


\begin{proof}
  In each case, it suffices to find a permutation \(\pi \in \Av(1324, p)\) such that \(f(\pi)\) contains \(p\). We begin with the case  \(q = p\). The case \(q = p^{-1}\) is analogous -- take \(\pi^{-1}\) instead of \(\pi\). In \ref{incomp suff cond:3 comp} we have \(p = \sigma \oplus {\id_m} \oplus \tau\) with \(m \geq 1\), so we can take \(\pi = \sigma \oplus {\id_{m-1}} \oplus \tau\). Then \(\pi\) obviously avoids \(p\), and \(f(\pi) = p\). Henceforth, assume that \(\comp(p) \leq 2\).

  Suppose that \(p\) satisfies \ref{incomp suff cond:almost decomp}. Assume first that \(\comp(p) = 2\), say \(p = \sigma \oplus \tau\). Here \(\sigma\) is an indecomposable \(132\)-avoider such that \(\comp(\sigma \delete \sigma_1) \geq 2\), and it is easy to see that such a permutation must begin with its largest entry -- condition \ref{incomp suff cond:2 comp} holds, and this case will be handled in the next paragraph. Assume instead that \(p\) is indecomposable. If \(\comp(p \delete p_1) \geq 3\), we can let \(\pi\) be the preimage \(f^{-1}(p)\). Otherwise, let \(\pi\) be defined by \(\pi \delete \pi_1 = p \delete p_1\) and \(\pi_1 = p_1 + 1\) (since \(p_1 < n\), this is possible). Clearly \(\pi\) avoids \(p\), and \(f(\pi)\) contains \(p\).

  Now, assume \(p\) satisfies \ref{incomp suff cond:2 comp}. As above, define \(\pi\) by \(\pi \delete \pi_1 = p \delete p_1\), \(\pi_1 = p_1 + 1\). Clearly \(\pi\) is indecomposable, \(\pi\) avoids \(p\), and \(f(\pi)\) contains \(p\).

  Finally, suppose that \(p\) satisfies \ref{incomp suff cond:av 213}. Define \(\pi\) by \(\pi \delete 1 = 1 \oplus p\) and \(\pi_1 = p_1 + 2\) (i.e.\ the first entry of \(\pi \delete 1\) is \(p_1 + 1\)). It is easy to see that \(\pi\) avoids \(p\), and \(f(\pi)\) contains \(p\).

  If instead \(q \in \left\{p^{\rc}, (p^{\rc})^{-1}\right\}\) satisfies one of the conditions, we want to construct \(\pi\) in an analogous manner. However, care must be taken to ensure that \(\pi \delete 1\) and \(\pi \delete \pi_1\) are indecomposable; otherwise \(f(\pi)\) is not defined according to case \ref{inj:rc}. Concretely, this is what we want to show: if \(q = p^{\rc}\) satisfies \ref{incomp suff cond:almost decomp}, \ref{incomp suff cond:2 comp} or \ref{incomp suff cond:av 213}, and \(q_1 < n-1\), then there exists a permutation \(\pi \in \Av(1324, q)\) such that \(f(\pi)\) contains \(q\), and both \(\pi \delete n\) and \(\pi \delete \pi_n\) are indecomposable.

  First, suppose \(q = p^{\rc}\) satisfies condition \ref{incomp suff cond:almost decomp} or \ref{incomp suff cond:2 comp}: either \(q\) is indecomposable and \(\comp(q \delete q_1) > 2\), or \(q\) has exactly two components, the first of which starts with its largest entry. Write \(q \delete q_1 = \sigma \oplus \tau\), where \(\tau \in \Av_m(213)\) is indecomposable. We define
  \begin{equation} \label{eq:annoying tau}
    \tau' = \begin{cases}
      \tau & \text{if } \tau_1 = m, \tau_m = 1, \\
      1 \ominus \tau & \text{if } \tau_1 \neq m, \tau_m = 1, \\
      \tau \ominus 1 & \text{if } \tau_1 = m, \tau_m \neq 1, \\
      1 \ominus \tau \ominus 1 & \text{if }\tau_1 \neq m, \tau_m \neq 1,
    \end{cases}
  \end{equation}
  in each case ensuring that \(\tau'\) begins with its largest entry and ends with \(1\), so that in particular \(\tau'\) is not almost decomposable. Now, define \(\pi\) by \(\pi \delete \pi_1 = \sigma \oplus \tau'\) and
  \begin{equation*}
    \pi_1 = \begin{cases}
      q_1 + 1 & \text{if } \tau_m = 1, \\
      q_1 + 2 & \text{if } \tau_m \neq 1.
    \end{cases}
  \end{equation*}
  We can check that \(\pi\) avoids \(q\): an occurrence of \(q\) in \(\pi\) cannot use the new entries added to \(\tau\), so it must consist of \(\pi_1\) and \(\sigma \oplus \tau\) -- but \(\pi_1\) was shifted up one step to prevent this. Furthermore, \(f(\pi)\) contains \(q\) and both \(\pi \delete n\) and \(\pi \delete \pi_n\) are indecomposable, since \(\tau'\) is not almost decomposable and critically, \(\pi_1\) is not the largest entry of \(\pi\). (This is where we use the assumption that \(q_1 < n-1\).)

  Lastly, suppose that \(q = p^{\rc}\) satisfies condition \ref{incomp suff cond:av 213}. Let \(q \delete q_1 = \tau \in \Av_m(213)\) and define \(\tau'\) as in \eqref{eq:annoying tau}. Define \(\pi\) by \(\pi \delete \pi_1\) = \(1 \oplus \tau'\) and
  \begin{equation*}
    \pi_1 = \begin{cases}
      q_1 + 2 & \text{if } \tau_m = 1, \\
      q_1 + 3 & \text{if } \tau_m \neq 1.
    \end{cases}
  \end{equation*}
  The argument is analogous to the previous case. Since the case \(q = (p^{\rc})^{-1}\) is symmetrical, this concludes the proof.
\end{proof}

\subsection{1342 is almost compatible} \label{subsec:1342 almost compatible}

The pattern \(1342\) is not \(f\)-compatible: if \(\pi = 34152\) then \(f(\pi) = 241563\), whose subsequence \(2563\) forms a \(1342\)-pattern. However, we can show that every such counterexample has many inversions.

\begin{theorem} \label{thm:1342 almost compatible}
  If \(\pi \in \Av_n(1324, 1342)\) is decomposable or almost decomposable and \(\inv(\pi) \leq 2n-5\), then \(f(\pi)\) avoids \(1342\). In particular, for all \(k\) and \(n \geq \frac{k+7}{2}\), we have
  \begin{equation*}
    \av_n^k(1324, 1342) \leq \av_{n+1}^k(1324, 1342).
  \end{equation*}
\end{theorem}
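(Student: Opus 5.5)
The plan is to split according to how \(f(\pi)\) is defined, and in each case localize where an occurrence of \(1342\) in \(f(\pi)\) can sit. The final ``in particular'' part is then routine. If \(n \geq \frac{k+7}{2}\), then \(k \leq 2n-7 \leq 2n-5\), so by Theorem~\ref{thm:almost decomp 1324} every \(\pi \in \Av_n^k(1324,1342)\) is decomposable or almost decomposable. By Theorem~\ref{thm:inj 1324}, \(f\) is an inversion-preserving injection into \(\Av_{n+1}^k(1324)\), and the first part of the theorem says its image avoids \(1342\).

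\emph{Decomposable case.} As in the proof of Proposition~\ref{prop:1324 1342}, a decomposable \(\pi\) avoids \(\{1324,1342\}\) if and only if \(\pi = \sigma \oplus \id_m \oplus \tau\) with \(\sigma \in \Av(132)\) and \(\tau \in \Av(213,231)\). The image \(f(\pi) = \sigma \oplus \id_{m+1} \oplus \tau\) has the same form, so it avoids \(1342\) with no inversion bound needed.

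\emph{Almost decomposable case.} Let \(e\) be the removed boundary point and write \(\pi \delete e = \sigma \oplus \id_m \oplus \tau\). Since \(f(\pi\delete e) = \sigma\oplus\id_{m+1}\oplus\tau\) avoids \(1342\), any occurrence in \(f(\pi)\) must use \(e\). It must also be impossible to move the occurrence back into \(\pi\). The identity block points are interchangeable with respect to every entry outside the block. Hence if the occurrence uses \(j\) points of \(\id_{m+1}\) and \(j \leq m\), it transfers to \(\pi\). So a bad occurrence uses all \(m+1\) identity points, and since \(1342\) has at most two increasing positions that could lie in such a block, we get \(m \leq 1\).

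I then treat the four cases F1, F2, F3\('\), F4\('\) separately, applying the corresponding symmetry to the pattern. In F1, \(e = \pi_1\) is the first entry, so it must play the role of \(1\) in \(1342\). The other three entries exceed \(\pi_1\), and hence lie in \(\tau\) (the identity block lies below \(\pi_1\)); such an occurrence already exists in \(\pi\), so F1 never fails. F2 is the same argument for \(\pi^{-1}\) with the pattern \(1342^{-1} = 1423\): the entry \(1\) is the first point in value order, and the same reasoning applies. So the genuine failures come from F3\('\) and F4\('\), where \(e = \pi_n\) or \(e = n\), with patterns \(1342^{\rc} = 3124\) and \((1342^{\rc})^{-1} = 2314\). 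This is consistent with the counterexample \(34152\).

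The main obstacle is the inversion count in cases F3\('\)/F4\('\). Take F3\('\) by symmetry. The plan is to read off the exact shape of a bad occurrence of \(3124\) in \(f(\pi)\): the last entry \(e\) plays the \(4\), and the identity points (with \(m\le1\)) play the \(1\) and/or \(2\). This forces \(e\) to lie below some entries of \(\sigma\) and above the identity block. It also forces the block \(\tau\) to be absent or small, and \(\sigma\) to contain an entry above \(e\). I would then bound \(\inv(\pi)\) from below by summing three contributions: \(\inv(\sigma) \geq |\sigma| - 1\) and \(\inv(\tau) \geq |\tau|-1\) by indecomposability and \eqref{eq:comp inv length}; the inversions between \(e\) and all entries of \(\sigma \oplus \id_m \oplus \tau\) lying above it; and the inversions forced by indecomposability of \(\pi\) in the positions where \(e\) sits. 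The goal is to show this sum is at least \(2n-4\), using \(m\le1\) and \(n = |\sigma|+m+|\tau|+1\), which contradicts \(\inv(\pi)\le 2n-5\). I expect this step to need a short sub-case split on \(m\in\{0,1\}\) and on whether \(\tau\) is empty. In the tight sub-case I would use that \(\sigma\) avoids \(132\), so that its inversion table is a partition (Lemma~\ref{lem:132 partition properties}), to extract the one or two extra inversions needed. Case F4\('\) then follows by taking inverses.
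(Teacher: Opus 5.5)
There is a genuine gap, and the bound you are aiming for cannot be reached. In case F3\('\) you plan to prove \(\inv(\pi)\ge 2n-4\), which would contradict \(\inv(\pi)\le 2n-5\). The example you cite refutes this. The permutation \(\pi=34152\) avoids \(1324\) and \(1342\) and is almost decomposable. Here \(\pi\delete\pi_1\) and \(\pi\delete 1\) are indecomposable while \(\pi\delete\pi_5=2314\) is decomposable, so F3\('\) applies. We have \(n=5\) and \(\inv(\pi)=5=2n-5\), yet \(f(\pi)=241563\) contains \(1342\).

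So the first assertion of the statement is off by one as printed. What is true, and what the paper's own count actually proves, is that every bad \(\pi\) has \(\inv(\pi)\ge 2n-5\). In other words, \(f(\pi)\) avoids \(1342\) whenever \(\inv(\pi)\le 2n-6\). Your derivation of the ``in particular'' part still works with this corrected hypothesis, since \(n\ge\frac{k+7}{2}\) gives \(k\le 2n-7\).

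\textbf{The reduction step.} You assume that \(f\) puts the boundary point \(e\) back in the same relative position with respect to \(\sigma\oplus\mathrm{id}_m\oplus\tau\), so that only the identity block changes. In fact \(f\) preserves the number of inversions made by \(e\), not its relative position:
\begin{itemize}
  \item In F1, \(f(\pi)_1=\pi_1\) keeps its value while the new identity point lies below it. So \(e\) now lies below an entry of \(\tau\) that it used to exceed.
  \item In F3\('\), the value \(\pi_n+1\) lifts \(e\) above one more entry of \(\sigma\).
\end{itemize}
Your transfer argument ignores this slide, and the example shows it fails. Write \(\pi\delete\pi_5=231\oplus 1\), so \(\sigma=231\), \(m=0\), \(\tau=1\). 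The occurrence \(2\,4\,6\,3\) of \(1342\) in \(f(\pi)\) uses no identity point (the new one is \(5\)). The corresponding entries \(3\,4\,5\,2\) of \(\pi\) form \(2341\), not \(1342\). Hence ``a bad occurrence uses all \(m+1\) identity points, so \(m\le 1\)'' is false, and so is the shape analysis built on it.

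\textbf{The other cases.} F1 and F2 are still harmless, but for the paper's reason. Since \(\pi_1>\pi_2\), you can replace \(e\) by \(f(\pi)_2\), which gives an occurrence inside \(\widetilde f(\pi\delete\pi_1)\); that permutation avoids \(1342\). F4\('\) is not obtained from F3\('\) by inversion, because \(1342^{-1}=1423\). The paper excludes it directly: in an occurrence \(a\,b\,(n+1)\,d\), \(1324\)-avoidance of \(f(\pi)\) forces \(d\) to come after \(n\). Then \(a\,b\,n\,d\) is an occurrence inside \(\widetilde f(\pi\delete n)\), a contradiction.

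\textbf{The count in F3\('\).} Your bookkeeping uses \(\inv(\sigma)\ge|\sigma|-1\), \(\inv(\tau)\ge|\tau|-1\), and the inversions of \(e\) (with one entry of \(\sigma\) above it). This only yields \(\inv(\pi)\ge n+|\tau|-2\). The shortfall against \(2n-5\) is \(n-|\tau|-3\), which is unbounded, so ``one or two extra inversions'' from \(132\)-avoidance of \(\sigma\) cannot close it.

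The missing ingredient is \(1342\)-avoidance, which forces three facts:
\begin{itemize}
  \item \(\pi_1>\pi_n\);
  \item \(1\) appears before \(n\);
  \item no entry positioned between \(1\) and \(n\) exceeds \(\pi_n\), since it would form \(1342\) with \(1\), \(n\), \(\pi_n\).
\end{itemize}
With these, the boundary points \(\pi_1,1,n,\pi_n\) give \(3\) inversions among themselves. Each of the other \(n-4\) entries contributes at least \(2\) more. An entry positioned between \(1\) and \(n\) gets its second inversion from an interior entry, forced by indecomposability of \(\pi\delete\pi_1\). The total is \(3+2(n-4)=2n-5\), and \(34152\) shows this is sharp.
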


\begin{remark}
  The pattern \(1342\) is the only such case among the length four patterns. All others are either \(f\)-compatible, or there are counterexamples \(\pi \in \Av_n(1324, p)\) with \(n\), \(n+1\) or \(n+2\) inversions, so that \(f(\pi)\) contains \(p\).
\end{remark}

\begin{proof}[Proof of Theorem \ref{thm:1342 almost compatible}]
  First of all, it is clear that if \(\pi\) is decomposable, then \(f(\pi) = \widetilde f(\pi)\) avoids \(1342\). So, let us assume that \(\pi\) is an almost decomposable \(\{1324,1342\}\)-avoiding permutation of length \(n\). We will examine how the \(1342\)-pattern can appear in \(f(\pi)\). The goal is to prove that \(f(\pi)\) must have been constructed according to case \ref{inj:pn} in the definition of \(f\).

  \begin{itemize}
    \item Suppose \(\pi \delete \pi_1\) is decomposable. Since \(\widetilde f(\pi \delete \pi_1)\) avoids \(1342\), we know that if \(f(\pi)\) contains \(1342\), then \(f(\pi)_1\) must be the \(1\) in the pattern. However, \(\pi_1 > \pi_2\) and hence \(f(\pi)_1 > f(\pi)_2\), so \(\widetilde f(\pi \delete \pi_1)\) must also contain \(1342\). This is a contradiction, so \(\pi \delete \pi_1\) must be indecomposable.
    \item By an identical argument, \(\pi \delete 1\) must be indecomposable.
    \item Suppose that \(\pi \delete \pi_1\) and \(\pi \delete 1\) are indecomposable, and \(\pi \delete n\) is decomposable. Again, \(\widetilde f(\pi \delete n)\) avoids \(1342\), so the entry \(n+1\) of \(f(\pi)\) must be the \(4\) in any occurrence of \(1342\). Since \(\pi^{-1}_n < \pi^{-1}_{n-1}\) we have that \(f(\pi)^{-1}_{n+1} < f(\pi)^{-1}_n\). Now, fix an occurrence \(a b c d\) of \(1342\) in \(f(\pi)\), where \(c = n+1\). Note that \(d\) must come after \(n\) in \(f(\pi)\), as otherwise \(a b d n\) forms a \(1324\)-pattern. But this means that \(a b n d\) is an occurrence of \(1342\) in \(f(\pi)\) that does not use the entry \(n+1\), which is a contradiction. Hence, \(\pi \delete n\) must be indecomposable.
  \end{itemize}
  
  In conclusion, we have showed that if \(f(\pi)\) contains \(1342\), then \(\pi \delete \pi_1\), \(\pi \delete 1\) and \(\pi \delete n\) are all indecomposable. So, \(\pi \delete \pi_n\) must be decomposable. We further claim that \(\pi_1 > \pi_n\) must hold. Indeed, since \(\pi\) is indecomposable, there would otherwise be some \(i < \pi^{-1}_n\) such that \(\pi_i > \pi_n\), and then \(\pi_1 \pi_i n \pi_n\) forms a \(1342\)-pattern. Furthermore, \(\pi^{-1}_1 < \pi^{-1}_n\) since \(\pi \delete \pi_n\) is decomposable.

  See Figure \ref{fig:1342 f incompatible} for an illustration of the plot of such a permutation \(\pi\). Points can be contained in any of the white regions, but the shaded region must be empty, since a point there would create a \(1342\)-pattern together with \(1\), \(n\) and \(\pi_n\). Note that the four `boundary' points \(\pi_1\), \(1\), \(n\), \(\pi_n\) create \(2 \cdot 4 - 5 = 3\) inversions by themselves. Furthermore, any point in one of the empty white regions must create at least two inversions with these boundary points. Lastly, any point \(\pi_i\) in the bottom middle region (containing a circled point in the figure) creates one inversion with \(\pi_1\), and we claim that it must also create an inversion with a non-boundary (`interior') point. Indeed, otherwise all points (except \(\pi_1\)) before \(\pi_i\) are smaller than it, and all points after \(\pi_i\) are larger, so \(\pi \delete \pi_1\) is decomposable. This is a contradiction.

  Construct \(\pi\) from the configuration of its four boundary points by adding the interior points one by one, so that the points in the bottom middle region are added last. The argument above shows that the number of inversions increases by at least two at each step, so \(\inv(\pi) \geq 2n - 5\) by induction.
\end{proof}

\begin{figure}[t]
  \begin{minipage}[t]{0.42\textwidth}\vspace{0pt}
    \begin{tikzpicture}[scale=0.8]
      \draw (0,0) rectangle (6,6);
      \fill[black!20] (2,2) rectangle (4,6);

      \foreach \x\y in {0/4,2/0,4/6,6/2} {
        \node[dot] at (\x,\y) {};
        \draw
          (\x,0) -- (\x,6)
          (0,\y) -- (6,\y);
      }

      \node[dot] at (3,1) {};
      \node[circ] at (3,1) {};
    \end{tikzpicture}
  \end{minipage}\hfill
  \begin{minipage}[t]{0.58\textwidth}\vspace{0pt}
    \caption{The structure of an almost decomposable permutation in \(\Av(1324,1342)\) whose image under \(f\) contains \(1342\).}
    \label{fig:1342 f incompatible}
  \end{minipage}
\end{figure}
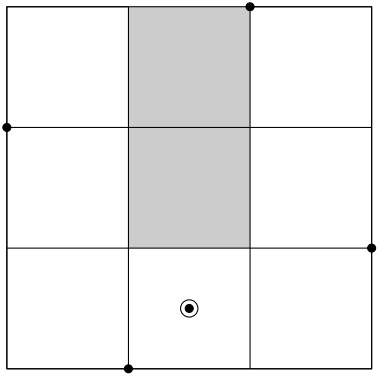

To conclude this section, we follow the methods of \cite{linusson_enumerating_2025} to enumerate the row differences
\begin{equation*}
  \av_{n+1}^k(1324, 1342) - \av_n^k(1324, 1342) 
\end{equation*}
for \(n \geq \frac{k+7}{2}\). Since we also know the limit sequence of \(\{1324, 1342\}\), this yields an enumeration of \(\Av_n^k(1324, 1342)\) for all \(n \geq \frac{k+7}{2}\). Recall first from \cite[Section~4]{linusson_enumerating_2025} that
\begin{equation*}
  \Av_{n+1}^k(1324) \setminus f(\Av_n^k(1324)) = R_1 \sqcup R_2 \sqcup R_3,
\end{equation*}
where the collections \(R_i\) are defined as follows.
\begin{itemize}
  \item \(R_1\) contains all permutations \(\sigma\) such that \(\sigma_1 = n+1\) or \(\sigma_{n+1} = 1\).
  \item \(R_2\) contains all permutations \(\sigma\) such that \(\sigma_2 = n+1\) or \(\sigma_{n+1} = 2\).
  \item \(R_3\) contains all permutations \(\sigma\) such that either \(\tau = \sigma\) or \(\tau = \sigma^{-1}\) has the following form: \(\comp(\tau \delete \tau_1) \geq 3\), \(\tau_1 = \ell+1\) and \(\tau_{n+1} = \ell+2\), where \(\ell\) is the length of the first component of \(\tau \delete \{\tau_1, \tau_{n+1}\}\).
\end{itemize}
We will show that the difference
\begin{equation} \label{eq:1342 row diff}
  \Av_{n+1}^k(1324, 1342) \setminus f(\Av_n^k(1324, 1342))
\end{equation}
consists of exactly the same collections intersected with \(\Av(1342)\).

\begin{lemma} \label{lem:1342 preserved}
  If \(\pi \in \Av_n(1324)\) is decomposable or almost decomposable and contains \(1342\), then \(f(\pi)\) also contains \(1342\).
\end{lemma}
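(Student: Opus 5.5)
The plan is to prove something stronger that does not mention \(1342\) at all: for every decomposable or almost decomposable \(\pi \in \Av_n(1324)\), the permutation \(f(\pi)\) contains \(\pi\) as a pattern. Concretely, I aim to show that \(f(\pi) \delete x = \pi\) for a suitable entry \(x\) of \(f(\pi)\). The lemma then follows immediately by transitivity of pattern containment: an occurrence of \(1342\) in \(\pi\) is carried to an occurrence in \(f(\pi)\).

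\emph{Decomposable case.} Write \(\pi = \sigma \oplus \id_m \oplus \tau\), so that \(f(\pi) = \widetilde f(\pi) = \sigma \oplus \id_{m+1} \oplus \tau\). Deleting any one entry of the block \(\id_{m+1}\) returns \(\pi\). This also covers \(m = 0\), where the single entry of \(\id_1\) is the inserted point.

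\emph{Case F\ref{inj:p1}.} Here \(\pi \delete \pi_1 = \sigma \oplus \id_m \oplus \tau\), and \(f(\pi)\) consists of the first entry \(\pi_1\) followed by a copy of \(\sigma \oplus \id_{m+1} \oplus \tau\). I will view this as inserting a single new point into \(\pi\). The new point should lie immediately after the last entry of \(\sigma \oplus \id_m\) in position, and immediately above it in value; equivalently, it is a duplicate of the top entry of the identity block, or it sits between \(\sigma\) and \(\tau\) when \(m = 0\).

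The bookkeeping to check is the value of the first entry. Its value relative to the other entries must be the same in \(f(\pi)\) as in \(\pi\), so that removing the new point reproduces \(\pi\) exactly. As noted in the proof of Theorem~\ref{thm:incomp necessary}, \(\pi_1 \geq |\sigma| + m + 1\). So \(\pi_1\) lies above every entry of \(\sigma \oplus \id_m\), and the new point, inserted just above that block, stays below \(\pi_1\). Hence the relative order of \(\pi_1\) with every old entry is unchanged, and \(f(\pi) \delete x = \pi\) for the new point \(x\). This value comparison is the step I expect to need the most care. In particular, I have to confirm that the convention \(f(\pi)_1 = \pi_1\) (the value kept literally, with the block below it shifted) agrees with this insertion picture. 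The inequality \(\pi_1 \geq |\sigma| + m + 1\) is the key fact, and it holds because \(\pi\) is indecomposable.

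\emph{Cases F\ref{inj:inv} and F\ref{inj:rc}.} These follow by symmetry. In case F\ref{inj:inv}, \(\pi^{-1} \delete \pi^{-1}_1\) is decomposable, so \(\pi^{-1}\) falls under case F\ref{inj:p1}. Applying the previous paragraph to it gives that \(f(\pi^{-1})\) contains \(\pi^{-1}\), and taking inverses shows that \(f(\pi) = f(\pi^{-1})^{-1}\) contains \(\pi\). In case F\ref{inj:rc}, \(f(\pi^{\rc})\) is defined via F\ref{inj:p1} or F\ref{inj:inv}, so it contains \(\pi^{\rc}\) by the cases already done. Since reverse-complement preserves pattern containment, \(f(\pi) = f(\pi^{\rc})^{\rc}\) contains \(\pi\). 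This completes the argument, and the lemma follows.
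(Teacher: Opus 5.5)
Your decomposable case is fine, but the stronger claim that \(f(\pi)\) always contains \(\pi\) is false. It breaks exactly at the value comparison you flagged in case F1.

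Since \(\pi\) is indecomposable and \(\tau\) is nonempty, one actually has \(\pi_1 \geq |\sigma| + m + 2\), not just \(+1\). So in \(\pi\) the first entry lies above all of \(\sigma \oplus \id_m\) \emph{and} above at least one entry of \(\tau\). In \(f(\pi)\) the first entry keeps the literal value \(\pi_1\), so it still has exactly \(\pi_1 - 1\) smaller entries. One of these is now the extra point of \(\id_{m+1}\), so the largest entry of \(\tau\) that lay below \(\pi_1\) in \(\pi\) ends up above it. Hence \(f(\pi)\) is not \(\pi\) with one point inserted: \(\pi_1\) also drops past one entry of \(\tau\).

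This shift is forced by \(\inv(f(\pi)) = \inv(\pi)\). The new point adds one inversion with \(\pi_1\) and the crossing entry removes one, whereas your insertion picture would add an inversion. A concrete counterexample is \(\pi = 3142\). It avoids \(1324\), and \(\pi \delete 3 = 132 = 1 \oplus 21\) is decomposable, so case F1 gives \(f(\pi) = 31254\). The only four-entry patterns of \(31254\) are \(1243\), \(2143\) and \(3124\), so no deletion returns \(\pi\). The paper's own example \(34152 \mapsto 241563\) (case F3) is another instance. Since your F2 and F3 cases are reduced to the F1 claim, the whole argument collapses.

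What does survive in case F1 is one-directional. No entry moves from above \(\pi_1\) to below it, and \(\widetilde f\) preserves the relative order of all other entries. That suffices for \(1342\) specifically:
\begin{itemize}
  \item An occurrence avoiding \(\pi_1\) lies in \(\pi \delete \pi_1\) and is kept by \(\widetilde f\).
  \item An occurrence using \(\pi_1\) must use it as the \(1\), with the \(342\) entirely above \(\pi_1\). Those entries are therefore still above it in \(f(\pi)\).
\end{itemize}
Case F2 is the analogous statement about positions to the right of the entry \(1\).

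For F3 and F4 you cannot transport this by reverse-complement, since the reverse-complement of \(1342\) is \(3124\). They need separate arguments, as in the paper:
\begin{itemize}
  \item In case F3 the image always contains \(1342\). It is formed by the entry \(1\), an ascent from \(\id_{m+1}\) into \(\tau\), and the last entry \(\pi_n + 1\), which lies below all of \(\id_{m+1} \oplus \tau\).
  \item In case F4, \(n\) must sit among the positions of the first component of \(\pi \delete n\). So an occurrence using \(n\) would put its \(1\), \(3\), and hence \(2\) into that component, giving a \(132\) there. Thus every occurrence avoids \(n\) and survives in \(\widetilde f(\pi \delete n)\).
\end{itemize}
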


\begin{proof}
  This is clear when \(\pi\) is decomposable, so suppose that \(\pi\) is almost decomposable. Assume first that \(\pi \delete \pi_1\) is decomposable. The entries \(342\) of an occurrence of \(1342\) in \(\pi\) must be contained in the same component of \(\pi \delete \pi_1\), so it is clear that \(f(\pi)\) also contains \(1342\). The case where \(\pi \delete 1\) is decomposable is analogous.

  Now, suppose that \(\pi \delete \pi_1\) and \(\pi \delete 1\) are indecomposable, and that \(\pi \delete \pi_n\) is decomposable. In \(f(\pi)\) there must be an occurrence of \(12\) with both entries larger than \(f(\pi)_{n+1}\). Together with \(1\), these entries form a \(1342\)-pattern in \(f(\pi)\). Finally, assume that \(\pi \delete n\) is decomposable. The entries \(1\) and \(3\) in our \(1342\) must both come from the first component of \(\pi \delete n\), so the \(2\) must also come from the first component. But then the first component contains \(132\), which is impossible.
\end{proof}

Lemma \ref{lem:1342 preserved} implies that the collection \ref{eq:1342 row diff} consists exactly of \(R_1\), \(R_2\) and \(R_3\) intersected with \(\Av(1342)\), like we wanted. It remains to enumerate these sets.

\begin{theorem} \label{thm:1342 enum}
  For every nonnegative integer \(k\) and \(n \geq \frac{k+7}{2}\), the difference \(\av_{n+1}^k(1324, 1342) - \av_n^k(1324, 1342)\) is nonnegative, and equals the coefficient of \(x^k\) in the generating function
  \begin{equation*}
    x^{n-1} (2 + 2x) C_{1324, 1342}(x) = x^{n-1} (2+2x) \cdot \prod_{i \geq 1} \frac{1 + x^i}{1 - x^i}.
  \end{equation*}
  In particular, when \(n \geq \frac{k+7}{2}\), 
  \begin{equation*}
    \av_n^k(1324, 1342) = [x^k] \left( \frac{1 - x - x^{n-1}(2 + 2x)}{1 - x} \cdot \prod_{i \geq 1} \frac{1 + x^i}{1 - x^i} \right).
  \end{equation*}
\end{theorem}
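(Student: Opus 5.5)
Since \(n \geq \frac{k+7}{2}\) gives \(k \leq 2n-7 \leq 2n-5\), Theorem \ref{thm:1342 almost compatible} makes \(f\) an injection \(\Av_n^k(1324,1342) \to \Av_{n+1}^k(1324,1342)\). By Lemma \ref{lem:1342 preserved} and the decomposition of \cite{linusson_enumerating_2025}, the complement of the image is \((R_1 \sqcup R_2 \sqcup R_3) \cap \Av_{n+1}^k(1324,1342)\). Nonnegativity is then immediate, and the difference equals \(|R_1 \cap \Av|+|R_2\cap\Av|+|R_3\cap\Av|\), where \(\Av\) abbreviates \(\Av_{n+1}^k(1324,1342)\). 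The plan is to count each piece as a coefficient of \(x^{n-1}\) or \(x^n\) times \(C(x) := C_{1324,1342}(x)\), using that \(n\) is large relative to \(k\) so that every leftover permutation is decomposable and lies in the limit regime of Proposition \ref{prop:1324 1342}.

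\(R_1\): if \(\sigma_1 = n+1\), then \(\sigma\delete(n+1)\) has length \(n\) and \(k-n\) inversions. Since \(n \geq (k-n)+2\), it lies in the limit regime, and prepending a maximum never creates \(1324\) or \(1342\) (the latter pattern starts with its minimum). This gives \([x^{k-n}]C\). For \(\sigma_{n+1}=1\), removing the final \(1\) removes \(n\) inversions. A trailing minimum cannot be the \(1\) of \(1342\) and cannot play the role of \(4\) or \(2\) with something smaller after it, so avoidance is preserved; this gives another \([x^{k-n}]C\). The overlap (both conditions hold) needs \(\geq 2n-1 > k\) inversions and is therefore empty. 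So \(|R_1\cap\Av| = 2[x^{k-n}]C\).

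\(R_2\): if \(\sigma_2=n+1\), then removing it costs \(n-1\) inversions, and the pattern \(1342\) could use \(\sigma_1\) as the \(1\) and \(n+1\) as the \(4\). The plan is to argue, using largeness, that \(\sigma\delete(n+1) = \rho\) is decomposable with \(\rho_1\) in its first component. An occurrence \(\sigma_1, n+1, c, d\) with \(\sigma_1<d<c\) forces a \(132\) in the tail starting at \(\sigma_1\); avoidance of this is exactly what \(1342\)-avoidance of \(\rho\) (via \(\rho_1\) plus a \(132\)) should already exclude, up to checking. Similarly, for \(\sigma_{n+1}=2\), the trailing \(2\) can serve as the \(2\) of \(1342\) with \(1\) as the \(1\), which requires a \(34\)-ascent after \(1\); in the decomposable regime this should be automatic unless \(1\) is near the end. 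I expect each half to contribute \([x^{k-n+1}]C\), giving \(2[x^{k-(n-1)}]C\) in total. This is the step where I must be most careful: the \(R_2\) halves are not symmetric under inverse in the presence of \(1342\) (whose inverse is \(1423\)), so each half needs a separate bijection with \(\Av(1324,1342)\) at shifted parameters. If one half fails to match \(C\) exactly, the bookkeeping between \(R_2\) and \(R_3\) must compensate.

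\(R_3\): here \(\tau_1=\ell+1\) and \(\tau_{n+1}=\ell+2\) with \(\comp(\tau\delete\tau_1)\geq3\). For \(\tau=\sigma\), the entries \(\ell+1,\ldots\) together with the last entry \(\ell+2\) lying above the first component give \(\tau_1, a, b, \ell+2\) with \(a<b\) from later components, which is a \(1342\) occurrence whenever the later components contain an ascent above \(\ell+2\). Since \(\comp\geq3\) guarantees such components, I expect that \(R_3\cap\Av(1342)\) comes only from the inverse form, or from one of the two forms collapsing. The target is a total \(R_3\) contribution of \(2[x^{k-n}]C\) minus overlaps, such that the grand total matches \((2+2x)x^{n-1}C\): the coefficient \(2\) attached to \(x^{n-1}\) and the coefficient \(2\) attached to \(x^n\). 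Concretely, I will determine whether \(R_2\) supplies the \(x^{n-1}\) term and \(R_1\cup R_3\) supply the \(x^n\) term, adjusting after computing, and I will verify the outcome against the data in Table \ref{tab:1324 1342 diffs}. This \(R_2/R_3\) analysis under \(1342\)-avoidance is the main obstacle.

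Finally, the closed formula follows by summing the differences. From \(\av_m^k = [x^k]C\) for \(m\geq k+2\), telescope downward: \(\av_n^k = [x^k]C - \sum_{m\geq n}[x^k]x^{m-1}(2+2x)C\). The sum is \(x^{n-1}(2+2x)C/(1-x)\), which gives the stated expression \([x^k]\frac{1-x-x^{n-1}(2+2x)}{1-x}C\). All intermediate \(m\) satisfy \(m\geq\frac{k+7}{2}\), so each difference formula applies.
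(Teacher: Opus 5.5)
Your reduction is right: the complement of \(f(\Av_n^k(1324,1342))\) is \((R_1\sqcup R_2\sqcup R_3)\cap\Av_{n+1}^k(1324,1342)\). Your count \(2x^nC\) for \(R_1\) is correct, your observation that the form \(\tau=\sigma\) of \(R_3\) always contains \(1342\) is correct, and the telescoping at the end is fine. But the \(R_2\)/\(R_3\) count is the actual content of the theorem, and you never carry it out; the allocation you tentatively propose is also wrong. You expect both halves of \(R_2\) to give \([x^{k-n+1}]C\), and \(R_3\) to contribute at degree \(x^n\) (``\(2[x^{k-n}]C\) minus overlaps''). Since you also expect \(R_3\) to survive through its inverse form, this cannot add up to \((2+2x)x^{n-1}C\). ``Adjusting after computing'' against Table~\ref{tab:1324 1342 diffs} is not a proof.

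The missing facts are these.

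(i) The half \(\sigma_{n+1}=2\) of \(R_2\) is empty. Here \(\sigma\delete 2\) has length \(n\) and \(k-n+1\le n-6\) inversions, so by \eqref{eq:comp inv length} it has at least three components. The entry \(1\) lies in the first component, and entries \(a<b\) from the second and third components lie above \(2\), so \(1\,a\,b\,2\) is a \(1342\). You spotted this mechanism (``should be automatic'') but still counted the half.

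(ii) The half \(\sigma_2=n+1\) is a clean bijection. The entry \(n+1\) has only one entry to its left, so it cannot be the \(4\) of \(1324\) or \(1342\). Deleting it gives an arbitrary element of \(\Av_n^{k-n+1}(1324,1342)\), which lies in the limit regime, so \(R_2\) contributes exactly \(x^{n-1}C\). Your worry about \(\sigma_1,\,n+1,\,c,\,d\) with \(\sigma_1<d<c\) is moot: that is a \(1432\).

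(iii) In the form \(\tau=\sigma^{-1}\) of \(R_3\), \(\sigma\) has \(1\) at position \(\ell+1\) and \(n+1\) at position \(\ell+2\). Together these create \(\ell+(n-\ell-1)=n-1\) inversions, so this piece contributes \(x^{n-1}C\), not \(x^nC\). You must also check that \(\sigma\) avoids \(\{1324,1342\}\) exactly when \(\sigma\delete\{1,n+1\}\) does:
\begin{itemize}
\item a \(213\) or \(231\) to the right of \(n+1\), preceded by an entry of the first component, gives a \(1324\) or \(1342\) in \(\sigma\delete\{1,n+1\}\);
\item a \(132\) to the left of \(1\), followed by an entry of a later component, gives a \(1324\) in \(\sigma\delete\{1,n+1\}\);
\item \(n+1\) cannot be the \(4\) of \(1342\), since \(1\) and \(n+1\) are adjacent and everything to the right of \(n+1\) exceeds everything to the left of \(1\).
\end{itemize}

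Summing gives \(2x^nC+x^{n-1}C+x^{n-1}C=x^{n-1}(2+2x)C\), as in the paper.
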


\begin{proof}
  The collection \(R_1 \cap \Av(1342)\) consists of permutations \(1 \ominus \pi\) and \(\pi \ominus 1\), where \(\pi \in \Av_n^{k-n}(1324, 1342)\) is arbitrary. Hence, its generating function in \(k\) is \(2 x^n C_{1324,1342}(x)\). In \(R_2 \cap \Av(1342)\), note that the case \(\sigma_{n+1} = 2\) is impossible: there exists a \(12\)-pattern in \(\sigma\) with both entries above \(\sigma_{n+1}\), which together with \(1\) form a \(1342\)-pattern. Hence, as in the previous case, \(R_2 \cap \Av(1342)\) has generating function \(x^{n-1} C_{1324, 1342}(x)\). The collection \(R_3 \cap \Av(1342)\) is similar: the case where \(\tau = \sigma\) (see the definition) is impossible, since there exists a \(12\)-pattern with both entries above \(\tau_{n+1}\). On the other hand, when \(\tau = \sigma^{-1}\), the decomposable permutation \(\sigma \delete \{1, n+1\} \in \Av_{n-1}^{k-n+2}(1324, 1342)\) is arbitrary, so the generating function of this collection is also \(x^{n-1} C_{1324, 1342}(x)\). Summing up, we get the generating function
  \begin{equation*}
    x^{n-1} (2 + 2x) C_{1324, 1342}(x) = x^{n-1} (2+2x) \cdot \prod_{i \geq 1} \frac{1 + x^i}{1 - x^i}
  \end{equation*}
  with \(C_{1324, 1342}(x)\) from Proposition \ref{prop:1324 1342}.
\end{proof}

\section{Some indecomposable 132-avoiders} \label{sec:132 indecomposable}

In this section, we enumerate indecomposable permutations avoiding \(132\) and certain patterns of length four, by the number of inversions. A similar analysis has been carried out before for collections of length-three patterns \cite{franklin_restricted_2024, franklin_pattern_2025}. For a collection \(B\) of patterns, let
\begin{equation*}
  I_k(B) = \left\{\pi \in \Av(B) : \comp(\pi) = 1 \text{ and } \inv(\pi) = k \right\} \quad \text{and} \quad
  i_k(B) = |I_k(B)|.
\end{equation*}
Observe that \(I_k(B)\) can contain permutations of different sizes. The fact that \(I_k(B)\) is finite is easily deduced from the inequality \(\inv(\pi) + \comp(\pi) \geq |\pi|\) due to Claesson, Jelínek and Steingrímsson \cite{claesson_upper_2012}.

Taking the inversion table of a permutation defines a bijection \(\Lambda\) from \(I_k(132)\) to the partitions of \(k\), which means that if \(B\) contains \(132\), then \(I_k(B)\) can be identified with a certain subclass of partitions. This subclass often has another nice description, and thus a known generating function. The converse is true in some cases, in the sense that the pattern-avoidance perspective illuminates the partition class. Note that since a \(132\)-avoiding permutation is decomposable if and only if it ends with its largest entry, the generating function \(C_B(x)\) of the limit sequence of \(B\) satisfies
\begin{equation*}
  C_B(x) = \sum_{k \geq 0} i_k(B) x^k
\end{equation*}
for any basis \(B\) containing \(132\). The existence of the limit sequence is guaranteed by Proposition \ref{prop:lim seq existence}.

Throughout, we will use the following convention: if \(\lambda = (\lambda_1, \ldots, \lambda_\ell)\) is a partition, then \(\lambda_i = 0\) for all \(i > \ell\). This is convenient, since \(\Lambda(\pi)\) is always shorter than \(\pi\) itself for an indecomposable \(132\)-avoider \(\pi\), and we do not necessarily know by how much.

\subsection{2341: the sand pile model} \label{subsec:132 2341 spm}

The \emph{sand pile model} is a discrete dynamical system originating in physics \cite{bak_criticality_1988} that has since been studied by combinatorialists as a chip-firing game, and computer scientists as a cellular automaton. In combinatorial terms, the model consists of a set \(\SPM(k)\) of partitions of \(k\) defined recursively as follows: \((k) \in \SPM(k)\), and if \(\lambda \in \SPM(k)\) then every partition that can be obtained from \(\lambda\) by subtracting one from a part and adding one to the next part is also in \(\SPM(k)\). These partitions admit the following characterization.

\begin{proposition}[\cite{phan_structures_1999}] \label{prop:spm}
  A partition \(\lambda\) of \(k\) is contained in \(\SPM(k)\) if and only if
  \begin{enumerate}[(a)]
    \item\label{prop:spm three equal} \(\lambda\) has no three equal parts, and
    \item\label{prop:spm two two} if \(\lambda_{i_1} = \lambda_{i_1+1}\) and \(\lambda_{i_3} = \lambda_{i_3+1}\) for some \(i_1 < i_3\), then there exists some \(i_2\) such that \(i_1 < i_2 < i_3\) and \(\lambda_{i_2} - \lambda_{i_2+1} \geq 2\).
  \end{enumerate}
\end{proposition}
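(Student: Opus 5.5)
The plan is to recast everything in terms of the difference sequence of \(\lambda\) and show two things: the set of partitions satisfying \ref{prop:spm three equal}--\ref{prop:spm two two} is closed under the sand pile moves, and every such partition other than \((k)\) can be reached from another such partition by one move. Write \(d_j = \lambda_j - \lambda_{j+1}\) for \(j \geq 1\), using the convention \(\lambda_j = 0\) beyond the length \(\ell\) of \(\lambda\). We only look at positions \(1 \leq j < \ell\), plus \(d_\ell = \lambda_\ell \geq 1\).

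In this language the conditions read as follows. Condition \ref{prop:spm three equal} says that no two consecutive \(d_j\) with \(j<\ell\) are both \(0\). Condition \ref{prop:spm two two} says that between any two zeros of \(d\) there is an index with \(d_{j} \geq 2\). A sand pile move at index \(i\) (subtract one from \(\lambda_i\), add one to \(\lambda_{i+1}\)) yields a partition exactly when \(d_i \geq 2\). Its effect is \(d_i \mapsto d_i - 2\), \(d_{i-1} \mapsto d_{i-1}+1\) for \(i \geq 2\), and \(d_{i+1} \mapsto d_{i+1} + 1\). So this is chip firing on a path with a sink at position \(0\), and the moves strictly increase the potential \(\Phi(\lambda) = \sum_j j\lambda_j\). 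Let \(G(k)\) denote the set of partitions of \(k\) satisfying \ref{prop:spm three equal} and \ref{prop:spm two two}.

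\emph{Step 1: \(\SPM(k) \subseteq G(k)\).} Clearly \((k) \in G(k)\), since its difference sequence has no zeros. I then show that a move at \(i\) preserves both conditions. Zeros can only be created at position \(i\), and only when \(d_i = 2\). Both neighbours of \(i\) increase, so they become \(\geq 1\), and \ref{prop:spm three equal} is preserved. For \ref{prop:spm two two}, the danger is that \(d_i\) was the unique entry \(\geq 2\) separating two zeros. I handle this by looking at the nearest zeros \(z_1 < i < z_2\) and the new zero at \(i\). Every entry strictly between \(z_1\) and \(i\) is \(1\) except possibly entries \(\geq 2\); if \(d_{i-1}\) was a zero it is now \(1\), and otherwise it is \(\geq 1\) and unchanged in kind. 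The pairs \((z_1,i)\) and \((i,z_2)\) require a separator \(\geq 2\). I expect that the original separation of \(z_1\) from \(z_2\) by \(d_i\) alone, together with the pattern of \(1\)'s forced by \ref{prop:spm two two} in the original, leads to a contradiction unless a neighbour has become \(\geq 2\). I will check the small cases \(d_{i\pm1} \in \{0,1\}\) explicitly.

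\emph{Step 2: \(G(k) \subseteq \SPM(k)\).} I argue by induction on \(\Phi\). Given \(\lambda \in G(k) \setminus \{(k)\}\), I must find an index \(i\) at which the reverse move (\(d_i \mapsto d_i+2\), neighbours \(-1\)) is legal, meaning both relevant neighbours are \(\geq 1\) and the result is again a partition. The resulting \(\lambda'\) must also lie in \(G(k)\); then \(\Phi(\lambda') < \Phi(\lambda)\), and \(\lambda\) is obtained from \(\lambda'\) by a move. My first attempt is to take \(i\) to be the smallest index whose neighbours are both positive. Since \(\lambda \neq (k)\), we have \(\ell \geq 2\), and such an \(i\) should exist using \(d_\ell \geq 1\) and the sink at \(0\). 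If this choice fails, the fallback is to take \(i\) immediately after the last entry \(\geq 2\) preceding the first zero.

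The main obstacle is verifying that the reverse move does not create two adjacent zeros, or two zeros not separated by a \(\geq 2\), from the neighbours that drop from \(1\) to \(0\). Here the minimality of \(i\), combined with \ref{prop:spm two two} applied to \(\lambda\), has to rule out a zero lying just beyond a neighbour. I expect a short case analysis on the values \(d_{i-2}, d_{i-1}, d_{i+1}, d_{i+2}\) to settle it.
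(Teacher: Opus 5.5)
The paper does not prove this proposition; it is quoted from Phan's thesis. So your proposal has to stand on its own. The set-up is sound: the difference sequence, the chip-firing description of a move, and the potential \(\Phi\), which changes by exactly one per move. Step 1 can be finished by the case check you describe. One point there: if \(d_{i-1}=0\) before a move at \(i\) with \(d_i=2\), the new zero at \(i\) is separated from the nearest zero on its left by the index that (b) for \(\lambda\) already gives between that zero and the old zero \(i-1\). No contradiction is needed.

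\textbf{The gap is in Step 2:} neither of your rules for choosing the reverse move works. Take \(\lambda=(5,4,3,2,2)\in G(16)\), so \(d=(1,1,1,0,2)\).
\begin{itemize}
\item Suppose the sink counts as a positive neighbour, which you need for \(\lambda=(2,1)\). Then your rule picks \(i=1\) and produces \((6,3,3,2,2)\), which violates (b).
\item Suppose it does not count. Then your rule picks \(i=2\) and produces \((5,5,2,2,2)\), which violates (a).
\item Your fallback is undefined here, since no entry \(\geq 2\) precedes the first zero.
\end{itemize}
Minimality of \(i\) gives no control over a neighbour dropping from \(1\) to \(0\) next to, or one step short of, an existing zero. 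That is exactly the obstacle you flagged, and minimality cannot resolve it.

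The missing idea is to reverse-fire \emph{at a zero}. Suppose \(d_z=0\) for some \(z<\ell\).
\begin{itemize}
\item The reverse move at \(z\) is legal, since \(d_{z\pm1}\geq 1\) by (a) and \(d_\ell\geq 1\). It sets \(d'_z=2\) and lowers only \(d_{z\pm1}\).
\item A new zero at \(z+1\) (with \(z+1<\ell\)) requires \(d_{z+1}=1\). For every zero \(w>z\) of \(d\), the separator that (b) gives between \(z\) and \(w\) cannot be \(z+1\). So it lies in \([z+2,w)\) and is unchanged by the move.
\item In particular \(d_{z+2}\neq 0\), since \([z+2,z+2)\) is empty. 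Hence \(z+1\) stays separated from every zero to its right. The left side is symmetric.
\item Pairs of old zeros on the same side of \(z\) have their separators away from \(z\pm1\), so they are untouched.
\item Zeros on opposite sides of \(z\) are separated by \(d'_z=2\). This also shows that a \(2\) at \(z\pm1\) dropping to \(1\) is harmless.
\end{itemize}
If \(\lambda\) has distinct parts and \(\ell\geq 2\), the reverse move at \(i=1\) works, because it creates at most one zero, at position \(2\). In both cases \(\Phi\) drops by one, and induction on \(\Phi\) completes Step 2. For instance, \((5,4,3,2,2)\) is obtained from \((5,4,3,3,1)\in G(16)\) by a move at position \(4\).
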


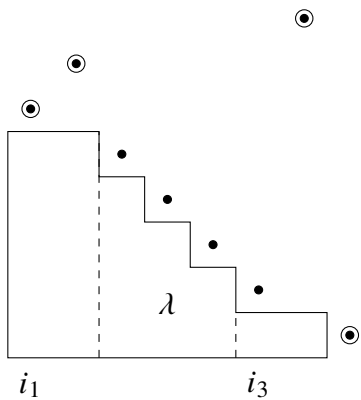
\begin{figure}[t]
  \begin{minipage}[t]{0.37\textwidth}\vspace{0pt}
    \begin{tikzpicture}[scale=0.6]
      \draw (0,0) -- (0,5) -- (2,5) -- (2,4) -- (3,4) -- (3,3) -- (4,3) -- (4,2) -- (5,2) -- (5,1) -- (7,1) -- (7,0) -- cycle;
      \draw[dashed] 
        (2,0) -- (2,5)
        (5,0) -- (5,1);
      \foreach \x\y in {2/4,3/3,4/2,5/1}
        \node[dot] at (\x+0.5,\y+0.5) {};
      \foreach \x\y in {0/5,1/6,6/7,7/0}
      {
        \node[dot] at (\x+0.5,\y+0.5) {};
        \node[circ] at (\x+0.5,\y+0.5) {};
      }

      \node at (0.5,-0.6) {\(i_1\)};
      \node at (5.5,-0.6) {\(i_3\)};
      \node at (3.5,1.2) {\(\lambda\)};
    \end{tikzpicture}
  \end{minipage}\hfill
  \begin{minipage}[t]{0.63\textwidth}\vspace{0pt}
    \caption{A partition violating condition \ref{prop:spm two two} of Proposition \ref{prop:spm} corresponds to a \(132\)-avoiding permutation containing the pattern \(2341\). The picture shows the relevant part of the partition as explained in the proof of Proposition \ref{prop:132 2341 spm}, as well as the corresponding points of the permutation. The points forming the \(2341\) pattern are circled.}
    \label{fig:132 2341 partition}
  \end{minipage}
\end{figure}

The first few terms of the sequence \(|{\SPM(k)}|\) are
\begin{equation*}
  1, 1, 2, 2, 4, 5, 6, 9, 13, 15, \ldots,
\end{equation*}
see entry \href{https://oeis.org/A056219}{A056219} in the OEIS. For example,
\begin{equation*}
  \SPM(5) = \{(5), (4,1), (3,2), (3,1,1), (2,2,1)\}.
\end{equation*}

\begin{proposition} \label{prop:132 2341 spm}
  For every \(k\), we have \(\Lambda(I_k(132, 2341)) = \SPM(k)\). In particular,
  \begin{equation*}
    C_{132, 2341}(x) = 1 + \sum_{k \geq 1} x^{\frac{k(k+1)}{2}} \cdot \prod_{i=1}^k \left(x + \frac{1}{1 - x^i}\right).
  \end{equation*}
\end{proposition}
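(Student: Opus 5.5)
The plan is to show, using the characterization of \(\SPM(k)\) in Proposition~\ref{prop:spm}, that an indecomposable \(132\)-avoider \(\pi\) contains \(2341\) if and only if \(\lambda = \Lambda(\pi)\) violates condition~\ref{prop:spm three equal} or condition~\ref{prop:spm two two}. Since \(\Lambda\) restricted to \(I_k(132)\) is a bijection onto the partitions of \(k\), this gives \(\Lambda(I_k(132,2341)) = \SPM(k)\). The generating function then follows directly from \cite{corteel_enumeration_2002}, together with the observation (made at the start of this section) that \(C_B(x) = \sum_k i_k(B)x^k\) whenever \(132 \in B\).

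The dictionary between \(\pi\) and \(\lambda\) is Lemma~\ref{lem:132 partition properties}. A run of equal parts \(\lambda_i = \dots = \lambda_j\) corresponds to an ascending run \(\pi_i < \dots < \pi_j\). At a strict descent \(\lambda_i > \lambda_{i+1}\), the entry \(\pi_{i+1} = \lambda_{i+1}+1\) is the smallest value still unused. A \(2341\) occurrence inside a \(132\)-avoider should therefore come from an increasing run of length three (giving the ``\(234\)''), or from two length-two runs that can be chained without an intervening gap, followed by a later small entry (the ``\(1\)'').

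For the direction from a violation to an occurrence: if \(\lambda_i=\lambda_{i+1}=\lambda_{i+2}=c\), then \(\pi_i<\pi_{i+1}<\pi_{i+2}\). Since \(\lambda_i = c \ge 1\) (trailing zeros are not parts), \(\pi_i\) has an inversion with some later entry. That later entry lies after position \(i+2\) and is smaller than \(\pi_i\), which gives \(2341\). If two pairs of equal parts at \(i_1<i_3\) have all intermediate drops \(\lambda_{i_2}-\lambda_{i_2+1}\le 1\), I expect, as in Figure~\ref{fig:132 2341 partition}, that \(\pi_{i_1}<\pi_{i_1+1}\) and \(\pi_{i_3+1}\) form an increasing triple, with the last entry of \(\pi\) (or a suitable entry below \(\pi_{i_1}\)) playing the role of the \(1\). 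The key computation uses Lemma~\ref{lem:132 partition properties}\ref{lem:132 partition property def}: along a stretch where every drop is at most one, the new values \(\lambda_{j+1}+1\) stay below the values already placed. Hence the entries \(\pi_{i_1+2},\dots,\pi_{i_3}\) are small, while \(\pi_{i_3+1}\) exceeds \(\pi_{i_1+1}\).

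For the converse, take an occurrence \(bcda\) of \(2341\) in \(\pi\) with positions \(p<q<r<s\). Choose it with \(p,q,r\) as close together as possible, and use \(132\)-avoidance to argue that the entries between them cannot break the increasing chain. If \(p,q,r\) are consecutive, \(\lambda\) has three equal parts. Otherwise the ascents \(p\to q\) and \(q\to r\) each give equal parts in \(\lambda\), and I would show that any drop of size at least two between them would place an unused small value, which together with the surrounding entries forms a \(132\) or contradicts minimality. The main obstacle is exactly this bookkeeping in condition~\ref{prop:spm two two}: translating ``all drops at most one'' into the precise relative order of values via Lemma~\ref{lem:132 partition properties}\ref{lem:132 partition property def}. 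I would first prove a small auxiliary claim: for \(j\) in a block of positions where every drop \(\lambda_j-\lambda_{j+1}\le 1\), the set of values used so far is an initial segment plus a controlled set, and \(\pi_{j+1}\) is less than \(\pi_j\) exactly at drops. Both directions of the equivalence would then follow from this claim.
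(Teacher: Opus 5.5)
Your route is the paper's: translate the two conditions of Proposition~\ref{prop:spm} through Lemma~\ref{lem:132 partition properties} in both directions, and take the generating function from \cite{corteel_enumeration_2002}. Two steps are wrong as you state them, but both can be repaired.

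\textbf{Choice of the ``1''.} The last entry of \(\pi\) cannot serve as the \(1\) in general. For \(\lambda=(6,2,2,1,1)\) one gets \(\pi=\Lambda^{-1}(\lambda)=7342516\). Here \(i_1=2\), \(i_3=4\), the increasing triple is \(\pi_2\pi_3\pi_5=345\), and the last entry \(6\) exceeds \(\pi_{i_1}=3\). Use the entry \(1\) itself, as the paper does. Entries after \(1\) in a \(132\)-avoider increase, so if \(\lambda\) has \(t\) parts then \(\pi_{t+1}=1\), which lies after every position carrying a positive part.

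The same fact gives a clean reformulation for the converse. In any occurrence of \(2341\) the \(4\) sits at a position \(\le t\), so \(\pi\) contains \(2341\) iff \(\pi_1\cdots\pi_t\) contains \(123\).

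In the forward direction, choose the violating pair \(i_1<i_3\) with \(i_3-i_1\) minimal. Then all intermediate drops are exactly \(1\): a zero drop would give a closer pair, and \(i_3=i_1+1\) is condition~\ref{prop:spm three equal}. Only then does Lemma~\ref{lem:132 partition properties}\ref{lem:132 partition property drop} give \(\pi_{i_1+2},\dots,\pi_{i_3}\) as \(\lambda_j+1\).

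\textbf{The second equal pair.} When \(r>q+1\), the ascent \(q\to r\) does not give equal parts; the second equal pair is \((r-1,r)\). Here is how to get it.
\begin{itemize}
\item Pick the increasing triple \(p<q<r\le t\) with \(r\) minimal, then \(q\) maximal, then \(p\) maximal.
\item These choices together with \(132\)-avoidance force \(q=p+1\). They also make the entries strictly between \(q\) and \(r\) decreasing and smaller than \(\pi_p\).
\item Hence \(\lambda_p=\lambda_{p+1}\) and \(\lambda_{r-1}=\lambda_r>0\).
\end{itemize}
For the drops, take \(q\le j\le r-2\). Then \(\pi_j>\pi_{j+1}\) and
\(\lambda_j-\lambda_{j+1}=1+\lvert\{s>j+1:\pi_{j+1}<\pi_s<\pi_j\}\rvert\).
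Any such \(s\) must lie after \(r\), since positions \(j+2,\dots,r-1\) carry smaller values and \(\pi_r>\pi_j\). If \(\pi_s<\pi_p\), then \(\pi_{j+1}\pi_r\pi_s\) is a \(132\). Otherwise \(\pi_p\pi_q\pi_s\) is a \(132\), which can only occur for \(j=q\). So every such drop equals \(1\), and \((i_1,i_3)=(p,r-1)\) violates condition~\ref{prop:spm two two}. If \(r=q+1\), you get three equal parts instead.

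This is the ``standard argument'' the paper alludes to, and it replaces your auxiliary claim. Note also that the second half of that claim (\(\pi_{j+1}<\pi_j\) exactly at drops) is just Lemma~\ref{lem:132 partition properties}\ref{lem:132 partition property equal}, which holds everywhere.
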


\begin{proof}
  Suppose first that \(\lambda \vdash k\) is not contained in \(\SPM(k)\), i.e.\ condition \ref{prop:spm three equal} or \ref{prop:spm two two} of Proposition \ref{prop:spm} is violated. Assume that \ref{prop:spm three equal} does not hold, i.e.\ there is some \(i\) such that \(\lambda_i = \lambda_{i+1} = \lambda_{i+2} > 0\). By Lemma \ref{lem:132 partition properties} \ref{lem:132 partition property equal} we have \(\pi_i < \pi_{i+1} < \pi_{i+2}\), and so \(\pi_i \pi_{i+1} \pi_{i+2} 1\) forms a \(2341\) pattern. Suppose instead that condition \ref{prop:spm two two} does not hold, i.e.\ there exist some indices \(i_1 < i_3\) such that \(\lambda_{i_1} = \lambda_{i_1 + 1} > \lambda_{i_3} = \lambda_{i_3 + 1}\), and \(\lambda_{i_2} = \lambda_{i_2+1} + 1\) for all \(i_1 < i_2 < i_3\). By Lemma \ref{lem:132 partition properties} we get \(\pi_{i_1} < \pi_{i_1+1}\), \(\pi_{i_2} = \lambda_{i_2} + 1\) for all \(i_1 + 1 < i_2 \leq i_3\), and therefore finally \(\pi_{i_3 + 1} > \pi_{i_1+1}\). Putting everything together, \(\pi_{i_1} \pi_{i_1+1} \pi_{i_3 + 1} 1\) forms a \(2341\) pattern. See Figure \ref{fig:132 2341 partition} for an illustration.

  Conversely, suppose that \(\pi \in \Av(132)\) contains \(2341\). A standard argument shows that \(\pi\) contains an occurrence \(\pi_{i_1} \pi_{i_2} \pi_{i_3} \pi_{i_4}\) of \(2341\) such that \(\pi_{i_4} = 1\), \(i_2 = i_1 + 1\), and either \(i_3 = i_2 + 1\) or \(\pi_j = \pi_{j+1} + 1\) for all \(i_2 < j < i_3 - 1\). In the first case \(\pi_{i_1} < \pi_{i_2} < \pi_{i_3}\), so \(\lambda\) has three equal parts. In the second case, \(\lambda_{i_1} = \lambda_{i_1+1}\), \(\lambda_{i_3-1} = \lambda_{i_3}\), and there is no \(i_1 < j < i_3-1\) such that \(\lambda_j - \lambda_{j+1} \geq 2\). Thus, in either case, \(\lambda\) is not in \(\SPM(k)\). The generating function is due to Corteel and Gouyou-Beauchamps \cite{corteel_enumeration_2002}.
\end{proof}

\subsection{3241: steep partitions} \label{subsec:132 3241 steep}

We call a partition \(\lambda\) of \(k\) \emph{steep} if the difference between any two consecutive distinct parts of \(\lambda\) is greater than or equal to the multiplicity of the smaller part. For example, \((5,5,3,3,2,1)\) is steep, but \((3,2,2)\) and \((5,3,3,3,1)\) are not. The enumeration sequence for the steep partitions is
\begin{equation*}
  1, 1, 2, 3, 4, 6, 8, 10, 14, 19, \ldots,
\end{equation*}
and it does not appear in the OEIS. We were not able to find a nice expression for the generating function of this sequence.

\begin{figure}[t]
  \begin{minipage}[t]{0.33\textwidth}\vspace{0pt}
    \begin{tikzpicture}[scale=0.6]
      \draw (0,0) -- (0,5) -- (1,5) -- (1,1) -- (6,1) -- (6,0) -- cycle;
      \draw[dashed] 
        (1,0) -- (1,1)
        (5,0) -- (5,1);
      \foreach \x\y in {2/2,3/3,4/4}
        \node[dot] at (\x+0.5,\y+0.5) {};
      \foreach \x\y in {0/5,1/1,5/6,6/0}
      {
        \node[dot] at (\x+0.5,\y+0.5) {};
        \node[circ] at (\x+0.5,\y+0.5) {};
      }

      \node at (0.5,-0.6) {\(i_1\)};
      \node at (1.5,-0.6) {\(i_2\)};
      \node at (5.5,-0.6) {\(i_3\)};
    \end{tikzpicture}
  \end{minipage}\hfill
  \begin{minipage}[t]{0.67\textwidth}\vspace{0pt}
    \caption{A special occurrence of \(3241\) in a \(132\)-avoider, and the corresponding non-steep partition. The points forming the \(3241\) pattern described in the proof of Proposition \ref{prop:132 3241 steep} are circled.}
    \label{fig:132 3241 partition}
  \end{minipage}
\end{figure}
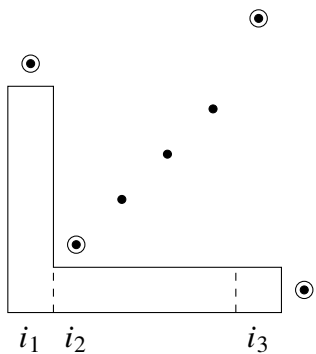

\begin{proposition} \label{prop:132 3241 steep}
  An indecomposable \(132\)-avoider \(\pi\) avoids \(3241\) if and only if \(\Lambda(\pi)\) is steep.
\end{proposition}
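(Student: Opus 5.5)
The plan is to prove both directions through the local dictionary of Lemma~\ref{lem:132 partition properties}, mirroring the proof of Proposition~\ref{prop:132 2341 spm} and the picture in Figure~\ref{fig:132 3241 partition}. Write \(\lambda = \Lambda(\pi)\). A maximal block of equal parts \(\lambda_{j} = \dots = \lambda_{j+m-1} = b\) corresponds to a maximal increasing run \(\pi_j < \dots < \pi_{j+m-1}\), by part~\ref{lem:132 partition property equal}. If the block is preceded by a larger part, then \(\pi_j = b+1\), by part~\ref{lem:132 partition property drop}.

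\textbf{Key counting step.} Let \(a > b \ge 1\) be consecutive distinct parts of \(\lambda\). Let \(i_1\) be the last index with \(\lambda_{i_1} = a\), and let \(i_2 = i_1 + 1\) start the block of \(b\)'s, of multiplicity \(m\).
\begin{itemize}
  \item Since \(\pi_{i_2} = b+1\) is the smallest unused value exceeding \(b\), and \(\lambda_{i_2} = b\) counts the later entries smaller than \(b+1\), the values \(1, \dots, b\) all occur after position \(i_2\).
  \item By part~\ref{lem:132 partition property def}, the run \(\pi_{i_2} < \pi_{i_2+1} < \cdots\) takes the successive smallest values not used before \(i_2\), starting at \(b+1\).
  \item The number \(\lambda_{i_1} = a\) counts the later entries below \(\pi_{i_1}\). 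These are exactly the \(b\) small values together with the unused values in \([b+1, \pi_{i_1})\). Hence there are exactly \(a - b\) unused values in \([b+1, \pi_{i_1})\).
\end{itemize}
Consequently the run exceeds \(\pi_{i_1}\), that is \(\pi_{i_2+m-1} > \pi_{i_1}\), if and only if \(m > a - b\).

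\textbf{Non-steep implies containment.} Suppose \(\lambda\) is not steep, so some pair \(a > b\) as above has \(m > a-b\). Then the four entries
\begin{equation*}
  \pi_{i_1},\quad \pi_{i_2} = b+1,\quad \pi_{i_2+m-1},\quad 1
\end{equation*}
form a \(3241\). The entry \(1\) lies after the run because \(b \ge 1\) and all values up to \(b\) occur after \(i_2\). Trailing zero parts never enter the steepness condition, so this direction is just the counting step.

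\textbf{Containment implies non-steep.} Suppose \(\pi\) contains \(3241\); I will normalize the occurrence. Take an occurrence \(x\,y\,z\,w\) with \(w < y < x < z\).
\begin{itemize}
  \item First, replace \(y\) by the first entry after \(x\) that is smaller than \(x\). Then replace \(x\) by the entry immediately preceding that new \(y\), which is still larger than \(y\). Now \(x = \pi_{i_1}\) and \(y = \pi_{i_1+1}\) form a descent, so \(\lambda_{i_1} > \lambda_{i_1+1}\).
  \item Next, I claim the increasing run starting at \(i_1+1\) already rises above \(\pi_{i_1}\). This is where \(132\)-avoidance is used. Any entry after \(y\) that is at least \(y\) must exceed the entries between them, or else a \(132\) arises with \(y\) playing the role of \(1\). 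Following the successive entries from \(y\) up to \(z\), one should therefore find an ascending run starting at \(y\) that reaches a value above \(x\) before any descent occurs.
  \item A lower value such as \(w\) lies after this run. Hence \(b = \lambda_{i_1+1} \ge 1\), and by the counting step the block of \(b\)'s has multiplicity \(m > a-b\), so \(\lambda\) is not steep.
\end{itemize}
I expect this normalization, and specifically showing that the rise past \(x\) happens within a single increasing run, to be the main obstacle. If the direct argument gets messy, I would first try choosing \(z\) as the earliest entry after \(y\) exceeding \(x\), and then argue that every entry strictly between \(y\) and \(z\) is larger than its predecessor.
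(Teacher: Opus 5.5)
Your counting step and the direction ``not steep \(\Rightarrow\) contains \(3241\)'' are correct, and they are essentially the paper's argument. The paper uses \(\pi_i\pi_{i+1}\pi_{i+m+1}1\) with \(m\) the difference of the parts; you use the last entry of the block, which works equally well.

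The gap is in the converse, at the step you flagged, and it is a wrong choice rather than a messy one. Taking \(y\) to be the first entry after \(x\) that is smaller than \(x\), and \(x\) its immediate predecessor, does not guarantee that the increasing run from \(y\) climbs above \(x\). Your \(132\) observation only constrains later entries that are at least \(y\); a later entry smaller than \(y\) can end the run at once. Concretely, \(\pi = 43251\) is an indecomposable \(132\)-avoider with \(\Lambda(\pi) = (3,2,1,1)\). For the occurrence \(4\,3\,5\,1\) (or \(4\,2\,5\,1\)), your procedure gives \(x=\pi_1=4\) and \(y=\pi_2=3\). But \(\pi_3=2\), so the run is just \(3\), and \(a=3\), \(b=2\), \(m=1=a-b\). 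This pair does not witness non-steepness; the actual witness is the pair of parts \(2,1\). Your fallback, choosing \(z\) as the earliest entry after \(y\) exceeding \(x\), fails on the same example, since \(2\) lies between \(3\) and \(5\).

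The fix is to anchor the descent at \(z\) instead of at \(x\). Given an occurrence at positions \(p_x<p_y<p_z<p_w\), let \(q\) be the last index in \([p_x,p_z)\) with \(\pi_q>\pi_{q+1}\). It exists because \(x>y\), and by maximality \(\pi_{q+1}<\pi_{q+2}<\dots<\pi_{p_z}=z\).
\begin{itemize}
  \item If \(q>p_x\) and \(\pi_q>z\), then \(x\,\pi_q\,z\) is a \(132\). Hence \(\pi_q<z\), which also forces \(q+1<p_z\).
  \item If \(\pi_{q+1}<w\), then \(\pi_{q+1}\,z\,w\) is a \(132\). Hence \(w<\pi_{q+1}\), so \(\lambda_{q+1}\geq 1\).
\end{itemize}
Your counting step now applies with \(i_1=q\). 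The parts \(a=\lambda_q>b=\lambda_{q+1}\geq 1\) are consecutive distinct parts. The \(b\)-block is the maximal increasing run starting at \(q+1\), and it reaches at least \(z>\pi_q\), so its multiplicity exceeds \(a-b\) and \(\lambda\) is not steep. Your original first replacement step also silently needed \(132\)-avoidance to keep \(w<y\); anchoring at \(z\) handles this explicitly.
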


\begin{proof}
  Suppose first that \(\lambda \coloneqq \Lambda(\pi)\) is not steep. This means that there is some \(i\) such that \(\lambda_{i+1} = \lambda_{i+2} = \ldots = \lambda_{i+m+1}\), where \(m \coloneqq \lambda_i - \lambda_{i+1} \geq 1\). Using Lemma \ref{lem:132 partition properties} we see that \(\pi_i \pi_{i+1} \pi_{i+m+1} 1\) forms a \(3241\) pattern in \(\pi\).

  Conversely, assume \(\pi\) contains \(3241\). It is routine to check that \(\pi\) must contain an occurrence \(\pi_{i_1} \pi_{i_2} \pi_{i_3} \pi_{i_4}\) of \(3241\) such that \(i_2 = i_1 + 1\), \(\pi_{i_4} = 1\), and 
  \begin{equation*}
    \pi_{i_2} = \pi_{i_2+1} - 1 = \pi_{i_2+2} - 2 = \ldots = \pi_{i_3-1} - i_3 + i_2 + 1. 
  \end{equation*}
  Lemma \ref{lem:132 partition properties} shows that \(\lambda_{i_2} = \lambda_{i_2 + 1} = \ldots = \lambda_{i_3}\). Since the difference \(\lambda_{i_1} - \lambda_{i_2}\) equals \(i_3 - i_2\) (see Figure \ref{fig:132 3241 partition} for an illustration), \(\lambda\) is not steep.
\end{proof}

\subsection{3412: convex penny arrangements} \label{subsec:132 3412}

The sequence \(i_k(132, 3412)\) begins with
\begin{equation*}
  1, 1, 2, 3, 4, 7, 9, 13, 17, 25, \ldots,
\end{equation*}
sequence \href{https://oeis.org/A005576}{A005576} in the OEIS. This sequence is best explained in the following way. A \emph{penny arrangement} is a configuration of pennies in rows in the plane such that the bottom row is contiguous, and each penny in a higher row touches two pennies in the row below it. A penny arrangement is called \emph{convex} if every row is contiguous. Let \(P(r,s)\) denote the set of all penny arrangements of \(r\) pennies with \(s\) pennies in the bottom row, and let \(C(r,s) \subseteq P(r,s)\) be the set of convex penny arrangements, \(c(r,s) \coloneqq |C(r,s)|\). This is sequence \href{https://oeis.org/A259095}{A259095}. Figure \ref{fig:penny arrangements} illustrates two convex penny arrangements and a non-convex one for \(r = 9\), \(s = 5\).

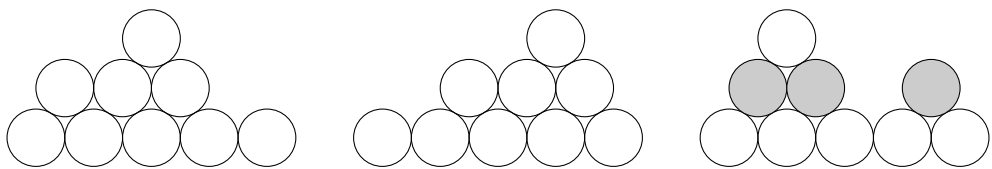
\begin{figure}[t]
  \centering
  \begin{tikzpicture}[scale=0.38]
    \foreach \x in {0,2,4,6,8}
      \draw (\x,0) circle (1);
    \foreach \x in {1,3,5}
      \draw (\x,1.732) circle (1);
    \draw (4,3.464) circle (1);

    \begin{scope}[shift={(12,0)}]
      \foreach \x in {0,2,4,6,8}
        \draw (\x,0) circle (1);
      \foreach \x in {3,5,7}
        \draw (\x,1.732) circle (1);
      \draw (6,3.464) circle (1);
    \end{scope}

    \begin{scope}[shift={(24,0)}]
      \foreach \x in {0,2,4,6,8}
        \draw (\x,0) circle (1);
      \foreach \x in {1,3,7}
        \draw[fill=black!20] (\x,1.732) circle (1);
      \draw (2,3.464) circle (1);
    \end{scope}
  \end{tikzpicture}
  \caption{Two convex penny arrangements and a non-convex one for \(r = 9\), \(s = 5\).}
  \label{fig:penny arrangements}
\end{figure}

The \(k\)-th entry \(a(k)\) of sequence \href{https://oeis.org/A005576}{A005576} equals \(c(r(r+1)/2 - k, r)\) for large \(r\). In other words, it is the limit as \(r\) goes to infinity of the number of convex penny arrangements with \(r\) pennies in the bottom row, such that the complement of the arrangement -- with respect to the full triangular arrangement -- contains \(k\) pennies. It is easy to see that the limit is attained when \(r = k\), so  \(a(k) = c\big(\binom k2, k\big)\). If \(\alpha\) is a penny arrangement, let \(\alpha^{\compl}\) denote its complement. Figure \ref{fig:penny complements} shows three examples.

\begin{figure}[t]
  \centering
  \begin{tikzpicture}[scale=0.38]
    \foreach \x in {0,2,4,6,8}
      \draw (\x,0) circle (1);
    \foreach \x in {1,3,5}
      \draw (\x,1.732) circle (1);
    \draw (4,3.464) circle (1);
    \draw[fill=black!20] (7,1.732) circle (1);
    \foreach \x in {2,6}
      \draw[fill=black!20] (\x,3.464) circle (1);
    \foreach \x in {3,5}
      \draw[fill=black!20] (\x,5.196) circle (1);
    \draw[fill=black!20] (4,6.928) circle (1);

    \begin{scope}[shift={(12,0)}]
      \foreach \x in {0,2,4,6,8}
        \draw (\x,0) circle (1);
      \foreach \x in {3,5,7}
        \draw (\x,1.732) circle (1);
      \draw (6,3.464) circle (1);
      \draw [fill=black!20] (1,1.732) circle (1);
      \foreach \x in {2,4}
        \draw[fill=black!20] (\x,3.464) circle (1);
      \foreach \x in {3,5}
        \draw[fill=black!20] (\x,5.196) circle (1);
      \draw[fill=black!20] (4,6.928) circle (1);
    \end{scope}

    \begin{scope}[shift={(24,0)}]
      \foreach \x in {0,2,4,6,8}
        \draw (\x,0) circle (1);
      \foreach \x in {1,3,7}
        \draw (\x,1.732) circle (1);
      \draw (2,3.464) circle (1);
      \draw[fill=black!20] (5,1.732) circle (1);
      \foreach \x in {4,6}
        \draw[fill=black!20] (\x,3.464) circle (1);
      \foreach \x in {3,5}
        \draw[fill=black!20] (\x,5.196) circle (1);
      \draw[fill=black!20] (4,6.928) circle (1);
    \end{scope}
  \end{tikzpicture}
  \caption{The complements of two convex penny arrangements and a non-convex one. From left to right, the corresponding partitions are \((4,1,1)\), \((2,2,1,1)\) and \((3,3)\).}
  \label{fig:penny complements}
\end{figure}
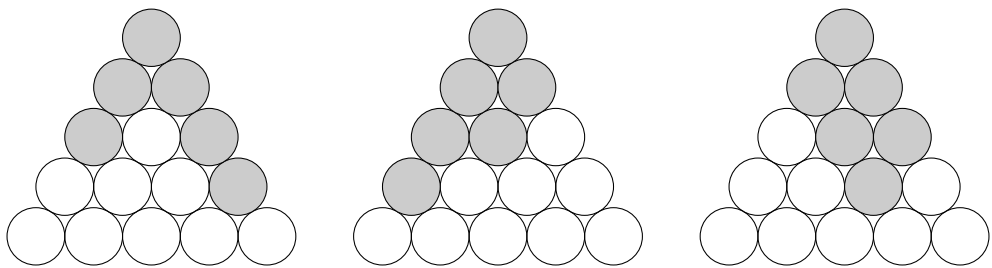

Let \(\Pi(\alpha)\) be the partition formed by the lengths of the top-to-bottom-right diagonals of \(\alpha^{\compl}\), going from right to left. In the figure, the partitions are \((4,1,1)\), \((2,2,1,1)\) and \((3,3)\), respectively. Observe that \(\Pi\) gives a bijection from \(P\big(\binom k2, k\big)\) to the partitions of \(k\). Our claim is that when restricted to convex penny arrangements, the image of \(\Pi\) is the set \(\Lambda(I_k(132,3412))\).

\begin{proposition} \label{prop:132 3412 penny}
  Let \(\pi\) be an indecomposable \(132\)-avoider and \(\lambda = \Lambda(\pi)\). The following are equivalent:
  \begin{enumerate}[(a)]
    \item\label{prop:penny avoid} \(\pi\) avoids \(3412\).
    \item\label{prop:penny convex} \(\Pi^{-1}(\lambda)\) is a convex penny arrangement.
    \item\label{prop:penny partition} There are no two indices \(i < j\) such that \(\lambda_i = \lambda_{i+1}\) and \(\lambda_j - \lambda_{j+1} \geq 2\).
  \end{enumerate} 
\end{proposition}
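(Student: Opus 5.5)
I plan to prove the two equivalences \ref{prop:penny avoid}\(\Leftrightarrow\)\ref{prop:penny partition} and \ref{prop:penny convex}\(\Leftrightarrow\)\ref{prop:penny partition} separately, using \ref{prop:penny partition} as the hub.

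For \ref{prop:penny avoid}\(\Leftrightarrow\)\ref{prop:penny partition} I will follow the template of Propositions \ref{prop:132 2341 spm} and \ref{prop:132 3241 steep}, translating through Lemma \ref{lem:132 partition properties}.

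Suppose first that \ref{prop:penny partition} fails, and pick \(i<j\) with \(\lambda_i=\lambda_{i+1}\) and \(\lambda_j-\lambda_{j+1}\ge 2\). By part \ref{lem:132 partition property equal}, \(\pi_i<\pi_{i+1}\). By part \ref{lem:132 partition property drop}, \(\pi_{j+1}=\lambda_{j+1}+1\). The drop of at least two means that the value \(\lambda_{j+1}+2\le \lambda_j\) is not placed at position \(j+1\). By part \ref{lem:132 partition property def} it is also not placed before position \(j+1\), since every \(\pi_t\) with \(t\le j\) exceeds \(\lambda_t\ge\lambda_j\). Hence the value \(\lambda_{j+1}+2\) sits at some position \(s>j+1\). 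I claim \(\pi_i\,\pi_{i+1}\,\pi_{j+1}\,\pi_s\) is a \(3412\): we have \(\pi_i,\pi_{i+1}>\lambda_i\ge\lambda_j\ge\lambda_{j+1}+2\), so both exceed \(\pi_s>\pi_{j+1}\). One should check that \(i+1<j+1\) (true since \(i<j\)). The case where \(\lambda_{j+1}\) lies beyond the length of \(\lambda\) uses the zero-padding convention.

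Conversely, suppose \(\pi\) contains \(3412\). First I will normalize the occurrence \(abcd\) using \(132\)-avoidance. The ``\(34\)'' can be taken adjacent: between them there is no larger-than-\(a\) descent issue, since \(132\)-avoidance forces entries between them to be above \(a\), and choosing the last ascent works. The ``\(1\)'' can be taken so that it immediately follows a descent, i.e.\ the first entry after the \(34\) smaller than \(d\). Then part \ref{lem:132 partition property equal} gives equal consecutive parts at the \(34\) positions, and the gap between the \(1\)-entry and the smaller-valued \(2\) appearing later forces a drop of at least \(2\) in \(\lambda\) at the position just before the \(1\). The value \(d\) lying strictly between \(c\) and the next available values gives \(\lambda_j\ge c+1>\lambda_{j+1}+1\).

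I expect this normalization to be the main obstacle. I would first try taking the rightmost possible \(c\) below \(d\) and arguing via part \ref{lem:132 partition property def} that \(\pi_{j+1}=\lambda_{j+1}+1\) while \(\lambda_j\ge d-1\).

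For \ref{prop:penny convex}\(\Leftrightarrow\)\ref{prop:penny partition}, I will describe \(\alpha^{\compl}\) directly. It is a staircase-justified set whose top-to-bottom-right diagonals, read right to left, have lengths \(\lambda_1\ge\lambda_2\ge\cdots\). Each row of \(\alpha\) is contiguous unless a hole of \(\alpha^{\compl}\) penetrates it from above between two pennies. Working out the rows, a gap arises exactly when some diagonal length stays equal (which pushes the complement inward on one side) and a later diagonal drops by at least \(2\) (which creates a notch on the other side). A drop of at most \(1\) everywhere after the last equality keeps the rows contiguous. I would verify this with the examples in Figure \ref{fig:penny complements}: \((3,3)\) is non-convex, while \((4,1,1)\) and \((2,2,1,1)\) are convex. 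Then I would argue row by row, using induction on the number of diagonals.
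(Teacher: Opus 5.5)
Your plan follows the paper's proof. Condition (c) is the hub. For ``(c) fails \(\Rightarrow\) 3412'' you use \(\pi_i\pi_{i+1}\pi_{j+1}\) plus one later entry. For the converse you normalize an occurrence so that the 34 is adjacent and the 1 is the first small entry after it. For (b)\(\Leftrightarrow\)(c) you argue row by row; the paper only says this is easy to see from the pictures. Your forward direction is correct and slightly cleaner than the paper's. The paper first takes \(i,j\) at minimal distance so that \(\pi_{i+2},\dots,\pi_j\) are left-to-right minima; you name the fourth entry directly as the value \(\lambda_{j+1}+2\).

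The converse as written has a false step and a fallback that fails.

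\emph{The adjacency step.} The claim that 132-avoidance forces the entries between the 3 and the 4 above \(a\) is wrong. \(43512\) is an indecomposable 132-avoider with the occurrence \(4512\), and the entry \(3\) between the 4 and the 5 is below \(a=4\). What is true is that every entry \(x\) strictly between them exceeds \(d\), since otherwise \(x\,b\,d\) is a 132. Hence any ascent \(\pi_t<\pi_{t+1}\) in that window gives an occurrence \(\pi_t\pi_{t+1}cd\) with adjacent 34.

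\emph{The choice of the 1.} Keep the choice you first stated: \(\pi_{j+1}\) is the first entry after position \(t+1\) that is smaller than \(d\). Drop the ``rightmost \(c\)'' idea. In \(45123\) with \(d=3\), the rightmost entry below \(d\) is preceded by an ascent, so there is no drop there, and \(\lambda_j\ge d-1\) fails.

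\emph{The drop.} With the first-entry choice we have \(\pi_j>d>\pi_{j+1}\); note \(\pi_j\ne d\) because \(d\) lies after \(c\). By Lemma~\ref{lem:132 partition properties}, \(\pi_{j+1}=\lambda_{j+1}+1\), so all smaller values lie to its right. The entries after position \(j\) that are below \(\pi_j\) therefore include those \(\lambda_{j+1}\) values, \(\pi_{j+1}\) itself, and \(d\). This gives \(\lambda_j\ge\lambda_{j+1}+2\), with \(j\ge t+1>t\) and \(\lambda_t=\lambda_{t+1}\), as required.

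\emph{The convexity sketch.} ``A drop of at most 1 everywhere after the last equality'' should read ``after the first equality''. The partition \((3,3,1,1)\) satisfies your wording, yet it violates (c) and gives a non-convex arrangement. To make this part rigorous, let \(\ell_m\) be the number of removed pennies on the \(m\)-th diagonal, counted from the left starting at \(0\). The penny in row \(r\) on diagonal \(m\) is present if and only if \(m-\ell_m\ge r\). So each row, restricted to the diagonals meeting it, is a superlevel set of \(h(m)=m-\ell_m\), whose increments are \(1-(\ell_{m+1}-\ell_m)\). Some row is non-contiguous if and only if \(h\) has a decrease followed later by an increase, which is exactly the failure of (c).
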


\begin{proof}
  The equivalence of \ref{prop:penny convex} and \ref{prop:penny partition} is easy to see from the pictures. It remains to establish the equivalence of \ref{prop:penny avoid} and \ref{prop:penny partition}, which can be done using standard methods. Suppose first that there are indices \(i < j\) such that \(\lambda_i = \lambda_{i+1}\) and \(\lambda_j - \lambda_{j+1} \geq 2\). Suppose further that the distance between the two is minimal, meaning that we have \(\lambda_{i+1} = \lambda_{i+2} + 1 = \lambda_{i+3} + 2 = \ldots = \lambda_{j} + j-i-1\). Hence, for every \(i+1 < \ell \leq j\), \(\pi_{\ell}\) is a left-to-right minimum. By Lemma \ref{lem:132 partition properties}, we can see that \(\pi_i \pi_{i+1} \pi_{j+1} \pi_{\ell}\) forms a \(3412\) pattern (for some \(\ell > j\)).

  Conversely, assume that \(\pi\) contains \(3412\). A standard argument shows that \(\pi\) contains a \(3412\) pattern \(\pi_{i_1} \pi_{i_2} \pi_{i_3} \pi_{i_4}\) such that \(i_2 = i_1 + 1\), and \(\pi_{i_2+1}, \pi_{i_2+2}, \ldots, \pi_{i_3-1}\) forms an interval in which every point \(\pi_{\ell}\) is a left-to-right minimum, and \(\pi_{i_1} > \pi_{\ell} > \pi_{i_4}\). It follows that \(\lambda_{i_1} = \lambda_{i_1 + 1}\), and \(\lambda_{i_3-1} - \lambda_{i_3} \geq 2\).
\end{proof}

\subsection{3421: distinct parts, except the smallest} \label{subsec:132 3421}

The enumeration sequence \(i_k(132, 3421)\) begins with
\begin{equation*}
  1, 1, 2, 3, 5, 6, 10, 12, 17, 22, \ldots,
\end{equation*}
sequence \href{https://oeis.org/A115029}{A115029} in the OEIS: the number of partitions of \(k\) such that all parts, except possibly the smallest, have multiplicity one.

\begin{proposition} \label{prop:132 3421}
  An indecomposable \(132\)-avoider \(\pi\) avoids \(3421\) if and only if all parts of \(\Lambda(\pi)\), except possibly the smallest, have multiplicity one. In particular,
  \begin{equation*}
    C_{132, 3421}(x) = 1 + \sum_{k \geq 1} \frac{x^k}{1 - x^k} \cdot \prod_{i \geq k+1} \big(1+ x^i\big).
  \end{equation*}
\end{proposition}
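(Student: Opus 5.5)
We follow the same scheme as Propositions \ref{prop:132 3241 steep} and \ref{prop:132 3412 penny}: translate an occurrence of \(3421\) into a local configuration of the partition \(\lambda = \Lambda(\pi)\) using Lemma \ref{lem:132 partition properties}, and conversely. The condition on \(\lambda\) says exactly that there is no index \(i\) with \(\lambda_i = \lambda_{i+1} > \lambda_{j}\) for some \(j > i+1\) with \(\lambda_j > 0\). Equivalently, some part that is not the smallest part is repeated. (With the convention \(\lambda_j = 0\) for \(j\) beyond the length.)

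For the direction "\(\lambda\) bad \(\Rightarrow\) \(\pi\) contains \(3421\)", take \(i\) with \(\lambda_i = \lambda_{i+1}\) and some later strictly smaller positive part. Let \(j\) be the first index with \(\lambda_{j} < \lambda_{i+1}\), so \(j \geq i+2\) and \(\lambda_{j-1} > \lambda_j \geq 1\). By Lemma \ref{lem:132 partition properties}\ref{lem:132 partition property equal}, \(\pi_i < \pi_{i+1}\). By part \ref{lem:132 partition property drop}, \(\pi_j = \lambda_j + 1\) is small. Concretely, \(\pi_j \le \lambda_{i+1} < \pi_i\), using part \ref{lem:132 partition property def}, which gives \(\pi_i > \lambda_i\). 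Finally, since \(\lambda_j \geq 1\), the entry \(\pi_j\) has an inversion with something later, so some later entry is smaller than \(\pi_j\); the entry \(1\) itself lies after \(\pi_j\) because \(\pi_j \ge 2\). Then \(\pi_i\,\pi_{i+1}\,\pi_j\,1\) is an occurrence of \(3421\).

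For the converse, suppose \(\pi\) contains \(3421\). I would first normalise the occurrence, as in the earlier proofs: take the \(1\) to be the entry \(1\). Choose the \(3,4\) to be adjacent, \(i_2 = i_1+1\); this is possible because in a \(132\)-avoider, between an ascent pair we can slide to a consecutive ascent. Then \(\lambda_{i_1} = \lambda_{i_1+1}\). The \(2\) at position \(i_3\) is smaller than \(\pi_{i_1}\) and is followed by the entry \(1\), so \(\lambda_{i_3} \geq 1\). It remains to show \(\lambda_{i_3} < \lambda_{i_1+1}\). The entries greater than \(\pi_{i_3}\) that are left of it include \(\pi_{i_1},\pi_{i_1+1}\). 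Using \(132\)-avoidance, every entry right of \(\pi_{i_1+1}\) and smaller than \(\pi_{i_3}\) is also smaller than \(\pi_{i_1+1}\), so it is counted in \(\lambda_{i_1+1}\). Moreover \(\pi_{i_3}\) itself is counted in \(\lambda_{i_1+1}\) but not in \(\lambda_{i_3}\). Hence \(\lambda_{i_1+1} \geq \lambda_{i_3}+1\), and \(\lambda\) violates the condition.

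The main obstacle is the normalisation step, namely justifying that one may take \(i_2=i_1+1\). I would argue it by choosing, among occurrences with the same \(2\) and \(1\), the pair \(3,4\) with \(i_2-i_1\) minimal. If some entry \(\pi_t\) with \(i_1<t<i_2\) existed, then either \(\pi_t > \pi_{i_1}\) gives a closer pair \(\pi_{i_1}\pi_t\), or \(\pi_t<\pi_{i_1}\). In the latter case, \(\pi_t<\pi_{i_3}\) would make \(\pi_t\pi_{i_2}\pi_{i_3}\) a \(132\), so \(\pi_t > \pi_{i_3}\) and \(\pi_t\pi_{i_2}\) is a closer pair. The generating function is then the standard one: choose the smallest part \(k\) with any multiplicity, giving \(x^k/(1-x^k)\), and distinct larger parts, giving \(\prod_{i>k}(1+x^i)\). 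Together with the empty partition this yields the stated formula, since \(C_{132,3421}(x)=\sum_k i_k(132,3421)x^k\).
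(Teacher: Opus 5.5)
Your proof is correct and follows essentially the same route as the paper. The forward direction reads off the occurrence \(\pi_i\pi_{i+1}\pi_j1\) of \(3421\) via Lemma~\ref{lem:132 partition properties}, and the converse normalises an occurrence so that its \(1\) is the entry \(1\) and its \(3,4\) are adjacent, yielding \(\lambda_i=\lambda_{i+1}>\lambda_j>0\).

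Taking \(j\) to be the \emph{first} index with \(\lambda_j<\lambda_{i+1}\) is a small but real improvement over the paper's wording, since an arbitrary smaller part can fail: for \(\lambda=(2,2,1,1)\), \(\pi=34251\) and \(j=4\) you get \(3451\), which is not a \(3421\). The one step you leave unjustified, that the pattern's \(1\) may be taken to be the entry \(1\), holds because otherwise the entry \(1\) together with the pattern's \(2\) and \(1\) would form a \(132\).
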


\begin{proof}
  Denote \(\lambda = \Lambda(\pi)\). Suppose first that \(\lambda_i = \lambda_{i+1}\) for some \(i\), and that \(\lambda\) has a part \(\lambda_j\) smaller than \(\lambda_i\). By Lemma \ref{lem:132 partition properties}, \(\pi_i \pi_{i+1} \pi_j 1\) forms a \(3421\) pattern.

  Conversely, suppose that \(\pi\) contains \(3421\). Note first that we can choose the occurrence of \(3421\) so that the entry \(1\) of \(\pi\) serves as the \(1\) in the pattern. Otherwise, the entry \(1\) of \(\pi\) occurs before the \(21\) of the pattern, and the three entries together form \(132\). Furthermore, we can choose the \(3\) and \(4\) of the pattern to be adjacent. Here is why: say \(\pi_{i_1} \pi_{i_2} \pi_{i_3} 1\) is the occurrence of \(3421\) in \(\pi\). None of the entries \(\pi_i\) with \(i_1 < i < i_2\) can satisfy \(\pi_i < \pi_{i_3}\), or \(\pi_i \pi_{i_2} \pi_{i_3}\) is a \(132\) pattern. Hence, we have an occurrence \(\pi_i \pi_{i+1} \pi_j 1\) of \(3421\) in \(\pi\). This implies that \(\lambda_i = \lambda_{i+1} > \lambda_j > 0\), proving the claim. The generating function is easy to see.
\end{proof}

\subsection{4231: convex partitions} \label{subsec:132 4231}

The enumeration sequence \(i_k(132, 4231)\) begins with
\begin{equation*}
  1, 1, 2, 3, 5, 6, 9, 12, 15, 19, 25, \ldots,
\end{equation*}
which is not in the OEIS. However, we are able to characterize and enumerate the corresponding partitions.

\begin{proposition} \label{prop:132 4231}
  An indecomposable \(132\)-avoider \(\pi\) avoids \(4231\) if and only if 
  \(\lambda \coloneqq \Lambda(\pi)\) satisfies the following condition: if \(\lambda_i \geq \lambda_{i+1} + 2\) for some \(i\), then \(\lambda_{i+1} > \lambda_{i+2} > \ldots.\) In particular,
  \begin{equation*}
    C_{132, 4231}(x) = \prod_{i\geq 1} (1+x^i)\ + \sum_{a, b \geq 0} x^{(a+2)(b + 1)} \cdot \prod_{i=1}^a (1+x^i) \cdot \prod_{i=1}^b (1+x^i).
  \end{equation*}
\end{proposition}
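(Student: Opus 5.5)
The plan is to argue directly through Lemma~\ref{lem:132 partition properties}, in the same way as for Propositions \ref{prop:132 3241 steep} and \ref{prop:132 3421}. Throughout, \(\pi\) is an indecomposable \(132\)-avoider with \(\lambda = \Lambda(\pi)\). I read the condition "\(\lambda_{i+1} > \lambda_{i+2} > \ldots\)" as applying only to the nonzero parts. The condition therefore fails exactly when there are indices \(i < j\) with \(\lambda_i \geq \lambda_{i+1} + 2\) and \(\lambda_j = \lambda_{j+1} > 0\).

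\emph{Violation implies a \(4231\).} Take such \(i < j\) with \(j - i\) minimal, so that every step strictly between them is a drop of exactly one. Then:
\begin{itemize}
\item By Lemma~\ref{lem:132 partition properties}\ref{lem:132 partition property equal}, \(\pi_j < \pi_{j+1}\); these two entries will serve as the \(2\) and the \(3\).
\item Since \(\lambda_{j+1} > 0\), some smaller entry lies to the right of position \(j+1\). In an indecomposable \(132\)-avoider the entry \(1\) can be taken as that entry, and it will serve as the \(1\).
\item For the \(4\), I would use \(\pi_i\), or an earlier entry. Here the drop of size at least two matters. By Lemma~\ref{lem:132 partition properties}\ref{lem:132 partition property drop}, \(\pi_{i+1} = \lambda_{i+1} + 1\). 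Part \ref{lem:132 partition property def} of the same lemma then shows that the values in \((\lambda_{i+1}+1, \lambda_i]\) are used to the left of position \(i\), so \(\pi_i\) exceeds them.
\item Unit drops between \(i\) and \(j\) make the entries \(\pi_{i+1}, \dots, \pi_j\) left-to-right minima that stay below \(\pi_i\). Combined with \(132\)-avoidance, this should force \(\pi_{j+1} < \pi_i\).
\end{itemize}
This step, checking that the chosen "\(4\)" really exceeds \(\pi_{j+1}\), is where I expect the main case work. If \(\pi_i\) fails, I would try the entry realising the largest value below \(\lambda_i + 1\) instead.

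\emph{A \(4231\) implies a violation.} As in the proof of Proposition~\ref{prop:132 3421}, \(132\)-avoidance lets us normalise an occurrence \(\pi_{i_1}\pi_{i_2}\pi_{i_3}\pi_{i_4}\) of \(4231\):
\begin{itemize}
\item First take \(\pi_{i_4} = 1\).
\item Next push \(i_2\) and \(i_3\) to be adjacent, \(i_3 = i_2 + 1\). Any entry strictly between them that is smaller than \(\pi_{i_2}\) would form a \(132\), and larger ones can replace \(\pi_{i_3}\).
\item Finally take \(i_1\) as late as possible.
\end{itemize}
After this, \(\lambda_{i_2} = \lambda_{i_2+1} > 0\). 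Reading the segment between \(i_1\) and \(i_2\) through Lemma~\ref{lem:132 partition properties}, the gap between \(\pi_{i_1}\) and \(\pi_{i_2}\) should produce an index \(i < i_2\) with \(\lambda_i - \lambda_{i+1} \geq 2\), which is the required violation.

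\emph{Generating function.} Split the admissible partitions into two classes.
\begin{itemize}
\item Partitions with no drop \(\geq 2\), including the final drop to \(0\). Their conjugates are exactly the partitions into distinct parts, which gives \(\prod_{i\geq1}(1+x^i)\).
\item Partitions that have a drop \(\geq 2\). Let \(i = b+1\) be the index of the first such drop, and write \(\lambda_i = a+2\). Then \(\lambda\) decomposes into three pieces:
\begin{itemize}
\item a \((b+1) \times (a+2)\) rectangle, contributing \(x^{(a+2)(b+1)}\);
\item the excess of the first \(b+1\) rows over the rectangle, whose consecutive differences are at most \(1\) and whose last row is \(0\), so its conjugate has distinct parts of size at most \(b\), contributing \(\prod_{i=1}^b(1+x^i)\);
\item the tail \(\lambda_{i+1} > \lambda_{i+2} > \cdots\), which consists of distinct parts of size at most \(a\), contributing \(\prod_{i=1}^a(1+x^i)\).
\end{itemize}
This decomposition is clearly reversible, and summing over \(a, b \geq 0\) gives the stated formula.
\end{itemize}
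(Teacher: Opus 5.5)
Your outline follows the paper's proof: the minimal pair \((i,j)\), normalising the \(4231\) by \(\pi_{i_4}=1\), \(i_3=i_2+1\) and \(i_1\) maximal, and the rectangle/\(\mu\)/\(\nu\) decomposition, which is correct. The gap is the one nontrivial step of the forward direction, namely \(\pi_{j+1}<\pi_i\). You leave it as ``this should force''. The one argument you give for how the drop of size at least two enters is also backwards. By Lemma~\ref{lem:132 partition properties}(a), the entries to the left of position \(i\) that are smaller than \(\pi_i\) are exactly the values in \((\lambda_i,\pi_i)\). So every value \(\le\lambda_i\) lies to the \emph{right} of position \(i\); in \(4213\), with \(\lambda=(3,1)\), the value \(3\) is the last entry.

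That fact is precisely what closes the step. The \(\lambda_i-\lambda_{i+1}-1\ge 1\) values in \((\lambda_{i+1}+1,\lambda_i]\) lie strictly between \(\pi_{i+1}=\lambda_{i+1}+1\) and \(\pi_i\), to the right of position \(i+1\). Since \(\pi_{i+1}>\pi_{i+2}>\dots>\pi_j\), any such value \(\pi_t\) has \(t\ge j+1\). Then \(\pi_t\) is unused before position \(j+1\) and exceeds \(\lambda_{j+1}\), so part (a) gives \(\pi_{j+1}\le\pi_t<\pi_i\). Equivalently, \(\pi_{j+1}>\pi_i\) would make \(\pi_{i+1}\pi_{j+1}\pi_t\) a \(132\). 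This is the paper's ``there must be some \(i_3>i_2\) with \(\pi_{i_1+1}<\pi_{i_3}<\pi_{i_1}\)''.

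The step genuinely needs the drop of size at least two. In \(3241\), with \(\lambda=(2,1,1)\), the drop at \(i=1\) is one and \(\pi_{j+1}=4>\pi_i=3\). Your fallback does not help: the entry of value \(\lambda_i\) lies to the right of position \(j\), so it cannot play the \(4\).

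The converse is likewise only gestured at, and the relevant gap is not between \(\pi_{i_1}\) and \(\pi_{i_2}\). With \(i_1\) maximal, every entry strictly between positions \(i_1\) and \(i_2\) is below \(\pi_{i_2+1}\), since otherwise it could replace \(\pi_{i_1}\). Hence \(\pi_{i_1+1}<\pi_{i_2+1}<\pi_{i_1}\). The entries to the right of \(i_1\) that are smaller than \(\pi_{i_1}\) then include \(\pi_{i_1+1}\), the \(\lambda_{i_1+1}\) entries below \(\pi_{i_1+1}\) to its right, and \(\pi_{i_2+1}\). So the drop of size at least two occurs at \(i=i_1\) itself.

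In the adjacency step the \(132\) is misattributed. An entry between positions \(i_2\) and \(i_3\) that is smaller than \(\pi_{i_2}\) forms a \(213\) with them, not a \(132\). The entries excluded by \(132\)-avoidance are those above \(\pi_{i_3}\), via \(\pi_{i_2}\pi_k\pi_{i_3}\). After that, \(\pi_{i_3-1}\) can replace \(\pi_{i_2}\).
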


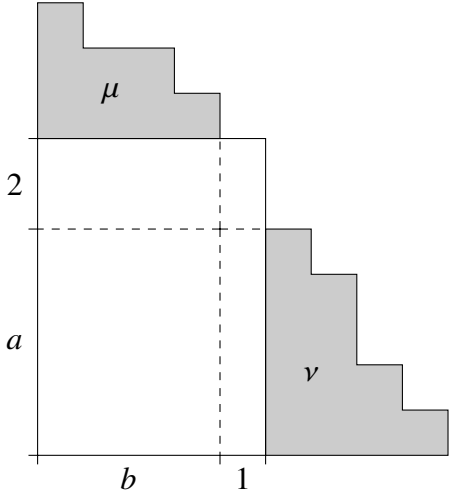
\begin{figure}[t]
  \begin{minipage}[t]{0.47\textwidth}\vspace{0pt}
    \begin{tikzpicture}[scale=0.6]
      \draw[fill=black!20] 
        (0,7) -- (0,10) -- (1,10) -- (1,9) -- (3,9) -- (3,8) -- (4,8) -- (4,7) -- cycle;
      \draw[fill=black!20] 
        (5,5) -- (6,5) -- (6,4) -- (7,4) -- (7,2) -- (8,2) -- (8,1) -- (9,1) -- (9,0) -- (5,0) -- cycle;
      \draw 
        (-0.2,7) -- (5,7) -- (5,-0.2)
        (0,-0.2) -- (0,7)
        (-0.2,0) -- (5,0)
        (-0.2,5) -- (0,5)
        (4,-0.2) -- (4,0);
      \draw[dashed]
        (4,5) -- (0,5)
        (4,5) -- (5,5)
        (4,5) -- (4,0)
        (4,5) -- (4,7);
      \node at (-0.5, 2.5) {\(a\)};
      \node at (2,-0.5) {\(b\)};
      \node at (-0.5, 6) {\(2\)};
      \node at (4.5,-0.5) {\(1\)};
      \node at (1.6,8) {\(\mu\)};
      \node at (6,1.8) {\(\nu\)};
    \end{tikzpicture}
  \end{minipage}\hfill
  \begin{minipage}[t]{0.53\textwidth}\vspace{0pt}
    \caption{Counting the partitions \(\lambda \vdash k\) in bijection with \(I_k(132, 4231)\). Either \(\lambda\) has no gap of size greater than one, or it looks like this. The partition \(\mu\) has no gap of size greater than one and at most \(b\) parts, whereas all parts of \(\nu\) are distinct and the largest part is at most \(a\).}
    \label{fig:132 4231 partition}
  \end{minipage}
\end{figure}

\begin{proof}
  Let \(\pi \in I_k(132)\) and \(\lambda = \Lambda(\pi)\). We want to show that \(\pi\) avoids \(4231\) if and only if \(\lambda\) satisfies the following condition: 
  \begin{equation}\label{eq:4231 condition}
    \text{if } \lambda_i \geq \lambda_{i+1} + 2 \text{ for some } i, \text{ then } \lambda_{i+1} > \lambda_{i+2} > \ldots.
  \end{equation}
  Suppose first that condition \eqref{eq:4231 condition} does not hold, i.e.\ that there is some \(i_1 < i_2\) with \(\lambda_{i_1} \geq \lambda_{i_1+1} + 2\) and \(\lambda_{i_2} = \lambda_{i_2+1}\). Suppose further that no \(i\) with \(i_1 < i < i_2\) has either of the two properties. Since \(\lambda_{i_1} - \lambda_{i_1+1} \geq 2\), there must be some \(i_3 > i_2\) such that \(\pi_{i_1+1} < \pi_{i_3} < \pi_{i_1}\). If \(i_2+1\) does not satisfy this property, i.e.\ \(\lambda_{i_2+1} > \lambda_{i_1}\), then \(\pi_{i_1} \pi_{i_2+1} \pi_{i_3}\) forms a \(132\) pattern. Therefore \(\lambda_{i_2+1} < \lambda_{i_1}\), and clearly \(\pi_{i_2} < \pi_{i_2+1}\). The entry \(1\) must come after \(\pi_{i_1}\) in \(\pi\), so \(\pi_{i_1} \pi_{i_2} \pi_{i_2+1} 1\) forms a \(4231\) pattern in \(\pi\).

  Conversely, suppose \(\pi\) contains \(4231\). We can assume that the entry \(1\) of \(\pi\) serves as the \(1\) in the pattern. Otherwise the entry \(1\) of \(\pi\) occurs before the \(31\) of the pattern, and the three entries together form \(132\). Furthermore, we can choose the \(2\) and \(3\) of the pattern to be adjacent. Here is why: say \(\pi_{i_1} \pi_{i_2} \pi_{i_3} 1\) is the occurrence of \(4231\) in \(\pi\). None of the entries \(\pi_i\) with \(i_2 < i < i_3\) can satisfy \(\pi_i > \pi_{i_3}\), or \(\pi_{i_2} \pi_i \pi_{i_3}\) is a \(132\) pattern. So, we have an occurrence \(\pi_{i_1} \pi_{i_2} \pi_{i_2+1} 1\) of \(4231\) in \(\pi\). Maximize \(i_1\), so that \(\pi_i < \pi_{i_2+1}\) for all \(i_1 < i < i_2\). In particular, \(\pi_{i_1+1} < \pi_{i_2+1} < \pi_{i_1}\), implying that \(\lambda_{i_1} \geq \lambda_{i_1+1} + 2\) and \(\lambda_{i_2} = \lambda_{i_2+1}\). Thus, \(\lambda\) does not satisfy condition \eqref{eq:4231 condition}.

  Figure \ref{fig:132 4231 partition} shows the structural interpretation of the partitions \(\lambda\) satisfying condition \eqref{eq:4231 condition} leading to the claimed generating function. We have two cases: either \(\lambda\) has no gap of size greater than one, or it has a first gap of size at least two. The first case is enumerated by \(\prod_{i \geq 1} (1+x^i)\). The second case is given by first choosing \(a,b \geq 0\) (as in the figure), and then two partitions \(\mu\) and \(\nu\), both with all parts distinct, and largest parts at most \(b\) and \(a\), respectively. (Conjugate \(\mu\).) We multiply the generating function by \(x^{(a+2)(b+1)}\) to account for the rectangle in the figure.
\end{proof}

\subsection{132 and the decreasing pattern} \label{subsec:132 4321}

The enumeration sequence \(i_k(132, 4321)\) begins with
\begin{equation*}
  1, 1, 2, 3, 5, 7, 10, 13, 17, 20, \ldots,
\end{equation*}
entry \href{https://oeis.org/A265250}{A265250} in the OEIS: partitions of \(k\) having at most two distinct parts. This generalizes to \(i_k(132, \id_m^{\rev})\) for any \(m\).

\begin{proposition}
  An indecomposable \(132\)-avoider \(\pi\) avoids \(\id_m^{\rev}\) if and only if \(\Lambda(\pi)\) has at most \(m-2\) distinct parts. In particular,
  \begin{equation*}
    C_{132, 4321}(x) = 1 + \sum_{k \geq 1} \frac{x^k}{1 - x^k} + \sum_{k \geq 1} \sum_{i \geq k+1} \frac{x^{k + i}}{(1 - x^k)(1 - x^i)}.
  \end{equation*}
\end{proposition}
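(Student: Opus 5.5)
The plan is to work directly with $\lambda = \Lambda(\pi)$ and Lemma~\ref{lem:132 partition properties}. The claimed criterion is that an indecomposable $132$-avoider $\pi$ avoids $\id_m^{\rev}$ if and only if $\lambda$ has at most $m-2$ distinct parts. I will view the distinct parts as "descent levels" in $\pi$.

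The key observation concerns strict drops. By Lemma~\ref{lem:132 partition properties}\ref{lem:132 partition property drop}, whenever $\lambda_i > \lambda_{i+1}$ we have $\pi_{i+1} = \lambda_{i+1}+1$. Here we use the convention $\lambda_i = 0$ past the length of $\lambda$, and $\pi_1 = \lambda_1 + 1$ by part~\ref{lem:132 partition property def}. Now let $d$ be the number of distinct parts of $\lambda$, counting $0$ as an extra value, so there are $d+1$ distinct values. Let $j_0 = 1 < j_1 < \dots < j_d$ be the first positions at which $\lambda$ attains each new (smaller) value. Then $\pi_{j_t} = \lambda_{j_t}+1$, and these values strictly decrease. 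This gives a decreasing subsequence of length $d+1$, ending at the entry $1$ (where $\lambda$ reaches $0$).

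To extend this by one more entry, I use indecomposability. Write $i = j_1 - 1$. Since $\lambda_1 = \dots = \lambda_i > \lambda_{i+1}$, part~\ref{lem:132 partition property equal} gives $\pi_1 < \dots < \pi_i$ when $i \geq 2$, with $\pi_i > \pi_1 = \lambda_1+1$. In that case I replace the first element by $\pi_i$, or simply note that the first block's last entry exceeds all later left-to-right minima. This alone does not add length, so I need a sharper count. The correct argument should instead be that every maximal block of equal parts of $\lambda$ contributes an increasing run, and the entries $\lambda_{j_t}+1$ are the left-to-right minima.

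For the forward direction, I will show that a decreasing subsequence of length $m$ in a $132$-avoider can be taken to consist of left-to-right minima together with one extra entry preceding the first of them. Since $\pi$ has exactly $d+1$ left-to-right minima (the values $\lambda_{j_t}+1$, including $1$), the longest decreasing subsequence has length at most $d+2$. It attains $d+2$ precisely when some entry precedes and exceeds the minimum $\pi_{j_1}$ in a way not counted. Because $\pi_1 > \pi_{j_1}$ always holds, this bound should come out as exactly $d+1$ plus a correction. This is the point I expect to be the main obstacle: aligning the count so that "at most $m-2$ nonzero distinct parts" matches "no decreasing subsequence of length $m$."

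I will first sanity-check small cases. For $m=3$ the criterion says $\lambda$ has at most one distinct part, meaning $\lambda = (c^r)$. For $\pi = 21$, $\lambda = (1)$, and indeed $21$ avoids $321$, which is consistent. I will use such checks to fix the off-by-one, then prove the left-to-right-minima lemma. The $132$-avoidance argument is that any element of a decreasing sequence which is not a left-to-right minimum, when preceded by a smaller entry, creates a $132$ with a later smaller entry, unless it is the first element of the sequence.

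Finally, the generating function follows by summing over partitions with at most two distinct sizes. The term $1$ counts the empty partition. The term $x^k/(1-x^k)$ counts partitions with one distinct part $k$. The double sum counts partitions with two distinct parts $k<i$, each with positive multiplicity.
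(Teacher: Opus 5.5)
Your first step is correct and is half of the paper's proof. The positions $j_0=1<j_1<\dots<j_d$ carry the values $\lambda_{j_t}+1$, which form a decreasing subsequence of length $d+1$ ending at $1$, so $d\ge m-1$ forces an occurrence of $\id_m^{\rev}$. This already matches the statement exactly. The paragraph about extending this sequence by one more entry via indecomposability is a detour, and no such extension is possible: the longest decreasing subsequence of $\pi$ has length exactly $d+1$.

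The gap is in the other direction, that at most $m-2$ distinct parts implies avoidance. Your proposed lemma (left-to-right minima plus one extra entry preceding the first of them) only bounds decreasing subsequences by $d+2$. That does not rule out a copy of $\id_m^{\rev}$ when $d=m-2$, and you leave this off-by-one explicitly unresolved, planning to fix it by checking small cases. The extra entry does not exist, because the first left-to-right minimum is $\pi_1$ itself. Your $132$ argument is also not right as written: if $b<a_t$ precedes $a_t$ and $c<a_t$ follows it, then $b\,a_t\,c$ is a $132$ only when $b<c$.

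The missing observation is simple. By Lemma~\ref{lem:132 partition properties}\ref{lem:132 partition property equal}, applied to the Lehmer code including its trailing zeros, $\pi_i<\pi_{i+1}$ whenever $\lambda_i=\lambda_{i+1}$. So the descents of $\pi$ are exactly the $d$ strict drops of $\lambda$, and $\pi$ is a concatenation of $d+1$ increasing runs starting at $j_0,\dots,j_d$. A decreasing subsequence meets each run at most once, so its length is at most $d+1$. Hence $\pi$ avoids $\id_m^{\rev}$ if and only if $d+1\le m-1$. This is precisely the paper's remark that the entries between consecutive left-to-right minima are increasing. Your generating-function count is fine.
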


\begin{proof}
  Let \(\pi \in I_k(132)\) and \(\lambda = \Lambda(\pi)\). We want to show that \(\pi\) avoids \(\id_m^{\rev}\) if and only if \(\lambda\) has at most \(m-2\) distinct parts. Suppose first that \(\pi\) contains \(\id_m^{\rev}\). Since the entries between any two consecutive left-to-right minima of \(\pi\) are increasing, there must be an occurrence of \(\id_m^{\rev}\) in \(\pi\) consisting only of left-to-right minima. By Lemma \ref{lem:132 partition properties} \ref{lem:132 partition property drop}, this means that \(\lambda\) has at least \(m-1\) distinct parts. Conversely, if \(\lambda\) has at least \(m-1\) distinct parts, then the left-to-right minima of \(\pi\) form a decreasing sequence of length at least \(m\). The generating function for the special case of \(m = 4\) is easy to see.
\end{proof}

\section{Conclusions and open problems} \label{sec:conclusions}

\paragraph{Inversion monotone sets}

We presented the first proofs of inversion monotonicity in nontrivial cases, namely for the pair \(\{1324, 231\}\), and through the construction method in Section \ref{sec:building bases}. The next natural step would be to prove that a pair \(\{1324, p\}\) with \(p \in S_4\) is inversion monotone. Our injection for \(\{1324, 231\}\) is too intricate to easily be adapted to wider classes, so new ideas are needed.

\begin{problem}
   Prove that \(\{1324, p\}\) is inversion monotone for some pattern \(p \in S_4\).
\end{problem}

\paragraph{Limit sequences}

Our analysis of the limit sequences of pairs \(\{1324, p\}\) relies on the identification of a decomposable \(1324\)-avoider as a pair of partitions. The same technique is applicable to any collection of patterns containing \(1324\), as the problem reduces to enumerating restricted partitions. However, the limit sequences of other collections are often more difficult to determine. The limit sequence of the pattern \(1243\) is
\begin{equation*}
  1, 2, 5, 10, 20, 37, 66, 114, 193, 317, \ldots,
\end{equation*}
and this sequence is not in the OEIS. The problem is essentially equivalent to enumerating \(I_k(1243)\), the indecomposable \(1243\)-avoiding permutations with \(k\) inversions.

\begin{problem}
  Determine the limit sequence of the pattern \(1243\).
\end{problem}

In Proposition \ref{prop:lim seq existence}, we characterized the sets that have limit sequences: they are precisely the sets \(B\) that contain a pattern \(p\) such that \(\inv(p) \leq 1\). But when do two sets have the same limit sequence? 

\begin{problem}
  Find necessary and sufficient conditions for two sets of patterns to have the same limit sequence.
\end{problem}

\paragraph{Higher limit sequences}

We were able to prove that the pair \(\{1324, 1342\}\) has a secondary limit sequence (in the sense of Section \ref{sec:1324 pairs}) given by \((2+2x) C_{1324,1342}(x)\). The result follows from an analysis of the injection of Linusson and Verkama \cite{linusson_enumerating_2025} used to prove a similar result for \(1324\). However, we observe secondary limit sequences also for the pairs \(\{1324, p\}\) with \(p\) in
\begin{equation*}
  1243,\ 1432,\ 2341,\ 3421,\ 4321
\end{equation*}
without proofs. The patterns \(1432\) and \(4321\) are \(f\)-compatible (in the sense of Section \ref{sec:almost decomp}), so there is hope of applying the same method as for \(1342\). The other three patterns are not \(f\)-compatible, and it is not clear why they have secondary limit sequences. Similarly, we have no explanation for the tertiary limit sequences of the remaining pairs.

\begin{problem}
  Explain why every pair \(\{1324, p\}\) with \(p \in S_4\) has either a secondary or a tertiary limit sequence.
\end{problem}

\subsection*{Acknowledgements}

SL and EV are supported by the Swedish Research Council (VR) grant 2022-03875. HU would like to thank SL and EV for their hospitality at KTH.

\clearpage
\printbibliography

@article{bevan_structural_2020,
	title = {A structural characterisation of Av(1324) and new bounds on its growth rate},
	volume = {88},
	doi = {10.1016/j.ejc.2020.103115},
	note = {103115},
	journaltitle = {Eur. J. Comb.},
	author = {Bevan, David and Brignall, Robert and Elvey Price, Andrew and Pantone, Jay},
	year = {2020},
	mrnumber = {4111727},
}

@article{dokos_permutation_2012,
	title = {Permutation patterns and statistics},
	volume = {312},
	issn = {0012-365X},
	doi = {10.1016/j.disc.2012.05.014},
	pages = {2760--2775},
	number = {18},
	journaltitle = {Discrete Math.},
	author = {Dokos, Theodore and Dwyer, Tim and Johnson, Bryan P. and Sagan, Bruce E. and Selsor, Kimberly},
	year = {2012},
}

@article{chan_infinite_2015,
	title = {An infinite family of inv-Wilf-equivalent permutation pairs},
	volume = {44},
	issn = {0195-6698},
	doi = {10.1016/j.ejc.2014.08.031},
	pages = {57--76},
	journaltitle = {Eur. J. Comb.},
	author = {Chan, Justin H. C.},
	year = {2015},
}

@book{stanley_enumerative_2012,
  author = {Stanley, Richard P.},
  title = {Enumerative combinatorics. {V}ol. 1},
  edition = {Second edition},
  publisher = {Cambridge University Press},
  year = {2012},
  pages = {xiv+626},
  isbn = {978-1-107-60262-5},
}

@article{stanton_unimodality_1990,
  title = {Unimodality and Young's lattice},
  author = {Stanton, Dennis},
  journal = {J. Comb. Theory Ser. A},
  volume = {54},
  number = {1},
  pages = {41--53},
  year = {1990},
  publisher = {Elsevier}
}

@article{linusson_enumerating_2025, 
	title = {Enumerating 1324-Avoiders with Few Inversions}, 
	volume = {32},
	doi = {10.37236/13387},
	number = {3}, 
	journal = {Electron. J. Comb.}, 
	author = {Linusson, Svante and Verkama, Emil}, 
	year = {2025},
	pages = {P3.44} 
}

@thesis{phan_structures_1999,
	title = {Structures ordonnees et dynamiques de piles de sable},
	pagetotal = {110 p.},
	type = {phdthesis},
	author = {Phan, Thi Ha Duong},
	school = {Université Paris},
	year = {1999},
}

@article{bak_criticality_1988,
  title = {Self-organized criticality},
  author = {Bak, Per and Tang, Chao and Wiesenfeld, Kurt},
  journal = {Phys. Rev. A},
  volume = {38},
  issue = {1},
  pages = {364--374},
  year = {1988},
  publisher = {American Physical Society},
  doi = {10.1103/PhysRevA.38.364}
}

@article{franklin_restricted_2024,
	title = {Restricted Permutations Enumerated by Inversions},
	volume = {403},
	ISSN = {2075-2180},
	DOI = {10.4204/eptcs.403.21},
	journal = {Electron. Proc. Theor. Comput. Sci.},
	publisher = {Open Publishing Association},
	author = {Franklín, Atli Fannar and Claesson, Anders and Bean, Christian and Ulfarsson, Henning and Pantone, Jay},
	year = {2024},
	pages = {96--100} 
}

@article{franklin_pattern_2025,
	title = {Pattern Avoiding Permutations Enumerated by Inversions},
	volume = {vol. 27:1, Permutation Patterns 2024},
	issn = {1365-8050},
	doi = {10.46298/dmtcs.14437},
	issue = {Special issues},
	journaltitle = {Discrete Math. Theor. Comput. Sci.},
	author = {Franklín, Atli Fannar},
	year = {2025},
}

@article{claesson_upper_2012,
	title = {Upper bounds for the Stanley-Wilf limit of 1324 and other layered patterns},
	volume = {119},
	doi = {10.1016/j.jcta.2012.05.006},
	pages = {1680--1691},
	number = {8},
	journaltitle = {J. Comb. Theory Ser. A},
	author = {Claesson, Anders and Jelínek, Vít and Steingrímsson, Einar},
	year = {2012},
	mrnumber = {2946382},
}

@article{marcus_excluded_2004,
	title = {Excluded permutation matrices and the Stanley-Wilf conjecture},
	volume = {107},
	doi = {10.1016/j.jcta.2004.04.002},
	pages = {153--160},
	number = {1},
	journaltitle = {J. Comb. Theory Ser. A},
	author = {Marcus, Adam and Tardos, Gábor},
	year = {2004},
	mrnumber = {2063960},
}

@article{conway_1324-avoiding_2018,
	title = {1324-avoiding permutations revisited},
	volume = {96},
	doi = {10.1016/j.aam.2018.01.002},
	pages = {312--333},
	journaltitle = {Adv. Appl. Math.},
	author = {Conway, Andrew R. and Guttmann, Anthony J. and Zinn-Justin, Paul},
	year = {2018},
	mrnumber = {3767512},
}

@misc{oeis,
	author = {{OEIS Foundation Inc.}},
	note = {Published electronically at \url{http://oeis.org}},
	title = {The {O}n-{L}ine {E}ncyclopedia of {I}nteger {S}equences},
	year = {2026}
}

@misc{miner_enumeration_2016,
	title = {Enumeration of several two-by-four classes}, 
	author = {Sam Miner},
	year = {2016},
	eprint = {1610.01908},
	archivePrefix = {arXiv},
	primaryClass = {math.CO},
}

@article{combinatorial-exploration,
	author = {Albert, Michael and Bean, Christian and Claesson, Anders and Nadeau, \'Emile and Pantone, Jay and Ulfarsson, Henning},
	journal = {Mem. Am. Math. Soc.},
	title = {Combinatorial {E}xploration: An algorithmic framework for enumeration},
	year = {2025},  
	note = {To appear},
}

@article{lakshmibai_criterion_1990,
	title = {Criterion for smoothness of Schubert varieties in \(\mathrm{Sl}(n)/B\)},
	volume = {100},
	issn = {0973-7685},
	doi = {10.1007/BF02881113},
	pages = {45--52},
	number = {1},
	journaltitle = {Proc. Indian Acad. Sci. (Math. Sci.)},
	author = {Lakshmibai, V. and Sandhya, B.},
	year = {1990},
	langid = {english},
}

@article{bona_smooth_1998,
	title={The Permutation Classes Equinumerous to the Smooth Class}, 
	volume={5},
	DOI={10.37236/1369},
	number={1}, 
	journal={Electron. J. Comb.}, 
	author={Bóna, Miklós}, 
	year={1998},
	pages={R31} 
}

@article{albert_inflations_2014,
	title = {Inflations of geometric grid classes: three case studies},
	volume = {58},
	issn = {1034-4942,2202-3518},
	shorttitle = {Inflations of geometric grid classes},
	pages = {24--47},
	journaltitle = {Australas. J. Comb.},
	author = {Albert, Michael and Atkinson, Mike and Vatter, Vincent},
	year = {2014},
	mrnumber = {3211768},
}

@misc{albert_enumeration_2011,
	title = {The enumeration of permutations avoiding 2143 and 4231}, 
	author = {Albert, Michael and Atkinson, Mike and Brignall, Robert},
	year = {2011},
	eprint = {1108.0989},
	archivePrefix = {arXiv},
	primaryClass = {math.CO}
}

@article{albert_enumeration_2012, 
	title = {The Enumeration of Three Pattern Classes using Monotone Grid Classes}, 
	volume = {19},
	doi = {10.37236/2442},
	number = {3}, 
	journal = {Electron. J. Comb.}, 
	author={Albert, Michael and Atkinson, Mike and Brignall, Robert}, 
	year={2012}, 
	pages={P20} 
}

@article{albert_counting_2009, 
	title={Counting 1324, 4231-Avoiding Permutations}, 
	volume={16},
	DOI={10.37236/225},
	number={1}, 
	journal={Electron. J. Comb.}, 
	author={Albert, Michael H. and Atkinson, M. D. and Vatter, Vincent}, 
	year={2009},
	pages={R136} 
}

@article{vatter_finding_2012,
	title = {Finding regular insertion encodings for permutation classes},
	volume = {47},
	issn = {0747-7171},
	doi = {10.1016/j.jsc.2011.11.002},
	pages = {259--265},
	number = {3},
	journaltitle = {J. Symb. Comput.},
	author = {Vatter, Vincent},
	year = {2012},
}

@article{corteel_enumeration_2002,
	title = {Enumeration of sand piles},
	volume = {256},
	issn = {0012-365X},
	doi = {10.1016/S0012-365X(02)00339-4},
	series = {{LaCIM} 2000 Conference on Combinatorics, Computer Science and Applications},
	pages = {625--643},
	number = {3},
	journaltitle = {Discrete Math.},
	author = {Corteel, Sylvie and Gouyou-Beauchamps, Dominique},
	year = {2002}
}

\clearpage
\appendix

\section{Data} \label{appendix:data}

%1243
\begin{table}[h]
  \centering
  \scriptsize
  \caption{The values of \(\av_n^k(1324, 1243)\) for \(n, k \leq 15\).}
  \begin{tblr}{
    rows = {mode=math},
    colsep = 5pt,
    cell{-}{-} = {c},
    hline{2,17} = {2-17}{},
    vline{2,18} = {2-16}{},
  }
    n \backslash k & 0 & 1 & 2 & 3 & 4 & 5 & 6 & 7 & 8 & 9 & 10 & 11 & 12 & 13 & 14 & 15 \\ 
    1 & 1 & ~ & ~ & ~ & ~ & ~ & ~ & ~ & ~ & ~ & ~ & ~ & ~ & ~ & ~ & ~ \\ 
    2 & 1 & 1 & ~ & ~ & ~ & ~ & ~ & ~ & ~ & ~ & ~ & ~ & ~ & ~ & ~ & ~ \\ 
    3 & 1 & 2 & 2 & 1 & ~ & ~ & ~ & ~ & ~ & ~ & ~ & ~ & ~ & ~ & ~ & ~ \\ 
    4 & 1 & 1 & 5 & 6 & 5 & 3 & 1 & ~ & ~ & ~ & ~ & ~ & ~ & ~ & ~ & ~ \\ 
    5 & 1 & 1 & 2 & 7 & 12 & 18 & 20 & 15 & 9 & 4 & 1 & ~ & ~ & ~ & ~ & ~ \\ 
    6 & 1 & 1 & 2 & 3 & 9 & 14 & 27 & 43 & 61 & 70 & 65 & 49 & 29 & 14 & 5 & 1 \\ 
    7 & 1 & 1 & 2 & 3 & 5 & 11 & 17 & 26 & 50 & 82 & 122 & 177 & 226 & 262 & 263 & 223 \\ 
    8 & 1 & 1 & 2 & 3 & 5 & 7 & 15 & 21 & 30 & 48 & 80 & 125 & 198 & 290 & 429 & 593 \\ 
    9 & 1 & 1 & 2 & 3 & 5 & 7 & 11 & 19 & 28 & 38 & 56 & 80 & 124 & 185 & 272 & 401 \\ 
    10 & 1 & 1 & 2 & 3 & 5 & 7 & 11 & 15 & 26 & 36 & 50 & 70 & 97 & 133 & 195 & 273 \\ 
    11 & 1 & 1 & 2 & 3 & 5 & 7 & 11 & 15 & 22 & 34 & 48 & 64 & 91 & 121 & 163 & 222 \\ 
    12 & 1 & 1 & 2 & 3 & 5 & 7 & 11 & 15 & 22 & 30 & 46 & 62 & 85 & 115 & 155 & 204 \\ 
    13 & 1 & 1 & 2 & 3 & 5 & 7 & 11 & 15 & 22 & 30 & 42 & 60 & 83 & 109 & 149 & 196 \\ 
    14 & 1 & 1 & 2 & 3 & 5 & 7 & 11 & 15 & 22 & 30 & 42 & 56 & 81 & 107 & 143 & 190 \\ 
    15 & 1 & 1 & 2 & 3 & 5 & 7 & 11 & 15 & 22 & 30 & 42 & 56 & 77 & 105 & 141 & 184
  \end{tblr}
  \label{tab:1324 1243}
\end{table}

\vspace{18mm}
\begin{table}[!h]
  \centering
  \scriptsize
  \caption{The values of \(\av_{n+1}^k(1324, 1243) - \av_n^k(1324, 1243)\) for \(n, k \leq 15\).}
  \begin{tblr}{
    rows = {mode=math},
    colsep = 5pt,
    cell{-}{-} = {c},
    hline{2,16} = {2-17}{},
    vline{2,18} = {2-15}{},
  }
    n \backslash k & 0 & 1 & 2 & 3 & 4 & 5 & 6 & 7 & 8 & 9 & 10 & 11 & 12 & 13 & 14 & 15 \\ 
    1 & 0 & 1 & ~ & ~ & ~ & ~ & ~ & ~ & ~ & ~ & ~ & ~ & ~ & ~ & ~ & ~ \\ 
    2 & 0 & 1 & 2 & 1 & ~ & ~ & ~ & ~ & ~ & ~ & ~ & ~ & ~ & ~ & ~ & ~ \\ 
    3 & 0 & -1 & 3 & 5 & 5 & 3 & 1 & ~ & ~ & ~ & ~ & ~ & ~ & ~ & ~ & ~ \\ 
    4 & 0 & 0 & -3 & 1 & 7 & 15 & 19 & 15 & 9 & 4 & 1 & ~ & ~ & ~ & ~ & ~ \\ 
    5 & 0 & 0 & 0 & -4 & -3 & -4 & 7 & 28 & 52 & 66 & 64 & 49 & 29 & 14 & 5 & 1 \\ 
    6 & 0 & 0 & 0 & 0 & -4 & -3 & -10 & -17 & -11 & 12 & 57 & 128 & 197 & 248 & 258 & 222 \\ 
    7 & 0 & 0 & 0 & 0 & 0 & -4 & -2 & -5 & -20 & -34 & -42 & -52 & -28 & 28 & 166 & 370 \\ 
    8 & 0 & 0 & 0 & 0 & 0 & 0 & -4 & -2 & -2 & -10 & -24 & -45 & -74 & -105 & -157 & -192 \\ 
    9 & 0 & 0 & 0 & 0 & 0 & 0 & 0 & -4 & -2 & -2 & -6 & -10 & -27 & -52 & -77 & -128 \\ 
    10 & 0 & 0 & 0 & 0 & 0 & 0 & 0 & 0 & -4 & -2 & -2 & -6 & -6 & -12 & -32 & -51 \\ 
    11 & 0 & 0 & 0 & 0 & 0 & 0 & 0 & 0 & 0 & -4 & -2 & -2 & -6 & -6 & -8 & -18 \\ 
    12 & 0 & 0 & 0 & 0 & 0 & 0 & 0 & 0 & 0 & 0 & -4 & -2 & -2 & -6 & -6 & -8 \\ 
    13 & 0 & 0 & 0 & 0 & 0 & 0 & 0 & 0 & 0 & 0 & 0 & -4 & -2 & -2 & -6 & -6 \\ 
    14 & 0 & 0 & 0 & 0 & 0 & 0 & 0 & 0 & 0 & 0 & 0 & 0 & -4 & -2 & -2 & -6
  \end{tblr}
  \label{tab:1324 1243 diffs}
\end{table}

\clearpage

% 2143
\begin{table}[p]
  \centering
  \scriptsize
  \caption{The values of \(\av_n^k(1324, 2143)\) for \(n, k \leq 15\).}
  \begin{tblr}{
    rows = {mode=math},
    colsep = 5pt,
    cell{-}{-} = {c},
    hline{2,17} = {2-17}{},
    vline{2,18} = {2-16}{},
  }
    n \backslash k & 0 & 1 & 2 & 3 & 4 & 5 & 6 & 7 & 8 & 9 & 10 & 11 & 12 & 13 & 14 & 15 \\ 
    1 & 1 & ~ & ~ & ~ & ~ & ~ & ~ & ~ & ~ & ~ & ~ & ~ & ~ & ~ & ~ & ~ \\ 
    2 & 1 & 1 & ~ & ~ & ~ & ~ & ~ & ~ & ~ & ~ & ~ & ~ & ~ & ~ & ~ & ~ \\ 
    3 & 1 & 2 & 2 & 1 & ~ & ~ & ~ & ~ & ~ & ~ & ~ & ~ & ~ & ~ & ~ & ~ \\ 
    4 & 1 & 2 & 4 & 6 & 5 & 3 & 1 & ~ & ~ & ~ & ~ & ~ & ~ & ~ & ~ & ~ \\ 
    5 & 1 & 2 & 4 & 6 & 12 & 16 & 18 & 15 & 9 & 4 & 1 & ~ & ~ & ~ & ~ & ~ \\ 
    6 & 1 & 2 & 4 & 6 & 10 & 18 & 26 & 36 & 50 & 60 & 58 & 46 & 29 & 14 & 5 & 1 \\ 
    7 & 1 & 2 & 4 & 6 & 10 & 14 & 28 & 36 & 52 & 70 & 104 & 135 & 168 & 200 & 212 & 193 \\ 
    8 & 1 & 2 & 4 & 6 & 10 & 14 & 22 & 38 & 52 & 70 & 96 & 130 & 184 & 245 & 310 & 400 \\ 
    9 & 1 & 2 & 4 & 6 & 10 & 14 & 22 & 30 & 54 & 70 & 96 & 126 & 174 & 224 & 318 & 403 \\ 
    10 & 1 & 2 & 4 & 6 & 10 & 14 & 22 & 30 & 44 & 72 & 96 & 126 & 170 & 224 & 294 & 386 \\ 
    11 & 1 & 2 & 4 & 6 & 10 & 14 & 22 & 30 & 44 & 60 & 98 & 126 & 170 & 220 & 294 & 378 \\ 
    12 & 1 & 2 & 4 & 6 & 10 & 14 & 22 & 30 & 44 & 60 & 84 & 128 & 170 & 220 & 290 & 378 \\ 
    13 & 1 & 2 & 4 & 6 & 10 & 14 & 22 & 30 & 44 & 60 & 84 & 112 & 172 & 220 & 290 & 374 \\ 
    14 & 1 & 2 & 4 & 6 & 10 & 14 & 22 & 30 & 44 & 60 & 84 & 112 & 154 & 222 & 290 & 374 \\ 
    15 & 1 & 2 & 4 & 6 & 10 & 14 & 22 & 30 & 44 & 60 & 84 & 112 & 154 & 202 & 292 & 374
  \end{tblr}
  \label{tab:1324 2143}
\end{table}

\begin{table}[p]
  \centering
  \scriptsize
  \caption{The values of \(\av_{n+1}^k(1324, 2143) - \av_n^k(1324, 2143)\) for \(n, k \leq 15\).}
  \begin{tblr}{
    rows = {mode=math},
    colsep = 5pt,
    cell{-}{-} = {c},
    hline{2,16} = {2-17}{},
    vline{2,18} = {2-15}{},
  }
    n \backslash k & 0 & 1 & 2 & 3 & 4 & 5 & 6 & 7 & 8 & 9 & 10 & 11 & 12 & 13 & 14 & 15 \\ 
    1 & 0 & 1 & ~ & ~ & ~ & ~ & ~ & ~ & ~ & ~ & ~ & ~ & ~ & ~ & ~ & ~ \\ 
    2 & 0 & 1 & 2 & 1 & ~ & ~ & ~ & ~ & ~ & ~ & ~ & ~ & ~ & ~ & ~ & ~ \\ 
    3 & 0 & 0 & 2 & 5 & 5 & 3 & 1 & ~ & ~ & ~ & ~ & ~ & ~ & ~ & ~ & ~ \\ 
    4 & 0 & 0 & 0 & 0 & 7 & 13 & 17 & 15 & 9 & 4 & 1 & ~ & ~ & ~ & ~ & ~ \\ 
    5 & 0 & 0 & 0 & 0 & -2 & 2 & 8 & 21 & 41 & 56 & 57 & 46 & 29 & 14 & 5 & 1 \\ 
    6 & 0 & 0 & 0 & 0 & 0 & -4 & 2 & 0 & 2 & 10 & 46 & 89 & 139 & 186 & 207 & 192 \\ 
    7 & 0 & 0 & 0 & 0 & 0 & 0 & -6 & 2 & 0 & 0 & -8 & -5 & 16 & 45 & 98 & 207 \\ 
    8 & 0 & 0 & 0 & 0 & 0 & 0 & 0 & -8 & 2 & 0 & 0 & -4 & -10 & -21 & 8 & 3 \\ 
    9 & 0 & 0 & 0 & 0 & 0 & 0 & 0 & 0 & -10 & 2 & 0 & 0 & -4 & 0 & -24 & -17 \\ 
    10 & 0 & 0 & 0 & 0 & 0 & 0 & 0 & 0 & 0 & -12 & 2 & 0 & 0 & -4 & 0 & -8 \\ 
    11 & 0 & 0 & 0 & 0 & 0 & 0 & 0 & 0 & 0 & 0 & -14 & 2 & 0 & 0 & -4 & 0 \\ 
    12 & 0 & 0 & 0 & 0 & 0 & 0 & 0 & 0 & 0 & 0 & 0 & -16 & 2 & 0 & 0 & -4 \\ 
    13 & 0 & 0 & 0 & 0 & 0 & 0 & 0 & 0 & 0 & 0 & 0 & 0 & -18 & 2 & 0 & 0 \\ 
    14 & 0 & 0 & 0 & 0 & 0 & 0 & 0 & 0 & 0 & 0 & 0 & 0 & 0 & -20 & 2 & 0
  \end{tblr}
  \label{tab:1324 2143 diffs}
\end{table}

% 1342 
\begin{table}[p]
  \centering
  \scriptsize
  \caption{The values of \(\av_n^k(1324, 1342)\) for \(n, k \leq 15\).}
  \begin{tblr}{
    rows = {mode=math},
    colsep = 5pt,
    cell{-}{-} = {c},
    hline{2,17} = {2-17}{},
    vline{2,18} = {2-16}{},
  }
    n \backslash k & 0 & 1 & 2 & 3 & 4 & 5 & 6 & 7 & 8 & 9 & 10 & 11 & 12 & 13 & 14 & 15 \\ 
    1 & 1 & ~ & ~ & ~ & ~ & ~ & ~ & ~ & ~ & ~ & ~ & ~ & ~ & ~ & ~ & ~ \\ 
    2 & 1 & 1 & ~ & ~ & ~ & ~ & ~ & ~ & ~ & ~ & ~ & ~ & ~ & ~ & ~ & ~ \\ 
    3 & 1 & 2 & 2 & 1 & ~ & ~ & ~ & ~ & ~ & ~ & ~ & ~ & ~ & ~ & ~ & ~ \\ 
    4 & 1 & 2 & 4 & 6 & 5 & 3 & 1 & ~ & ~ & ~ & ~ & ~ & ~ & ~ & ~ & ~ \\ 
    5 & 1 & 2 & 4 & 8 & 12 & 16 & 18 & 15 & 9 & 4 & 1 & ~ & ~ & ~ & ~ & ~ \\ 
    6 & 1 & 2 & 4 & 8 & 14 & 22 & 32 & 44 & 54 & 60 & 58 & 46 & 29 & 14 & 5 & 1 \\ 
    7 & 1 & 2 & 4 & 8 & 14 & 24 & 38 & 56 & 80 & 110 & 142 & 175 & 204 & 220 & 218 & 193 \\ 
    8 & 1 & 2 & 4 & 8 & 14 & 24 & 40 & 62 & 92 & 134 & 188 & 256 & 338 & 431 & 534 & 642 \\ 
    9 & 1 & 2 & 4 & 8 & 14 & 24 & 40 & 64 & 98 & 146 & 212 & 300 & 416 & 564 & 746 & 967 \\ 
    10 & 1 & 2 & 4 & 8 & 14 & 24 & 40 & 64 & 100 & 152 & 224 & 324 & 460 & 640 & 876 & 1180 \\ 
    11 & 1 & 2 & 4 & 8 & 14 & 24 & 40 & 64 & 100 & 154 & 230 & 336 & 484 & 684 & 952 & 1308 \\ 
    12 & 1 & 2 & 4 & 8 & 14 & 24 & 40 & 64 & 100 & 154 & 232 & 342 & 496 & 708 & 996 & 1384 \\ 
    13 & 1 & 2 & 4 & 8 & 14 & 24 & 40 & 64 & 100 & 154 & 232 & 344 & 502 & 720 & 1020 & 1428 \\ 
    14 & 1 & 2 & 4 & 8 & 14 & 24 & 40 & 64 & 100 & 154 & 232 & 344 & 504 & 726 & 1032 & 1452 \\ 
    15 & 1 & 2 & 4 & 8 & 14 & 24 & 40 & 64 & 100 & 154 & 232 & 344 & 504 & 728 & 1038 & 1464
  \end{tblr}
  \label{tab:1324 1342}
\end{table}

\begin{table}[p]
  \centering
  \scriptsize
  \caption{The values of \(\av_{n+1}^k(1324, 1342) - \av_n^k(1324, 1342)\) for \(n, k \leq 15\).}
  \begin{tblr}{
    rows = {mode=math},
    colsep = 5pt,
    cell{-}{-} = {c},
    hline{2,16} = {2-17}{},
    vline{2,18} = {2-15}{},
  }
    n \backslash k & 0 & 1 & 2 & 3 & 4 & 5 & 6 & 7 & 8 & 9 & 10 & 11 & 12 & 13 & 14 & 15 \\ 
    1 & 0 & 1 & ~ & ~ & ~ & ~ & ~ & ~ & ~ & ~ & ~ & ~ & ~ & ~ & ~ & ~ \\ 
    2 & 0 & 1 & 2 & 1 & ~ & ~ & ~ & ~ & ~ & ~ & ~ & ~ & ~ & ~ & ~ & ~ \\ 
    3 & 0 & 0 & 2 & 5 & 5 & 3 & 1 & ~ & ~ & ~ & ~ & ~ & ~ & ~ & ~ & ~ \\ 
    4 & 0 & 0 & 0 & 2 & 7 & 13 & 17 & 15 & 9 & 4 & 1 & ~ & ~ & ~ & ~ & ~ \\ 
    5 & 0 & 0 & 0 & 0 & 2 & 6 & 14 & 29 & 45 & 56 & 57 & 46 & 29 & 14 & 5 & 1 \\ 
    6 & 0 & 0 & 0 & 0 & 0 & 2 & 6 & 12 & 26 & 50 & 84 & 129 & 175 & 206 & 213 & 192 \\ 
    7 & 0 & 0 & 0 & 0 & 0 & 0 & 2 & 6 & 12 & 24 & 46 & 81 & 134 & 211 & 316 & 449 \\ 
    8 & 0 & 0 & 0 & 0 & 0 & 0 & 0 & 2 & 6 & 12 & 24 & 44 & 78 & 133 & 212 & 325 \\ 
    9 & 0 & 0 & 0 & 0 & 0 & 0 & 0 & 0 & 2 & 6 & 12 & 24 & 44 & 76 & 130 & 213 \\ 
    10 & 0 & 0 & 0 & 0 & 0 & 0 & 0 & 0 & 0 & 2 & 6 & 12 & 24 & 44 & 76 & 128 \\ 
    11 & 0 & 0 & 0 & 0 & 0 & 0 & 0 & 0 & 0 & 0 & 2 & 6 & 12 & 24 & 44 & 76 \\ 
    12 & 0 & 0 & 0 & 0 & 0 & 0 & 0 & 0 & 0 & 0 & 0 & 2 & 6 & 12 & 24 & 44 \\ 
    13 & 0 & 0 & 0 & 0 & 0 & 0 & 0 & 0 & 0 & 0 & 0 & 0 & 2 & 6 & 12 & 24 \\ 
    14 & 0 & 0 & 0 & 0 & 0 & 0 & 0 & 0 & 0 & 0 & 0 & 0 & 0 & 2 & 6 & 12
  \end{tblr}
  \label{tab:1324 1342 diffs}
\end{table}

% 1432
\begin{table}[p]
  \centering
  \scriptsize
  \caption{The values of \(\av_n^k(1324, 1432)\) for \(n, k \leq 15\).}
  \begin{tblr}{
    rows = {mode=math},
    colsep = 5pt,
    cell{-}{-} = {c},
    hline{2,17} = {2-17}{},
    vline{2,18} = {2-16}{},
  }
    n \backslash k & 0 & 1 & 2 & 3 & 4 & 5 & 6 & 7 & 8 & 9 & 10 & 11 & 12 & 13 & 14 & 15 \\ 
    1 & 1 & ~ & ~ & ~ & ~ & ~ & ~ & ~ & ~ & ~ & ~ & ~ & ~ & ~ & ~ & ~ \\ 
    2 & 1 & 1 & ~ & ~ & ~ & ~ & ~ & ~ & ~ & ~ & ~ & ~ & ~ & ~ & ~ & ~ \\ 
    3 & 1 & 2 & 2 & 1 & ~ & ~ & ~ & ~ & ~ & ~ & ~ & ~ & ~ & ~ & ~ & ~ \\ 
    4 & 1 & 2 & 5 & 5 & 5 & 3 & 1 & ~ & ~ & ~ & ~ & ~ & ~ & ~ & ~ & ~ \\ 
    5 & 1 & 2 & 5 & 9 & 13 & 15 & 17 & 13 & 9 & 4 & 1 & ~ & ~ & ~ & ~ & ~ \\ 
    6 & 1 & 2 & 5 & 9 & 17 & 23 & 36 & 43 & 52 & 55 & 50 & 41 & 26 & 14 & 5 & 1 \\ 
    7 & 1 & 2 & 5 & 9 & 17 & 27 & 42 & 59 & 87 & 112 & 140 & 163 & 189 & 195 & 187 & 163 \\ 
    8 & 1 & 2 & 5 & 9 & 17 & 27 & 46 & 65 & 98 & 136 & 194 & 253 & 333 & 408 & 494 & 580 \\ 
    9 & 1 & 2 & 5 & 9 & 17 & 27 & 46 & 69 & 104 & 148 & 212 & 287 & 402 & 527 & 694 & 883 \\ 
    10 & 1 & 2 & 5 & 9 & 17 & 27 & 46 & 69 & 108 & 154 & 224 & 309 & 432 & 575 & 783 & 1026 \\ 
    11 & 1 & 2 & 5 & 9 & 17 & 27 & 46 & 69 & 108 & 158 & 230 & 321 & 454 & 613 & 833 & 1100 \\ 
    12 & 1 & 2 & 5 & 9 & 17 & 27 & 46 & 69 & 108 & 158 & 234 & 327 & 466 & 635 & 871 & 1162 \\ 
    13 & 1 & 2 & 5 & 9 & 17 & 27 & 46 & 69 & 108 & 158 & 234 & 331 & 472 & 647 & 893 & 1200 \\ 
    14 & 1 & 2 & 5 & 9 & 17 & 27 & 46 & 69 & 108 & 158 & 234 & 331 & 476 & 653 & 905 & 1222 \\ 
    15 & 1 & 2 & 5 & 9 & 17 & 27 & 46 & 69 & 108 & 158 & 234 & 331 & 476 & 657 & 911 & 1234
  \end{tblr}
  \label{tab:1324 1432}
\end{table}

\begin{table}[p]
  \centering
  \scriptsize
  \caption{The values of \(\av_{n+1}^k(1324, 1432) - \av_n^k(1324, 1432)\) for \(n, k \leq 15\).}
  \begin{tblr}{
    rows = {mode=math},
    colsep = 5pt,
    cell{-}{-} = {c},
    hline{2,16} = {2-17}{},
    vline{2,18} = {2-15}{},
  }
    n \backslash k & 0 & 1 & 2 & 3 & 4 & 5 & 6 & 7 & 8 & 9 & 10 & 11 & 12 & 13 & 14 & 15 \\ 
    1 & 0 & 1 & ~ & ~ & ~ & ~ & ~ & ~ & ~ & ~ & ~ & ~ & ~ & ~ & ~ & ~ \\ 
    2 & 0 & 1 & 2 & 1 & ~ & ~ & ~ & ~ & ~ & ~ & ~ & ~ & ~ & ~ & ~ & ~ \\ 
    3 & 0 & 0 & 3 & 4 & 5 & 3 & 1 & ~ & ~ & ~ & ~ & ~ & ~ & ~ & ~ & ~ \\ 
    4 & 0 & 0 & 0 & 4 & 8 & 12 & 16 & 13 & 9 & 4 & 1 & ~ & ~ & ~ & ~ & ~ \\ 
    5 & 0 & 0 & 0 & 0 & 4 & 8 & 19 & 30 & 43 & 51 & 49 & 41 & 26 & 14 & 5 & 1 \\ 
    6 & 0 & 0 & 0 & 0 & 0 & 4 & 6 & 16 & 35 & 57 & 90 & 122 & 163 & 181 & 182 & 162 \\ 
    7 & 0 & 0 & 0 & 0 & 0 & 0 & 4 & 6 & 11 & 24 & 54 & 90 & 144 & 213 & 307 & 417 \\ 
    8 & 0 & 0 & 0 & 0 & 0 & 0 & 0 & 4 & 6 & 12 & 18 & 34 & 69 & 119 & 200 & 303 \\ 
    9 & 0 & 0 & 0 & 0 & 0 & 0 & 0 & 0 & 4 & 6 & 12 & 22 & 30 & 48 & 89 & 143 \\ 
    10 & 0 & 0 & 0 & 0 & 0 & 0 & 0 & 0 & 0 & 4 & 6 & 12 & 22 & 38 & 50 & 74 \\ 
    11 & 0 & 0 & 0 & 0 & 0 & 0 & 0 & 0 & 0 & 0 & 4 & 6 & 12 & 22 & 38 & 62 \\ 
    12 & 0 & 0 & 0 & 0 & 0 & 0 & 0 & 0 & 0 & 0 & 0 & 4 & 6 & 12 & 22 & 38 \\ 
    13 & 0 & 0 & 0 & 0 & 0 & 0 & 0 & 0 & 0 & 0 & 0 & 0 & 4 & 6 & 12 & 22 \\ 
    14 & 0 & 0 & 0 & 0 & 0 & 0 & 0 & 0 & 0 & 0 & 0 & 0 & 0 & 4 & 6 & 12
  \end{tblr}
  \label{tab:1324 1432 diffs}
\end{table}

% 4231
\begin{table}[p]
  \centering
  \scriptsize
  \caption{The values of \(\av_n^k(1324, 4231)\) for \(n, k \leq 15\).}
  \begin{tblr}{
    rows = {mode=math},
    colsep = 5pt,
    cell{-}{-} = {c},
    hline{2,17} = {2-17}{},
    vline{2,18} = {2-16}{},
  }
    n \backslash k & 0 & 1 & 2 & 3 & 4 & 5 & 6 & 7 & 8 & 9 & 10 & 11 & 12 & 13 & 14 & 15 \\ 
    1 & 1 & ~ & ~ & ~ & ~ & ~ & ~ & ~ & ~ & ~ & ~ & ~ & ~ & ~ & ~ & ~ \\ 
    2 & 1 & 1 & ~ & ~ & ~ & ~ & ~ & ~ & ~ & ~ & ~ & ~ & ~ & ~ & ~ & ~ \\ 
    3 & 1 & 2 & 2 & 1 & ~ & ~ & ~ & ~ & ~ & ~ & ~ & ~ & ~ & ~ & ~ & ~ \\ 
    4 & 1 & 2 & 5 & 6 & 5 & 2 & 1 & ~ & ~ & ~ & ~ & ~ & ~ & ~ & ~ & ~ \\ 
    5 & 1 & 2 & 5 & 10 & 16 & 18 & 16 & 10 & 5 & 2 & 1 & ~ & ~ & ~ & ~ & ~ \\ 
    6 & 1 & 2 & 5 & 10 & 20 & 30 & 45 & 55 & 55 & 45 & 30 & 20 & 10 & 5 & 2 & 1 \\ 
    7 & 1 & 2 & 5 & 10 & 20 & 34 & 55 & 82 & 114 & 146 & 172 & 172 & 146 & 114 & 82 & 55 \\ 
    8 & 1 & 2 & 5 & 10 & 20 & 34 & 59 & 92 & 137 & 190 & 262 & 350 & 441 & 510 & 532 & 510 \\ 
    9 & 1 & 2 & 5 & 10 & 20 & 34 & 59 & 96 & 147 & 216 & 304 & 412 & 559 & 738 & 950 & 1188 \\ 
    10 & 1 & 2 & 5 & 10 & 20 & 34 & 59 & 96 & 151 & 226 & 332 & 462 & 627 & 842 & 1110 & 1448 \\ 
    11 & 1 & 2 & 5 & 10 & 20 & 34 & 59 & 96 & 151 & 230 & 342 & 492 & 681 & 924 & 1236 & 1618 \\ 
    12 & 1 & 2 & 5 & 10 & 20 & 34 & 59 & 96 & 151 & 230 & 346 & 502 & 713 & 982 & 1324 & 1768 \\ 
    13 & 1 & 2 & 5 & 10 & 20 & 34 & 59 & 96 & 151 & 230 & 346 & 506 & 723 & 1016 & 1386 & 1862 \\ 
    14 & 1 & 2 & 5 & 10 & 20 & 34 & 59 & 96 & 151 & 230 & 346 & 506 & 727 & 1026 & 1422 & 1928 \\ 
    15 & 1 & 2 & 5 & 10 & 20 & 34 & 59 & 96 & 151 & 230 & 346 & 506 & 727 & 1030 & 1432 & 1966
  \end{tblr}
  \label{tab:1324 4231}
\end{table}

\begin{table}[p]
  \centering
  \scriptsize
  \caption{The values of \(\av_{n+1}^k(1324, 4231) - \av_n^k(1324, 4231)\) for \(n, k \leq 15\).}
  \begin{tblr}{
    rows = {mode=math},
    colsep = 5pt,
    cell{-}{-} = {c},
    hline{2,16} = {2-17}{},
    vline{2,18} = {2-15}{},
  }
    n \backslash k & 0 & 1 & 2 & 3 & 4 & 5 & 6 & 7 & 8 & 9 & 10 & 11 & 12 & 13 & 14 & 15 \\ 
    1 & 0 & 1 & ~ & ~ & ~ & ~ & ~ & ~ & ~ & ~ & ~ & ~ & ~ & ~ & ~ & ~ \\ 
    2 & 0 & 1 & 2 & 1 & ~ & ~ & ~ & ~ & ~ & ~ & ~ & ~ & ~ & ~ & ~ & ~ \\ 
    3 & 0 & 0 & 3 & 5 & 5 & 2 & 1 & ~ & ~ & ~ & ~ & ~ & ~ & ~ & ~ & ~ \\ 
    4 & 0 & 0 & 0 & 4 & 11 & 16 & 15 & 10 & 5 & 2 & 1 & ~ & ~ & ~ & ~ & ~ \\ 
    5 & 0 & 0 & 0 & 0 & 4 & 12 & 29 & 45 & 50 & 43 & 29 & 20 & 10 & 5 & 2 & 1 \\ 
    6 & 0 & 0 & 0 & 0 & 0 & 4 & 10 & 27 & 59 & 101 & 142 & 152 & 136 & 109 & 80 & 54 \\ 
    7 & 0 & 0 & 0 & 0 & 0 & 0 & 4 & 10 & 23 & 44 & 90 & 178 & 295 & 396 & 450 & 455 \\ 
    8 & 0 & 0 & 0 & 0 & 0 & 0 & 0 & 4 & 10 & 26 & 42 & 62 & 118 & 228 & 418 & 678 \\ 
    9 & 0 & 0 & 0 & 0 & 0 & 0 & 0 & 0 & 4 & 10 & 28 & 50 & 68 & 104 & 160 & 260 \\ 
    10 & 0 & 0 & 0 & 0 & 0 & 0 & 0 & 0 & 0 & 4 & 10 & 30 & 54 & 82 & 126 & 170 \\ 
    11 & 0 & 0 & 0 & 0 & 0 & 0 & 0 & 0 & 0 & 0 & 4 & 10 & 32 & 58 & 88 & 150 \\ 
    12 & 0 & 0 & 0 & 0 & 0 & 0 & 0 & 0 & 0 & 0 & 0 & 4 & 10 & 34 & 62 & 94 \\ 
    13 & 0 & 0 & 0 & 0 & 0 & 0 & 0 & 0 & 0 & 0 & 0 & 0 & 4 & 10 & 36 & 66 \\ 
    14 & 0 & 0 & 0 & 0 & 0 & 0 & 0 & 0 & 0 & 0 & 0 & 0 & 0 & 4 & 10 & 38
  \end{tblr}
  \label{tab:1324 4231 diffs}
\end{table}

% 4321
\begin{table}[p]
  \centering
  \scriptsize
  \caption{The values of \(\av_n^k(1324, 4321)\) for \(n, k \leq 15\).}
  \begin{tblr}{
    rows = {mode=math},
    colsep = 5pt,
    cell{-}{-} = {c},
    hline{2,17} = {2-17}{},
    vline{2,18} = {2-16}{},
  }
    n \backslash k & 0 & 1 & 2 & 3 & 4 & 5 & 6 & 7 & 8 & 9 & 10 & 11 & 12 & 13 & 14 & 15 \\ 
    1 & 1 & ~ & ~ & ~ & ~ & ~ & ~ & ~ & ~ & ~ & ~ & ~ & ~ & ~ & ~ & ~ \\ 
    2 & 1 & 1 & ~ & ~ & ~ & ~ & ~ & ~ & ~ & ~ & ~ & ~ & ~ & ~ & ~ & ~ \\ 
    3 & 1 & 2 & 2 & 1 & ~ & ~ & ~ & ~ & ~ & ~ & ~ & ~ & ~ & ~ & ~ & ~ \\ 
    4 & 1 & 2 & 5 & 6 & 5 & 3 & 0 & ~ & ~ & ~ & ~ & ~ & ~ & ~ & ~ & ~ \\ 
    5 & 1 & 2 & 5 & 10 & 16 & 20 & 18 & 11 & 3 & 0 & 0 & ~ & ~ & ~ & ~ & ~ \\ 
    6 & 1 & 2 & 5 & 10 & 20 & 32 & 49 & 61 & 65 & 50 & 26 & 10 & 1 & 0 & 0 & 0 \\ 
    7 & 1 & 2 & 5 & 10 & 20 & 36 & 59 & 90 & 130 & 168 & 192 & 189 & 153 & 96 & 48 & 15 \\ 
    8 & 1 & 2 & 5 & 10 & 20 & 36 & 63 & 100 & 153 & 218 & 307 & 394 & 483 & 525 & 531 & 477 \\ 
    9 & 1 & 2 & 5 & 10 & 20 & 36 & 63 & 104 & 163 & 242 & 349 & 478 & 640 & 820 & 1012 & 1177 \\ 
    10 & 1 & 2 & 5 & 10 & 20 & 36 & 63 & 104 & 167 & 252 & 373 & 524 & 720 & 946 & 1233 & 1530 \\ 
    11 & 1 & 2 & 5 & 10 & 20 & 36 & 63 & 104 & 167 & 256 & 383 & 548 & 766 & 1034 & 1365 & 1738 \\ 
    12 & 1 & 2 & 5 & 10 & 20 & 36 & 63 & 104 & 167 & 256 & 387 & 558 & 790 & 1080 & 1453 & 1882 \\ 
    13 & 1 & 2 & 5 & 10 & 20 & 36 & 63 & 104 & 167 & 256 & 387 & 562 & 800 & 1104 & 1499 & 1970 \\ 
    14 & 1 & 2 & 5 & 10 & 20 & 36 & 63 & 104 & 167 & 256 & 387 & 562 & 804 & 1114 & 1523 & 2016 \\ 
    15 & 1 & 2 & 5 & 10 & 20 & 36 & 63 & 104 & 167 & 256 & 387 & 562 & 804 & 1118 & 1533 & 2040
  \end{tblr}
  \label{tab:1324 4321}
\end{table}

\begin{table}[p]
  \centering
  \scriptsize
  \caption{The values of \(\av_{n+1}^k(1324, 4321) - \av_n^k(1324, 4321)\) for \(n, k \leq 15\).}
  \begin{tblr}{
    rows = {mode=math},
    colsep = 5pt,
    cell{-}{-} = {c},
    hline{2,16} = {2-17}{},
    vline{2,18} = {2-15}{},
  }
    n \backslash k & 0 & 1 & 2 & 3 & 4 & 5 & 6 & 7 & 8 & 9 & 10 & 11 & 12 & 13 & 14 & 15 \\ 
    1 & 0 & 1 & ~ & ~ & ~ & ~ & ~ & ~ & ~ & ~ & ~ & ~ & ~ & ~ & ~ & ~ \\ 
    2 & 0 & 1 & 2 & 1 & ~ & ~ & ~ & ~ & ~ & ~ & ~ & ~ & ~ & ~ & ~ & ~ \\ 
    3 & 0 & 0 & 3 & 5 & 5 & 3 & 0 & ~ & ~ & ~ & ~ & ~ & ~ & ~ & ~ & ~ \\ 
    4 & 0 & 0 & 0 & 4 & 11 & 17 & 18 & 11 & 3 & 0 & 0 & ~ & ~ & ~ & ~ & ~ \\ 
    5 & 0 & 0 & 0 & 0 & 4 & 12 & 31 & 50 & 62 & 50 & 26 & 10 & 1 & 0 & 0 & 0 \\ 
    6 & 0 & 0 & 0 & 0 & 0 & 4 & 10 & 29 & 65 & 118 & 166 & 179 & 152 & 96 & 48 & 15 \\ 
    7 & 0 & 0 & 0 & 0 & 0 & 0 & 4 & 10 & 23 & 50 & 115 & 205 & 330 & 429 & 483 & 462 \\ 
    8 & 0 & 0 & 0 & 0 & 0 & 0 & 0 & 4 & 10 & 24 & 42 & 84 & 157 & 295 & 481 & 700 \\ 
    9 & 0 & 0 & 0 & 0 & 0 & 0 & 0 & 0 & 4 & 10 & 24 & 46 & 80 & 126 & 221 & 353 \\ 
    10 & 0 & 0 & 0 & 0 & 0 & 0 & 0 & 0 & 0 & 4 & 10 & 24 & 46 & 88 & 132 & 208 \\ 
    11 & 0 & 0 & 0 & 0 & 0 & 0 & 0 & 0 & 0 & 0 & 4 & 10 & 24 & 46 & 88 & 144 \\ 
    12 & 0 & 0 & 0 & 0 & 0 & 0 & 0 & 0 & 0 & 0 & 0 & 4 & 10 & 24 & 46 & 88 \\ 
    13 & 0 & 0 & 0 & 0 & 0 & 0 & 0 & 0 & 0 & 0 & 0 & 0 & 4 & 10 & 24 & 46 \\ 
    14 & 0 & 0 & 0 & 0 & 0 & 0 & 0 & 0 & 0 & 0 & 0 & 0 & 0 & 4 & 10 & 24
  \end{tblr}
  \label{tab:1324 4321 diffs}
\end{table}

% 2341
\begin{table}[p]
  \centering
  \scriptsize
  \caption{The values of \(\av_n^k(1324, 2341)\) for \(n, k \leq 15\).}
  \begin{tblr}{
    rows = {mode=math},
    colsep = 5pt,
    cell{-}{-} = {c},
    hline{2,17} = {2-17}{},
    vline{2,18} = {2-16}{},
  }
    n \backslash k & 0 & 1 & 2 & 3 & 4 & 5 & 6 & 7 & 8 & 9 & 10 & 11 & 12 & 13 & 14 & 15 \\ 
    1 & 1 & ~ & ~ & ~ & ~ & ~ & ~ & ~ & ~ & ~ & ~ & ~ & ~ & ~ & ~ & ~ \\ 
    2 & 1 & 1 & ~ & ~ & ~ & ~ & ~ & ~ & ~ & ~ & ~ & ~ & ~ & ~ & ~ & ~ \\ 
    3 & 1 & 2 & 2 & 1 & ~ & ~ & ~ & ~ & ~ & ~ & ~ & ~ & ~ & ~ & ~ & ~ \\ 
    4 & 1 & 2 & 5 & 5 & 5 & 3 & 1 & ~ & ~ & ~ & ~ & ~ & ~ & ~ & ~ & ~ \\ 
    5 & 1 & 2 & 5 & 8 & 13 & 16 & 15 & 13 & 9 & 4 & 1 & ~ & ~ & ~ & ~ & ~ \\ 
    6 & 1 & 2 & 5 & 8 & 16 & 23 & 32 & 39 & 48 & 51 & 44 & 37 & 26 & 14 & 5 & 1 \\ 
    7 & 1 & 2 & 5 & 8 & 16 & 26 & 39 & 56 & 77 & 98 & 114 & 131 & 150 & 161 & 155 & 133 \\ 
    8 & 1 & 2 & 5 & 8 & 16 & 26 & 42 & 63 & 94 & 129 & 171 & 214 & 266 & 319 & 369 & 411 \\ 
    9 & 1 & 2 & 5 & 8 & 16 & 26 & 42 & 66 & 101 & 146 & 202 & 274 & 366 & 464 & 574 & 693 \\ 
    10 & 1 & 2 & 5 & 8 & 16 & 26 & 42 & 66 & 104 & 153 & 219 & 305 & 426 & 568 & 739 & 940 \\ 
    11 & 1 & 2 & 5 & 8 & 16 & 26 & 42 & 66 & 104 & 156 & 226 & 322 & 457 & 628 & 843 & 1110 \\ 
    12 & 1 & 2 & 5 & 8 & 16 & 26 & 42 & 66 & 104 & 156 & 229 & 329 & 474 & 659 & 903 & 1214 \\ 
    13 & 1 & 2 & 5 & 8 & 16 & 26 & 42 & 66 & 104 & 156 & 229 & 332 & 481 & 676 & 934 & 1274 \\ 
    14 & 1 & 2 & 5 & 8 & 16 & 26 & 42 & 66 & 104 & 156 & 229 & 332 & 484 & 683 & 951 & 1305 \\ 
    15 & 1 & 2 & 5 & 8 & 16 & 26 & 42 & 66 & 104 & 156 & 229 & 332 & 484 & 686 & 958 & 1322
  \end{tblr}
  \label{tab:1324 2341}
\end{table}

\begin{table}[p]
  \centering
  \scriptsize
  \caption{The values of \(\av_{n+1}^k(1324, 2341) - \av_n^k(1324, 2341)\) for \(n, k \leq 15\).}
  \begin{tblr}{
    rows = {mode=math},
    colsep = 5pt,
    cell{-}{-} = {c},
    hline{2,16} = {2-17}{},
    vline{2,18} = {2-15}{},
  }
    n \backslash k & 0 & 1 & 2 & 3 & 4 & 5 & 6 & 7 & 8 & 9 & 10 & 11 & 12 & 13 & 14 & 15 \\ 
    1 & 0 & 1 & ~ & ~ & ~ & ~ & ~ & ~ & ~ & ~ & ~ & ~ & ~ & ~ & ~ & ~ \\ 
    2 & 0 & 1 & 2 & 1 & ~ & ~ & ~ & ~ & ~ & ~ & ~ & ~ & ~ & ~ & ~ & ~ \\ 
    3 & 0 & 0 & 3 & 4 & 5 & 3 & 1 & ~ & ~ & ~ & ~ & ~ & ~ & ~ & ~ & ~ \\ 
    4 & 0 & 0 & 0 & 3 & 8 & 13 & 14 & 13 & 9 & 4 & 1 & ~ & ~ & ~ & ~ & ~ \\ 
    5 & 0 & 0 & 0 & 0 & 3 & 7 & 17 & 26 & 39 & 47 & 43 & 37 & 26 & 14 & 5 & 1 \\ 
    6 & 0 & 0 & 0 & 0 & 0 & 3 & 7 & 17 & 29 & 47 & 70 & 94 & 124 & 147 & 150 & 132 \\ 
    7 & 0 & 0 & 0 & 0 & 0 & 0 & 3 & 7 & 17 & 31 & 57 & 83 & 116 & 158 & 214 & 278 \\ 
    8 & 0 & 0 & 0 & 0 & 0 & 0 & 0 & 3 & 7 & 17 & 31 & 60 & 100 & 145 & 205 & 282 \\ 
    9 & 0 & 0 & 0 & 0 & 0 & 0 & 0 & 0 & 3 & 7 & 17 & 31 & 60 & 104 & 165 & 247 \\ 
    10 & 0 & 0 & 0 & 0 & 0 & 0 & 0 & 0 & 0 & 3 & 7 & 17 & 31 & 60 & 104 & 170 \\ 
    11 & 0 & 0 & 0 & 0 & 0 & 0 & 0 & 0 & 0 & 0 & 3 & 7 & 17 & 31 & 60 & 104 \\ 
    12 & 0 & 0 & 0 & 0 & 0 & 0 & 0 & 0 & 0 & 0 & 0 & 3 & 7 & 17 & 31 & 60 \\ 
    13 & 0 & 0 & 0 & 0 & 0 & 0 & 0 & 0 & 0 & 0 & 0 & 0 & 3 & 7 & 17 & 31 \\ 
    14 & 0 & 0 & 0 & 0 & 0 & 0 & 0 & 0 & 0 & 0 & 0 & 0 & 0 & 3 & 7 & 17
  \end{tblr}
  \label{tab:1324 2341 diffs}
\end{table}

% 2413
\begin{table}[p]
  \centering
  \scriptsize
  \caption{The values of \(\av_n^k(1324, 2413)\) for \(n, k \leq 15\).}
  \begin{tblr}{
    rows = {mode=math},
    colsep = 5pt,
    cell{-}{-} = {c},
    hline{2,17} = {2-17}{},
    vline{2,18} = {2-16}{},
  }
    n \backslash k & 0 & 1 & 2 & 3 & 4 & 5 & 6 & 7 & 8 & 9 & 10 & 11 & 12 & 13 & 14 & 15 \\ 
    1 & 1 & ~ & ~ & ~ & ~ & ~ & ~ & ~ & ~ & ~ & ~ & ~ & ~ & ~ & ~ & ~ \\ 
    2 & 1 & 1 & ~ & ~ & ~ & ~ & ~ & ~ & ~ & ~ & ~ & ~ & ~ & ~ & ~ & ~ \\ 
    3 & 1 & 2 & 2 & 1 & ~ & ~ & ~ & ~ & ~ & ~ & ~ & ~ & ~ & ~ & ~ & ~ \\ 
    4 & 1 & 2 & 5 & 5 & 5 & 3 & 1 & ~ & ~ & ~ & ~ & ~ & ~ & ~ & ~ & ~ \\ 
    5 & 1 & 2 & 5 & 10 & 12 & 15 & 16 & 13 & 9 & 4 & 1 & ~ & ~ & ~ & ~ & ~ \\ 
    6 & 1 & 2 & 5 & 10 & 20 & 25 & 33 & 42 & 49 & 47 & 47 & 39 & 26 & 14 & 5 & 1 \\ 
    7 & 1 & 2 & 5 & 10 & 20 & 36 & 51 & 69 & 86 & 110 & 132 & 146 & 155 & 163 & 157 & 141 \\ 
    8 & 1 & 2 & 5 & 10 & 20 & 36 & 65 & 93 & 135 & 178 & 223 & 276 & 336 & 388 & 442 & 483 \\ 
    9 & 1 & 2 & 5 & 10 & 20 & 36 & 65 & 110 & 165 & 241 & 335 & 444 & 557 & 690 & 826 & 980 \\ 
    10 & 1 & 2 & 5 & 10 & 20 & 36 & 65 & 110 & 185 & 277 & 413 & 582 & 803 & 1056 & 1347 & 1671 \\ 
    11 & 1 & 2 & 5 & 10 & 20 & 36 & 65 & 110 & 185 & 300 & 455 & 675 & 971 & 1354 & 1837 & 2428 \\ 
    12 & 1 & 2 & 5 & 10 & 20 & 36 & 65 & 110 & 185 & 300 & 481 & 723 & 1079 & 1552 & 2195 & 3014 \\ 
    13 & 1 & 2 & 5 & 10 & 20 & 36 & 65 & 110 & 185 & 300 & 481 & 752 & 1133 & 1675 & 2423 & 3432 \\ 
    14 & 1 & 2 & 5 & 10 & 20 & 36 & 65 & 110 & 185 & 300 & 481 & 752 & 1165 & 1735 & 2561 & 3690 \\ 
    15 & 1 & 2 & 5 & 10 & 20 & 36 & 65 & 110 & 185 & 300 & 481 & 752 & 1165 & 1770 & 2627 & 3843
  \end{tblr}
  \label{tab:1324 2413}
\end{table}

\begin{table}[p]
  \centering
  \scriptsize
  \caption{The values of \(\av_{n+1}^k(1324, 2413) - \av_n^k(1324, 2413)\) for \(n, k \leq 15\).}
  \begin{tblr}{
    rows = {mode=math},
    colsep = 5pt,
    cell{-}{-} = {c},
    hline{2,16} = {2-17}{},
    vline{2,18} = {2-15}{},
  }
    n \backslash k & 0 & 1 & 2 & 3 & 4 & 5 & 6 & 7 & 8 & 9 & 10 & 11 & 12 & 13 & 14 & 15 \\ 
    1 & 0 & 1 & ~ & ~ & ~ & ~ & ~ & ~ & ~ & ~ & ~ & ~ & ~ & ~ & ~ & ~ \\ 
    2 & 0 & 1 & 2 & 1 & ~ & ~ & ~ & ~ & ~ & ~ & ~ & ~ & ~ & ~ & ~ & ~ \\ 
    3 & 0 & 0 & 3 & 4 & 5 & 3 & 1 & ~ & ~ & ~ & ~ & ~ & ~ & ~ & ~ & ~ \\ 
    4 & 0 & 0 & 0 & 5 & 7 & 12 & 15 & 13 & 9 & 4 & 1 & ~ & ~ & ~ & ~ & ~ \\ 
    5 & 0 & 0 & 0 & 0 & 8 & 10 & 17 & 29 & 40 & 43 & 46 & 39 & 26 & 14 & 5 & 1 \\ 
    6 & 0 & 0 & 0 & 0 & 0 & 11 & 18 & 27 & 37 & 63 & 85 & 107 & 129 & 149 & 152 & 140 \\ 
    7 & 0 & 0 & 0 & 0 & 0 & 0 & 14 & 24 & 49 & 68 & 91 & 130 & 181 & 225 & 285 & 342 \\ 
    8 & 0 & 0 & 0 & 0 & 0 & 0 & 0 & 17 & 30 & 63 & 112 & 168 & 221 & 302 & 384 & 497 \\ 
    9 & 0 & 0 & 0 & 0 & 0 & 0 & 0 & 0 & 20 & 36 & 78 & 138 & 246 & 366 & 521 & 691 \\ 
    10 & 0 & 0 & 0 & 0 & 0 & 0 & 0 & 0 & 0 & 23 & 42 & 93 & 168 & 298 & 490 & 757 \\ 
    11 & 0 & 0 & 0 & 0 & 0 & 0 & 0 & 0 & 0 & 0 & 26 & 48 & 108 & 198 & 358 & 586 \\ 
    12 & 0 & 0 & 0 & 0 & 0 & 0 & 0 & 0 & 0 & 0 & 0 & 29 & 54 & 123 & 228 & 418 \\ 
    13 & 0 & 0 & 0 & 0 & 0 & 0 & 0 & 0 & 0 & 0 & 0 & 0 & 32 & 60 & 138 & 258 \\ 
    14 & 0 & 0 & 0 & 0 & 0 & 0 & 0 & 0 & 0 & 0 & 0 & 0 & 0 & 35 & 66 & 153
  \end{tblr}
  \label{tab:1324 2413 diffs}
\end{table}

% 2431
\begin{table}[p]
  \centering
  \scriptsize
  \caption{The values of \(\av_n^k(1324, 2431)\) for \(n, k \leq 15\).}
  \begin{tblr}{
    rows = {mode=math},
    colsep = 5pt,
    cell{-}{-} = {c},
    hline{2,17} = {2-17}{},
    vline{2,18} = {2-16}{},
  }
    n \backslash k & 0 & 1 & 2 & 3 & 4 & 5 & 6 & 7 & 8 & 9 & 10 & 11 & 12 & 13 & 14 & 15 \\ 
    1 & 1 & ~ & ~ & ~ & ~ & ~ & ~ & ~ & ~ & ~ & ~ & ~ & ~ & ~ & ~ & ~ \\ 
    2 & 1 & 1 & ~ & ~ & ~ & ~ & ~ & ~ & ~ & ~ & ~ & ~ & ~ & ~ & ~ & ~ \\ 
    3 & 1 & 2 & 2 & 1 & ~ & ~ & ~ & ~ & ~ & ~ & ~ & ~ & ~ & ~ & ~ & ~ \\ 
    4 & 1 & 2 & 5 & 6 & 4 & 3 & 1 & ~ & ~ & ~ & ~ & ~ & ~ & ~ & ~ & ~ \\ 
    5 & 1 & 2 & 5 & 10 & 15 & 17 & 15 & 11 & 7 & 4 & 1 & ~ & ~ & ~ & ~ & ~ \\ 
    6 & 1 & 2 & 5 & 10 & 19 & 30 & 43 & 50 & 50 & 49 & 39 & 29 & 19 & 11 & 5 & 1 \\ 
    7 & 1 & 2 & 5 & 10 & 19 & 34 & 55 & 80 & 114 & 140 & 153 & 165 & 161 & 150 & 132 & 105 \\ 
    8 & 1 & 2 & 5 & 10 & 19 & 34 & 59 & 93 & 140 & 202 & 278 & 352 & 420 & 476 & 518 & 544 \\ 
    9 & 1 & 2 & 5 & 10 & 19 & 34 & 59 & 97 & 154 & 231 & 335 & 468 & 639 & 823 & 1014 & 1218 \\ 
    10 & 1 & 2 & 5 & 10 & 19 & 34 & 59 & 97 & 158 & 246 & 366 & 534 & 754 & 1033 & 1393 & 1799 \\ 
    11 & 1 & 2 & 5 & 10 & 19 & 34 & 59 & 97 & 158 & 250 & 382 & 567 & 825 & 1166 & 1615 & 2187 \\ 
    12 & 1 & 2 & 5 & 10 & 19 & 34 & 59 & 97 & 158 & 250 & 386 & 584 & 860 & 1242 & 1758 & 2440 \\ 
    13 & 1 & 2 & 5 & 10 & 19 & 34 & 59 & 97 & 158 & 250 & 386 & 588 & 878 & 1279 & 1839 & 2593 \\ 
    14 & 1 & 2 & 5 & 10 & 19 & 34 & 59 & 97 & 158 & 250 & 386 & 588 & 882 & 1298 & 1878 & 2679 \\ 
    15 & 1 & 2 & 5 & 10 & 19 & 34 & 59 & 97 & 158 & 250 & 386 & 588 & 882 & 1302 & 1898 & 2720
  \end{tblr}
  \label{tab:1324 2431}
\end{table}

\begin{table}[p]
  \centering
  \scriptsize
  \caption{The values of \(\av_{n+1}^k(1324, 2431) - \av_n^k(1324, 2431)\) for \(n, k \leq 15\).}
  \begin{tblr}{
    rows = {mode=math},
    colsep = 5pt,
    cell{-}{-} = {c},
    hline{2,16} = {2-17}{},
    vline{2,18} = {2-15}{},
  }
    n \backslash k & 0 & 1 & 2 & 3 & 4 & 5 & 6 & 7 & 8 & 9 & 10 & 11 & 12 & 13 & 14 & 15 \\ 
    1 & 0 & 1 & ~ & ~ & ~ & ~ & ~ & ~ & ~ & ~ & ~ & ~ & ~ & ~ & ~ & ~ \\ 
    2 & 0 & 1 & 2 & 1 & ~ & ~ & ~ & ~ & ~ & ~ & ~ & ~ & ~ & ~ & ~ & ~ \\ 
    3 & 0 & 0 & 3 & 5 & 4 & 3 & 1 & ~ & ~ & ~ & ~ & ~ & ~ & ~ & ~ & ~ \\ 
    4 & 0 & 0 & 0 & 4 & 11 & 14 & 14 & 11 & 7 & 4 & 1 & ~ & ~ & ~ & ~ & ~ \\ 
    5 & 0 & 0 & 0 & 0 & 4 & 13 & 28 & 39 & 43 & 45 & 38 & 29 & 19 & 11 & 5 & 1 \\ 
    6 & 0 & 0 & 0 & 0 & 0 & 4 & 12 & 30 & 64 & 91 & 114 & 136 & 142 & 139 & 127 & 104 \\ 
    7 & 0 & 0 & 0 & 0 & 0 & 0 & 4 & 13 & 26 & 62 & 125 & 187 & 259 & 326 & 386 & 439 \\ 
    8 & 0 & 0 & 0 & 0 & 0 & 0 & 0 & 4 & 14 & 29 & 57 & 116 & 219 & 347 & 496 & 674 \\ 
    9 & 0 & 0 & 0 & 0 & 0 & 0 & 0 & 0 & 4 & 15 & 31 & 66 & 115 & 210 & 379 & 581 \\ 
    10 & 0 & 0 & 0 & 0 & 0 & 0 & 0 & 0 & 0 & 4 & 16 & 33 & 71 & 133 & 222 & 388 \\ 
    11 & 0 & 0 & 0 & 0 & 0 & 0 & 0 & 0 & 0 & 0 & 4 & 17 & 35 & 76 & 143 & 253 \\ 
    12 & 0 & 0 & 0 & 0 & 0 & 0 & 0 & 0 & 0 & 0 & 0 & 4 & 18 & 37 & 81 & 153 \\ 
    13 & 0 & 0 & 0 & 0 & 0 & 0 & 0 & 0 & 0 & 0 & 0 & 0 & 4 & 19 & 39 & 86 \\ 
    14 & 0 & 0 & 0 & 0 & 0 & 0 & 0 & 0 & 0 & 0 & 0 & 0 & 0 & 4 & 20 & 41
  \end{tblr}
  \label{tab:1324 2431 diffs}
\end{table}

% 3412
\begin{table}[p]
  \centering
  \scriptsize
  \caption{The values of \(\av_n^k(1324, 3412)\) for \(n, k \leq 15\).}
  \begin{tblr}{
    rows = {mode=math},
    colsep = 5pt,
    cell{-}{-} = {c},
    hline{2,17} = {2-17}{},
    vline{2,18} = {2-16}{},
  }
    n \backslash k & 0 & 1 & 2 & 3 & 4 & 5 & 6 & 7 & 8 & 9 & 10 & 11 & 12 & 13 & 14 & 15 \\ 
    1 & 1 & ~ & ~ & ~ & ~ & ~ & ~ & ~ & ~ & ~ & ~ & ~ & ~ & ~ & ~ & ~ \\ 
    2 & 1 & 1 & ~ & ~ & ~ & ~ & ~ & ~ & ~ & ~ & ~ & ~ & ~ & ~ & ~ & ~ \\ 
    3 & 1 & 2 & 2 & 1 & ~ & ~ & ~ & ~ & ~ & ~ & ~ & ~ & ~ & ~ & ~ & ~ \\ 
    4 & 1 & 2 & 5 & 6 & 4 & 3 & 1 & ~ & ~ & ~ & ~ & ~ & ~ & ~ & ~ & ~ \\ 
    5 & 1 & 2 & 5 & 10 & 14 & 16 & 16 & 11 & 6 & 4 & 1 & ~ & ~ & ~ & ~ & ~ \\ 
    6 & 1 & 2 & 5 & 10 & 18 & 30 & 37 & 46 & 46 & 45 & 38 & 27 & 16 & 8 & 5 & 1 \\ 
    7 & 1 & 2 & 5 & 10 & 18 & 34 & 53 & 74 & 98 & 118 & 134 & 139 & 134 & 123 & 106 & 86 \\ 
    8 & 1 & 2 & 5 & 10 & 18 & 34 & 57 & 92 & 130 & 184 & 237 & 294 & 349 & 373 & 400 & 407 \\ 
    9 & 1 & 2 & 5 & 10 & 18 & 34 & 57 & 96 & 150 & 220 & 313 & 422 & 551 & 690 & 826 & 945 \\ 
    10 & 1 & 2 & 5 & 10 & 18 & 34 & 57 & 96 & 154 & 242 & 353 & 508 & 699 & 934 & 1223 & 1526 \\ 
    11 & 1 & 2 & 5 & 10 & 18 & 34 & 57 & 96 & 154 & 246 & 377 & 552 & 795 & 1102 & 1503 & 1998 \\ 
    12 & 1 & 2 & 5 & 10 & 18 & 34 & 57 & 96 & 154 & 246 & 381 & 578 & 843 & 1208 & 1691 & 2314 \\ 
    13 & 1 & 2 & 5 & 10 & 18 & 34 & 57 & 96 & 154 & 246 & 381 & 582 & 871 & 1260 & 1807 & 2522 \\ 
    14 & 1 & 2 & 5 & 10 & 18 & 34 & 57 & 96 & 154 & 246 & 381 & 582 & 875 & 1290 & 1863 & 2648 \\ 
    15 & 1 & 2 & 5 & 10 & 18 & 34 & 57 & 96 & 154 & 246 & 381 & 582 & 875 & 1294 & 1895 & 2708
  \end{tblr}
  \label{tab:1324 3412}
\end{table}

\begin{table}[p]
  \centering
  \scriptsize
  \caption{The values of \(\av_{n+1}^k(1324, 3412) - \av_n^k(1324, 3412)\) for \(n, k \leq 15\).}
  \begin{tblr}{
    rows = {mode=math},
    colsep = 5pt,
    cell{-}{-} = {c},
    hline{2,16} = {2-17}{},
    vline{2,18} = {2-15}{},
  }
    n \backslash k & 0 & 1 & 2 & 3 & 4 & 5 & 6 & 7 & 8 & 9 & 10 & 11 & 12 & 13 & 14 & 15 \\ 
    1 & 0 & 1 & ~ & ~ & ~ & ~ & ~ & ~ & ~ & ~ & ~ & ~ & ~ & ~ & ~ & ~ \\ 
    2 & 0 & 1 & 2 & 1 & ~ & ~ & ~ & ~ & ~ & ~ & ~ & ~ & ~ & ~ & ~ & ~ \\ 
    3 & 0 & 0 & 3 & 5 & 4 & 3 & 1 & ~ & ~ & ~ & ~ & ~ & ~ & ~ & ~ & ~ \\ 
    4 & 0 & 0 & 0 & 4 & 10 & 13 & 15 & 11 & 6 & 4 & 1 & ~ & ~ & ~ & ~ & ~ \\ 
    5 & 0 & 0 & 0 & 0 & 4 & 14 & 21 & 35 & 40 & 41 & 37 & 27 & 16 & 8 & 5 & 1 \\ 
    6 & 0 & 0 & 0 & 0 & 0 & 4 & 16 & 28 & 52 & 73 & 96 & 112 & 118 & 115 & 101 & 85 \\ 
    7 & 0 & 0 & 0 & 0 & 0 & 0 & 4 & 18 & 32 & 66 & 103 & 155 & 215 & 250 & 294 & 321 \\ 
    8 & 0 & 0 & 0 & 0 & 0 & 0 & 0 & 4 & 20 & 36 & 76 & 128 & 202 & 317 & 426 & 538 \\ 
    9 & 0 & 0 & 0 & 0 & 0 & 0 & 0 & 0 & 4 & 22 & 40 & 86 & 148 & 244 & 397 & 581 \\ 
    10 & 0 & 0 & 0 & 0 & 0 & 0 & 0 & 0 & 0 & 4 & 24 & 44 & 96 & 168 & 280 & 472 \\ 
    11 & 0 & 0 & 0 & 0 & 0 & 0 & 0 & 0 & 0 & 0 & 4 & 26 & 48 & 106 & 188 & 316 \\ 
    12 & 0 & 0 & 0 & 0 & 0 & 0 & 0 & 0 & 0 & 0 & 0 & 4 & 28 & 52 & 116 & 208 \\ 
    13 & 0 & 0 & 0 & 0 & 0 & 0 & 0 & 0 & 0 & 0 & 0 & 0 & 4 & 30 & 56 & 126 \\ 
    14 & 0 & 0 & 0 & 0 & 0 & 0 & 0 & 0 & 0 & 0 & 0 & 0 & 0 & 4 & 32 & 60
  \end{tblr}
  \label{tab:1324 3412 diffs}
\end{table}

% 3421
\begin{table}[p]
  \centering
  \scriptsize
  \caption{The values of \(\av_n^k(1324, 3421)\) for \(n, k \leq 15\).}
  \begin{tblr}{
    rows = {mode=math},
    colsep = 5pt,
    cell{-}{-} = {c},
    hline{2,17} = {2-17}{},
    vline{2,18} = {2-16}{},
  }
    n \backslash k & 0 & 1 & 2 & 3 & 4 & 5 & 6 & 7 & 8 & 9 & 10 & 11 & 12 & 13 & 14 & 15 \\ 
    1 & 1 & ~ & ~ & ~ & ~ & ~ & ~ & ~ & ~ & ~ & ~ & ~ & ~ & ~ & ~ & ~ \\ 
    2 & 1 & 1 & ~ & ~ & ~ & ~ & ~ & ~ & ~ & ~ & ~ & ~ & ~ & ~ & ~ & ~ \\ 
    3 & 1 & 2 & 2 & 1 & ~ & ~ & ~ & ~ & ~ & ~ & ~ & ~ & ~ & ~ & ~ & ~ \\ 
    4 & 1 & 2 & 5 & 6 & 5 & 2 & 1 & ~ & ~ & ~ & ~ & ~ & ~ & ~ & ~ & ~ \\ 
    5 & 1 & 2 & 5 & 10 & 16 & 18 & 16 & 10 & 5 & 2 & 1 & ~ & ~ & ~ & ~ & ~ \\ 
    6 & 1 & 2 & 5 & 10 & 20 & 30 & 47 & 55 & 53 & 43 & 31 & 20 & 10 & 5 & 2 & 1 \\ 
    7 & 1 & 2 & 5 & 10 & 20 & 34 & 57 & 84 & 122 & 152 & 162 & 160 & 138 & 111 & 80 & 55 \\ 
    8 & 1 & 2 & 5 & 10 & 20 & 34 & 61 & 94 & 145 & 208 & 292 & 369 & 437 & 471 & 478 & 453 \\ 
    9 & 1 & 2 & 5 & 10 & 20 & 34 & 61 & 98 & 155 & 232 & 340 & 470 & 645 & 830 & 1003 & 1174 \\ 
    10 & 1 & 2 & 5 & 10 & 20 & 34 & 61 & 98 & 159 & 242 & 364 & 522 & 740 & 1003 & 1344 & 1725 \\ 
    11 & 1 & 2 & 5 & 10 & 20 & 34 & 61 & 98 & 159 & 246 & 374 & 546 & 792 & 1106 & 1517 & 2034 \\ 
    12 & 1 & 2 & 5 & 10 & 20 & 34 & 61 & 98 & 159 & 246 & 378 & 556 & 816 & 1158 & 1620 & 2219 \\ 
    13 & 1 & 2 & 5 & 10 & 20 & 34 & 61 & 98 & 159 & 246 & 378 & 560 & 826 & 1182 & 1672 & 2322 \\ 
    14 & 1 & 2 & 5 & 10 & 20 & 34 & 61 & 98 & 159 & 246 & 378 & 560 & 830 & 1192 & 1696 & 2374 \\ 
    15 & 1 & 2 & 5 & 10 & 20 & 34 & 61 & 98 & 159 & 246 & 378 & 560 & 830 & 1196 & 1706 & 2398
  \end{tblr}
  \label{tab:1324 3421}
\end{table}

\begin{table}[p]
  \centering
  \scriptsize
  \caption{The values of \(\av_{n+1}^k(1324, 3421) - \av_n^k(1324, 3421)\) for \(n, k \leq 15\).}
  \begin{tblr}{
    rows = {mode=math},
    colsep = 5pt,
    cell{-}{-} = {c},
    hline{2,16} = {2-17}{},
    vline{2,18} = {2-15}{},
  }
    n \backslash k & 0 & 1 & 2 & 3 & 4 & 5 & 6 & 7 & 8 & 9 & 10 & 11 & 12 & 13 & 14 & 15 \\ 
    1 & 0 & 1 & ~ & ~ & ~ & ~ & ~ & ~ & ~ & ~ & ~ & ~ & ~ & ~ & ~ & ~ \\ 
    2 & 0 & 1 & 2 & 1 & ~ & ~ & ~ & ~ & ~ & ~ & ~ & ~ & ~ & ~ & ~ & ~ \\ 
    3 & 0 & 0 & 3 & 5 & 5 & 2 & 1 & ~ & ~ & ~ & ~ & ~ & ~ & ~ & ~ & ~ \\ 
    4 & 0 & 0 & 0 & 4 & 11 & 16 & 15 & 10 & 5 & 2 & 1 & ~ & ~ & ~ & ~ & ~ \\ 
    5 & 0 & 0 & 0 & 0 & 4 & 12 & 31 & 45 & 48 & 41 & 30 & 20 & 10 & 5 & 2 & 1 \\ 
    6 & 0 & 0 & 0 & 0 & 0 & 4 & 10 & 29 & 69 & 109 & 131 & 140 & 128 & 106 & 78 & 54 \\ 
    7 & 0 & 0 & 0 & 0 & 0 & 0 & 4 & 10 & 23 & 56 & 130 & 209 & 299 & 360 & 398 & 398 \\ 
    8 & 0 & 0 & 0 & 0 & 0 & 0 & 0 & 4 & 10 & 24 & 48 & 101 & 208 & 359 & 525 & 721 \\ 
    9 & 0 & 0 & 0 & 0 & 0 & 0 & 0 & 0 & 4 & 10 & 24 & 52 & 95 & 173 & 341 & 551 \\ 
    10 & 0 & 0 & 0 & 0 & 0 & 0 & 0 & 0 & 0 & 4 & 10 & 24 & 52 & 103 & 173 & 309 \\ 
    11 & 0 & 0 & 0 & 0 & 0 & 0 & 0 & 0 & 0 & 0 & 4 & 10 & 24 & 52 & 103 & 185 \\ 
    12 & 0 & 0 & 0 & 0 & 0 & 0 & 0 & 0 & 0 & 0 & 0 & 4 & 10 & 24 & 52 & 103 \\ 
    13 & 0 & 0 & 0 & 0 & 0 & 0 & 0 & 0 & 0 & 0 & 0 & 0 & 4 & 10 & 24 & 52 \\ 
    14 & 0 & 0 & 0 & 0 & 0 & 0 & 0 & 0 & 0 & 0 & 0 & 0 & 0 & 4 & 10 & 24
  \end{tblr}
  \label{tab:1324 3421 diffs}
\end{table}

\clearpage

\end{document}